\tikzset{
curarrow/.style={
rounded corners=8pt,
execute at begin to={every node/.style={fill=red}},
to path={-- ([xshift=50pt]\tikztostart.center)
  |- (#1) node[fill=white] {$\scriptstyle d^*$}
  -| ([xshift=-40pt]\tikztotarget.center)
  -- (\tikztotarget)}
  }
}
\newtheorem{theorem}{Theorem}[section]
\newtheorem{lemma}[theorem]{Lemma}
\newtheorem{proposition}[theorem]{Proposition}
\newtheorem{corollary}[theorem]{Corollary}
\crefname{prop}{proposition}{propositions}
\Crefname{prop}{Proposition}{Propositions}
\theoremstyle{definition}
\newtheorem{definition}[theorem]{Definition}
\newtheorem{remark}[theorem]{Remark}
\numberwithin{equation}{section}
\newcommand{\smon}{N}
\newcommand{\mon}[1][\smon]{\text{MConf}(#1)}
\newcommand{\modc}[2][\smon]{\text{Mod-Conf}^{\, 1}_{#1}(#2)}
\newcommand{\rimodc}[3][M]{\text{Mod-Conf}^{\, #3}_{#1}(#2)}
\newcommand{\fconf}[1][M]{\text{FConf}(#1)}
\newcommand{\bmon}[2][\smon]{\text{MConf}(#1)_{#2}}
\newcommand{\bmodc}[3][\smon]{\modc[#1]{#2}_{#3}}
\newcommand{\dmodel}{Y}
\newcommand{\restr}[1]{|_{#1}}
\newcommand{\iterxbarc}[5]{\mathcal{X}^{#5}_{#4}({#1},{#2},{#3})}
\newcommand{\piterxbarc}[6]{\mathcal{X}_{#6,#4}^{#5}({#1},{#2},{#3})}
\newcommand{\iterxpbarc}[5]{\mathcal{X}^{#5}_{#4}({#1},{#2},{#3})^{+}}
\newcommand{\wpbarc}[4]{W_{#4}({#1},{#2},{#3})}
\newcommand{\citerxbarc}[6]{C\mathcal{X}^{#5}_{#4}({#1},{#2},{#3};{#6})}
\newcommand{\cpiterxbarc}[7]{C\mathcal{X}^{#5}_{#7, #4}({#1},{#2},{#3};{#6})}
\newcommand{\tot}[1]{\text{Tot}(#1)}
\newcommand{\srange}{m}
\newcommand{\partic}{k}
\newcommand{\neck}{\lambda}
\newcommand{\newstab}[1]{t_{#1 ,c}}
\newcommand{\lnewstab}[1]{\hat{t}_{#1 ,c}}
\newcommand{\moconfigone}{\mathcal{C}}
\newcommand{\moconfigtwo}{\mathcal{C}}
\newcommand{\bdisk}{\modc[S^{n-1}]{\bar{D}^{n}}^{\leq 1}}
\newcommand{\fieldc}{R}
\newcommand{\nch}[1]{C_{*}(#1;\fieldc)}
\newcommand{\conffilt}{\mathcal{G}}
\newcommand{\browd}[2]{\left [#1, #2\right ]}
\newcommand{\embed}{\iota}
\newcommand{\iteratedmap}[3]{\text{Cone}^{#3}(#1, (#2))}
\newcommand{\pariteratedmap}[4]{\text{Cone}_{#4}^{#3}(#1, (#2))}
\newcommand{\jiteratedmap}[4]{\text{Cone}^{#3,#4}(#1, (#2))}
\newcommand{\jpariteratedmap}[5]{\text{Cone}_{#4}^{#3,#5}(#1, (#2))}
\newcommand{\punc}[1]{Z^{\srange}_{#1}(M)}
\newcommand{\ppunc}[1]{Z^{\srange}_{\partic, #1}(M)}
\newcommand{\cpunc}[2][A^{I}_{*}]{CZ^{\srange}_{ #2}(#1,M;\fieldc)}
\newcommand{\cppunc}[3][A^{I}_{*}]{CZ^{\srange}_{#3, #2}(#1,M;\fieldc)}
\begin{document}
\title{Secondary Homological Stability for Unordered Configuration Spaces}

\author{Zachary Himes}
\address{Department of Mathematics, Purdue University, 150 N. University Street, West Lafayette, Indiana, 47906}
\email{himesz@purdue.edu}
\thanks{Zachary Himes was supported in part by a Frederick N. Andrews Fellowship, Department of Mathematics, Purdue University.}





\begin{abstract}
Secondary homological stability is a recently discovered stability pattern for a sequence of spaces exhibiting homological stability and it holds outside the range where the homology stabilizes.
We prove secondary homological stability for the homology of the unordered configuration spaces of a connected manifold. The main difficulty is the case that the manifold is closed because there are no obvious maps inducing stability and the homology 
eventually is periodic instead of stable.
We resolve this issue by constructing a new chain-level stabilization map for configuration spaces.
\end{abstract}

\maketitle
\tableofcontents

\section{Introduction}\label{intro}

The unordered configuration space of $k$ points in a manifold $M$ is the quotient space
\begin{center}
$\text{Conf}_{\partic}(M)\colonequals \{(x_1 ,\ldots , x_{\partic })\in M^{\partic} :  x_i \neq x_j \text{ if } i\neq j \}/\Sigma_{\partic}.$
\end{center}
The subset $\{(x_1 ,\ldots , x_{\partic })\in M^{\partic} :  x_i \neq x_j \text{ if } i\neq j \}\subset M^{\partic}$ has the subspace topology and the symmetric group $\Sigma_{\partic}$ acts on it by permuting the order of the coordinates.
In this paper, we study stability patterns for the homology groups $H_{*} (\text{Conf}_{\partic }(M);\fieldc)$ for a connected topological manifold $M$. When the homology is over the finite field $\mathbb{F}_{p}\colonequals \mathbb{Z}/p\mathbb{Z}$, we prove that there are two stability patterns, depending on whether the manifold is open
or closed. When the manifold is open, the homology of the configuration space exhibits secondary homological stability, in the sense of \textcite[Theorem A]{MR4028513}. When the manifold is closed, the homology of the configuration space has a new pattern called periodic secondary homological stability.
\subsection{Homological Stability}
Given an embedding $$\embed\colon  L\sqcup M\hookrightarrow P,$$ there is an induced map $$\text{Conf}(\embed)\colon \text{Conf}_{j}(L)\times \text{Conf}_{\partic}(M)\to  \text{Conf}_{j+\partic}(P).$$ 
By the K\"unneth theorem,
there is an induced map 
\begin{align*}
     \text{Conf}(\embed)_{*}\colon \bigoplus_{i=a+b}H_{a} (\text{Conf}_{j}(L);\fieldc)\otimes H_{b} (\text{Conf}_{\partic}(M);\fieldc)  &\to   H_{i} (\text{Conf}_{j+k}(P);\fieldc)\\
    \omega\otimes\tau & \mapsto  \text{Conf}(\embed)_{*}(\omega \otimes \tau).
\end{align*}
Fixing a homology class $x\in H_{a}(\text{Conf}_{j}(L);\fieldc)$ and a representative cycle $x'\in C_{a}(\text{Conf}_{j}(L);\fieldc)$ of $x$ yields a map
\begin{align*}
     t_{x}\colon H_{b}(\text{Conf}_{\partic}(M);\fieldc) &\to   H_{a+b}(\text{Conf}_{j+\partic}(P);\fieldc)\\
    \omega &\mapsto  \text{Conf}(\embed)_{*}(\omega \otimes x').
\end{align*}
When the manifold $M$ is a non-compact and without boundary (we will call such manifolds \emph{open manifolds}), there is an embedding $\embed\colon \mathbb{R}^{n}\sqcup M \to  M$ (see e.g. \textcite[Lemma 2.4]{MR3344444}). In this case, when $x\in H_{0} (\text{Conf}_{1}(\mathbb{R}^{n});\fieldc)$ is the class of a point, the map,
$$t_{1}\colonequals t_{x}\colon H_{i} (\text{Conf}_{\partic}(M);\fieldc)\to  H_{i} (\text{Conf}_{\partic+1}(M);\fieldc),$$ which depends on 
the choice of an end of $M$,
is known as the \textit{stabilization map} (see \cref{fig:stabilization map}).

\begin{figure}[htb]
  \centering
  \includegraphics{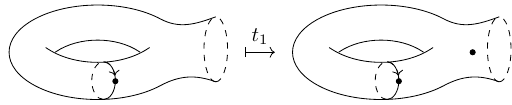}
  \caption{The stabilization map $t_{1}\colon H_{i} (\text{Conf}_{\partic}(M))\to  H_{i} (\text{Conf}_{\partic+1}(M))$ }
  \label{fig:stabilization map}
\end{figure}

In \textcite[p.~104]{MR358766}, McDuff proved that the stabilization map $t_{1}$ is an isomorphism in integral homology  
for all $k\gg i$ if $M$ is a smooth manifold of finite type. The isomorphism $H_{i} (\text{Conf}_{\partic}(M);\fieldc)\cong H_{i} (\text{Conf}_{\partic+1}(M);\fieldc)$ for all $k\gg i$ is known as \textit{homological stability}. The \textit{stable range} is the range where $t_{1}$ is an isomorphism and has been explicitly computed for homology with various coefficients
(the current best known ranges for various coefficients are \textcite[Proposition A.1]{MR533892}, \textcite[Corollary 3]{MR2909770}, \textcite[Theorem B]{MR3032101}, \textcite[Theorem 1.4]{MR3344444}, \textcite[Theorem 1.3]{MR3704255}, and \cref{homological stability in dimension 2}, where we improve the stable range for odd torsion when the manifold is a surface). Since the stabilization map $t_{1}$ is also induced from the map of chain complexes 
\begin{align*}
    C_{*}(\text{Conf}_{\partic}(M);\fieldc)&\to   C_{*} (\text{Conf}_{\partic+1}(M);\fieldc)\\
    \xi &\mapsto  \text{Conf}(\embed)_{*}(\xi\otimes x)
\end{align*}
one can make sense of relative homology. The stable range is the same as the range where the relative homology $H_{i}(\text{Conf}_{\partic+1}(M),\text{Conf}_{\partic}(M);\fieldc)$ vanishes because the stabilization map $t_{1}$ is injective by \textcite[p.~103]{MR358766}.
\subsection{Secondary Homological Stability}

If we replace $x\in H_{0} (\text{Conf}_{1}(\mathbb{R}^{n});\fieldc)$ with another homology class $z\in H_{a} (\text{Conf}_{b}(\mathbb{R}^{n});\fieldc)$ we get a new map of homology groups
\begin{align*}
    t_{z}\colon H_{*}(\text{Conf}_{\partic}(M);\fieldc)&\to  H_{*+a}(\text{Conf}_{\partic+b}(M);\fieldc).
\end{align*}
It is natural to ask if this map yields a different stability pattern for $H_{*}(\text{Conf}_{\partic}(M);\fieldc)$.
In general, the map $t_{z}\colon H_{*}(\text{Conf}_{\partic}(M);\fieldc)\to  H_{*+a}(\text{Conf}_{\partic+b}(M);\fieldc)$ is not an isomorphism in a range for any choice of $z$ because it misses some classes in the image of $t_{1}$. But, we will show that $t_{z}$ induces a map of relative homology groups $$t_{z}\colon H_{*}(\text{Conf}_{\partic}(M), \text{Conf}_{\partic-1}(M);\fieldc)\to  H_{*+a} (\text{Conf}_{\partic+b}(M), \text{Conf}_{\partic-1+b}(M);\fieldc).$$
This induced map of relative homology groups will be an isomorphism in a new range for certain choices of $z\in H_{a}(\text{Conf}_{b}(\mathbb{R}^{n});\fieldc)$.
In the range that
the homology $H_{*}(\text{Conf}_{\partic}(M);\fieldc)$ 
stabilizes, the relative homology groups $H_{*} (\text{Conf}_{\partic}(M), \text{Conf}_{\partic-1}(M);\fieldc)$ and $H_{*+a} (\text{Conf}_{\partic +b}(M), \text{Conf}_{\partic -1+b}(M);\fieldc)$ are isomorphic because both groups are zero. In this paper, we will show that the map $t_{z}$ is an isomorphism in a range where the relative homology groups are potentially nonzero.

Isomorphisms of the form $$H_{*}(\text{Conf}_{\partic}(M), \text{Conf}_{\partic-1}(M);\fieldc)\cong  H_{*+a} (\text{Conf}_{\partic +b}(M), \text{Conf}_{\partic -1+b}(M);\fieldc)$$ are called \textit{secondary homological stability}. We prove the following secondary homological stability result (as well as higher-order stability results-see \cref{higher stab open surface} and \cref{higher stab open higher dimensional manifold}) for the configuration spaces of an open manifold.

\begin{theorem}
\label{secondary stability open case}
Let $M$ be an open connected manifold of dimension $n$. Let $R$ be either $\mathbb{Z}$ or the field $\mathbb{F}_{p}$.
    Let $g(R,n,k)$ denote the constant
    \[g(R,n,k)= 
    \begin{cases}
      \frac{p^{2}-1}{p^2}k - \frac{2p^{2}-2p-2}{p^{2}} & \text{if } R=\mathbb{F}_{p} \text{ for } p\text{ an odd prime} \text{ and } n=2\\
      (2/3)k & \text{if } R=\mathbb{Z} \text{ and } n=2   \\
      (3/4)k & \text{if } R=\mathbb{F}_{2} \text{ or if } n>2 \text{ and } R=\mathbb{Z}. \\
   \end{cases}
   \]
   There exists a homology class $z\in H_{a}(\text{Conf}_{b}(\mathbb{R}^{n});\fieldc)$ with
   \[(a,b)=
    \begin{cases}
      (2p-2,2p) & \text{if } \fieldc=\mathbb{F}_{p} \text{ for } p\text{ an odd prime and } n=2\\
      (1,2)  & \text{if } \fieldc=\mathbb{F}_{2} \text{ or if } \fieldc=\mathbb{Z}.\\
   \end{cases}
   \]
   such that the stabilization map $$t_{z}\colon H_{i}(\text{Conf}_{\partic}(M),\text{Conf}_{\partic -1}(M);\fieldc)\to  H_{i+a}(\text{Conf}_{\partic+b}(M),\text{Conf}_{\partic-1+b}(M);\fieldc ) $$ is an isomorphism for $i< g(R,n,k)$ and a surjection for $i=g(R,n,k)$. 


\end{theorem}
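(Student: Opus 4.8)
The standard modern approach to homological stability for configuration spaces is via the $E_1$- or $E_2$-algebra structure on $\coprod_k \mathrm{Conf}_k(M)$ (when $M$ is open) and the associated machinery of cellular $E_k$-algebras in the sense of Galatius–Kupers–Randal-Williams. Here secondary stability is a statement about the second page of a "Dyer–Lashof" or "Koszul dual" type computation, so the plan is to reduce the statement for a general open manifold $M$ to the case $M = \mathbb{R}^n$ and then to cite the known calculation of $H_*(\mathrm{Conf}(\mathbb{R}^n); R)$.

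Concretely, I would proceed as follows. First, set up the bar construction / splitting complex that computes the relative homology $H_*(\mathrm{Conf}_k(M), \mathrm{Conf}_{k-1}(M))$ as the homology of the $E_1$-module $H_*(\mathrm{Conf}(M))$ over the $E_1$- (or $E_2$-) algebra $H_*(\mathrm{Conf}(\mathbb{R}^n))$; by the work of Randal-Williams–Wahl and its configuration-space incarnations (e.g. Cantero–Randal-Williams, Kupers–Miller–Patzt), the "iterated bar construction" $B(H_*\mathrm{Conf}(M), H_*\mathrm{Conf}(\mathbb{R}^n), R)$ controls these relative groups. Second, identify the class $z$: for odd $p$ and $n=2$ it should be (a power of) the Dyer–Lashof/Browder-type class in $H_{2p-2}(\mathrm{Conf}_{2p}(\mathbb{R}^2);\mathbb{F}_p)$ detected by Cohen–Lada–May's calculation, and for $R = \mathbb{F}_2$ or $R=\mathbb{Z}$ it should be the class $z \in H_1(\mathrm{Conf}_2(\mathbb{R}^n))$ coming from the Browder bracket/the generator of $H_1(\mathrm{Conf}_2(\mathbb{R}^n))$. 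Third, prove that multiplication by $z$ is a connectivity equivalence on the relevant complex in the claimed range: this is where one uses that $M$ is open (so that there is a genuine $E_1$-module structure with a "scanning"/gluing argument letting one push all the extra points off into a collar), reducing to a purely algebraic statement about the $H_*(\mathrm{Conf}(\mathbb{R}^n);R)$-module structure, which one then checks on the explicit free graded-commutative/divided-power algebra presentation of $H_*(\mathrm{Conf}(\mathbb{R}^n);R)$ due to Cohen–Lada–May and (for $\mathbb{F}_2$) Bödigheimer–Cohen–Taylor.

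The range $g(R,n,k)$ should emerge as the slope-optimal line on which $z$-multiplication is iso/surjective: for $n>2$ and $R=\mathbb{Z}$ the algebra $H_*(\mathrm{Conf}(\mathbb{R}^n))$ is generated in degrees $0$ and $n-1$ with relations forcing the $3/4$-slope (two "old" generators can be traded for one $z$-multiple plus an error of relative degree growing like $k/4$); for $R=\mathbb{F}_p$, $n=2$ the finer $\frac{p^2-1}{p^2}$-slope comes from the $p$-fold iterated top operation, exactly matching the numerology $(a,b)=(2p-2,2p)$. I would carry out the spectral-sequence bookkeeping by a vanishing-line induction on $k$: assume $H_i(\mathrm{Conf}_{k'}(M),\mathrm{Conf}_{k'-1}(M))$ is killed by $t_z$ for all $k'<k$ below the line, feed this into the bar complex, and conclude for $k$.

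**The main obstacle.** The genuinely hard step is not the algebra over $\mathbb{R}^n$ but transporting it to a general open manifold $M$ with the correct constant: one needs a chain-level comparison showing that the relative homology of $\mathrm{Conf}(M)$ is, in a range, a free (or suitably flat) module over $H_*(\mathrm{Conf}(\mathbb{R}^n);R)$ so that $z$-multiplication behaves as it does over $\mathbb{R}^n$, and making the error terms in this comparison respect the slope $g(R,n,k)$ rather than degrading it. This is where the "new chain-level stabilization map" advertised in the abstract presumably enters even in the open case — to get an actual map $t_z$ on the nose (not just on associated graded) and to control its cone. Concretely I expect the bulk of the work to be: (i) a careful excision/collar argument producing the module structure with explicit connectivity, and (ii) the input computation that the two relevant "old" generators of $H_*(\mathrm{Conf}(\mathbb{R}^n);R)$ lie in the image of $z$-multiplication up to the stated error, which for $n=2$, $p$ odd requires unwinding Cohen–Lada–May's formulas for the iterated Dyer–Lashof operations and their Bocksteins.
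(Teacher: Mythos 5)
Your identification of the classes $z$ and of Cohen--Lada--May's computation of $H_{*}(\text{Conf}(\mathbb{R}^{n});\mathbb{F}_{p})$ as the base input is correct, and your heuristic for where the slopes $\frac{p^{2}-1}{p^{2}}$ and $3/4$ come from matches the paper's \cref{stable ranges for the plane} and \cref{stable ranges for higher dimensions with F2 coefficients}, which are exactly the ideal-membership/unstable-range computations you describe. However, the step you yourself flag as ``the main obstacle'' --- transporting the computation from $\mathbb{R}^{n}$ to a general open $M$ --- is left as a plan rather than an argument, and the specific mechanism you propose (showing the relative homology of $\text{Conf}(M)$ is a free or flat module over $H_{*}(\text{Conf}(\mathbb{R}^{n});R)$ in a range, fed into a bar-construction/vanishing-line induction) is neither established in your write-up nor obviously true. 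This is a genuine gap: without it you have proved the theorem only for $M=\mathbb{R}^{n}$.

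The paper closes this gap by a much more elementary route, going back to Segal's Appendix A as streamlined by Kupers--Miller: one exhausts $M$ by interiors of compact manifolds with a single $0$-handle, chooses a closed subcomplex $X$ with $M\setminus X\cong\mathbb{R}^{n}$, filters $\text{Conf}_{\partic}(M)$ by the number of points lying in $X$, and runs a downward induction with the five lemma on the long exact sequences in \emph{compactly supported cohomology} (\cref{long exact sequence in csc}, \cref{filtered case}). Each filtration stratum is $\text{Conf}_{\partic-j}(\mathbb{R}^{n})\times\text{Conf}_{j}(X)$, so the K\"unneth theorem over $\mathbb{F}_{p}$ (flatness is automatic over a field --- no module freeness is needed) together with the bound $H^{i}_{c}(\text{Conf}_{j}(X))=0$ for $i>j\cdot\dim X$ reduces everything to the $\mathbb{R}^{n}$ case; Poincar\'e duality then converts back to homology. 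Two further points you should address: (i) in the open case no ``new chain-level stabilization map'' is needed --- $t_{1}$ and $t_{z}$ are honest chain maps induced by embeddings into disjoint disks, and the only care required is choosing those embeddings so that $t_{1}$ and $t_{z}$ commute on the nose, which is what makes the relative complex $\text{Cone}(t_{1})$ and the action of $t_{z}$ on it well defined (the new chain-level construction is only needed for \emph{closed} manifolds); (ii) the case $R=\mathbb{Z}$ does not follow formally from your outline --- the paper deduces it from the $\mathbb{F}_{p}$ cases for all primes $p$ via finite generation of $H_{*}(\text{Conf}_{\partic}(M);\mathbb{Z})$ and the universal coefficient theorem, after first checking that $x_{1}\in H_{1}(\text{Conf}_{2}(\mathbb{R}^{2});\mathbb{F}_{2})$ lifts to an integral class (\cref{integral secondary stability for surface}).
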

The ranges in this theorem are optimal---they are achieved when $M=\mathbb{R}^{n}$. All of the proofs in this paper assume that $n \geq 2$ but many of the statements are vacuously true when $n=1$.
The assumption that $n=2$ when $p$ is odd in \cref{secondary stability open case} may be dropped. But in the case $n\neq 2$ and $p$ is odd, the relative homology groups $H_{i}(\text{Conf}_{\partic}(M),\text{Conf}_{\partic-1}(M);\mathbb{F}_{p})$ vanish in a range including the above range by \textcite[Proposition 3.3]{MR3344444}. Hence, these groups do not exhibit nontrivial secondary stability in this range.
When the manifold $M$ is a surface and $R=\mathbb{F}_{p}$ for $p$ an odd prime, we also show that the relative homology $H_{i}(\text{Conf}_{\partic}(M),\text{Conf}_{\partic -1}(M);R)$ vanishes for $i\leq \frac{k(p-1)-(p-2)}{p}$ and that this range is optimal (\cref{homological stability in dimension 2}).

The homology classes in \cref{secondary stability open case} are explicitly given in \cref{sec:stability for open}. The case $R=\mathbb{F}_{2}$ or $\mathbb{Z}$ is a folk theorem known to some experts and follows from calculations in \textcite{MR991102}. When the ring $R$ is $\mathbb{F}_{2}$, a picture for the homology class $z$ is given by two points ``doing a half twist'' (see \cref{fig:secondary stabilization map for F2}).
\begin{figure}[htb]
  \includegraphics{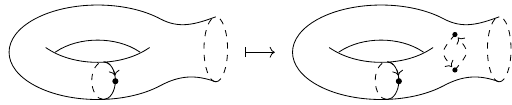}
  \caption{The secondary stabilization map for homology with $\mathbb{F}_{2}$-coefficients}
  \label{fig:secondary stabilization map for F2}
\end{figure}
This class is similar to the class in \textcite{MR4019898}---in their case, the homology class inducing secondary representation stability for ordered configuration spaces is given by two ordered points doing a full twist. When the ring $R$ is $\mathbb{F}_{p}$ for $p$ an odd prime, the picture for $z$ is a $(2p-2)$-parameter family of $2p$ spinning points and hence is harder to illustrate.

The first example of secondary homological stability is in \textcite[Theorem A]{MR4028513} which states that 
the relative homology of the mapping class groups of a surface of genus $g$ with one boundary component $H_{i}(\text{Mod}_{g,1}, \text{Mod}_{g-1,1};\mathbb{Z})$ stabilizes as the genus $g$ increases by $3$ and the homological degree $i$ increases by $2$. A secondary stability result in the setting of representation stability for the ordered configuration space of an open connected manifold of dimension at least $2$ can be found in \textcite[Theorem 1.2]{MR4019898}. For other secondary stability results, see also \textcite[Theorem B]{miller2019representation}, \textcite[Theorem C]{MR4546785},  \textcite[Theorems B and C]{MR4099630}, and \textcite{ho2020higher}.

\subsection{Periodic Secondary Homological Stability}
We now discuss similar results when the manifold is a compact manifold without boundary (we will call such manifolds \emph{closed manifolds}).
When $M$ is a closed manifold there is no obvious stabilization map $\text{Conf}_{\partic}(M)\to  \text{Conf}_{\partic+1}(M)$.
Moreover, direct calculations show that the homology $H_{*}(\text{Conf}_{\partic}(M);\fieldc)$ does not stabilize in general---for example, $H_{1}(\text{Conf}_{\partic}(S^{2});\mathbb{Z})$ is isomorphic to $ \mathbb{Z}/(2k-2)\mathbb{Z}$ for $k\geq 2$ by a presentation of the spherical braid group given in \textcite[p.~255]{MR141128}.

Even though the integral homology does not stabilize in general, if instead one considers homology with coefficients over a different ring $R$, then there are still stability patterns for $H_{i}(\text{Conf}_{\partic}(M);R)$. For example, if the manifold $M$ is orientable and the ring $R$ is $\mathbb{Q}$, \textcite[Corollary 3]{MR2909770} states that $H_{i}(\text{Conf}_{\partic}(M);\mathbb{Q})$ is isomorphic to $H_{i}(\text{Conf}_{\partic+1}(M);\mathbb{Q})$ for $i<\partic$. Randal-Williams and Knudsen later on showed that the assumption that $M$ is orientable can be dropped (\textcite[Theorem C]{MR3032101} and \textcite[Theorem 1.3]{MR3704255}). In the same paper, Randal-Williams also showed that $H_{i}(\text{Conf}_{\partic}(M);\mathbb{F}_{2})$ is isomorphic to $H_{i}(\text{Conf}_{\partic+1}(M);\mathbb{F}_{2})$ for $i\leq k/2$.
Nagpal (and independently \textcite[Theorem D]{MR3398727}) proved that if the ring $R$ is the field  $\mathbb{F}_{p}$ with $p$ an odd prime, then 
under various conditions on $M$, the homology $H_{i}(\text{Conf}_{\partic}(M);\mathbb{F}_{p})$ is periodic with period a power of $p$ in an explicit range (\textcite[Theorem F]{MR3358218}). Currently, the strongest periodicity result is  \textcite[Theorem A]{MR3556286} which states that when the dimension of $M$ is even and $p$ is an odd prime, the homology $H_{i}(\text{Conf}_{\partic}(M);\mathbb{F}_{p})$ is isomorphic to $H_{i}(\text{Conf}_{\partic+p}(M);\mathbb{F}_{p})$ in an explicit range 
and when the dimension of $M$ is odd, $H_{i}(\text{Conf}_{\partic}(M);\mathbb{Z})$ is isomorphic to $H_{i}(\text{Conf}_{\partic+1}(M);\mathbb{Z})$ for $i<\partic/2$. For other stability patterns in the homology of configuration spaces, see \textcite[Theorem 4.7]{MR3261967}, \textcite{MR3398727}, and \textcite[Theorem 5.1]{MR3805056}.

We prove a secondary stability result (as well as higher-order stability results---see \cref{stability results for closed manifold}) for configuration spaces of closed manifolds. Since an embedding $M\sqcup \mathbb{R}^{n}\to  M$ does not exist when $M$ is a closed manifold, it is not obvious what relative homology should even mean. Although Kupers--Miller and Randal-Williams were able to prove stability patterns for the homology groups of the configuration spaces of closed manifolds, 
they only show that the homology groups are abstractly isomorphic and they do not even produce maps at the level of homology groups.
We resolve this issue by constructing new chain-level stabilization maps that yield the stability results of \textcite[Theorem C]{MR3032101} and 
\textcite[Theorem A]{MR3556286}. 
These new stabilization maps allow us to define relative homology groups and we will also show that they induce stabilization maps of relative homology groups.

\begin{theorem}\label{closed periodic secondary stability}
Let $M$ be a closed connected 
manifold of dimension $n$. Let $R$ be either $\mathbb{Z}$ or the field $\mathbb{F}_{p}$.
\hfill\begin{enumerate}
    \item\label{closed secondary stability for dim M or p odd} Suppose that $n$ is odd or that $R$ is $\mathbb{F}_{2}$. There is a class $z\in H_{1}(\text{Conf}_{2}(\mathbb{R}^{n});R)$ and a stabilization map
    $$\newstab{z}\colon H_{i}(\text{Conf}_{\partic}(M),\text{Conf}_{\partic -1}(M);R)\to  H_{i+1}(\text{Conf}_{\partic +2}(M),\text{Conf}_{\partic +1}(M);R)$$ that is an isomorphism for $i< (3/4)k$ and a surjection for $i= (3/4)k$.
    \item\label{closed secondary stability} Suppose that $n$ is $2$ and $R$ is $\mathbb{F}_{p}$ with $p>2$. There is a class $z\in H_{2p-2}(\text{Conf}_{2p}(\mathbb{R}^{2});\mathbb{F}_{p})$ and a stabilization map
    $$\newstab{z}\colon H_{i}(\text{Conf}_{\partic}(M),\text{Conf}_{\partic -p}(M);R)\to  H_{i+2p-2}(\text{Conf}_{\partic +2p}(M),\text{Conf}_{\partic +p}(M);R)$$ that is an isomorphism for $i< \frac{p^{2}-1}{p^{2}}k-\frac{2p^{2}-2p-2}{p^{2}}$ and a surjection for $i=\frac{p^{2}-1}{p^{2}}k-\frac{2p^{2}-2p-2}{p^{2}}$.
\end{enumerate}
\end{theorem}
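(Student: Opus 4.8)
The plan is to deduce the closed case from the open case, \cref{secondary stability open case}, by means of a bar-construction model for the configuration spaces of $M$. Fix an embedded closed disk $\bar{D}^{n}\subset M$ and set $M_{0}\colonequals M\setminus\bar{D}^{n}$, an open manifold with a collar $S^{n-1}\times(0,1)$ near the deleted disk. Configurations in $S^{n-1}\times\mathbb{R}$ form a topological monoid $\mon[S^{n-1}]$ under stacking, and it acts on the spaces $\modc[S^{n-1}]{M_{0}}$ and $\modc[S^{n-1}]{\bar{D}^{n}}$ of configurations carrying a cylindrical germ; here $\modc[S^{n-1}]{\bar{D}^{n}}\simeq\text{Conf}(\mathbb{R}^{n})$. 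The first step is to establish (or recall) a weak equivalence, possibly after passing to a finer resolution, $\text{Conf}_{\partic}(M)\simeq|\barc{M_{0}}{S^{n-1}}{\bar{D}^{n}}|_{\partic}$, with $\partic$ the total number of points, and to read off from the same model the primary periodic stability of $H_{*}(\text{Conf}_{\partic}(M);R)$, with period $p$ (namely $1$ when $R=\mathbb{F}_{2}$ or $n$ is odd, and the prime $p$ when $R=\mathbb{F}_{p}$ with $p$ odd and $n=2$); this periodicity is what makes the relative groups in the statement meaningful.

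The new chain-level stabilization maps live on this model. In a simplex of the bar construction the ``$M_{0}$-points'' and the ``$\bar{D}^{n}$-points'' occupy disjoint regions of $M$, so one may insert a configuration into the $\modc[S^{n-1}]{\bar{D}^{n}}$-slot without creating collisions: inserting a one-point configuration gives $\newstab{1}$, and inserting a chain representative of the relevant class $z\in H_{*}(\text{Conf}(\mathbb{R}^{n});R)\cong H_{*}(\modc[S^{n-1}]{\bar{D}^{n}};R)$ gives $\newstab{z}$, both of which become maps of the chain complex $C_{*}(\text{Conf}_{\partic}(M);R)$ via the weak equivalence above. These two operations commute (each is ``insert into the disk''), so $\newstab{z}$ descends to the relative groups $H_{*}(\text{Conf}_{\partic}(M),\text{Conf}_{\partic-p}(M);R)$, \emph{defined} as the homology of $\text{Cone}(\newstab{1}^{\circ p})$; this is the chain-level stabilization map announced in the introduction. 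The subtle points are that $\newstab{1}$ and $\newstab{z}$ are well defined up to quasi-isomorphism, independently of the disk and of the cylindrical germs, and that they are compatible with primary periodic stability.

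The relative homology is then computed by a spectral sequence. Since $\newstab{1}$ and $\newstab{z}$ touch only the $\modc[S^{n-1}]{\bar{D}^{n}}$-slot, $\text{Cone}(\newstab{1}^{\circ p})$ is modeled by the same bar construction with that slot replaced by the mapping cone of the $p$-fold stabilization map $t_{1}^{\circ p}$ on $\text{Conf}(\mathbb{R}^{n})$, on which $\newstab{z}$ acts alone. The skeletal filtration of the bar construction gives a spectral sequence whose $E^{1}$-page is, by the K\"unneth theorem, a direct sum of tensor products $H_{*}(\modc[S^{n-1}]{M_{0}})\otimes H_{*}(\mon[S^{n-1}])^{\otimes q}\otimes H_{*}\bigl(\text{Conf}_{j}(\mathbb{R}^{n}),\text{Conf}_{j-p}(\mathbb{R}^{n})\bigr)$, and $\newstab{z}$ induces a map of such spectral sequences acting on $E^{1}$ only on the last tensor factor, there equal to the relative-homology secondary stabilization map of $\mathbb{R}^{n}$. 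Two reductions bring in \cref{secondary stability open case}: first, its $p$-fold-relative version follows from the stated $1$-fold-relative version by filtering $\text{Cone}(t_{1}^{\circ p})$ by iterated mapping cones of $t_{1}$ and applying the five lemma; second, \cref{secondary stability open case} then shows that on the last tensor factor $\newstab{z}$ is an isomorphism in the claimed range (and a surjection at the edge) for every point-count $j$ occurring in the disk slot. A spectral-sequence comparison (zig-zag lemma) transports the conclusion to $H_{*}(\text{Conf}_{\partic}(M),\text{Conf}_{\partic-p}(M);R)$. Parts~(\ref{closed secondary stability for dim M or p odd}) and~(\ref{closed secondary stability}) correspond to the two relevant lines of \cref{secondary stability open case}, with $z$ the half-twist class in $H_{1}(\text{Conf}_{2}(\mathbb{R}^{n});R)$ and the Cohen--Lada--May class in $H_{2p-2}(\text{Conf}_{2p}(\mathbb{R}^{2});\mathbb{F}_{p})$, respectively.

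The two steps I expect to be the real work are the construction and the range bookkeeping. Making $\barc{M_{0}}{S^{n-1}}{\bar{D}^{n}}$, its module structure over $\mon[S^{n-1}]$, and the operations $\newstab{1},\newstab{z}$ precise enough that $\newstab{z}$ is a genuine chain map on $C_{*}(\text{Conf}_{\partic}(M))$ compatible with $\newstab{1}$ and with periodicity is the conceptual core, and demands careful handling of configuration spaces with cylindrical boundary data---most likely via a more elaborate resolution involving several deleted disks joined by ``pipes'', so as to keep the relevant maps highly connected. The range analysis is the other delicate point: in each total degree one must bound the contributions of the $\mon[S^{n-1}]$- and $\modc[S^{n-1}]{\bar{D}^{n}}$-slots and of the part of $H_{*}(\text{Cone}(t_{1}^{\circ p}))$ lying outside the secondary-stable range---using homological and periodic stability for the configuration spaces of $M$ and of $M_{0}$, together with the bound on the homological dimension of $\text{Conf}_{j}$ for small $j$---so that the abutment inherits exactly the constant of \cref{secondary stability open case}; tracking the small additive corrections (including the shift incurred in the five-lemma step) is what pins down the precise ranges in the statement.
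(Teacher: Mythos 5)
Your overall architecture --- model $\text{Conf}(M)$ by the two-sided bar construction over the cylinder monoid, stabilize in the disk slot, and reduce the stable range to the open case --- matches the paper's starting point, but there are two genuine gaps.

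First, the construction of the maps. ``Inserting a configuration into the $\modc[S^{n-1}]{\bar{D}^{n}}$-slot'' is not a well-defined operation compatible with the bar construction: a face map merges a cylinder factor into the disk slot, and to add points ``from infinity'' one must first clear room in the disk, which destroys strict compatibility with the $\mon[S^{n-1}]$-action. Resolving this is the heart of the paper: it replaces $C_{*}(\modc[S^{n-1}]{\bar{D}^{n}})$ by a length-two resolution by free $C_{*}(\mon[S^{n-1}])$-modules (\cref{res is weakly equivalent to Conf(Disk)}), and a class $z$ then induces a module map --- hence a chain map on the realization --- exactly when the Browder bracket $\browd{z}{e}$ vanishes (\cref{p-power stabilization}, \cref{homology classes which give a stabilization map}). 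Your proposal never meets this obstruction, and it bites: in case (2) ($n$ even, $p$ odd) one has $\browd{e}{e}\neq 0$, so the map $\newstab{1}$ you use to define the relative groups as $\text{Cone}(\newstab{1}^{\circ p})$ does not exist by this construction --- consistently with $H_{1}(\text{Conf}_{\partic}(S^{2});\mathbb{F}_{p})$ being only $p$-periodic. One must instead build $\newstab{e^{p}}$ directly from the class $e^{p}$, which does satisfy $\browd{e^{p}}{e}=0$, and likewise verify $\browd{z}{e}=0$ for each secondary class $z$.

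Second, the range. The $E^{1}$-page of your bar spectral sequence is a sum over all distributions of the $\partic$ points among the slots, and for summands in which the disk slot carries few points the secondary stability range of $\text{Conf}_{j}(\mathbb{R}^{n})$ is far too small; there is no bound on the homological degree of $H_{*}(\text{Conf}_{l}(M\setminus \bar{D}^{n}))$ in terms of $l$ strong enough to absorb those terms, so the comparison does not close to the sharp slopes $3/4$ and $\tfrac{p^{2}-1}{p^{2}}$. The paper avoids this with a different resolution: the puncture resolution $Z_{\partic,\bullet}(M,\bar{E}_{\srange})$, whose simplices are the \emph{full} configuration spaces $\text{Conf}_{\partic}(M\setminus\{m_{0},\dots,m_{s}\})$ of open manifolds carrying all $\partic$ points. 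On each level $\newstab{z}$ is chain homotopic to the classical $t_{z}$ (\cref{new stabilization on open manifold}), so the open-manifold range of \cref{secondary stability open case} transfers verbatim through the realization spectral sequence (\cref{puncture res is equivalent to conf}, \cref{stable hom for a closed manifold}, \cref{stability results for closed manifold}).
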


In the case that the ring $R$ is $\mathbb{F}_{2}$, the results of this paper mostly follow from \textcite[Theorem A and Theorem B]{MR991102} (see \cref{remark on applying BCT}). In this case, B\"{o}digheimer--Cohen--Taylor explicitly calculate $H_{*}(\text{Conf}_{\partic}(M);\mathbb{F}_{2})$  for all $\partic$ from $H_{*}(M;\mathbb{F}_{2})$. 
B\"{o}digheimer--Cohen--Taylor do not consider the relative homology groups $H_{*}(\text{Conf}_{\partic}(M),\text{Conf}_{\partic-1}(M);\mathbb{F}_{2})$, although it is conceivable that one could use ideas from their paper to define these groups. They do, however, compute $H_{*}(\text{Conf}_{\partic}(M);\mathbb{F}_{2})$ and this computation suggests the presence of our secondary stability result \cref{closed periodic secondary stability}.
\subsection{Proof Sketch of the Construction of the New Stabilization Map}\label{proof sketch of stabilization map construction}
The hardest part of the paper is constructing the stabilization map for the configuration space of a closed manifold (as opposed to proving that the map is an isomorphism) so we sketch this construction. 

Let $M$ be a manifold of dimension $n$ and fix an open disk $D^{n}\subset M$. There is a map $$\text{Conf}_{j}(M\setminus \bar{D}^{n})\times \text{Conf}_{\partic}(D^{n})\to  \text{Conf}_{j+\partic}(M).$$ If this map were a weak homotopy equivalence, then a stabilization map for $\text{Conf}_{j+\partic}(M)$ could be constructed by using the stabilization map on $\text{Conf}_{\partic}(D^{n})$ since $D^{n}$ is open. Clearly this is not the case. But a similar map is a weak homotopy equivalence and similar ideas will let us construct the desired stabilization map.
We work with the functor
\begin{align*}
    \text{Conf}\colon \text{Top}&\to   \text{Top}\\
    N& \mapsto \bigsqcup_{\partic=0}^{\infty}\text{Conf}_{\partic}(N)
\end{align*}
instead of the functor $\text{Conf}_{\partic}$ since the former has better algebraic and functorial properties than the latter
(for example, $\text{Conf}(M\sqcup N)\cong \text{Conf}(M)\times \text{Conf}(\smon)$).
There are maps
\begin{align*}
    \text{Conf}(M\setminus \bar{D}^{n})\times\text{Conf}(S^{n-1}\times \mathbb{R}) &\to  \text{Conf}(M\setminus \bar{D}^{n}),\\
    \text{Conf}(S^{n-1}\times \mathbb{R}) \times \text{Conf}(D^{n})&\to  \text{Conf}(D^{n})
\end{align*}
coming from embeddings 
\begin{align*}
    (M\setminus \bar{D}^{n}) \sqcup (S^{n-1}\times \mathbb{R})&\hookrightarrow M\setminus \bar{D}^{n}, \\
    (S^{n-1}\times \mathbb{R})\sqcup D^{n}&\hookrightarrow D^{n},
\end{align*}
respectively. The configuration space $\text{Conf}(S^{n-1}\times \mathbb{R})$ is a monoid up to coherent homotopy by stacking cylinders from end to end (see \cref{mon-and-mod-maps-up-to-homotopy}).
In addition, the configuration spaces $\text{Conf}(M\setminus \bar{D}^{n})$ 
and $\text{Conf}(D^{n})$ are modules up to coherent homotopy over $\text{Conf}(S^{n-1}\times \mathbb{R})$ (see \cref{mon-and-mod-maps-up-to-homotopy}).
\begin{figure}
    \centering
    \includegraphics[scale=.7]{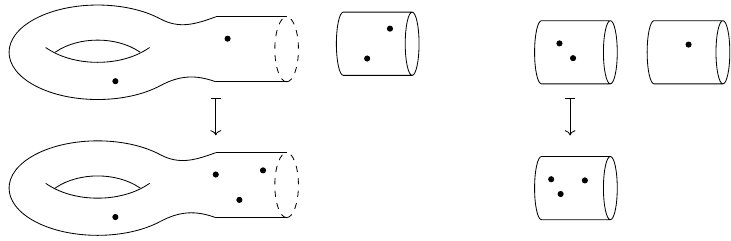}
    \caption{The $\text{Conf}(S^{n-1}\times \mathbb{R})$-module structure of $\text{Conf}(M\setminus \bar{D}^{n})$ and the monoid map for $\text{Conf}(S^{n-1}\times \mathbb{R})$ }
    \label{mon-and-mod-maps-up-to-homotopy}
\end{figure}

The map $$\text{Conf}(M\setminus\bar{D}^{n}) \times \text{Conf}(D^{n}) \to  \text{Conf}(M)$$ is still not a weak homotopy equivalence. But, by using the $\text{Conf}(S^{n-1}\times \mathbb{R})$-module maps on $\text{Conf}(M\setminus \bar{D}^{n})$ and $\text{Conf}(D^{n})$ we now have two new maps 
\begin{align*}
    \bigg ( \text{Conf}(M\setminus \bar{D}^{n} )\times \text{Conf}(S^{n-1}\times \mathbb{R})\bigg )\times \text{Conf}(D^{n})& \to  \text{ Conf}(M), \\
    \text{Conf}(M\setminus \bar{D}^{n})\times  \bigg (\text{Conf}(S^{n-1}\times \mathbb{R})\times \text{Conf}(D^{n})\bigg )& \to  \text{ Conf}(M).
\end{align*}
Due to these maps being homotopic (see \cref{fig:excision maps}) and the higher homotopy coherence between these maps, 
\begin{figure}[htb]
  \includegraphics[scale=.7]{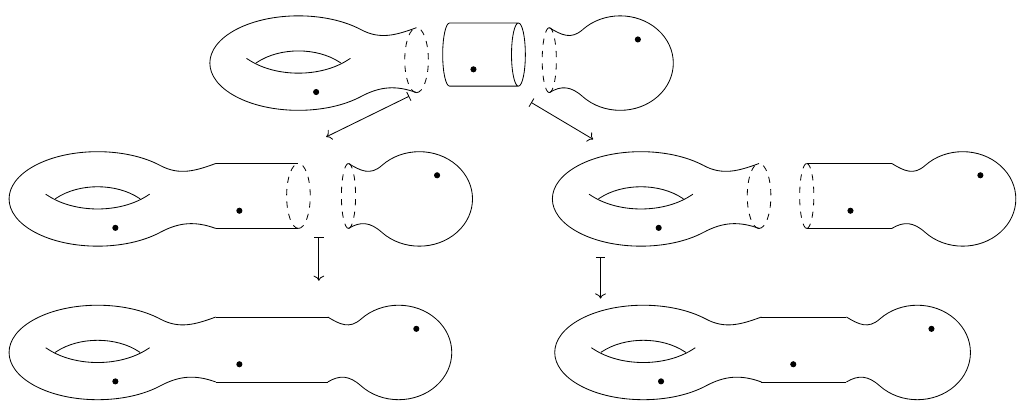}
  \caption{$\text{Conf}(M\setminus \bar{D}^{n})\times  \text{Conf}(S^{n-1}\times \mathbb{R})\times \text{Conf}(D^{n})\to  \text{Conf}(M)$ }
  \label{fig:excision maps}
\end{figure}
there is an induced map on the derived tensor product of $\text{Conf}(S^{n-1}\times \mathbb{R})$-modules  $$\text{Conf}(M\setminus\bar{D}^{n}) \bigotimes^{\mathbb{L}}_{\text{Conf}(S^{n-1}\times \mathbb{R})} \text{Conf}(D^{n}) \to  \text{Conf}(M).$$
Our model for the derived tensor product 
is the two-sided bar construction of the modules $\text{Conf}(M\setminus\bar{D}^{n})$ and $\text{Conf}(D^{n})$
(see \cref{iterated bar construction model for conf}).
By excision for factorization homology (\textcite[Lemma 3.18]{MR3431668}, or see \cref{map from bar construction to conf is a weak equivalence}), this induced map 
is a weak homotopy equivalence.
Therefore, to construct a chain-level stabilization map for $\text{Conf}(M)$ with coefficients in a ring $\fieldc$, it suffices to construct a chain-level stabilization map $$C_{*} (\text{Conf}(D^{n});\fieldc)\to  C_{*} (\text{Conf}(D^{n});\fieldc)$$ that is also a module map over the differential graded ring $C_{*}(\text{Conf}(S^{n-1}\times \mathbb{R});\fieldc)$.

We build such a map by giving a length two resolution of the differential graded module
$C_{*}(\text{Conf}(D^{n});\fieldc)$ 
by free modules over the differential graded ring $C_{*} (\text{Conf}(S^{n-1}\times \mathbb{R});\fieldc)$.
Up to homotopy equivalence, the generator can be taken to be a point in $H_{0}(\text{Conf}_{1}(D^{n});\fieldc)$. The only relation in our presentation comes from the fact that the natural inclusion $\text{Conf}_{1}(S^{n-1}\times \mathbb{R}) \hookrightarrow  \text{Conf}_{1}(D^{n})$ is nullhomotopic (see \cref{explanation for resolution} for a conceptual reason for the existence of such a small resolution).
Using this presentation for the module $C_{*}(\text{Conf}(D^{n});\fieldc)$, we can easily check when a homology class $z\in H_{i}(\text{Conf}_{\partic}(D^{n});\fieldc)$ gives a chain-level stabilization map $$t_{z}\colon C_{*}(\text{Conf}(D^{n});\fieldc)\to  C_{*}(\text{Conf}(D^{n});\fieldc)$$ of $C_{*}(\text{Conf}(S^{n-1}\times \mathbb{R});\fieldc)$-modules: the Browder bracket of the homology class $z$ and the class of a point $e\in H_{0}(\text{Conf}_{1}(D^{n});\fieldc)$ needs to be zero. This condition on the Browder bracket was key to the proofs of stability patterns in 
\textcite{MR3032101} and 
\textcite{MR3556286}, and our construction can be viewed as a categorification of their work.

Using this presentation of $C_{*}(\text{Conf}(D^{n});\fieldc)$, we obtain a stabilization map $$C_{*}(\text{Conf}(D^{n});\fieldc)\to  C_{*}(\text{Conf}(D^{n});\fieldc)$$ which is also a map of $C_{*}(\text{Conf}(S^{n-1}\times \mathbb{R});\fieldc)$-modules. Such a stabilization map 
induces a well-defined chain map $$ C_{*}\big(\text{Conf}(M \setminus\bar{D}^{n}) \bigotimes^{\mathbb{L}}_{\text{Conf}(S^{n-1}\times \mathbb{R})} \text{Conf}(D^{n});\fieldc\big ) \to  C_{*}\big(\text{Conf}(M \setminus\bar{D}^{n}) \bigotimes^{\mathbb{L}}_{\text{Conf}(S^{n-1}\times \mathbb{R})} \text{Conf}(D^{n});\fieldc\big).$$ This chain map on the derived tensor product gives a stabilization map $$C_{*}(\text{Conf}(M);\fieldc)\to C_{*}(\text{Conf}(M);\fieldc)$$ since $\text{Conf}(M \setminus\bar{D}^{n}) \bigotimes^{\mathbb{L}}_{\text{Conf}(S^{n-1}\times \mathbb{R})} \text{Conf}(D^{n})$ is weakly equivalent to $\text{Conf}(M)$. Then, we can specialize to construct stabilization maps  $$ C_{*}(\text{Conf}_{\partic}(M);\mathbb{F}_{p} )\to  C_{*}(\text{Conf}_{\partic+p}(M);\mathbb{F}_{p} )$$ when $n$ is even and $p$ is odd, and $$C_{*}(\text{Conf}_{\partic}(M);\mathbb{F}_{p} )\to  C_{*}(\text{Conf}_{\partic+1}(M);\mathbb{F}_{p} )$$ when $n$ is odd or when $p=2$.
Moreover, the same ideas allow for the construction of secondary and higher stabilization maps.

\subsection{Outline}
In \cref{sec:base case}, we review homology operations over an $E_{n}$-algebra and use Cohen--Lada--May's calculation of $H_{*}(\text{Conf}(\mathbb{R}^{n});\mathbb{F}_{p})$ to get bounds for homological stability and secondary homological stability. In \cref{sec:stability for open}, we review homological stability and prove secondary homological stability for homology with $\mathbb{F}_{p}$ coefficients of the configuration spaces of an open manifold. 
In \cref{sec:various models of conf}, we construct semi-simplicial models for $\text{Conf}(M)$ and $\text{Conf}(D^{n})$.
We construct new
stabilization maps for the configuration spaces of manifolds in \cref{sec:constructing stab maps} and prove periodic secondary homological stability in \cref{sec:puncture resolution}.
\subsection{Acknowledgments}
I am grateful to Jeremy Miller for his guidance and feedback. I thank Alexander Kupers for suggestions on how to construct the stabilization map for the configuration space of a closed manifold. I thank Andrea Bianchi for suggestions on how to compare different stabilization maps for the configuration space of a cylinder in \cref{stab maps on conf(cyl)} and for
describing when classes in $H_{*}(\text{Conf}_{\partic}(\mathbb{R}^{n});\mathbb{F}_{p})$ lift to classes in $H_{*}(\text{Conf}_{\partic}(\mathbb{R}^{n});\mathbb{Z})$. I thank Oscar Randal-Williams for suggestions on how to simplify the proofs of \cref{map from bar construction to conf is a weak equivalence} and for other helpful conversations. I also would like to thank Nicholas Wawrykow and Jennifer Wilson for feedback on an early draft of this paper. I thank the anonymous referee for their detailed feedback and for their suggestion on how to simplify the proof of \cref{res is weakly equivalent to Conf(Disk)}.
\section{Homology Operations and Stability for \texorpdfstring{$\text{Conf}(\mathbb{R}^{n})$}{the Configuration Space of a Disk} }\label{sec:base case}
We begin this section by reviewing homology operations and some of their properties for the homology of an algebra over the little $n$-disk operad $E_{n}$ (in this paper, $E_{n}$ means just the little $n$-disk operad, instead of an operad homotopy equivalent to the little $n$-disk operad). Pick a homeomorphism $\mathbb{R}^{n}\to D^{n}$.
Since the unordered configuration space of points in $\mathbb{R}^{n}$, $$\text{Conf}(\mathbb{R}^{n})\colonequals \bigsqcup_{\partic\geq 0}\text{Conf}_{\partic}(\mathbb{R}^{n}),$$ is an example of an $E_{n}$-algebra, there are homology operations and these homology operations are important in Cohen--Lada--May's calculation of $H_{*}(\text{Conf}(\mathbb{R}^{n});\mathbb{F}_{p})$.
We use this calculation to obtain the stable ranges for homological stability and secondary homological stability for $\text{Conf}(\mathbb{R}^{n})$. In \cref{sec:stability for open} and \cref{sec:puncture resolution}, we use these stable ranges to prove stability results for the configuration space of a manifold.

For simplicity, suppose that a space $A=\bigsqcup_{\partic\geq 0}A_{\partic}$ is an $E_{n}$-algebra. We think of $A$ as being graded over $\mathbb{N}$.
Since $A$ is an  $E_{n}$-algebra, it has the following operations (for more information about these operations, see \textcite[p.~213--219]{MR0436146} and \textcite[p.~158--164]{galatius2018cellular}).
\hfill\begin{enumerate}
    \item For chains with coefficients in a ring $\fieldc$: a multiplication map
        \begin{align*}
        -\bullet - \colon     C_{q}(A_{j};\fieldc)\otimes C_{r}(A_{\partic};\fieldc)\to  C_{q+r}(A_{j+\partic};\fieldc).
        \end{align*}
        \item For chains with coefficients in a ring $\fieldc$: a Browder bracket  
        \begin{align*}
            \browd{-}{-} \colon C_{q}(A_{j};\fieldc) \otimes C_{r}(A_{\partic};\fieldc) \to  C_{q+r+n-1}(A_{j+\partic};\fieldc).
        \end{align*} 
        \item For homology with $\mathbb{F}_{p}$ coefficients: when $p$ is odd and $q+n-1$ is even, there are ``top" homology operations
                $$\xi\colon H_{q}(A_{\partic};\mathbb{F}_{p})\to  H_{pq+(n-1)(p-1)}(A_{p\partic};\mathbb{F}_{p}),$$
                $$\beta\xi\colon H_{q}(A_{\partic};\mathbb{F}_{p})\to  H_{pq+(n-1)(p-1)-1}(A_{p\partic};\mathbb{F}_{p}),$$
        and an associated operation
        $$ \zeta\colon H_{q}(A_{\partic};\mathbb{F}_{p})\to  H_{pq+(n-1)(p-1)-1}(A_{p\partic};\mathbb{F}_{p}).$$
        When $p$ is $2$, the top homology operation $\xi$ is defined for all $q$.
        \item\label{Dyer-Lashof operations} For homology with $\mathbb{F}_{p}$ coefficients:
        when $p$ is odd, there are Dyer-Lashof operations
            \begin{align*}
               Q^{s}\colon H_{q}(A_{\partic};\mathbb{F}_{p})\to  H_{q+2s(p-1)}(A_{p\partic};\mathbb{F}_{p})\\
                \text{defined for }s\geq 0 \text{ with } 2s-q < n-1, \text{ and}
            \end{align*}
            \begin{align*}
               \beta Q^{s}\colon H_{q}(A_{\partic};\mathbb{F}_{p})\to  H_{q+2s(p-1)-1}(A_{p\partic};\mathbb{F}_{p})\\
                \text{defined for }s\geq 0 \text{ with } 2s-q < n-1.
            \end{align*} 
        When $p$ is $2$, there are Dyer-Lashof operations 
        \begin{align*}
               Q^{s}\colon H_{q}(A_{\partic};\mathbb{F}_{2})\to  H_{q+s}(A_{2\partic};\mathbb{F}_{2})\\
                \text{defined for }s\geq 0 \text{ with } s-q < n-1.
            \end{align*}
        When $p=2$ and $s< q$, the Dyer-Lashof operation $Q^{s}$ is the zero map, and when $p=2$ and $s=q$, the Dyer-Lashof operation $Q^{s}$ is the squaring map $Q^{s}(x)=x^{2}$ (when $p$ is an odd prime number, a similar description of the Dyer-Lashof operations holds for $s\leq 2q$ instead of $s\leq q$).
\end{enumerate}
Since we are working with spaces, the operations $\beta \xi$ and $\beta Q^{s}$ are the top homology operation $\xi$ and the Dyer-Lashof operations $Q^{s}$ composed with the Bockstein homomorphism respectively (see \textcite[Remark 16.2]{galatius2018cellular}).

We write $xy$ for the multiplication $x \bullet y$.
Now we explicitly define the multiplication map and Browder bracket on $C_{*}(\text{Conf}(\mathbb{R}^{n});\fieldc)$ and list some of the properties of the Browder bracket because we need these facts 
to construct a chain-level stabilization map for the configuration space of a closed manifold (see \cref{p-power stabilization} and \cref{homology classes which give a stabilization map}).
\begin{definition}[The multiplication map and the Browder bracket]\label{definition of multiplication and browder maps}
Recall that $E_{n}(2)$ is the space of embeddings $f\colon D^{n}\sqcup D^{n}\to  D^{n}$ such that the restriction of $f$ to each component of $D^{n}\sqcup D^{n}$ is obtained by compositions of dilations and translations. Such an embedding induces a map on configuration spaces $\text{Conf}(D^{n})\times \text{Conf}(D^{n})\to  \text{Conf}(D^{n})$. Therefore, we have a map 
$$\theta \colon E_{n}(2)\times \text{Conf}(D^{n})\times \text{Conf}(D^{n})\to  \text{Conf}(D^{n}).$$
Since $E_{n}(2)$ is homotopy equivalent to $S^{n-1}$, by applying the Eilenberg--Zilber map, the map $\theta$ induces a map on chain complexes
$$\theta_{*}\colon C_{*}(S^{n-1};\fieldc)\otimes C_{*}(\text{Conf}(D^{n});\fieldc)\otimes C_{*}(\text{Conf}(D^{n});\fieldc)\to  C_{*}(\text{Conf}(D^{n});\fieldc). $$
For $n\geq 2$, there are generators $\sigma_{0}\in H_{0}(S^{n-1};\fieldc)$ and $\sigma_{n-1}\in H_{n-1}(S^{n-1};\fieldc)$, which come from choosing a point of $S^{n-1}$ and an orientation of $S^{n-1}$ respectively.
The \textbf{multiplication map} $x \bullet y$ of $x$ and $y$ is the chain $$x \bullet y\colonequals  \theta_{*}(\sigma_{0}\otimes x\otimes y).$$
The \textbf{Browder bracket} $\browd{x}{y}$ of $x$ and $y$ is the chain $$\browd{x}{y}\colonequals (-1)^{\text{deg}(x)(n-1)+1} \theta_{*}(\sigma_{n-1}\otimes x\otimes y).$$
\end{definition}
\begin{remark}\label{geometric description of Browder bracket}
The multiplication map $x \bullet y$ and the Browder bracket $\browd{x}{y}$ of $x$ and $y$ in $C_{*}(\text{Conf}(D^{n});\fieldc)$ have the following geometric descriptions: the multiplication of $x$ and $y$ is the chain obtained by inserting $x$ and $y$ in a disk and the Browder bracket of $x$ and $y$ is the chain obtained by rotating $y$ around $x$ about $S^{n-1}$.
\begin{figure}[htb]
    \centering
    \subfloat{\includegraphics{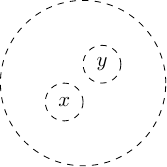}\label{fig:product map}}
    \hspace{2.5cm}
    \subfloat{\includegraphics{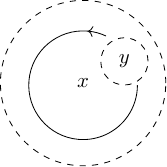}\label{fig:Browder bracket with a point}}
  \caption{The multiplication map $x\bullet y$ and the Browder bracket $\browd{x}{y}$ on $C_{*}(\text{Conf}(D^{n});\fieldc)$}
\end{figure}
\end{remark}
At the level of homology, the Browder bracket has the following properties (see \textcite[p.~215--218]{MR0436146} and \textcite[p.~176--181]{galatius2018cellular}).
\hfill\begin{enumerate}
    \item\label{Browder brack is linear} The Browder bracket is linear in both entries.
    \item\label{Browder bracket symmetric up to sign} The Browder bracket is symmetric up to a sign:
    $$\browd{x}{y}= (-1)^{\text{1+deg}(x)\text{deg}(y)+(n-1)(\text{deg}(x)+\text{deg}(y)+1)} \browd{y}{x}.$$
    \item\label{Jacobi identity of Browder bracket} The Browder bracket satisfies the Jacobi identity up to sign:
    
    \begin{align*}
        0&=(-1)^{(\text{deg}(x)+n-1)(\text{deg}(z)+n-1)}\browd{x}{\browd{y}{z}}+ (-1)^{(\text{deg}(y)+n-1)(\text{deg}(x)+n-1)}\browd{y}{\browd{z}{x}}\\
        & +(-1)^{(\text{deg}(z)+n-1)(\text{deg}(y)+n-1)}\browd{z}{\browd{x}{y}}.
    \end{align*}
    The Browder bracket satisfies $\browd{x}{x}=0$ for homology with $\mathbb{F}_{2}$ coefficients
    and it also satisfies $\browd{x}{\browd{x}{x}}=0$.
    \item\label{derivation property of Browder bracket} The Browder bracket is a derivation with respect to multiplication:
    $$\browd{x}{yz}=\browd{x}{y}z + (-1)^{\text{deg}(y)(\text{deg}(x)+n-1)} y\browd{x}{z}.$$
    \item\label{Browder bracket with unit} The Browder bracket with the unit vanishes: $\browd{1}{x}=0$ if $1\in H_{0}(A_{0};\fieldc)$ is the identity element.
    \item\label{Browder bracket with Dyer-Lashof} The Browder bracket of a Dyer-Lashof operation vanishes: $\browd{x}{Q^{s}y}=0$.
    \item\label{Bockstein applied to Browder bracket} $\beta \browd{x}{y}=\browd{\beta x}{y}+(-1)^{n-1+\text{deg}(x)}\browd{x}{\beta y}$.
    \item\label{iterated Browder bracket} Let $\text{ad}(x)(y)\colonequals \browd{x}{y}$ and for $i>1$, let $\text{ad}^{i}(x)(y)\colonequals \text{ad}(x)\big (\text{ad}^{i-1}(x)(y)\big )$. The homology operation $\zeta (-) $ can be defined as the map $\beta\xi(-)-\text{ad}^{p-1}(-)(\beta (-) )$. 
    We have that $$\browd{x}{\zeta y}=0 \text{ and } \browd{x}{\xi y}=\text{ad}^{p}(y)(x).$$
    When $p=2$, we have that $\browd{x}{\xi y}=\browd{y}{\browd{y}{x}}. $
\end{enumerate}

From these operations, Cohen--Lada--May calculated $H_{*}(\text{Conf}(\mathbb{R}^{n});\mathbb{F}_{p})$. Before introducing this calculation, we now describe a generalization of $\text{Conf}(M)$ and a functor which assigns to a based space $(X,*)$ an $E_{n}$-algebra $\mathbf{E_{n}}(X)$ because we will need them to study $H_{*}(\text{Conf}(S^{n-1}\times (0,\infty));\fieldc)$ for $\fieldc= \mathbb{F}_{p}, \mathbb{Q}$ in \cref{different models of conf(cyl) are equivalent}.
\begin{definition}
We use the notation $\theta$ to denote the composition map $\theta\colon E_{n}(\partic)\times E_{n}(j_{1})\times \cdots \times E_{n}(j_{\partic})\to E_{n}(j)$, with $j=\sum_{i=1}^{\partic}j_{i}$, of the operad $E_{n}$ and we use the notation $*$ to denote the point $E_{n}(0)$ (recall that $E_{n}(0)$ consists of the unique ``embedding'' of the empty set into $D^{n}$).
Let $\text{Top}_{*}$ denote the category of based topological spaces. Let $\mathbf{E_{n}}\colon \text{Top}_{*}\to \text{Top}_{*}$ be the functor defined by sending $(X, *)$ to
$$\mathbf{E_{n}}(X)\colonequals\bigsqcup_{\partic=0}^{\infty}E_{n}(\partic) \times_{\Sigma_{\partic}} X^{\partic}/\sim,$$ where for $(x_{1},\ldots,x_{\partic})\in X^{\partic}$ and $c\in E_{n}(\partic)$,
$$(\theta(c, (\text{id}_{D^{n}}^{i},*,\text{id}_{D^{n}}^{\partic-i-1})), (x_{1},\ldots,x_{\partic}))\sim (c, (x_{1},\ldots,x_{i},*,x_{i+1},\ldots, x_{\partic-1})).$$
\end{definition}
The space $\mathbf{E_{n}}(X)$ can be thought of as a model for the following generalization of $\text{Conf}(M)$ when $M$ is $D^{n}$.
\begin{definition}\label{conf with labels}
Let $M$ be a space and let $(X, x')$ be a based space.
The \textbf{configuration space of M with labels in X}, denoted by $\text{Conf}(M; X)$, is 
the quotient space
$$\bigg(\coprod_{\partic\geq 0} \big(\{(m_1 ,\ldots , m_{\partic })\in M^{\partic} :  m_i \neq m_j \text{ if } i\neq j \} \times X^{\partic}\big)\bigg)/\sim,$$
where $[(m_{1},\ldots,m_{\partic}), (x_{1},\ldots, x_{\partic})]$ is identified $[(m_{\sigma(1)},\ldots,m_{\sigma(\partic)}), (x_{\sigma(1)},\ldots, x_{\sigma(\partic)})]$ for $\sigma\in \Sigma_{\partic}$ and it is also identified with $ [(m_{1},\ldots, \hat{m}_{i}, \ldots, m_{\partic}), (x_{1},\ldots, \hat{x}_{i}, \ldots, x_{\partic})]$ if $x_{i}=x'$.
\end{definition}
Roughly speaking, a point $m_{i}x_{i}$ vanishes if $x_{i}$ converges to the basepoint $x'$ of $X$. A labelled configuration space can be thought of as a generalization of $\text{Conf}(M)$ since $\text{Conf}(M; S^{0})=\text{Conf}(M)$.
The map $E_{n}(\partic)/\Sigma_{\partic}\to \text{Conf}_{\partic}(D^{n})$ that records where an embedding $f\colon \sqcup_{i=0}^{\partic}D^{n}_{i}\to D^{n}$ sends the center of each disk $D^{n}_{i}$ induces a homotopy equivalence $(E_{n}(\partic) \times_{\Sigma_{\partic}} X^{\partic}/\sim) \to \text{Conf}_{\partic}(D^{n}; X)$ and so $\mathbf{E}_{\mathbf{n}}(S^{0})$ is homotopy equivalent to $\text{Conf}(D^{n})$.

\textcite[Theorem 3.1][p.~225, 227]{MR0436146} fully calculate $H_{*}(\mathbf{E_{n}}(X);\fieldc)$ for $\fieldc=\mathbb{F}_{p}, \mathbb{Q}$, which we now summarize.
\begin{definition}
Let $V$ be a vector space.
An element $v\in V$ is a \textbf{Lie word of weight 1}.
Assume that Lie words of weight $i$ have been defined for $i<j$. Then a \textbf{Lie word of weight j} is any $\browd{a}{b}$, where $a$ and $b$ are Lie words such that $\text{weight}(a)+\text{weight}(b)=j$.
\end{definition}

Instead of giving a full description of $H_{*}(\mathbf{E_{n}}(X);\fieldc)$ for $\fieldc=\mathbb{F}_{p},\mathbb{Q}$, the following summary will suffice for our purposes. We will give a more precise description of $H_{*}(\mathbf{E_{n}}(S^{0});\fieldc)=H_{*}(\text{Conf}(D^{n});\fieldc)$  when needed--see \cref{thm-Co} and \cref{description of homology of Conf of disk in dim greater than 2}.
\begin{theorem}[{\textcite[Theorem 3.1][p.~225--227]{MR0436146}}, {\textcite[Theorem 16.4]{galatius2018cellular}}]\label{homology of free En algebra}
Let $(X,*)$ be a based space and suppose that $\fieldc$ is either $\mathbb{F}_{p}$ or $\mathbb{Q}$.
Let $L$ denote the set of Lie words of $\tilde{H}_{*}(X; \fieldc)$.
\begin{enumerate}
    \item Suppose that $\fieldc=\mathbb{F}_{p}$. Let $S$ be the set of formal symbols constructed by iterated formal applications of $Q^{s}, \beta Q^{s}, \xi, \beta\xi,$ and $\zeta$ to elements of $L$. We also allow zero applications of these operations to $L$. There is a subset $G_{n}\subset S$ such that $H_{*}(\mathbf{E_{n}}(X);\mathbb{F}_{p})$ is isomorphic as a ring to the free graded commutative algebra on $G_{n}$.
    \item Suppose that $\fieldc=\mathbb{Q}$. Then $H_{*}(\mathbf{E_{n}}(X);\mathbb{Q})$ is isomorphic to free commutative algebra on $L$ modulo the relations in Properties \ref{Browder brack is linear}-\ref{Browder bracket with unit} of the Browder bracket.
\end{enumerate}

\end{theorem}
We will need the following lemma concerning the set $L$ in \cref{homology of free En algebra} when $X=S^{0}$--see \cref{different models of conf(cyl) are equivalent}.
\begin{lemma}\label{Lie word lemma}
Suppose that $X$ is $S^{0}$, $\fieldc$ is either $\mathbb{F}_{p}$ or $\mathbb{Q}$, and $e\in\tilde{H}_{0}(S^{0};\fieldc)$ is the class of a point. Then we can take the set $L$ in \cref{homology of free En algebra} to be $\{e, \browd{e}{e}\}$.
\end{lemma}
\begin{proof}
By \cref{homology of free En algebra}, elements in $L$ are homology classes in $H_{*}(\mathbf{E_{n}}(S^{0});\fieldc)$. It suffices to show that if $w\in L$ is a Lie word of weight $j>2$, then $w=0$ in $H_{*}(\mathbf{E_{n}}(S^{0});\fieldc)$. In $H_{*}(\mathbf{E_{n}}(S^{0});\fieldc)$, the only Lie word of weight $3$ is $\browd{e}{\browd{e}{e}}$ (since the Browder bracket is symmetric up to a choice of sign by Property \ref{Browder bracket symmetric up to sign}) and this class is zero (again by Property \ref{Browder bracket symmetric up to sign}). Up to sign, the only Lie words of weight $4$ are of the form $\browd{e}{\browd{e}{\browd{e}{e}}}$ and $\browd{\browd{e}{e}}{\browd{e}{e}}$. We have that $\browd{e}{\browd{e}{\browd{e}{e}}}$ is zero in $H_{*}(\mathbf{E_{n}}(S^{0});\fieldc)$, since $\browd{e}{\browd{e}{e}}=0$. By the Jacobi identity of the Browder bracket (Property \ref{Jacobi identity of Browder bracket}), 
$$0=\browd{\browd{e}{e}}{\browd{e}{e}}+\browd{e}{\browd{e}{\browd{e}{e}}}+ \browd{e}{\browd{\browd{e}{e}}{e}}.$$
Since $\browd{e}{\browd{\browd{e}{e}}{e}}=0$ in $H_{*}(\mathbf{E_{n}}(S^{0});\fieldc)$, we have that $\browd{\browd{e}{e}}{\browd{e}{e}}$ is zero as well. Suppose by induction that for $j\geq 5$ and for any Lie word $v$ with $3\leq \text{weight}(v)\leq j$,
we have that $v=0$ in $H_{*}(\mathbf{E_{n}}(S^{0});\fieldc)$. If $w\in L$ is a Lie word of weight $j+1$, then by definition $w=\browd{a}{b}$, where $\text{weight}(a)+\text{weight}(a)=j+1$. Since $j\geq 5$, we have that either $3\leq \text{weight}(a)\leq j$ or $3\leq \text{weight}(b)\leq j$. Since the set of all Lie words of weight $3,\ldots, j$ are zero in $H_{*}(\mathbf{E_{n}}(S^{0});\fieldc)$, either $a$ or $b$ is zero in $H_{*}(\mathbf{E_{n}}(S^{0});\fieldc)$, and so $w$ is as well.
\end{proof}


From \cref{homology of free En algebra}, Cohen--Lada--May calculated $H_{*}(\text{Conf}(\mathbb{R}^{n});\mathbb{F}_{p})$.
When $n>2$, $H_{*}(\text{Conf}(\mathbb{R}^{n});\mathbb{F}_{p})$ has a somewhat complicated description which we omit because we will only need some properties of $H_{*}(\text{Conf}(\mathbb{R}^{n});\mathbb{F}_{2})$ later on in \cref{sec stab in dim n greater than 2} (see \cref{description of homology of Conf of disk in dim greater than 2}). We have the following explicit computation of $H_{*}(\text{Conf}(\mathbb{R}^{2});\mathbb{F}_{p})$.
\begin{theorem}[{\textcite[p.~347--348]{MR0436146}}]\label{thm-Co}Let $e\in H_{0}(\text{Conf}_{1}(\mathbb{R}^{2}))$ denote the class of a point.
\hfill\begin{enumerate}
    \item 
    For $j>0$, let $(a,b)=(2^{j}-1,2^{j})$ and let $x_{j}$ denote the class $\xi\cdots \xi e\in H_{a}(\text{Conf}_{b}(\mathbb{R}^{2});\mathbb{F}_{2})$, where $\xi$ is applied $j$ times. We have that $$H_{*}(\text{Conf}(\mathbb{R}^{2});\mathbb{F}_{2})\cong \mathbb{F}_{2}\left [e, x_{1}, \ldots \right ].$$
    \item Let $p$ be an odd prime. 
    Let $z_{0} \in H_{1}(\text{Conf}_{2}(\mathbb{R}^{2});\mathbb{F}_{p})$ denote the class $\browd{e}{e}$.  For $j>0$, let $(c,d)=(2p^{j}-1,2p^{j})$ and let $y_{j}\colonequals  \beta \xi \cdots \xi z_{0}\in H_{c-1}(\text{Conf}_{d}(\mathbb{R}^{2});\mathbb{F}_{p})$ and let $z_{j}$ denote the class $\xi \cdots \xi z_{0}\in H_{c}(\text{Conf}_{d}(\mathbb{R}^{2});\mathbb{F}_{p})$, where $\xi$ is applied $j$ times.
    Let $\Lambda_{\mathbb{F}_{p}}\left [z_{0},z_{1},\ldots \right ]$ denote the exterior algebra generated by the classes $z_{j}$. We have that 
    $$H_{*}(\text{Conf}(\mathbb{R}^{2} );\mathbb{F}_p)\cong   \mathbb{F}_{p}\left [e, y_{1}, \ldots \right ]\otimes\Lambda_{\mathbb{F}_p}\left [z_{0}, z_{1}, \ldots \right ].  $$
%
\end{enumerate}

\end{theorem}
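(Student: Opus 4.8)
The plan is to recognize $\text{Conf}(\mathbb{R}^2)=\bigsqcup_k\text{Conf}_k(\mathbb{R}^2)$ as the free $E_2$-algebra on a single point and then read off its homology from Cohen's structure theorem for the homology of a free $E_n$-algebra. First I would note that the free $E_2$-algebra on a space $X$ is $\bigsqcup_k E_2(k)\times_{\Sigma_k}X^k$; taking $X$ a point gives $\bigsqcup_k E_2(k)/\Sigma_k$, and since $E_2(k)$ is $\Sigma_k$-equivariantly homotopy equivalent to the ordered configuration space of $k$ points in $\mathbb{R}^2$, this is homotopy equivalent to $\bigsqcup_k\text{Conf}_k(\mathbb{R}^2)$, with the particle-number grading matching the arity grading and the class $e\in H_0(\text{Conf}_1(\mathbb{R}^2))$ the generator. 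So $H_*(\text{Conf}(\mathbb{R}^2);\mathbb{F}_p)$ is the homology of the free $E_2$-algebra on $e$, and I would invoke the Cohen--Lada--May computation: the homology of the free $E_n$-algebra on a graded $\mathbb{F}_p$-module $V$ is the free object generated by $V$ among graded-commutative algebras carrying a Browder bracket of degree $n-1$ (satisfying Jacobi, the derivation property, and the vanishing conditions listed after \cref{geometric description of Browder bracket}), the restricted Dyer--Lashof operations (Adem relations, instability $2s-q<n-1$), and the top operations $\xi$, $\beta\xi$.

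Then I would compute this free object on the single degree-$0$ generator $e$ with $n=2$. Under the product alone a single generator gives a free graded-commutative algebra. The Browder bracket has degree $1$: at $p=2$ one has $[e,e]=0$; at odd $p$ the class $z_0\colonequals[e,e]\in H_1(\text{Conf}_2(\mathbb{R}^2))$ is nonzero of odd degree, while $[e,z_0]=[e,[e,e]]=0$ and $[z_0,z_0]=0$ by the symmetry relation, so the Lie part is exhausted by $z_0$. For $n=2$ there is a single genuinely new Dyer--Lashof-type operation, the top operation $\xi$ (defined for all degrees at $p=2$, and for odd degrees at odd $p$; the $Q^s$ in the instability range reduce to $p$-th powers or $0$). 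Iterating it at $p=2$ produces $x_j=\xi^j e\in H_{2^j-1}(\text{Conf}_{2^j}(\mathbb{R}^2))$; at odd $p$, starting from the odd class $z_0$, it produces $z_j=\xi^j z_0\in H_{2p^j-1}(\text{Conf}_{2p^j}(\mathbb{R}^2))$ together with $y_j=\beta z_j\in H_{2p^j-2}(\text{Conf}_{2p^j}(\mathbb{R}^2))$ — the $z_j$ remaining odd and the $y_j$ even, so $\xi$ cannot be re-applied to the $y_j$ or to $e$. Checking the Adem relations and the mixed identities $[x,\xi y]=\operatorname{ad}^p(y)(x)$, $[x,\xi y]=[y,[y,x]]$ (for $p=2$), $[x,Q^s y]=0$ shows they impose nothing new: on a single generator every would-be relation is already $0$ or a monomial in these classes. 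Assembling, the even generators (namely $e,x_1,\dots$ at $p=2$, and $e,y_1,y_2,\dots$ at odd $p$) contribute a polynomial algebra and the odd generators ($z_0,z_1,\dots$, present only at odd $p$) an exterior algebra, which gives $H_*(\text{Conf}(\mathbb{R}^2);\mathbb{F}_2)\cong\mathbb{F}_2[e,x_1,\dots]$ and $H_*(\text{Conf}(\mathbb{R}^2);\mathbb{F}_p)\cong\mathbb{F}_p[e,y_1,\dots]\otimes\Lambda_{\mathbb{F}_p}[z_0,z_1,\dots]$.

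The main obstacle is \emph{freeness} — verifying that the listed classes are algebraically independent and that no Adem, Jacobi, or mixed bracket--operation identity cuts the free object down. The cleanest way to settle this is a rank count in each weight $k$: compare the Poincaré series of the claimed algebra in weight $k$ with the homology of the braid group $\text{Conf}_k(\mathbb{R}^2)=K(\beta_k,1)$ (Fuks at $p=2$, Cohen/Vainshtein at odd $p$), or equivalently use the group-completion equivalence $\text{Conf}(\mathbb{R}^2)\to\Omega^2 S^2$ and the stable splitting of $\Sigma^\infty_+\Omega^2S^2$ to match dimensions. This is precisely the bookkeeping carried out in Cohen--Lada--May, so in the paper I would simply cite their computation; the structure above is the conceptual skeleton behind it.
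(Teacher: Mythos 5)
The paper offers no proof of this statement at all: it is quoted verbatim as a theorem of Cohen--Lada--May, and the only "argument" in the text is the citation. Your sketch is a correct reconstruction of the standard argument behind that citation — identifying $\text{Conf}(\mathbb{R}^{2})$ with the free $E_{2}$-algebra on a point, applying the structure theorem for the homology of free $E_{n}$-algebras, observing that for $n=2$ the only nontrivial operations on a single degree-zero generator are the product, the bracket class $\browd{e}{e}$ at odd $p$, and iterates of $\xi$ (with $\beta$), and settling freeness by the rank count against $H_{*}(\Omega^{2}S^{2})$ or the braid groups — and you rightly conclude that in this paper one would simply cite Cohen--Lada--May, which is exactly what the paper does.
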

\begin{definition}\label{multiplication by class map when manifold is Rn}
Given a class $x\in H_{a}(\text{Conf}_{b}(\mathbb{R}^{n});\fieldc)$,
let $\text{deg}(x)\colonequals a$ denote the homological degree of $x$ and let $\text{par}(x)\colonequals b$ denote the number of particles of $x$. We introduce this notation to keep track of how homology operations interact with the homological degree and the number of particles of a homology class. Fix a representative cycle $x'\in C_{a}(\text{Conf}_{b}(\mathbb{R}^{n});\fieldc)$ of $x$ and define
\begin{align*}
    t_{x}\colon C_{i}(\text{Conf}_{\partic}(\mathbb{R}^{n});\fieldc)&\to  C_{i+\text{deg}(x)}(\text{Conf}_{\partic+\text{par}(x)}(\mathbb{R}^{n});\fieldc)\\
    \omega &\mapsto \omega x'
\end{align*}
to be the ``multiplication by $x$'' map.
By abuse of notation, let $t_{x}$ also denote the induced map on homology. 
\end{definition}
Now we fix some notation for an iterated mapping cone to formulate higher-order homological stability.
\begin{definition}\label{iterated mapping cone def}
Let $t_{i}\colon C_{*}(\text{Conf}_{\partic}(M);\fieldc)\to C_{*}(\text{Conf}_{\partic+\partic_{i}}(M);\fieldc)$ for $i=1,\ldots,\srange$ be chain maps that all commute with each other.
%
%
%
Recursively define the \textbf{\srange-th iterated mapping cone} $\pariteratedmap{M}{t_{i}}{\srange}{\partic}$ associated to the $\srange$-tuple $(t_{1},\ldots , t_{\srange})$ as follows:
set $\pariteratedmap{M}{t_{i}}{1}{\partic}$ to be the mapping cone $$\pariteratedmap{M}{t_{i}}{1}{\partic}\colonequals\text{Cone}(t_{1}\colon C_{*}(\text{Conf}_{\partic-k_{1}}(M);\fieldc)\to C_{*}(\text{Conf}_{\partic}(M);\fieldc)),$$ and for $\srange>1$, set $\pariteratedmap{M}{t_{i}}{\srange}{\partic}$  to be the induced mapping cone
$$\text{Cone }\big (t_{\srange}\colon\pariteratedmap{M}{t_{1},\ldots ,t_{\srange-1}}{\srange-1}{\partic-\partic_{\srange}}\to \pariteratedmap{M}{t_{1},\ldots ,t_{\srange-1}}{\srange-1}{\partic}\big ).$$
Let $\iteratedmap{M}{t_{i}}{\srange}$ denote $\bigsqcup_{\partic=0}^{\infty}\pariteratedmap{M}{t_{i}}{\srange}{\partic}$.
\end{definition}
Suppose that $M$ is an open connected manifold of dimension $n$ and that we have an embedding 
$\embed\colon M\bigsqcup_{j=1}^{\srange} \mathbb{R}^{n}\to M$ as in \cref{stabilization by embedding}.
Given $\omega_{1},\ldots,\omega_{\srange}\in H_{*}(\text{Conf}(\mathbb{R}^{n});\fieldc)$, we describe how to construct stabilization maps $t_{\omega_{j}}\colon C_{*}(\text{Conf}(M);\fieldc)\to C_{*}(\text{Conf}(M);\fieldc)$ for $j=1,\ldots,\srange$ that
commute with each other.
For simplicity, we describe the construction when $\srange=2$ (the construction of the mapping cone is similar when $\srange>2$).

Fix an embedding $\embed\colon M\bigsqcup_{j=1}^{2} \mathbb{R}^{n}_{j}\to M$ as in \cref{stabilization by embedding}. Let $\embed_{j}\colon M\sqcup \mathbb{R}^{n}_{j}\to M$ denote the restriction of $\embed$ to $M\sqcup D^{n}_{j}$ and let $\text{Conf}(\embed_{j})\colon  \text{Conf}(M)\times  \text{Conf}(\mathbb{R}^{n}_{j})\to  \text{Conf}(M)$ denote the induced map on configuration spaces by applying $\embed_{j}$ pointwise. The construction of $\embed$ (by a generalization of the proof of \textcite[Lemma 2.4]{MR3344444}) comes from picking two disjoint rays $\rho_{j}\cong [0,\infty)\subset M$ for $j=1,2$, taking a tubular neighborhood $N_{j}$ of $\rho_{j}$ such that $N_{1}$ and $N_{2}$ are disjoint, and inserting $\mathbb{R}^{n}_{j}$ inside $N_{j}$. Outside of $N_{1}\cup N_{2}$, the map $\embed$ is the identity. As a result, by picking the tubular neighborhoods $N_{1}$ and $N_{2}$ to be disjoint, we ensure that
the following diagram commutes.

\begin{center}\begin{tikzcd}
M\sqcup \mathbb{R}_{1}^{n}\sqcup \mathbb{R}_{2}^{n}
\arrow[r, "\embed_{1}"]\arrow[d, "\embed_{2}"] &
M\sqcup \mathbb{R}_{2}^{n}
\arrow[d, "\embed_{2}"]\\
M\sqcup \mathbb{R}_{1}^{n}
\arrow[r,"\embed_{1}"]& 
M
\end{tikzcd}\end{center}
Therefore, we have a commutative diagram
\begin{center}\begin{tikzcd}
\text{Conf}(M)\times \text{Conf}(\mathbb{R}_{1}^{n})\times \text{Conf}(\mathbb{R}_{2}^{n})
\arrow[r, "\text{Conf}(\embed_{1})"]\arrow[d, "\text{Conf}(\embed_{2})"] &
\text{Conf}(M)\times \text{Conf}(\mathbb{R}_{2}^{n})
\arrow[d, "\text{Conf}(\embed_{2})"]\\
\text{Conf}(M)\times \text{Conf}(\mathbb{R}_{1}^{n})
\arrow[r,"\text{Conf}(\embed_{1})"]& 
\text{Conf}(M).
\end{tikzcd}\end{center}

From the map $\text{Conf}(\embed_{j})$ and applying the Eilenberg--Zilber map,
we have an induced map on chain complexes 
$$\text{Conf}(\embed_{j})_{*}\colon C_{*}(\text{Conf}(M);\fieldc)\otimes C_{*}(\text{Conf}(\mathbb{R}^{n}_{j});\fieldc)\to C_{*}(\text{Conf}(M);\fieldc).$$ 
As a result, we have another commutative diagram 
\begin{center}\begin{tikzcd}
C_{*}(\text{Conf}(M);\fieldc)\otimes C_{*}(\text{Conf}(\mathbb{R}_{1}^{n});\fieldc)\otimes C_{*}( \text{Conf}(\mathbb{R}_{2}^{n});\fieldc)
\arrow[r, "\text{Conf}(\embed_{1})_{*}"]\arrow[d, "\text{Conf}(\embed_{2})_{*}"] &
C_{*}(\text{Conf}(M);\fieldc)\otimes C_{*}(\text{Conf}(\mathbb{R}_{2}^{n});\fieldc)
\arrow[d, "\text{Conf}(\embed_{2})_{*}"]\\
C_{*}(\text{Conf}(M);\fieldc)\otimes C_{*}(\text{Conf}(\mathbb{R}_{1}^{n});\fieldc)
\arrow[r,"\text{Conf}(\embed_{1})_{*}"]& 
C_{*}(\text{Conf}(M);\fieldc).
\end{tikzcd}\end{center}
Pick a representative cycle $\omega_{j}'\in C_{*}(\text{Conf}(\mathbb{R}^{n});\fieldc)$ for the class $\omega_{j}\in H_{*}(\text{Conf}(\mathbb{R}^{n});\fieldc)$ and set $t_{\omega_{j}}$ to be the restriction $$t_{\omega_{j}}\colonequals\text{Conf}(\embed_{j})_{*}(-\otimes \omega_{j}')\colon C_{*}(\text{Conf}(M);\fieldc)\to C_{*}(\text{Conf}(M);\fieldc).$$ 
From the previous commutative diagram, we have that
$t_{\omega_{1}}t_{\omega_{2}}=t_{\omega_{2}}t_{\omega_{1}}$.
Now we obtain stable ranges for homological and secondary homological stability for $\text{Conf}_{\partic}(\mathbb{R}^{2})$.
\begin{proposition}\label{stable ranges for the plane}
Let $e\in H_{0}(\text{Conf}_{1}(\mathbb{R}^{2});\mathbb{F}_{p})$ be the class of a point, let $x_{j}\in H_{*}(\text{Conf}(\mathbb{R}^{2});\mathbb{F}_{2})$ and $y_{j}\in H_{*}(\text{Conf}(\mathbb{R}^{2});\mathbb{F}_{p})$ denote the homology classes from \cref{thm-Co}.
Let $\srange$ a positive integer and define $D(p, \srange, \partic)$
to be the constant
\[
 D(p, \srange, \partic) \colonequals  
  \begin{cases} 
   (2^{\srange}-1)\partic/2^{\srange}       & \text{if p is } 2\\
   \displaystyle\frac{(p^{\srange}-1)\partic -\sum_{j=0}^{\srange-1}p^{\srange}-2p^{j}}{p^{m}} & \text{if p is odd.} \\
  \end{cases}
\]
Let $\omega_{0}$ be the class $e$ and for $\srange >1$, let $w_{\srange -1}$ be the class $x_{\srange-1}$ if $p$ is $2$ and let $w_{\srange-1}$ be the class $y_{\srange-1}$ if $p$ is odd. Let $x$ be a homology class in $H_{i+\text{deg}(w_{\srange -1})}(\text{Conf}_{\partic +\text{par}(w_{\srange -1})}(\mathbb{R}^{2});\mathbb{F}_{p})$ and suppose that $i\leq D(p, \srange, k)$.
\hfill\begin{enumerate}
    \item\label{class of a point case ideal} If $\srange$ is $1$, then $x$ is in the ideal $(e)$.
    \item\label{even prime case ideal} If $\srange$ is greater than $1$ and $p$ is $2$, then the class $x$ is in the ideal $I_{\srange -1}\colonequals (e, x_{1},\ldots, x_{\srange -1})$.
    \item\label{odd prime case ideal} If $\srange$ is greater than $1$ and $p$ is odd, then the class $x$ is in the ideal $J_{\srange -1}\colonequals (e, y_{1},\ldots, y_{\srange -1})$.
\end{enumerate}
\end{proposition}
\begin{proof}
We first consider Part \ref{odd prime case ideal}. Suppose that a monomial $x\in H_{i}(\text{Conf}_{\partic}(\mathbb{R}^{2});\mathbb{F}_{p})$ with $p>2$, is in the complement of the ideal  $I_{\srange-1}$. Then $x$ is a product of elements in the set $R_{\srange}\colonequals\{ y_{a}, z_{b} : a\geq \srange ,b\geq 0\}$. We want to show that for such an $x$ we have that $i-\text{deg}(y_{\srange-1})>D(p,\srange,\partic-\text{par}(y_{\srange-1}))$. We have that $i-\text{deg}(y_{\srange-1})= i-(2p^{\srange-1}-2)$ and $$D(p,\srange,\partic-\text{par}(y_{\srange-1}))= \displaystyle\frac{(p^{\srange}-1)(\partic-2p^{\srange-1}) -\sum_{j=0}^{\srange-1}p^{\srange}-2p^{j}}{p^{m}}.$$
Note that 
$$-\text{deg}(y_{\srange-1})>- (p^{\srange}-1)\text{par}(y_{\srange-1})/p^{\srange}$$ since $\text{deg}(y_{\srange-1})p^{\srange}=2(p^{\srange-1}-1)p^{\srange}$ while $(p^{\srange}-1)\text{par}(y_{\srange-1})=2(p^{\srange}-1)p^{\srange}$. Therefore, to prove Part \ref{odd prime case ideal}, it suffices to show that for any $x\in H_{i}(\text{Conf}_{\partic}(\mathbb{R}^{2});\mathbb{F}_{p})$ that is a product of elements in $R_{\srange}$, we have that $i\geq D(p,\srange,\partic)$. Similarly, to prove Part \ref{even prime case ideal}, it suffices to show that for any $x\in H_{i}(\text{Conf}_{\partic}(\mathbb{R}^{2});\mathbb{F}_{p})$ that is a product of elements in the set $\{x_{n} :  n\geq\srange\}$, we have that $i\geq D(2,\srange,\partic)$. We say that a class $x\in H_{i}(\text{Conf}_{\partic}(\mathbb{R}^{2});\mathbb{F}_{p})$ is in the \textit{\srange-th unstable range} if
$i\geq D(p,\srange ,\partic )$.

We first deal with the case $p>2$. Let $A(p ,\srange )=p^{m}$, $B(p,\srange)=p^{\srange}-1$ and let $C(p ,\srange )=\sum_{j=0}^{\srange-1}p^{\srange}-2p^{j}$ so that $ D(p,\srange , \partic )= \frac{B(p,\srange)k-C(p ,\srange )}{A(p ,\srange )}$.
For all $n\geq \srange$, let $\alpha_{n}$ denote $y_{n}$ or $z_{n}$. Since $\text{par}(\alpha_n)=2p^n$ and $\text{deg}(\alpha_n)\geq 2p^n-2$, we have that $\frac{\text{deg}(\alpha_n)}{\text{par}(\alpha_n)}\geq \frac{p^{n}-1}{p^{n}}\geq \frac{B(p ,\srange )}{A(p ,\srange )}=\frac{p^{\srange}-1}{p^{\srange}}$ and $\alpha_n$ is in the \srange-th unstable range. Since  $\text{par}(xy)=\text{par}(x)+\text{par}(y)$ (likewise for $\text{deg}$), it follows that for every $x$ that is a product of elements in the set $\{y_{n},z_{n}: n\geq \srange\}\equalscolon S_{\srange}\subset R_{\srange}$, 
we have that $A(p ,\srange )\text{deg}(x)\geq B(p ,\srange )\text{par}(x)$. We now deal with elements $y$ in $R_{\srange}\setminus S_{\srange}$.
Write $y$ as $y=z_{0}^{\epsilon_0}\cdots z_{\srange-1}^{\epsilon_{\srange-1}}z$, with $\epsilon_i\in\{0,1\}$ and $z\in S_{\srange}$.

To show that $y$ is in the \srange-th unstable range, it suffices to prove that $B(p ,\srange )\text{par}(y)-A(p ,\srange )\text{deg}(y)\leq C(p ,\srange )$. Note that
\begin{center}
    $B(p ,\srange )\text{par}(y)-A(p ,\srange )\text{deg}(y)$
    \\$=B(p ,\srange )\big (\text{par}(z)+\sum_{j=0}^{\srange-1}\text{par}(z_{j}^{\epsilon_j})\big )-A(p ,\srange )\big (\text{deg}(z)+\sum_{j=0}^{\srange-1}\text{deg}(z_{j}^{\epsilon_j})\big )$
    \\ $=B(p ,\srange )\big (\text{par}(z)+\sum_{j=0}^{\srange-1}\epsilon_{j}\text{par}(z_{j})\big )-A(p ,\srange )\big (\text{deg}(z)+\sum_{j=0}^{\srange-1}\epsilon_j\text{deg}(z_{j})\big )$.
\end{center}
Since $z$ is in $S_{\srange}$, we have that $B(p ,\srange )\text{par}(z)-A(p ,\srange )\text{deg}(z)\leq 0$. We now check that 
\begin{center}
    $B(p ,\srange )\big (\sum_{j=0}^{\srange-1}\epsilon_{j}\text{par}(z_{j})\big )-A(p ,\srange )\sum_{j=0}^{\srange-1}\epsilon_j\text{deg}(z_{j})\leq C(p ,\srange )$.
\end{center}
Note that
\begin{center}
    $B(p ,\srange )\big (\sum_{j=0}^{\srange-1}\epsilon_{j}\text{par}(z_{j})\big )-A(p ,\srange )\sum_{j=0}^{\srange-1}\epsilon_{j}\text{deg}(z_{j})$
    \\ $=(p^{\srange}-1)(\sum_{j=0}^{\srange-1}\epsilon_{j}2p^j )-p^{\srange}\sum_{j=0}^{\srange-1}\epsilon_{j}(2p^{j}-1 )$
    \\ $=\sum_{j=0}^{\srange-1}\epsilon_{j}\big ( 2p^{j}(p^\srange -1)-(2p^j -1)p^{\srange}\big )$
    \\ $=\sum_{j=0}^{\srange-1}\epsilon_{j}\big (2p^{j}(p^{\srange} - p^{\srange} -1)+p^{\srange}\big )=\sum_{j=0}^{\srange-1}\epsilon_{j}(p^{\srange} -2p^{j})$
\end{center}
Since $p>2$ and $\srange> j$, $p^{\srange} -2p^j > 0$. In addition, the sum $\sum_{j=0}^{\srange-1}\epsilon_{j}(p^{\srange} -2p^{j})$ achieves its maximum precisely when $\epsilon_{j}=1$ for all $j$, and in this case this sum equals $C(p ,\srange )$.

Now we deal with the case $p=2$. Note that $\text{par}(x_{n})=2^{n}$ and $\text{deg}(x_{n})=2^{n}-1$. For all $n\geq \srange$, we have that $\frac{\text{deg}(x_{n})}{\text{par}(x_{n})}\geq \frac{2^{\srange}-1}{2^{\srange}}$, so $x_{n}$ is in the \srange-th unstable range. By the same argument as the case $p>2$, it follows that if $x$ is a product of elements in the set $\{x_{n} :  n\geq\srange\}$, then $x$ is in the \srange-th unstable range.
\end{proof}
\begin{corollary}\label{prop:conf-stab}
Let $e\in H_{0}(\text{Conf}_{1}(\mathbb{R}^{2});\mathbb{F}_{p})$ be the class of a point, let $x_{j}\in H_{*}(\text{Conf}(\mathbb{R}^{2});\mathbb{F}_{2})$ and $y_{j}\in H_{*}(\text{Conf}(\mathbb{R}^{2});\mathbb{F}_{p})$ denote the homology classes from \cref{thm-Co}.
Let $D(p,\srange , \partic )$ denote the constant from \cref{stable ranges for the plane}.
\hfill\begin{enumerate}
    \item\label{homological stability for plane} The map 
    $$ t_{e}\colon H_{i}(\text{Conf}_{\partic}(\mathbb{R}^{2});\mathbb{F}_{p})\to H_{i}(\text{Conf}_{\partic+1}(\mathbb{R}^{2});\mathbb{F}_{p}) $$ is an isomorphism for
    $i\leq D(p,1,\partic)$.
    \item Suppose that $p$ is $2$. 
    Let $\pariteratedmap{\mathbb{R}^{2}}{t_{x_{j}}}{\srange}{\partic}$ denote the \srange-th iterated mapping cone associated to the collection $(t_{e},  t_{x_{1}}, t_{x_{2}}, \ldots, t_{x_{\srange-1}})$ as defined in \cref{iterated mapping cone def}. The map
    $$t_{x_{\srange}}\colon H_{i}(\pariteratedmap{\mathbb{R}^{2}}{t_{x_{j}}}{\srange}{\partic};\mathbb{F}_{2})\to H_{i+2^{\srange}-1}(\pariteratedmap{\mathbb{R}^{2}}{t_{x_{j}}}{\srange}{\partic+2^{\srange}};\mathbb{F}_{2}) $$ is an isomorphism 
    for $i\leq D(2, \srange + 1, \partic )$. 
    \item Suppose that $p$ is an odd prime. 
    Let $\pariteratedmap{\mathbb{R}^{2}}{t_{y_{j}}}{\srange}{\partic}$ denote the \srange-th iterated mapping cone associated to the collection $(t_{e},  t_{y_{1}}, t_{y_{2}}, \ldots, t_{y_{\srange-1}})$. The map on homology groups of mapping cones
    $$t_{y_{\srange}}\colon H_{i}(\pariteratedmap{\mathbb{R}^{2}}{t_{y_{j}}}{\srange}{\partic};\mathbb{F}_{p})\to H_{i+2p^{\srange}-2}(\pariteratedmap{\mathbb{R}^{2}}{t_{y_{j}}}{\srange}{\partic+2p^{\srange}};\mathbb{F}_{p}) $$ is an isomorphism 
    for $i\leq D(p, \srange + 1, \partic )$.
\end{enumerate}
\end{corollary}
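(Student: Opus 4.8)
The plan is to obtain all three parts by combining two facts already in hand: the computation of the graded ring $H_{*}(\text{Conf}(\mathbb{R}^{2});\mathbb{F}_{p})$ in \cref{thm-Co}, and the divisibility statement of \cref{stable ranges for the plane}, together with the long exact sequences of the iterated mapping cones of \cref{iterated mapping cone def}. The elementary observation used throughout is that the chain maps $t_{e}$ and $t_{x_{j}}$ (respectively $t_{y_{j}}$) induce on homology exactly multiplication by the ring elements $e$ and $x_{j}$ (respectively $y_{j}$), by \cref{multiplication by class map when manifold is Rn} and the definition of the $E_{n}$-product.

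For part~\eqref{homological stability for plane}: by \cref{thm-Co} the class $e$ is a polynomial generator of $H_{*}(\text{Conf}(\mathbb{R}^{2});\mathbb{F}_{p})$ (when $p$ is odd this ring is $\mathbb{F}_{p}[e,y_{1},\dots]\otimes\Lambda_{\mathbb{F}_{p}}[z_{0},z_{1},\dots]$, which is free as a module over the integral domain $\mathbb{F}_{p}[e,y_{1},\dots]$), so multiplication by $e$ is injective and hence $t_{e}$ is injective in every bidegree. For surjectivity onto $H_{i}(\text{Conf}_{\partic+1}(\mathbb{R}^{2});\mathbb{F}_{p})$ when $i\le D(p,1,\partic)$, apply \cref{stable ranges for the plane} with $\srange=1$ (so $\omega_{0}=e$, $\text{deg}(\omega_{0})=0$, $\text{par}(\omega_{0})=1$, and the bidegree appearing there is precisely $H_{i}(\text{Conf}_{\partic+1}(\mathbb{R}^{2});\mathbb{F}_{p})$): every such class lies in the ideal $(e)$, hence equals $e\cdot x'$ with $x'\in H_{i}(\text{Conf}_{\partic}(\mathbb{R}^{2});\mathbb{F}_{p})$, i.e.\ equals $t_{e}(x')$. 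Injectivity together with surjectivity in this range gives the isomorphism.

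For the second and third parts I treat $p=2$; the odd case is verbatim the same after replacing $x_{j}$ by $y_{j}$ and $\mathbb{F}_{2}[e,x_{1},\dots]$ by $\mathbb{F}_{p}[e,y_{1},\dots]\otimes\Lambda_{\mathbb{F}_{p}}[z_{0},z_{1},\dots]$. First I would prove, by induction on $\srange$, that $H_{*}(\pariteratedmap{\mathbb{R}^{2}}{t_{x_{j}}}{\srange}{\bullet};\mathbb{F}_{2})$ is isomorphic, compatibly with the residual $t_{x_{\srange}}$-action, to the quotient ring $\mathbb{F}_{2}[e,x_{1},\dots]/(e,x_{1},\dots,x_{\srange-1})\cong\mathbb{F}_{2}[x_{\srange},x_{\srange+1},\dots]$, up to the bidegree regrading produced by the cone construction. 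The base case $\srange=0$ is \cref{thm-Co}. For the inductive step, since $e,x_{1},x_{2},\dots$ is a regular sequence in the polynomial ring $\mathbb{F}_{2}[e,x_{1},\dots]$ (set $x_{0}:=e$), multiplication by the next term $x_{\srange-1}$ is injective on $H_{*}(\pariteratedmap{\mathbb{R}^{2}}{t_{x_{j}}}{\srange-1}{\bullet};\mathbb{F}_{2})\cong\mathbb{F}_{2}[x_{\srange-1},x_{\srange},\dots]$; hence the long exact sequence of $\iteratedmap{\mathbb{R}^{2}}{t_{x_{j}}}{\srange}=\text{Cone}(t_{x_{\srange-1}})$ breaks into short exact sequences identifying its homology with the cokernel, namely $\mathbb{F}_{2}[x_{\srange},x_{\srange+1},\dots]$. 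In particular $t_{x_{\srange}}$ acts on $H_{*}(\pariteratedmap{\mathbb{R}^{2}}{t_{x_{j}}}{\srange}{\bullet};\mathbb{F}_{2})$ as multiplication by $x_{\srange}$ on a polynomial ring, hence injectively. For surjectivity onto the bidegree $(i+2^{\srange}-1,\partic+2^{\srange})$ summand when $i\le D(2,\srange+1,\partic)$: such a class lifts to some $\beta\in H_{i+2^{\srange}-1}(\text{Conf}_{\partic+2^{\srange}}(\mathbb{R}^{2});\mathbb{F}_{2})$, and since this bidegree is exactly $H_{i+\text{deg}(x_{\srange})}(\text{Conf}_{\partic+\text{par}(x_{\srange})}(\mathbb{R}^{2});\mathbb{F}_{2})$ and $i\le D(2,\srange+1,\partic)$, \cref{stable ranges for the plane} (applied with parameter $\srange+1$) puts $\beta$ in the ideal $I_{\srange}=(e,x_{1},\dots,x_{\srange})$; reducing modulo $(e,x_{1},\dots,x_{\srange-1})$ the class lies in $x_{\srange}\cdot\mathbb{F}_{2}[x_{\srange},x_{\srange+1},\dots]$, hence in the image of $t_{x_{\srange}}$. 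Injectivity and surjectivity in this range give the isomorphism; the odd-primary statement is the same argument using the odd-prime case of \cref{stable ranges for the plane}, noting that $\mathbb{F}_{p}[e,y_{1},\dots]\otimes\Lambda_{\mathbb{F}_{p}}[z_{0},z_{1},\dots]$ is free over $\mathbb{F}_{p}[e,y_{1},\dots]$, so $e,y_{1},y_{2},\dots$ remains a regular sequence on it.

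The only genuine work is bookkeeping: checking that the chain maps $t_{e},t_{x_{j}}$ (respectively $t_{y_{j}}$) pairwise commute — which is needed even to form the iterated cones, and is exactly the commuting-little-disks construction recalled before the corollary — that they induce the stated multiplication maps, and that the bidegree shifts introduced at each stage of the iterated cone are tracked carefully enough that ``cokernel of multiplication by $x_{\srange-1}$'' is matched with the correct summand. I expect this bookkeeping, rather than any new idea, to be the main obstacle; all of the quantitative content — the ranges $D(p,\srange,\partic)$ — is supplied by \cref{stable ranges for the plane}.
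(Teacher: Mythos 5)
Your proposal is correct and follows essentially the same route as the paper's proof: injectivity of the stabilization maps via the ring structure of \cref{thm-Co} (regular sequence/free module), identification of the iterated cone homology with the successive quotient rings via the long exact sequences splitting into short exact sequences, and surjectivity in the stated range from the ideal-membership statement of \cref{stable ranges for the plane}. The paper phrases the last step as vanishing of the $(\srange+1)$-st iterated cone rather than direct surjectivity, but given injectivity these are the same statement.
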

\begin{proof}
We prove this result for the case $p$ is odd and $\srange$ is greater than $1$ since the proofs for the other cases are similar.

To show that the map $$t_{y_{\srange}}\colon C_{i}(\pariteratedmap{\mathbb{R}^{2}}{t_{y_{j}}}{\srange}{\partic};\mathbb{F}_{p})\to C_{i+2p^{\srange}-2}(\pariteratedmap{\mathbb{R}^{2}}{t_{y_{j}}}{\srange}{\partic+2p^{\srange}};\mathbb{F}_{p}) $$ induces an isomorphism on homology in a range, we will show that the map $t_{y_{\srange}}$ induces an injection on homology and that the homology of the $\srange+1$ iterated mapping cone
$$H_{i+2p^{\srange}-2}(\pariteratedmap{\mathbb{R}^{2}}{t_{e},t_{y_{1}},\ldots,t_{y_{\srange}}}{\srange+1}{\partic+2p^{\srange}};\mathbb{F}_{p})$$ vanishes in the same range. Using induction on $\srange$, we first show that the map $$t_{y_{\srange-1}}\colon H_{i}(\pariteratedmap{\mathbb{R}^{2}}{t_{y_{j}}}{\srange}{\partic};\mathbb{F}_{p})\to H_{i+2p^{\srange}-2}(\pariteratedmap{\mathbb{R}^{2}}{t_{y_{j}}}{\srange}{\partic+2p^{\srange}};\mathbb{F}_{p}) $$ is injective and that $H_{*}(\iteratedmap{\mathbb{R}^{2}}{t_{e},t_{y_{1}},\ldots,t_{y_{\srange-1}}}{\srange})$ is isomorphic to $H_{*}(\text{Conf}(\mathbb{R}^{2});\mathbb{F}_{p})/(e, y_{1},\ldots, y_{\srange-1})$.

Suppose that $\srange=1$. The map $t_{y_{0}}\colonequals t_{e}\colon H_{*}(\text{Conf}(\mathbb{R}^{2});\mathbb{F}_{p})\to  H_{*}(\text{Conf}(\mathbb{R}^{2});\mathbb{F}_{p})$ is injective. We have a long exact sequence of homology groups
$$\cdots\rightarrow H_{*-1}\big( \pariteratedmap{R^{2}}{t_{e}}{1}{\partic}\big)
\xrightarrow{\partial_{*}}  H_{*}(\text{Conf}(\mathbb{R}^{2}))\xrightarrow{t_{e}} 
H_{*}(\text{Conf}(\mathbb{R}^{2}))\rightarrow
H_{*}\big(\iteratedmap{\mathbb{R}^{2}}{t_{e}}{1}\big)\rightarrow  \cdots.$$
Since the map $t_{e}\colon H_{*}(\text{Conf}(\mathbb{R}^{2});\mathbb{F}_{p})\to  H_{*}(\text{Conf}(\mathbb{R}^{2});\mathbb{F}_{p})$ is injective, the boundary homomorphism $$\partial_{*}\colon H_{*-1}(\pariteratedmap{R^{2}}{t_{e}}{1}{\partic};\mathbb{F}_{p})\to H_{*}(\text{Conf}(\mathbb{R}^{2});\mathbb{F}_{p})$$ is the zero map and we have a short exact sequence
$$ 0\rightarrow  H_{*}(\text{Conf}(\mathbb{R}^{2});\mathbb{F}_{p})\xrightarrow{t_{e}} 
H_{*}(\text{Conf}(\mathbb{R}^{2});\mathbb{F}_{p})\rightarrow
H_{*}(\iteratedmap{\mathbb{R}^{2}}{t_{e}}{1};\mathbb{F}_{p})\rightarrow  0.$$ As a result, we have that $H_{*}(\iteratedmap{\mathbb{R}^{2}}{t_{e}}{1};\mathbb{F}_{p})$ is isomorphic to $H_{*}(\text{Conf}(\mathbb{R}^{2});\mathbb{F}_{p})/(e)$.

Suppose now that $\srange>1$ and that for $l=1,\ldots,\srange-1$, $H_{*}(\iteratedmap{\mathbb{R}^{2}}{t_{e},t_{y_{1}},\ldots,t_{y_{l-1}}}{l};\mathbb{F}_{p})$ is isomorphic to $H_{*}(\text{Conf}(\mathbb{R}^{2});\mathbb{F}_{p})/(e, y_{1},\ldots, y_{l-1})$. Since $$H_{*}(\text{Conf}(\mathbb{R}^{2});\mathbb{F}_{p})/(e, y_{1},\ldots, y_{\srange-2})\cong \mathbb{F}_{p}\left [y_{\srange-1}, y_{\srange}, \ldots \right ]\otimes\Lambda_{\mathbb{F}_p}\left [z_{0}, z_{1}, \ldots \right ],$$ the map $$t_{y_{\srange-1}}\colon H_{*}(\iteratedmap{\mathbb{R}^{2}}{t_{e},t_{y_{1}},\ldots,t_{y_{\srange-2}}}{\srange-1};\mathbb{F}_{p})\to H_{*}(\iteratedmap{\mathbb{R}^{2}}{t_{e},t_{y_{1}},\ldots,t_{y_{\srange-2}}}{\srange-1};\mathbb{F}_{p})$$ is injective. From the same argument for the case $\srange=1$, we have a short exact sequence
$$ 0\rightarrow  H_{*}(\iteratedmap{\mathbb{R}^{2}}{t_{y_{j}}}{\srange-1};\mathbb{F}_{p})\xrightarrow{t_{y_{\srange-1}}}
H_{*}(\iteratedmap{\mathbb{R}^{2}}{t_{y_{j}}}{\srange-1};\mathbb{F}_{p})\rightarrow
H_{*}(\iteratedmap{\mathbb{R}^{2}}{t_{y_{j}}}{\srange};\mathbb{F}_{p})\rightarrow  0$$ and so $H_{*}(\iteratedmap{\mathbb{R}^{2}}{t_{y_{j}}}{\srange};\mathbb{F}_{p})$ is isomorphic to $H_{*}(\text{Conf}(\mathbb{R}^{2});\mathbb{F}_{p})/(e, y_{1},\ldots, y_{\srange-1})$. 

We now show that $H_{i+2p^{\srange}-2}(\pariteratedmap{\mathbb{R}^{2}}{t_{e},t_{y_{1}},\ldots,t_{y_{\srange}}}{\srange+1}{\partic+2p^{\srange}};\mathbb{F}_{p})$ vanishes for $i\leq D(p, \srange+1, \partic)$. 
This is equivalent to showing that if $x$ is a class in $ H_{i+2p^{\srange}-2}(\text{Conf}_{\partic+2p^{\srange}}(\mathbb{R}^{2});\mathbb{F}_{p})$ with $i\leq D(p, \srange+1, \partic)$, then $x$ is in the ideal $(e, y_{1}, \ldots, y_{\srange})$, which follows immediately from \cref{stable ranges for the plane}.
\end{proof}
\section{Stability for Configuration Spaces of an Open Manifold}\label{sec:stability for open}
In this section, we prove secondary homological stability for the unordered configuration space of an open connected manifold $M$. Our proof mostly follows from an argument from \textcite[Section 4]{MR3344444}. In \cite[Section 4]{MR3344444}, Kupers--Miller streamline the argument behind the proof of \textcite[Proposition A.1]{MR533892} in order to prove homological stability for the unordered configurations space of an open connected manifold of dimension greater than 2.
In \cref{partial review of homological stability}, we review the setup from \textcite{MR3344444}. In \cref{second stability}, we prove secondary homological stability when the dimension of $M$ is $2$. In \cref{sec stab in dim n greater than 2}, we prove secondary homological stability when the dimension of $M$ is greater than $2$. 
For simplicity, we always assume that the dimension of $M$ is even (or equal to one) and orientable--these conditions are equivalent to $\text{Conf}(M)$ being orientable. If $M$ is odd dimensional or not orientable, secondary homological stability still holds by an argument at the end of Appendix A of \textcite[p.~71]{MR533892}.
\subsection{Homological Stability for Configuration Spaces of an Open Manifold}\label{partial review of homological stability}
Stability for the configuration space of an open manifold depends crucially on the following lemma.
\begin{lemma}[{\textcite[Lemma 2.4]{MR3344444}}]\label{stabilization by embedding}
Let $M$ be an open connected manifold of dimension $n$. For any $\srange\in\mathbb{N}$,  there exists an embedding $\embed\colon M\bigsqcup(\bigsqcup_{i=1}^{\srange} \mathbb{R}^{n})\to M$ such that $\embed\vert_{M}$ is isotopic to $\text{id}_{M}$.

Furthermore, the embedding $\embed$ can be chosen so that $M$ has an exhaustion by the interiors of compact manifolds $\bar{M}_{j}$ admitting a finite handle decomposition with a single $0$-handle and $\embed$ restricts to an embedding $\embed_{j}\colon M_{j}\bigsqcup(\bigsqcup_{i=1}^{\srange} \mathbb{R}^{n})\to  M_{j}$ such that $\embed_{j}\vert_{M_{j}}$ is isotopic to $\text{id}_{M_{j}}$.
\end{lemma}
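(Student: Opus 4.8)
The plan is to reduce everything to one geometric fact: a connected open $n$-manifold $M$ admits a smooth open self-embedding $F\colon M\hookrightarrow M$ which is isotopic through embeddings to $\text{id}_{M}$ and whose image has complement of nonempty interior. Granting this, one builds $\embed$ by setting $\embed\vert_{M}=F$ and embedding the $\srange$ copies of $\mathbb{R}^{n}$ as pairwise disjoint open disks inside $\text{int}\big(M\setminus F(M)\big)$; the images of the two factors of $\embed$ are then automatically disjoint, and $\embed\vert_{M}$ is isotopic to $\text{id}_{M}$ by construction. For the exhaustion, since $M$ is connected and noncompact, sublevel sets of a proper Morse function --- connected up by tubes and then slightly shrunk --- give an exhaustion $\bar M_{0}\subset\text{int}(\bar M_{1})\subset\bar M_{1}\subset\cdots$ with $\bigcup_{j}\text{int}(\bar M_{j})=M$ and each $\bar M_{j}$ a compact connected codimension-$0$ submanifold with nonempty boundary, and standard handle trading (merging all $0$-handles, using connectedness) lets us take each $\bar M_{j}$ to have a handle decomposition with a single $0$-handle, which is finite by compactness.

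To build $F$: since $M$ is connected and noncompact, choose a smooth properly embedded ray $\gamma\colon[0,\infty)\hookrightarrow M$ (obtained by linking up points escaping the exhaustion and perturbing to an embedding). A properly embedded tubular neighborhood of $\gamma$ is a half-space $N\cong\mathbb{R}^{n-1}\times[0,\infty)$ with $\partial N\cong\mathbb{R}^{n-1}$, and $\gamma$ leaves every compact set along the $[0,\infty)$-coordinate. Fix an increasing diffeomorphism $\psi\colon[0,\infty)\to[0,1)$ with $\psi(s)=s$ for $s$ near $0$ and $\psi(s)\le s$ for all $s$, and let $F$ be the identity on $M\setminus N$ and $(y,s)\mapsto(y,\psi(s))$ on $N$; this is a well-defined smooth map because $\psi$ is the identity near $s=0$. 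Then $F(M)=M\setminus\big(\mathbb{R}^{n-1}\times[1,\infty)\big)$, whose complement is closed in $M$, so $F(M)$ is open; and $F$ restricts to a homeomorphism on each of $N$ and $M\setminus\text{int}(N)$, compatibly along $\partial N$, so $F$ is an open embedding. Its image has complement with interior $\mathbb{R}^{n-1}\times(1,\infty)\cong\mathbb{R}^{n}$, which comfortably receives $\srange$ disjoint open disks, producing $\embed$. Finally, the interpolation $\psi_{u}=(1-u)\,\text{id}+u\,\psi$ (each an embedding $[0,\infty)\hookrightarrow[0,\infty)$ that is the identity near $0$) induces an isotopy of embeddings $F_{u}$ from $\text{id}_{M}$ to $F$, so $\embed\vert_{M}$ is isotopic to $\text{id}_{M}$.

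For the ``furthermore'' clause, refine the exhaustion to be compatible with $F$. Starting from the exhaustion above and modifying it near $N$, one can arrange that $\bar M_{j}\cap N$ is a corner-smoothing of the product of an $(n-1)$-ball of radius $b_{j}$ with $[0,a_{j}]$, where $a_{j},b_{j}\to\infty$, without disturbing the other properties of the $\bar M_{j}$. Because $F$ moves points only inside $N$, and there only decreases the $[0,\infty)$-coordinate while fixing the $\mathbb{R}^{n-1}$-coordinate, it follows that $F(\bar M_{j})\subseteq\bar M_{j}$ and, by the same reasoning, $F_{u}(\bar M_{j})\subseteq\bar M_{j}$ for all $u$. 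After re-indexing so that $a_{j}\ge 2$ and $b_{j}\ge 1$, the $\srange$ new disks (which lie in $\mathbb{R}^{n-1}\times(1,2)$) are contained in every $\bar M_{j}$ as well. Restricting $F$ and the isotopy $F_{u}$ to $M_{j}=\text{int}(\bar M_{j})$ then gives $\embed_{j}$ with $\embed_{j}\vert_{M_{j}}$ isotopic to $\text{id}_{M_{j}}$.

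The step I expect to be the main obstacle is verifying that $F$ is genuinely an embedding, not merely an injective immersion: the compressed far end of the tube $N$ accumulates onto the hypersurface $\mathbb{R}^{n-1}\times\{1\}\subset\text{int}(N)$, so $F$ fails to be a proper map, and one must check directly that $F(M)$ is open and that $F^{-1}$ is continuous on it (via the pasting lemma on the closed cover $\{\,F(N),\,M\setminus\text{int}(N)\,\}$ of $F(M)$). The other delicate point is the bookkeeping in the ``furthermore'' part: producing one exhaustion that is at the same time compatible with $F$ and with the entire isotopy $F_{u}$, meets the handle-theoretic constraints, and contains the $\srange$ new disks.
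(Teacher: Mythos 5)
Your overall strategy is the right one and is essentially the standard proof of the cited Kupers--Miller lemma (note that the paper itself does not reprove it: it quotes the case $\srange=1$ from \textcite{MR3344444} and observes that the general case follows because $\bigsqcup_{i=1}^{\srange}\mathbb{R}^{n}$ embeds in $\mathbb{R}^{n}$). However, there is a genuine gap in your construction of $F$, and it is not the one you flag. If $N$ is literally a tubular neighborhood of the ray --- a disk bundle over $\gamma([0,\infty))$ with the $[0,\infty)$-coordinate running along the ray --- then the topological frontier of $N$ in $M$ is not $\partial N\cong\mathbb{R}^{n-1}\times\{0\}$ (the fibre over $\gamma(0)$): it also contains the ``lateral'' limit points coming from the boundaries of the fibres. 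Concretely, for $M=\mathbb{R}^{2}$, $\gamma(s)=(s,0)$ and $N=\{(x,y):x\ge 0,\ |y|<1\}$, the point $p=(x_{0},1)$ with $x_{0}$ large lies in $M\setminus N$, so $F(p)=p$; but the nearby points $(x_{0},1-\varepsilon)\in N$ are sent to $(\psi(x_{0}),1-\varepsilon)$ with $\psi(x_{0})<1$, and these do not converge to $p$. So $F$ is discontinuous along the sides of the tube. Your ``main obstacle'' discussion addresses the accumulation of the compressed end onto $\mathbb{R}^{n-1}\times\{1\}$, which is harmless, and your pasting-lemma argument presupposes exactly what fails, namely that $\{N,\ M\setminus\mathrm{int}(N)\}$ is a closed cover of $M$ meeting only along $\partial N$.

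The fix is to compress transversally to the \emph{entire} frontier of a closed tube rather than along the ray direction. Take a closed tubular neighborhood $T\cong[0,\infty)\times D^{n-1}$ of the ray, properly embedded and with corners smoothed; its topological frontier in $M$ is $(\{0\}\times D^{n-1})\cup([0,\infty)\times S^{n-2})\cong\mathbb{R}^{n-1}$, and one has a diffeomorphism of pairs $(T,\mathrm{fr}(T))\cong(\mathbb{R}^{n-1}\times[0,\infty),\mathbb{R}^{n-1}\times\{0\})$. In \emph{these} coordinates --- in which the ray escapes to infinity along the $\mathbb{R}^{n-1}$ factor while its $[0,\infty)$-coordinate (``depth'' from the frontier) stays bounded --- the map $(y,s)\mapsto(y,\psi(s))$ is the identity near the whole frontier, so extending by the identity on $M\setminus T$ does give an open self-embedding isotopic to $\mathrm{id}_{M}$; your continuity and openness checks then go through verbatim, because $T$ really is closed in $M$ with frontier $\mathbb{R}^{n-1}\times\{0\}$, and the image misses the open set $\mathbb{R}^{n-1}\times(1,\infty)\cong\mathbb{R}^{n}$, which receives the $\srange$ disks. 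The same change of coordinates must be propagated into your ``furthermore'' bookkeeping: $\bar M_{j}\cap T$ should be arranged to be star-shaped toward the frontier in the depth coordinate $s$, rather than a product of a fibre ball with an interval of ray-parameters. With these corrections the rest of your argument (the Morse-theoretic exhaustion, connecting and handle-trading to a single $0$-handle, and restricting $F_{u}$ to each $M_{j}$) is fine.
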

Kupers--Miller only considered the case $\srange=1$, but the general case follows immediately since an embedding $\bigsqcup_{j=1}^{\srange}\mathbb{R}^{n}\to \mathbb{R}^{n}$ exists. An embedding $\embed\colon  \bigsqcup M_{j}\to  M$ and a collection of nonnegative integers $k_{j}$ with $\sum k_{j}=k$ induce a map of configuration spaces $$\text{Conf}(\embed)\colon \prod \text{Conf}_{\partic_{j}}(M_{j})\to  \text{Conf}_{\partic}(M)$$ by applying the embedding $\embed$ to each point of the configuration.
\begin{definition}\label{stabilization map first def}
Suppose that $M$ is an open manifold. Fix an embedding $\embed\colon  M\sqcup \mathbb{R}^{n}\to  M$ as in \cref{stabilization by embedding} and define $$\text{Conf}(\embed)\colon  \text{Conf}_{\partic}(M)\times  \text{Conf}_{l}(\mathbb{R}^{n})\to  \text{Conf}_{\partic +l}(M)$$ to be the induced map on configuration spaces. Let $$\text{Conf}(\embed)_{*}\colon \nch{\text{Conf}_{\partic}(M)}\otimes \nch{\text{Conf}_{l}(\mathbb{R}^{n})}\to \nch{\text{Conf}_{\partic}(M)}$$ be the composition of the Eilenberg--Zilber map $$EZ\colon \nch{\text{Conf}_{\partic}(M)}\otimes \nch{\text{Conf}_{l}(\mathbb{R}^{n})}\to \nch{\text{Conf}_{\partic}(M)\times  \text{Conf}_{l}(\mathbb{R}^{n})}$$ with the map on chain complexes induced from $\text{Conf}(\embed)$.

Given a class $z\in H_{a}(\text{Conf}_{b}(\mathbb{R}^{n});\fieldc)$, fix a representative cycle $z'\in C_{a}(\text{Conf}_{b}(\mathbb{R}^{n});\fieldc)$ of $z$ and define 
$$t_{z}\colon C_{*}(\text{Conf}_{\partic}(M);\fieldc)\to  C_{*+a}(\text{Conf}_{\partic +b}(M);\fieldc)$$ to be the map sending $\omega\in C_{*}(\text{Conf}_{\partic}(M);\fieldc)$ to $\text{Conf}(\embed)_{*}(\omega\otimes z')$.
Let $t_{z}$ also denote the induced maps on (co)homology (it will be clear from context which map we mean).
\end{definition}
\begin{remark}
Our reason for having that the restriction $\embed\vert_{M}$ is isotopic to $\text{id}_{M}$ is the following:
if $z\in H_{0}(\text{Conf}_{0}(\mathbb{R}^{n});R)$ is the class of the empty configuration, then the map $t_{z}\colon H_{*}(\text{Conf}_{\partic}(M);R)\to H_{*}(\text{Conf}_{\partic}(M);R)$ is the identity map instead of a non-identity automorphism.
\end{remark}
\begin{definition}\label{def of primary stabilization map}
If $e\in H_{0}(\text{Conf}_{1}(\mathbb{R}^{n});\fieldc)$ is the class of a point, the map $$t_{1}\colonequals t_{e}\colon C_{*}(\text{Conf}_{\partic}(M);\fieldc)\to  C_{*}(\text{Conf}_{\partic +1}(M);\fieldc)$$ is called the \textit{stabilization map}.
\end{definition}
McDuff showed that the induced map on integral homology $t_{1} \colon H_{i}(\text{Conf}_{\partic}(M);\mathbb{Z})\to  H_{i} (\text{Conf}_{\partic +1}(M);\mathbb{Z})$ is an isomorphism for all $i\ll \partic$ if $M$ is a smooth manifold of finite type (\textcite[p.~104]{MR358766}). Isomorphisms of the form $H_{i}(\text{Conf}_{\partic}(M);\fieldc)\cong H_{i}(\text{Conf}_{\partic +1}(M);\fieldc)$ for all $i\ll \partic$ are called \textit{homological stability}. To prove homological stability for $\text{Conf}_{\partic}(M)$, it helps to work with compactly supported cohomology. When $X$ is a locally compact Hausdorff space, the reduced cohomology $\tilde{H}^{*}(X^{+}, X^{+}\setminus X; \fieldc)$ of the point-compactification $X^{+}$ of $X$ is isomorphic to $H^{*}_{c}(X; \fieldc)$.  One advantage of working with compactly supported cohomology is the following well-known long exact sequence.
\begin{proposition}[e.g. \textcite{MR842190}, \MakeUppercase{\romannumeral 3}.7.6]\label{long exact sequence in csc}
Let $R$ be an abelian group, W be a locally compact and locally path-connected Hausdorff space, and $A\subset W$ a closed subspace that is also locally path-connected. Let $U=W\setminus A$ denote its complement. There is a long exact sequence in compactly supported cohomology
\begin{center}
    $\cdots \to  H^{*}_{c}(U;\fieldc)
    \to  H^{*}_{c}(W;\fieldc)
    \to  H^{*}_{c}(A;\fieldc)
    \to  H^{*+1}_{c}(U;\fieldc)\to \cdots$
\end{center}
The group $R$ can also be replaced by a twisted coefficient system on $W$.
\end{proposition}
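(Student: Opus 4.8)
The plan is to reduce the statement to the long exact sequence of a pair (equivalently, of a cofiber sequence) by passing to one-point compactifications. For a locally compact, locally path-connected Hausdorff space $X$, compactly supported cohomology is computed by the reduced singular cohomology of the one-point compactification, $H^{*}_{c}(X;\fieldc)\cong\widetilde{H}^{*}(X^{+};\fieldc)$; this is where local path-connectedness is used, as it is what makes $X^{+}$ well enough behaved for singular cohomology to see it correctly (in all of the applications in this paper the relevant spaces are in fact locally contractible --- manifolds or configuration spaces of manifolds --- so this comparison is classical). Under this identification, the extension-by-zero map $H^{*}_{c}(U;\fieldc)\to H^{*}_{c}(W;\fieldc)$ is induced by the collapse map $W^{+}\to W^{+}/C^{+}$, and the restriction map $H^{*}_{c}(W;\fieldc)\to H^{*}_{c}(C;\fieldc)$ is induced by the based inclusion $C^{+}\hookrightarrow W^{+}$ (here $C^{+}=C\cup\{\infty\}$ is a closed subspace of $W^{+}$ precisely because $C$ is closed in $W$), and the composite of the two is the constant map, hence zero.

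First I would record the point-set fact that $W^{+}/C^{+}\cong U^{+}$ when $C$ is closed in $W$: the collapsed point has a neighborhood basis $\{(U\setminus K)\cup C^{+} : K\subseteq U\text{ compact}\}$, which matches the neighborhood basis of $\infty$ in $U^{+}$, and off the basepoints both spaces are literally $U$. Granting that the inclusion $C^{+}\hookrightarrow W^{+}$ is a cofibration --- which follows from the local connectivity hypotheses and is immediate for the locally contractible spaces arising in the paper --- the long exact sequence of the pair $(W^{+},C^{+})$ together with $\widetilde{H}^{*}(W^{+},C^{+})\cong\widetilde{H}^{*}(W^{+}/C^{+})\cong\widetilde{H}^{*}(U^{+})$ yields exactly
$$\cdots\to H^{*}_{c}(U;\fieldc)\to H^{*}_{c}(W;\fieldc)\to H^{*}_{c}(C;\fieldc)\to H^{*+1}_{c}(U;\fieldc)\to\cdots,$$
with the maps as described above and the connecting map the usual boundary homomorphism.

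An alternative, and in some ways cleaner, route --- which I would use to handle the twisted-coefficients statement uniformly --- is sheaf-theoretic: writing $j\colon U\hookrightarrow W$ and $i\colon C\hookrightarrow W$ for the open and closed inclusions, there is a short exact sequence of sheaves $0\to j_{!}j^{*}\mathcal{F}\to\mathcal{F}\to i_{*}i^{*}\mathcal{F}\to 0$ on $W$ for any sheaf $\mathcal{F}$ (in particular the constant sheaf $\underline{\fieldc}$ or a local system). Applying $R\Gamma_{c}(W;-)$ and using $R\Gamma_{c}(W;j_{!}j^{*}\mathcal{F})=R\Gamma_{c}(U;j^{*}\mathcal{F})$ and $R\Gamma_{c}(W;i_{*}i^{*}\mathcal{F})=R\Gamma_{c}(C;i^{*}\mathcal{F})$ (the latter because a closed immersion is proper) produces the long exact sequence in sheaf cohomology with compact supports. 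The main --- and really the only genuine --- obstacle in either approach is the comparison theorem identifying the cohomology theory that makes exactness formal (reduced cohomology of $X^{+}$, or sheaf cohomology with compact supports) with singular compactly supported cohomology; this is exactly what the local path-connectedness (in the applications, local contractibility) hypotheses provide, and once it is in hand the long exact sequence is automatic.
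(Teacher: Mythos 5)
The paper never proves this proposition: it is introduced as ``the following well-known long exact sequence'' and used as a black box (the implicit source is the treatment in Segal's appendix and in Kupers--Miller, where $H^{*}_{c}(X;\fieldc)$ is taken to be the reduced cohomology of the one-point compactification $X^{+}$ and the sequence is the long exact sequence of the pair $(W^{+},C^{+})$). Your first argument is essentially that standard one, and your identification $W^{+}/C^{+}\cong U^{+}$ for $C$ closed is correct, as is the description of the two maps.

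One sub-claim is genuinely wrong, though fortunately unnecessary: that $C^{+}\hookrightarrow W^{+}$ being a cofibration ``follows from the local connectivity hypotheses.'' Local path-connectedness of a closed subspace does not make a closed inclusion a cofibration (e.g.\ the Hawaiian earring is a closed, locally path-connected subset of $\mathbb{R}^{2}$ whose inclusion is not a cofibration, since it is not an ANR). What the argument actually needs is not a cofibration but the excision statement $H^{*}(W^{+},C^{+})\cong\widetilde{H}^{*}(W^{+}/C^{+})$, and for \v{C}ech--Alexander--Spanier or sheaf cohomology this holds for any closed (hence taut) subspace of a compact Hausdorff space with no cofibrancy hypothesis; that is the version of $H^{*}_{c}$ for which the proposition is stated (it is also the version for which the Poincar\'e duality and extension-by-zero functoriality used elsewhere in the paper work cleanly). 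Relatedly, your assertion that local path-connectedness ``is exactly what provides'' the comparison of singular compactly supported cohomology with $\widetilde{H}^{*}(X^{+})$ overstates matters --- for singular cochains one needs more than these hypotheses in general --- but in the paper's applications the spaces are manifolds, configuration spaces, and open subsets of finite complexes, where the comparison is unproblematic. Your second, sheaf-theoretic argument via $0\to j_{!}j^{*}\mathcal{F}\to\mathcal{F}\to i_{*}i^{*}\mathcal{F}\to 0$ avoids both issues entirely, handles twisted coefficients uniformly, and is the cleanest complete proof of the statement as written.
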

The maps $ H^{*}_{c}(U;\fieldc) \to  H^{*}_{c}(W;\fieldc)$ and $H^{*}_{c}(W;\fieldc)\to  H^{*}_{c}(A;\fieldc)$ in \cref{long exact sequence in csc} come from the extension by zero map $ C^{*}_{c}(U;\fieldc) \to  C^{*}_{c}(W;\fieldc)$ and restriction map $C^{*}_{c}(W;\fieldc)\to  C^{*}_{c}(A;\fieldc)$ respectively.
For $i=1,2$, let $W_{i}$ and $A_{i}$ satisfy the hypothesis of $W$ and $A$, respectively, in \cref{long exact sequence in csc} and let $U_{i}=W_{i}\setminus A_{i}$.
From the functoriality of extension by zero and restriction, given a map $C^{*}_{c}(W_{1};\fieldc)\to C^{*}_{c}(W_{2};\fieldc)$ that restricts to maps $C^{*}_{c}(A_{1};\fieldc)\to C^{*}_{c}(A_{2};\fieldc)$ and $C^{*}_{c}(U_{1};\fieldc)\to C^{*}_{c}(U_{2};\fieldc)$, we have an induced map of long exact sequences in compactly supported cohomology
\begin{center}
\begin{tikzcd}
\cdots\arrow[r]& H^{*}_{c}(U_{1};\fieldc)\arrow[r]\arrow[d]& H^{*}_{c}(W_{1};\fieldc)\arrow[r]\arrow[d]& H^{*}_{c}(A_{1};\fieldc)\arrow[r]\arrow[d]& H^{*+1}_{c}(U_{1};\fieldc)\arrow[r]\arrow[d]& \cdots\\
\cdots\arrow[r]& H^{*}_{c}(U_{2};\fieldc)\arrow[r]& H^{*}_{c}(W_{2};\fieldc)\arrow[r]& H^{*}_{c}(A_{2};\fieldc)\arrow[r]& H^{*+1}_{c}(U_{2};\fieldc)\arrow[r]& \cdots .
\end{tikzcd}
\end{center}

We now construct stabilization maps on $C_{c}^{*}(\text{Conf}(M);\fieldc)$.
Given $z\in H^{*}_{c}(\text{Conf}_{l}(D^{n});\fieldc)$, fix a representative compactly suppported cocycle $z'\in C^{*}_{c}(\text{Conf}_{l}(D^{n});\fieldc)$ to get a restriction
$K(-\otimes z')$ of the K\"unneth map $$K\colon C^{*}_{c}(\text{Conf}_{\partic}(M);\fieldc)\otimes C^{*}_{c}(\text{Conf}_{l}(D^{n});\fieldc)\to C^{*}_{c}(\text{Conf}_{\partic}(M)\times \text{Conf}_{l}(D^{n});\fieldc).$$
The map 
$\text{Conf}(\embed)\colon  \text{Conf}_{\partic}(M) \times \text{Conf}_{l}(D^{n}) \to  \text{Conf}_{\partic+l}(M)$ is an open embedding for all nonnegative $\partic$ and $l$. Therefore, we have an induced map $$\text{Conf}(\embed)^{*}_{c}\colon  C^{*}_{c}(\text{Conf}_{\partic}(M) \times \text{Conf}_{l}(D^{n});\fieldc) \to  C^{*}_{c}(\text{Conf}_{\partic+l}(M);\fieldc)$$ via extension by zero. Define $$t_{z}\colon C^{*}_{c}(\text{Conf}_{\partic}(M);\fieldc)\to C^{*}_{c}(\text{Conf}_{\partic+l}(M);\fieldc)$$
to be the composition
$\text{Conf}(\embed)^{*}_{c}\circ  K(-\otimes z')$.

Specializing to the case $l=1$, when $z\in H^{n}_{c}(\text{Conf}_{1}(D^{n});\fieldc)\cong H_{0}(\text{Conf}_{1}(D^{n});\fieldc)$ corresponds to the class of a point (under Poincar\'e duality), we get a map $t_{1}\colon H_{c}^{*}(\text{Conf}_{\partic}(M);\fieldc) \to  H_{c}^{*+n}(\text{Conf}_{\partic +1 }(M);\fieldc)$.
By our assumption that $\text{Conf}(M)$ is orientable and Poincar\'e duality, the map $t_{1}$ being an isomorphism in a range is equivalent to the stabilization map $t_{1}\colon  H_{*} (\text{Conf}_{\partic}(M);\fieldc)\to  H_{*} (\text{Conf}_{\partic +1}(M);\fieldc)$ being an isomorphism in the dual range. In particular, one can show:
\begin{proposition}\label{homological stability in dimension 2}
Let $M$ be an open connected surface and let $D(p,\srange , \partic )$ denote the constant from \cref{stable ranges for the plane}.
The stabilization map $$t_{1}\colon  H_{i}(\text{Conf}_{\partic}(M);\mathbb{F}_{p})\to  H_{i}(\text{Conf}_{\partic+1}(M);\mathbb{F}_{p})$$ from \cref{def of primary stabilization map} is an isomorphism for $i < D(p,1,\partic )$ and a surjection for $i=D(p,1,\partic )$.
As a consequence, the map $t_{1} \colon  H_{i}(\text{Conf}_{\partic}(M);\mathbb{Z}\left [1/2\right ])\to  H_{i}(\text{Conf}_{\partic+1}(M);\mathbb{Z}\left [1/2 \right])$ is an isomorphism for $3i < 2k-1$ and a surjection for $3i = 2k-1.$ These ranges for homological stability are optimal.
\end{proposition}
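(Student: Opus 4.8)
The plan is to prove, following \textcite[Section 4]{MR3344444}, the Poincar\'e-dual statement in compactly supported cohomology. Set $n=2$. Since $\embed\colon M\sqcup\mathbb{R}^{n}\hookrightarrow M$ is an open embedding, it induces by extension by zero a degree-preserving map $T_{1}\colon H^{*}_{c}(\text{Conf}_{\partic}(M)\times\mathbb{R}^{n})\to H^{*}_{c}(\text{Conf}_{\partic+1}(M))$, and since $\text{Conf}_{\partic}(M)$ is an $n\partic$-dimensional manifold, (possibly twisted) Poincar\'e duality identifies $T_{1}$ in cohomological degree $n(\partic+1)-i$ with $t_{1}\colon H_{i}(\text{Conf}_{\partic}(M))\to H_{i}(\text{Conf}_{\partic+1}(M))$. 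So it suffices to show that $T_{1}$ is an isomorphism, resp.\ a surjection, in the cohomological range dual to $i<D(p,1,\partic)$, resp.\ $i=D(p,1,\partic)$. The only manifold-specific input in Kupers--Miller's argument is the stable range for $\text{Conf}(\mathbb{R}^{n})$, so substituting the sharp range of \cref{prop:conf-stab} should yield the sharp range for $M$.

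Next I would run the induction on handles. Using \cref{stabilization by embedding}, choose $\embed$ compatibly with an exhaustion of $M$ by interiors $M_{j}$ of compact handlebodies $\bar M_{j}$ with a single $0$-handle; since $H^{*}_{c}$ carries this exhaustion to a colimit and $T_{1}$ is compatible with the inclusions, it suffices to prove the $T_{1}$-statement for each $M_{j}$, by induction on the number of handles. The base case $\bar M_{j}=D^{n}$ is the Poincar\'e dual of the homological-stability statement for $\text{Conf}(\mathbb{R}^{2})$ in \cref{prop:conf-stab}. For the inductive step, write $\bar M_{j}$ as $\bar M_{j}'$ with a handle of index $\lambda\ge1$ attached, and let $M_{j}'\subset M_{j}$ be the open submanifold obtained by deleting a closed disk neighborhood $D\cong D^{n}$ of the cocore of that handle; then $M_{j}'$ is diffeomorphic to the interior of $\bar M_{j}'$ and has one fewer handle. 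Filtering $\text{Conf}_{\partic}(M_{j})$ by the number of points lying in $D$ and applying \cref{long exact sequence in csc} repeatedly compares $H^{*}_{c}(\text{Conf}_{\partic}(M_{j}))$ with $H^{*}_{c}(\text{Conf}_{\partic}(M_{j}'))$, the discrepancy being controlled by the configuration spaces of the disk $\text{Conf}(D)=\text{Conf}(\mathbb{R}^{n})$, whose homological stability is supplied by \cref{prop:conf-stab}; this comparison is compatible with $T_{1}$, so the inductive hypothesis for $M_{j}'$ and the five lemma give the statement for $M_{j}$. I expect the main obstacle here to be, since \textcite{MR3344444} only aim for a non-optimal range when $n>2$, verifying that the induction preserves the \emph{sharp} constant $D(p,1,\partic)$ -- which comes down to the multiplicative behaviour of the ``unstable range'' established in the proof of \cref{stable ranges for the plane}, together with careful bookkeeping of the Poincar\'e-duality degree shift.

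Finally, for the $\mathbb{Z}[1/2]$-consequence: via the long exact sequence of the pair, the $\mathbb{F}_{p}$-statement is equivalent to $H_{i}(\text{Conf}_{\partic+1}(M),\text{Conf}_{\partic}(M);\mathbb{F}_{p})=0$ for $i\le D(p,1,\partic)$. Since $D(p,1,\partic)=\partic-1+(2-\partic)/p$ is nondecreasing in $p$ with minimum $D(3,1,\partic)=(2\partic-1)/3$ over odd primes (the cases $\partic\le1$ being trivial), this relative homology vanishes with $\mathbb{F}_{p}$-coefficients for \emph{every} odd prime $p$ in the range $i\le(2\partic-1)/3$; passing to the exhaustion so that the homology groups in question are finitely generated, the universal coefficient theorem upgrades this to $H_{i}(\text{Conf}_{\partic+1}(M),\text{Conf}_{\partic}(M);\mathbb{Z}[1/2])=0$ for $3i\le2\partic-1$, which is precisely the asserted isomorphism and surjectivity for $t_{1}$ over $\mathbb{Z}[1/2]$. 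Optimality for $M=\mathbb{R}^{2}$ then follows from \cref{stable ranges for the plane}, which identifies $D(p,1,\partic)$ as exactly the threshold beyond which $H_{*}(\text{Conf}(\mathbb{R}^{2});\mathbb{F}_{p})$ contains classes outside the ideal $(e)$ (the polynomial generators $x_{j}$ for $p=2$ and $y_{j}$ for $p$ odd of \cref{thm-Co}); for the $\mathbb{Z}[1/2]$-range one uses the same computation at $p=3$, noting that $y_{1}=\beta\xi\browd{e}{e}$ lifts to a nonzero $3$-torsion class in integral homology and hence survives over $\mathbb{Z}[1/2]$.
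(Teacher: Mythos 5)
Your overall strategy -- pass to compactly supported cohomology via Poincar\'e duality, reduce to interiors of compact handlebodies via an exhaustion, run a five-lemma induction using \cref{long exact sequence in csc}, feed in the sharp base case from \cref{prop:conf-stab}, and then deduce the $\mathbb{Z}[1/2]$ statement from the vanishing of $H_{i}(\text{Conf}_{\partic+1}(M),\text{Conf}_{\partic}(M);\mathbb{F}_{p})$ for all odd $p$ together with \cref{homology of conf is finitely generated} and the universal coefficient theorem -- is exactly the argument the paper invokes (it simply cites Kupers--Miller, Sections 3--4, with the improved input from \cref{prop:conf-stab}). Your first and third paragraphs are fine.

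The inductive step in your second paragraph, however, has a genuine gap. You propose to delete a closed \emph{$n$-dimensional} disk $D$ (a neighborhood of a cocore) and filter $\text{Conf}_{\partic}(M_{j})$ by the number of points in $D$, so that the strata are $\text{Conf}_{\partic-j}(M_{j}')\times\text{Conf}_{j}(D)$ with $T_{1}$ acting on the first factor. Run the degree count: by induction, classes of $H^{a}_{c}(\text{Conf}_{\partic+1-j}(M_{j}'))$ not in the image of $T_{1}$ satisfy $a<2(\partic-j)+2-D(p,1,\partic-j)$, while the second factor contributes $H^{b}_{c}(\text{Conf}_{j}(D))$ with $b$ as large as $2j$ (it is a $2j$-manifold). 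So classes not hit can have total degree up to just below $2(\partic+1)-D(p,1,\partic-j)$, and since $D(p,1,\partic-j)<D(p,1,\partic)$ for $j>0$, this does \emph{not} land below the target threshold $2(\partic+1)-D(p,1,\partic)$; the five lemma step fails for every $j>0$, not merely for the sharp constant. The whole point of the Segal--Kupers--Miller induction is that the complementary stratum must be a closed subspace $X$ of dimension at most $n-1$ with $M\setminus X\cong\mathbb{R}^{n}$ (this is \cref{handle-decomp}, and is where the hypothesis of a single $0$-handle enters): then $H^{b}_{c}(\text{Conf}_{j}(X))=0$ for $b>j$, and the induction closes because $j+D(p,1,\partic-j)\ge D(p,1,\partic)$, i.e.\ $D(p,1,\partic)-D(p,1,\partic-j)=\frac{(p-1)j}{p}\le j$. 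So you should replace the handle-by-handle disk deletion with the single filtration $\conffilt^{j}_{\partic}(M)$ by points in such an $X$ (downward induction on $j$), stabilizing in $M\setminus X\cong\mathbb{R}^{2}$; this is precisely the structure of the paper's proof of \cref{filtered case}.
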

\begin{proof}
The proposition follows by applying the argument of \textcite[Sections 3 and 4]{MR3344444} and using Part \ref{homological stability for plane} of \cref{prop:conf-stab}.
\end{proof}
\subsection{Secondary Homological Stability for Configuration Spaces of an Open Surface}\label{second stability}
We now describe secondary homological stability for $H_{*}(\text{Conf}_{\partic}(M);\mathbb{F}_{p})$ when $M$ is an open connected surface. For simplicity, we assume that $p$ is an odd prime number (the case $p$ is $2$ is similar). 

Let $y_{1}\in H_{2p-2}(\text{Conf}_{2p}(\mathbb{R}^{2});\mathbb{F}_{p})$ denote the class defined in \cref{thm-Co} and consider the map $$t_{2}\colonequals t_{\displaystyle y_{1}} \colon C_{i}(\text{Conf}_{\partic}(M);\mathbb{F}_{p})\to  C_{i +2p-2}(\text{Conf}_{\partic +2p}(M);\mathbb{F}_{p})$$ (if $p$ were $2$, we would consider the map $t_{\displaystyle x_{1}}$ corresponding to $x_{1}\in H_{1}(\text{Conf}_{2}(\mathbb{R}^{2});\mathbb{F}_{2})$ defined in \cref{thm-Co}). The map $t_{2}$ does not induce an isomorphism of homology groups. But, we can consider
the relative chain complex
$$C_{*} (\text{Conf}_{\partic}(M),\text{Conf}_{\partic -1}(M);\mathbb{F}_{p})\colonequals \text{Cone}\big (t_{1}\colon C_{*} (\text{Conf}_{\partic-1}(M))\to C_{*} (\text{Conf}_{\partic}(M))\big)=C_{*}(\pariteratedmap{M}{t_{1}}{1}{\partic};\mathbb{F}_{p}).$$
Since the maps $t_{1}$ and $t_{2}$ can be constructed to commute with each other (see the discussion following \cref{iterated mapping cone def}), 
we have a 
map of relative chain complexes 
$$t_{2} \colon C_{i}\big(\text{Conf}_{\partic}(M),\text{Conf}_{\partic -1}(M);\mathbb{F}_{p}\big)\to  C_{i+2p-2}\big(\text{Conf}_{\partic+2p}(M),\text{Conf}_{\partic+2p -1}(M);\mathbb{F}_{p}\big)$$
This map of relative chain complexes is called the \textit{secondary stabilization map}.
Similar to the isomorphism $$t_{y_{1}} \colon H_{i} (\text{Conf}_{\partic}(\mathbb{R}^{2}),\text{Conf}_{\partic -1}(\mathbb{R}^{2});\mathbb{F}_{p})\to  H_{i+2p-2} (\text{Conf}_{\partic +2p}(\mathbb{R}^{2}),\text{Conf}_{\partic +2p -1}(\mathbb{R}^{2});\mathbb{F}_{p})$$ 
from \cref{prop:conf-stab}, the secondary stabilization map $t_{2}$ induces an isomorphism of relative homology groups. This phenomenon is called \textit{secondary homological stability}. 

To prove secondary homological stability, we pass to compactly supported cohomology. Let $y_{1}^{*}\in H^{2p+2}_{c}(\text{Conf}_{2p}(\mathbb{R}^{2});\mathbb{F}_{p})$ denote the compactly supported cohomology class corresponding to $y_{1}$ under Poincar\'e duality. We have a map $$t_{2}\colonequals t_{y_{1}^{*}}\colon C^{*}_{c}(\text{Conf}_{\partic}(M);\mathbb{F}_{p})\to C^{*+2p+2}_{c}(\text{Conf}_{\partic+2p}(M);\mathbb{F}_{p})$$
and a relative cochain complex
$$C^{*}_{c}(\text{Conf}_{\partic}(M),\text{Conf}_{\partic -1}(M);\mathbb{F}_{p})\colonequals \text{Cone}\big (t_{1}\colon C^{*-n}_{c} (\text{Conf}_{\partic-1}(M);\mathbb{F}_{p})\to C^{*}_{c} (\text{Conf}_{\partic}(M);\mathbb{F}_{p})\big).$$ The maps $t_{1}$ and $t_{2}$ can be constructed so that they commute with each other, so we have a secondary stabilization map for compactly supported cochains
$$t_{2} \colon C^{i}_{c}\big(\text{Conf}_{\partic}(M),\text{Conf}_{\partic -1}(M);\mathbb{F}_{p}\big)\to  C^{i+2p+2}_{c}\big(\text{Conf}_{\partic+2p}(M),\text{Conf}_{\partic+2p -1}(M);\mathbb{F}_{p}\big).$$ By Poincar\'e duality, the following proposition is equivalent to secondary homological stability.
\begin{proposition}
\label{compactly supported cohomology version}
Let $M$ be an open connected surface and let $D(p, \srange, \partic )$ denote the constant from \cref{stable ranges for the plane}.
\hfill\begin{enumerate}
    \item Suppose that $p$ is $2$. The secondary stabilization map induces an isomorphism
$$ t_{2} \colon H^{i}_{c}(\text{Conf}_{\partic}(M),
    \text{Conf}_{\partic -1}(M);\mathbb{F}_{2})\to 
    H^{i+3}_{c}(\text{Conf}_{\partic +2}(M),\text{Conf}_{\partic +1}(M);\mathbb{F}_{2})$$
for $i> 2\partic - D(2, 2, \partic )$ and a surjection for $i= 2\partic - D(2, 2, \partic )$.
    \item Suppose that $p$ is an odd prime and that $M$ is orientable. The secondary stabilization map induces an isomorphism
$$ t_{2} \colon H^{i}_{c}(\text{Conf}_{\partic}(M),
    \text{Conf}_{\partic -1}(M);\mathbb{F}_{p})\to 
    H^{i+2p+2}_{c}(\text{Conf}_{\partic +2p}(M),\text{Conf}_{\partic +2p-1}(M);\mathbb{F}_{p})$$
for $i> 2\partic - D(p, 2, \partic )$ and a surjection for $i= 2\partic - D(p, 2, \partic )$.
\end{enumerate}
\end{proposition}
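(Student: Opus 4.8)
The plan is to run the handle induction of \textcite[\S\S 3--4]{MR3344444} that proves \cref{homological stability in dimension 2}, but carried out on the relative (iterated mapping cone) complexes rather than on $C_*(\text{Conf}(-))$, feeding in the Cohen--Lada--May computation recalled in \cref{sec:base case} as the base case. As in that argument one works with compactly supported cohomology throughout: a handle decomposition of a manifold produces long exact sequences in $H^*_c$ via \cref{long exact sequence in csc}, not in ordinary homology, and $H^*_c$ turns the exhaustion of an open $M$ into a well-behaved inverse limit. By \cref{stabilization by embedding} with $\srange = 2$, choose the embedding $M \sqcup \mathbb{R}^2 \sqcup \mathbb{R}^2 \hookrightarrow M$ compatibly with an exhaustion of $M$ by interiors of compact surfaces $\bar M_j$, each with a single $0$-handle; this makes the primary and secondary stabilization maps $t_1,t_2$ strictly compatible with one another (see the discussion after \cref{iterated mapping cone def}) and with all the extension-by-zero maps below, so every long exact sequence produced carries compatible $t_1$- and $t_2$-actions. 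Since $H^*_c$ and the relative cone groups send $\operatorname{colim}_j \operatorname{int}\bar M_j$ to an inverse limit whose transition maps are eventually isomorphisms in each fixed degree, it suffices to treat each compact $\bar M_j$ in a range growing with $j$.

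The base case is a single $0$-handle, i.e.\ $M = \mathbb{R}^2$. Here the relative complex $C_*(\text{Conf}_k(\mathbb{R}^2),\text{Conf}_{k-1}(\mathbb{R}^2))$ is, by construction, the iterated mapping cone $\iteratedmap{\mathbb{R}^2}{t_e}{1}$ of the primary stabilization map restricted to $k$ particles, and $t_2 = t_{y_1}$ (respectively $t_{x_1}$ when $p=2$) acts on it; by \cref{prop:conf-stab} this map is an isomorphism on homology for $i \le D(p,2,k)$ and a surjection at $i = D(p,2,k)$. Since $\text{Conf}_k(\mathbb{R}^2)$ is an open $2k$-manifold, Poincar\'e--Lefschetz duality identifies its relative homology in degree $i$ with the relative $H^*_c$ in degree $2k - i$ and matches $t_{y_1}$ with $t_2$ exactly as for $t_1$ in the paragraph preceding \cref{homological stability in dimension 2}; dualizing the range gives the statement for $\mathbb{R}^2$. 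For $k$ large, $2k - D(p,2,k) < 2k - D(p,1,k)$, so this range of degrees genuinely exceeds the one in which homological stability already forces the relative groups to vanish.

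For the inductive step, write a compact $\bar M_j$ as $W' \cup_\phi H^\lambda$ with $H^\lambda$ a handle of index $\lambda \in \{0,1\}$ and $W'$ having one fewer handle; the case $\lambda = 0$ is a disjoint union and reduces to the base case via the K\"unneth theorem, so take $\lambda = 1$. Deleting the cocore $D$ of $H^\lambda$ identifies $W \setminus D$ with $\operatorname{int}W'$ up to isotopy, so the subspace of $\text{Conf}_k(W)$ of configurations meeting $D$ is closed with open complement $\text{Conf}_k(\operatorname{int}W')$, and \cref{long exact sequence in csc} produces a long exact sequence relating $H^*_c$ of $\text{Conf}_k(\operatorname{int}W')$, of $\text{Conf}_k(W)$, and of a term expressed through configuration spaces of lower-complexity open surfaces (the cocore and its normal sphere data) --- precisely the bookkeeping of \textcite[\S 3]{MR3344444}. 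Each building block satisfies secondary stability by the inductive hypothesis, with quantitative ranges governed by \cref{stable ranges for the plane}; assembling these sequences over all $k$, passing to iterated mapping cones along the compatible $t_1$-maps, and comparing the induced $t_2$-maps by the five lemma yields that $t_2$ is an isomorphism for $i > 2k - D(p,2,k)$ and a surjection at $i = 2k - D(p,2,k)$ for $W = \bar M_j$. Letting $j \to \infty$ finishes the proof; the $p = 2$ case is identical with $x_1$ and the constant $D(2,\cdot,\cdot)$ in place of $y_1$ and $D(p,\cdot,\cdot)$.

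The main obstacle is purely quantitative, exactly as in the homological-stability case: one must verify that the range $i > 2k - D(p,2,k)$ is preserved under attaching a handle, and that the estimates for the auxiliary configuration spaces coming from \cref{stable ranges for the plane} are sharp enough for the five-lemma comparison to close in that range. The one genuinely new point over \cref{homological stability in dimension 2} is that the long exact sequences must be arranged to commute with $t_1$ --- needed to form the relative complexes --- as well as with $t_2$, which is why \cref{stabilization by embedding} is invoked with $\srange = 2$ and why the commutation of $t_1$ and $t_2$ is used; beyond this, no idea not already present in \cref{sec:base case} and \textcite{MR3344444} is required.
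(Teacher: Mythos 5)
Your proposal is correct and is essentially the paper's argument: reduce via \cref{open manifold has exhaustion by compact manifold with finite handle decomposition} and \cref{stabilization by embedding} to the interior of a compact surface with a single $0$-handle, run the Segal--Kupers--Miller five-lemma induction in compactly supported cohomology on the relative groups with base case \cref{prop:conf-stab} fed in through Poincar\'e duality; the only organizational difference is that you induct handle-by-handle on the cocores, whereas the paper fixes the $1$-complex $X$ of \cref{handle-decomp} once (so $M\setminus X\cong\mathbb{R}^{2}$) and performs a single downward induction on the filtration $\conffilt^{j}_{k}(M)$ by the number of points in $X$ (\cref{filtered case}). The quantitative check you defer does close, exactly as computed at the end of the proof of \cref{filtered case}: on the stratum $\text{Conf}_{k-j}(\mathbb{R}^{2})\times\text{Conf}_{j}(X)$ the second K\"unneth factor contributes degree at most $j$, and $2(k-j)-D(p,2,k-j)+j\leq 2k-D(p,2,k)$ because the difference equals $-j/p^{2}\leq 0$.
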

By the following lemma and \cref{stabilization by embedding}, it suffices to prove this proposition when $M$ is the interior of a compact surface admitting a finite handle decomposition with a single 0-handle. 
\begin{lemma}[e.g. {\textcite[Lemma 2.1]{MR3344444}}]\label{open manifold has exhaustion by compact manifold with finite handle decomposition}
Every open connected manifold has an exhaustion by compact manifolds admitting a finite handle decomposition with a single 0-handle.
\end{lemma}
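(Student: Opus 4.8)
The plan is to reduce the whole statement to a single construction: a \emph{proper} Morse function $f\colon M\to[0,\infty)$ with a unique local minimum, which we arrange to lie at level $0$. Granting such an $f$, choose regular values $c_1<c_2<\cdots$ with $c_j\to\infty$ and set $\bar M_j := f^{-1}([0,c_j])$. Since $f$ is proper, each $\bar M_j$ is compact and $M=\bigcup_j\operatorname{int}\bar M_j$. Since $f$ has a unique local minimum, every component of every sublevel set contains that minimum---a component of $f^{-1}([0,c_j])$ is an $n$-dimensional submanifold with boundary, so its $f$-minimum is an interior point and hence a local minimum of $f$ on $M$---and therefore each $\bar M_j$ is connected. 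Finally, $f|_{\bar M_j}$ has finitely many critical points (compactness) and exactly one of index $0$, so the handle decomposition of $\bar M_j$ that it induces is finite with a single $0$-handle. This gives the asserted exhaustion.

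It remains to produce the proper Morse function with a unique local minimum. Since $M$ is connected it is second countable, hence $\sigma$-compact, so it admits a proper smooth function $g\colon M\to[0,\infty)$ (build one from a locally finite partition of unity subordinate to a compact exhaustion), which a $C^\infty$-small perturbation makes Morse while preserving properness. The function $g$ may have many, even infinitely many, local minima, and these must be eliminated. For this I would use the classical handle-trading move: given a local minimum $p$ other than a fixed base minimum, the first critical point along a descending path from $p$ toward the base at which the two descending basins merge has index $1$; after reordering so that this index-$1$ point immediately succeeds $p$, the Morse-theoretic cancellation lemma removes the pair, modifying the function only near these two points. Because minima may escape to infinity, the cancellations are organized by a diagonal argument: at stage $j$ one arranges that all local minima in $g^{-1}([0,j])$ have been merged to one, altering the function only on a region pushed further out at each stage, so the process stabilizes on every compact set and the limit $f$ is a proper Morse function with a single local minimum.

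The step I expect to be the main obstacle is precisely this inductive assembly of handle-trading moves on a noncompact manifold: one must ensure the cancellations terminate on each compact piece, do not destroy properness, and are mutually compatible so that a well-defined limit function exists. Each individual cancellation is standard (see, e.g., Milnor's \emph{Lectures on the $h$-Cobordism Theorem}, or Gompf--Stipsicz's book on $4$-manifolds), but the bookkeeping for the infinite process is the real content. As the statement is classical, an acceptable shortcut is to cite \textcite[Lemma 2.1]{MR3344444} directly; the argument above reproduces the content of that reference.
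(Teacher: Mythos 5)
The paper does not prove this lemma at all; it is quoted with the citation to \textcite[Lemma 2.1]{MR3344444}, so there is no in-paper argument to compare against. Judged on its own terms, your proposal has a genuine gap exactly where you flag it: the existence of a \emph{proper} Morse function on a noncompact manifold with a \emph{unique} local minimum is the entire content of your argument, and the infinite sequence of $0$--$1$ handle cancellations is only sketched. You do not show that the cancelling region for a far-out minimum can be kept away from the part of the manifold already treated (the merging saddle for a minimum in $g^{-1}([j,j+1])$ may lie at a much higher level, and the gradient path joining them may re-enter $g^{-1}([0,j])$), nor that the modifications are eventually constant on each compact set, nor that properness survives the limit. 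Since you yourself identify this as ``the real content,'' the proof as written is incomplete. (The reduction from such an $f$ to the lemma is correct, including the nice observation that a unique local minimum forces each sublevel set at a regular value to be connected.)

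The gap is also avoidable, because the lemma does not require the handle decompositions of the various $\bar M_j$ to come from a single global Morse function: each compact piece may be treated independently. Take any exhaustion $K_1\subset K_2\subset\cdots$ of $M$ by compact codimension-zero submanifolds with $K_i\subset\operatorname{int}K_{i+1}$; make each $K_i$ connected by adjoining thickened arcs (possible since $M$ is connected and each $K_i$ has finitely many components). Each connected compact manifold with boundary admits a finite handle decomposition, and the standard \emph{finite} cancellation argument (extra $0$-handles must be joined to the rest by $1$-handles, since handles of index $\geq 2$ cannot merge components) reduces it to a single $0$-handle. This yields the exhaustion with no infinite process and no properness issues. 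I would recommend either citing \textcite[Lemma 2.1]{MR3344444} outright, as the paper does, or replacing the global Morse-function construction with this piecewise argument.
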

As a result, we can prove stability for a general connected open manifold $M$ from a colimit argument involving an exhaustion of $M$ (see \textcite[p.~10]{MR3344444}).
The main reasons for assuming $M$ is the interior of such a compact manifold are the following well-known results.
\begin{lemma}[e.g. {\textcite[Lemma 2.2]{MR3344444}}]\label{handle-decomp}
Let
$M$ be the interior of an $n$-dimensional manifold admitting a finite handle decomposition with a single 0-handle. Then there is closed subspace $X$ of $M$ homeomorphic to an open subset of a finite complex of dimension at most $n-1$, such that $M \setminus X$ is homeomorphic to $\mathbb{R}^{n}$.
\end{lemma}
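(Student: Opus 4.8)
The plan is to read off both $X$ and the Euclidean piece $M\setminus X$ from a handle decomposition, equivalently a Morse function, of a compactification of $M$.

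First I would fix notation. Write $M=\mathrm{int}(\bar M)$, where $\bar M$ is a compact $n$-manifold carrying a handle decomposition $\bar M=h^0\cup h_1\cup\cdots\cup h_r$ with a single $0$-handle $h^0$ and $\mathrm{ind}(h_j)\geq 1$ for all $j$. Realize this by a Morse function $f\colon\bar M\to[0,1]$ with $f^{-1}(1)=\partial\bar M$ a regular level set and with a unique index-$0$ critical point $p_0$ (the center of $h^0$), together with a gradient-like vector field $\xi$ satisfying the Morse--Smale transversality condition; the remaining critical points $p_1,\dots,p_r$ then have index $\geq 1$. Since $f$ strictly decreases along the flow of $-\xi$ and $\partial\bar M$ is the top level, no interior flow line reaches $\partial\bar M$ in forward time, so every point of $M$ converges forward to some $p_j$, and $M$ is the disjoint union of the stable manifolds $W^s(p_0),\dots,W^s(p_r)$, with $\dim W^s(p_j)=n-\mathrm{ind}(p_j)$.

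Next I would set $X:=M\setminus W^s(p_0)=\bigsqcup_{j\geq 1}W^s(p_j)$. By the stable manifold theorem $W^s(p_0)$ is an injectively immersed submanifold diffeomorphic to $\mathbb{R}^{\,n-\mathrm{ind}(p_0)}=\mathbb{R}^n$; being top-dimensional it is an open submanifold of $M$, so $X$ is closed in $M$ and $M\setminus X=W^s(p_0)\cong\mathbb{R}^n$, which is the second assertion of the lemma. For the first assertion I would invoke the CW decomposition of $\bar M$ dual to the handle decomposition: under the Morse--Smale condition the closed stable manifolds are the closed cells of a finite CW structure on $\bar M$ --- one $n$-cell $W^s(p_0)$ coming from $h^0$, one cell of dimension $n-\mathrm{ind}(p_j)\leq n-1$ for each $j\geq 1$, and the cells of a CW structure on the $(n-1)$-manifold $\partial\bar M$. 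Deleting the single open top cell $W^s(p_0)$ leaves a finite subcomplex $L\subseteq\bar M$ of dimension at most $n-1$ that contains $\partial\bar M$, and then $X=L\setminus\partial\bar M$ is precisely an open subset of the finite complex $L$, as required.

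The hard part is upgrading the soft statements to the precise topological ones: that $W^s(p_0)$ is genuinely diffeomorphic to $\mathbb{R}^n$ rather than merely a contractible open $n$-manifold (one exhausts it by an increasing sequence of smoothly embedded closed $n$-disks and applies the standard "monotone union of balls" theorem), and that the dual stable manifolds really assemble into an honest CW structure on the space $\bar M$ with controlled attaching maps rather than just a space homotopy equivalent to one (the Thom--Smale complex). Both are classical in Morse theory and handlebody theory but are exactly the places where care is needed. If one prefers to avoid the dual CW structure, an alternative is to take $X$ to be the union of the open cocores of $h_1,\dots,h_r$ --- properly embedded disks of dimension $n-\mathrm{ind}(h_j)\leq n-1$ --- and prove $M\setminus X\cong\mathbb{R}^n$ by a handle-by-handle push-off that absorbs each handle into the previous stage, which produces the homeomorphism explicitly at the cost of a more hands-on argument.
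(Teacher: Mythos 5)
The paper does not actually prove this lemma; it quotes it from Kupers--Miller, and their argument is precisely the one you relegate to your final sentence: induct over the handles, take $X$ to be the union of the (partially open) cocores of the handles of index $\geq 1$, and use the fact that deleting the cocore of a $k$-handle lets you absorb that handle back into the previous stage, so that $M$ minus all the cocores is homeomorphic to the interior of the single $0$-handle, i.e.\ to $\mathbb{R}^{n}$; the cocores are finitely many disjoint disks of dimension $\leq n-1$, so $X$ is visibly an open subset of a finite complex. Your primary route via Morse--Smale stable manifolds is a genuinely different (dual) packaging of the same geometry and is correct in outline, but it buys its conclusion by invoking the Thom--Smale CW structure on $\bar M$, which is a considerably harder theorem than the lemma being proved (its proofs are exactly the compactification-of-flow-lines arguments whose details were historically delicate, and here you need the version for a manifold with boundary on which $f$ is maximal). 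There is also one wrinkle you pass over: the basin $W^{s}(p_{0})$ taken in $\bar M$ will in general meet $\partial \bar M$, so the set you must identify with $\mathbb{R}^{n}$ is $W^{s}(p_{0})\cap M$ rather than $W^{s}(p_{0})$ itself; the clean fix is to attach an open external collar, extend $f$ and $\xi$ over it without new critical points, and run the monotone-union-of-balls argument on the resulting open manifold, which is homeomorphic to $M$. With that repair, and granting the CW-structure theorem, your argument works; but the cocore induction is more elementary and produces the finite complex explicitly, which is why it is the proof in the cited reference and the one worth writing down here.
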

\begin{lemma}[e.g. {\textcite[Lemma 4.1]{MR3344444}}]\label{homology of conf is finitely generated}
Let $\fieldc$ be a ring and $M$ the interior of an $n$-dimensional manifold $\bar{M}$ admitting a finite handle decomposition. The homology $H_{i}(\text{Conf}_{\partic}(M);\fieldc)$ is a finitely-generated $\fieldc$-module for all $i\geq 0$.
\end{lemma}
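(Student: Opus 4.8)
The plan is to reduce the claim to showing that $\text{Conf}_{\partic}(M)$ is homotopy equivalent to a finite CW complex. Granting that, if $\text{Conf}_{\partic}(M)\simeq K$ with $K$ a finite CW complex, then $H_{i}(\text{Conf}_{\partic}(M);\mathbb{Z})\cong H_{i}(K;\mathbb{Z})$ is the homology of a bounded complex of finitely generated free abelian groups, hence a finitely generated abelian group; by the universal coefficient theorem $H_{i}(\text{Conf}_{\partic}(M);\fieldc)$ is then a finitely generated $\fieldc$-module (the $\mathrm{Tor}$-term is finitely generated because $\fieldc$ is Noetherian, and $\fieldc$ is always $\mathbb{Z}$ or $\mathbb{F}_{p}$ in this paper). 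So it suffices to produce a finite CW model for $\text{Conf}_{\partic}(M)$.

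First I would handle the ordered configuration space $F_{\partic}(M)=\{(x_{1},\ldots,x_{\partic})\in M^{\partic}:x_{i}\neq x_{j}\ \text{for}\ i\neq j\}$ by induction on $\partic$, using the Fadell--Neuwirth fiber bundle $F_{\partic}(M)\to F_{\partic-1}(M)$ that forgets the last point. When $\partic=1$ this is just $F_{1}(M)=M=\mathrm{int}(\bar{M})$, homotopy equivalent to the compact manifold-with-boundary $\bar{M}$, and $\bar{M}$ has a finite CW structure since it admits a finite handle decomposition. For the inductive step the fiber of $F_{\partic}(M)\to F_{\partic-1}(M)$ is $M$ with $\partic-1$ points deleted, which is again the interior of a compact manifold with boundary (delete $\partic-1$ disjoint open disks from $\bar{M}$) and hence homotopy equivalent to a finite CW complex by the $\partic=1$ case, while the base $F_{\partic-1}(M)$ is homotopy equivalent to a finite CW complex by induction. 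Pulling the bundle back along a finite CW approximation of the base and building it up over the cells of that approximation shows that $F_{\partic}(M)$ is homotopy equivalent to a finite CW complex.

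Finally I would pass to the unordered space. The covering $F_{\partic}(M)\to\text{Conf}_{\partic}(M)$ is free of degree $\partic!$, so the Cartan--Leray spectral sequence $H_{p}(\Sigma_{\partic};H_{q}(F_{\partic}(M);\fieldc))\Rightarrow H_{p+q}(\text{Conf}_{\partic}(M);\fieldc)$ has finitely generated $E^{2}$-page: $H_{q}(F_{\partic}(M);\fieldc)$ is finitely generated over $\fieldc$ by the previous two paragraphs, and the homology of a finite group with finitely generated coefficients over a Noetherian ring is again finitely generated. Hence $H_{i}(\text{Conf}_{\partic}(M);\fieldc)$ is finitely generated. (More slickly, one could instead observe that the Fulton--MacPherson compactification relative to $\bar{M}$ realizes $\text{Conf}_{\partic}(M)$ directly as the interior of a compact manifold with corners --- the $\Sigma_{\partic}$-action on the compactified ordered space remains free --- and the interior of a compact manifold with corners is homotopy equivalent to that manifold, hence to a finite CW complex.) I expect the only genuinely delicate point to be precisely this passage from ordered to unordered configurations, since $\text{Conf}_{\partic}$ carries no forgetful fibration of its own; the rest is standard CW-theoretic and point-set bookkeeping.
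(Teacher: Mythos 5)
The paper does not prove this lemma at all: it is quoted verbatim from Kupers--Miller \cite[Lemma 4.1]{MR3344444} and used as a black box, so there is no in-paper argument to match yours against. Your proof is a legitimate self-contained argument and is essentially correct. Two small points of care. First, in the inductive step your phrase ``pulling the bundle back along a finite CW approximation of the base and building it up over the cells'' proves more than you need (finiteness of homotopy type); for the lemma it suffices to run the Serre spectral sequence with local coefficients, $E^{2}_{pq}=H_{p}(F_{\partic-1}(M);\mathcal{H}_{q}(\mathrm{fiber}))$, whose $E^{2}$-page is computed by a bounded complex of finitely generated modules once the base has finite CW type and the fiber has degreewise finitely generated homology --- this sidesteps any finiteness-obstruction worries. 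Second, your Cartan--Leray step implicitly uses that $H_{p}(\Sigma_{\partic};A)$ is finitely generated over $\fieldc$ for $A$ finitely generated; this holds because $\Sigma_{\partic}$ is finite, so $\fieldc$ admits a resolution by finitely generated free $\fieldc[\Sigma_{\partic}]$-modules, but it is worth saying. You are also right to flag Noetherianity: as literally stated for an arbitrary ring the lemma is false (e.g.\ $H_{2}(\text{Conf}_{2}(\mathbb{R}^{3});\fieldc)\cong\ker(2\colon\fieldc\to\fieldc)$ need not be finitely generated over a non-Noetherian $\fieldc$), but every coefficient ring actually used in the paper is Noetherian, so nothing is lost. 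The alternative you sketch via the Fulton--MacPherson compactification is also a standard and valid route.
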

To prove \cref{compactly supported cohomology version} we use a five lemma argument due to   \textcite[Appendix A]{MR533892} involving \cref{long exact sequence in csc} and the following filtration on $\text{Conf}(M)$.
\begin{definition}\label{filt for Conf(M)}
Suppose that $M$ is the interior of an $n$-dimensional manifold admitting a finite handle decomposition with a single 0-handle. Pick a closed subspace $X\subset M$ with the properties described in \cref{handle-decomp}.
Define $\conffilt^{j}_{\partic}(M)$ to be the subspace of $\text{Conf}_{\partic}(M)$ with at least $j$ points in $X$.
\end{definition}
Note that $\conffilt^{j}_{\partic}(M)$ is empty for $j>\partic$, $\conffilt^{j+1}_{\partic}(M)\subset \conffilt^{j}_{\partic}(M)$, the complement $\conffilt^{j}_{\partic}(M)\setminus \conffilt^{j+1}_{\partic}(M)$ is homeomorphic to $\text{Conf}_{\partic-j}(M \setminus X )\times \text{Conf}_{j}(X)$, and $\conffilt^{j}_{\partic}(M)$ is closed for all $j, \partic \geq 0$.

Recall that the construction of the secondary stabilization map depends on choosing 
an embedding
$\embed\colon M\sqcup D^{n}_{1}\sqcup D^{n}_{2}\to M$ such that the restriction of $\embed$ to $M$ is isotopic to the identity. It is implicit in the proofs of \cref{stabilization by embedding} and \cref{handle-decomp} and in the definition of the stabilization map that we can choose $X\subset M$ and $\embed$ such that $\embed$ restricts to an embedding $\embed\colon  (M\setminus X)\sqcup D^{n}_{1}\sqcup D^{n}_{2}\to M\setminus X$ with the restriction of $\embed$ to $M\setminus X$ isotopic to the identity. Therefore, the map 
$$\text{Conf}(\embed)\colon \text{Conf}_{\partic}(M)\times \text{Conf}_{l_{1}}(D^{1})\times \text{Conf}_{l_{2}}(D^{2})\to \text{Conf}_{\partic+ l_{1} + l_{2}}(M)$$
restricts to a map $\text{Conf}(\embed)\colon\conffilt^{j}_{\partic}(M)\times \text{Conf}_{l_{1}}(D^{1})\times \text{Conf}_{l_{2}}(D^{2})\to \conffilt^{j}_{\partic+l_{1}+l_{2}}(M)$ for all $j$. As a result, by restriction we have maps $t_{1}\colon C^{i-n}_{c}(\conffilt^{j}_{\partic-1}(M);\fieldc)\to C^{i}_{c}(\conffilt^{j}_{\partic}(M);\fieldc)$ and $t_{2}\colon C^{i}_{c}(\conffilt^{j}_{\partic}(M);\fieldc)\to  C^{i+3}_{c}(\conffilt^{j}_{\partic +2}(M);\fieldc)$
which commute. So we have an induced map
$$t_{2} \colon  C^{i}_{c}(\conffilt^{j}_{\partic}(M),\conffilt^{j}_{\partic -1}(M);\fieldc)\to  C^{i+3}_{c}(\conffilt^{j}_{\partic +2}(M), \conffilt^{j}_{\partic +1}(M);\fieldc).$$

\cref{compactly supported cohomology version} is a consequence of the following proposition.
\begin{proposition}\label{filtered case}
Let $M$ be the interior of a connected surface admitting a finite handle decomposition with a single $0$-handle. Pick $X\subset M$ as in \cref{handle-decomp}. 
Let $D(p, \srange, \partic )$ denote the constant from \cref{stable ranges for the plane}.
\hfill\begin{enumerate}
    \item Suppose that $p$ is $2$. The map
$$t_{2} \colon  H^{i}_{c}(\conffilt^{j}_{\partic}(M),\conffilt^{j}_{\partic -1}(M);\mathbb{F}_{2})\to  H^{i+3}_{c}(\conffilt^{j}_{\partic +2}(M), \conffilt^{j}_{\partic +1}(M);\mathbb{F}_{2})$$
is an isomorphism for 
$i>  2k- D(2, 2 , \partic )$ and a surjection for $i=2k- D(2, 2 , \partic )$.
\item Suppose that $p$ is an odd prime and that $M$ is orientable. The map
$$ t_{2} \colon  H^{i}_{c}(\conffilt^{j}_{\partic}(M),\conffilt^{j}_{\partic -1}(M);\mathbb{F}_{p})\to  H^{i+2p+2}_{c}(\conffilt^{j}_{\partic +2p}(M), \conffilt^{j}_{\partic -1+2p}(M);\mathbb{F}_{p})$$
is an isomorphism for 
$i>  2k- D(p, 2 , \partic )$ and a surjection for $i=2k- D(p, 2 , \partic )$.
\end{enumerate}
\end{proposition}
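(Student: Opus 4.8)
The plan is to run the five-term exact sequence argument of \textcite[Appendix A]{MR533892}, in the streamlined form of \textcite[Section~4]{MR3344444}, but carried out in the relative setting (the mapping cone of the primary stabilization $t_1$) and with the secondary class $y_1$, resp.\ $x_1$, in place of the class of a point, feeding in the computation of the stable ranges for $\text{Conf}(\mathbb{R}^{2})$ from \cref{prop:conf-stab}. The two cases of the proposition are formally identical after replacing the data $(y_1,\,2p,\,2p+2,\,D(p,2,\partic))$ by $(x_1,\,2,\,3,\,D(2,2,\partic))$, so I would treat the odd case and remark that the $\mathbb{F}_{2}$ case is the same.

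\emph{Step 1: the strata.} First I would note that $t_1$ and $t_2$ are compatible with the filtration $\conffilt^{\bullet}$: the extra points are added inside $M\setminus X\cong\mathbb{R}^{2}$, so neither map changes the number of points lying in $X$, and — as is implicit in the choices of embeddings made just before the proposition — the defining embeddings can be taken to restrict to the complements of the subspaces $X_i$. Consequently, on the stratum
\[
\conffilt^{j}_{\partic}(M)\setminus\conffilt^{j+1}_{\partic}(M)\;\cong\;\text{Conf}_{\partic-j}(\mathbb{R}^{2})\times\text{Conf}_{j}(X),
\]
both $t_1$ and $t_2$ act only on the $\text{Conf}_{*}(\mathbb{R}^{2})$-factor, as the maps $t_e$ and $t_{y_1}$. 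Applying the K\"unneth theorem for compactly supported cohomology with $\mathbb{F}_{p}$ coefficients, and Poincar\'e duality (in its relative form) on the manifold $\text{Conf}_{\partic-j}(\mathbb{R}^{2})$, the restriction of $t_2$ to the relative compactly supported cohomology of the stratum becomes the tensor product of the dual of the secondary-stability statement for $\text{Conf}(\mathbb{R}^{2})$ in \cref{prop:conf-stab} (with $\srange=1$) with the identity on $H^{*}_{c}(\text{Conf}_{j}(X))$. Since $X$ is an open subset of a finite complex of dimension $\le n-1=1$, we have $H^{i_2}_{c}(\text{Conf}_{j}(X))=0$ for $i_2>j$, so a short computation shows that $t_2$ on the relative cohomology of the stratum is an isomorphism once $i\ge 2\partic-j-D(p,2,\partic-j)$; the identity
\[
D(p,2,\partic)-D(p,2,\partic-j)\;=\;\tfrac{p^{2}-1}{p^{2}}\,j\;\le\;j
\]
(and $D(2,2,\partic)-D(2,2,\partic-j)=\tfrac{3}{4}\,j\le j$ when $p=2$) shows that this range contains — in fact slightly exceeds — the range $\{\,i\ge 2\partic-D(p,2,\partic)\,\}$, which is exactly what will let the induction below close.

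\emph{Step 2: induction on $j$.} I would then induct downwards on $j$. When $j$ exceeds every particle count appearing, all four relative groups are those of empty spaces and the statement is vacuous. For the inductive step, the closed inclusion $\conffilt^{j+1}_{\partic}(M)\subset\conffilt^{j}_{\partic}(M)$ with open complement the stratum above gives, via \cref{long exact sequence in csc} at particle counts $\partic$ and $\partic-1$ and then passing to mapping cones of $t_1$ (which respects the stratification and the long exact sequences), a long exact sequence relating the relative compactly supported cohomology groups of $\conffilt^{j}_{\partic}(M)$, of $\conffilt^{j+1}_{\partic}(M)$, and of the stratum; and $t_2$ is a map from this sequence to its analogue with $(\partic,i)$ replaced by $(\partic+2p,\,i+2p+2)$. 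On the stratum terms $t_2$ is an isomorphism in the range of Step~1, and on the $\conffilt^{j+1}$-terms it is an isomorphism, resp.\ a surjection, in the asserted range by the inductive hypothesis; the five lemma then yields that $t_2$ on $H^{i}_{c}(\conffilt^{j}_{\partic}(M),\conffilt^{j}_{\partic-1}(M))$ is an isomorphism for $i>2\partic-D(p,2,\partic)$ and a surjection for $i=2\partic-D(p,2,\partic)$. Finite generation of the groups involved (\cref{homology of conf is finitely generated}) is available if one prefers to argue with dimensions, though it is not strictly needed.

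\emph{Main obstacle.} The structural ingredients — the filtration, the long exact sequence, the five lemma — are routine once set up this way, so the real work is the bookkeeping of ranges: one must check that on \emph{every} stratum, and uniformly in $j$, the dualized input from \cref{prop:conf-stab} is an isomorphism in a range strictly containing the target range for $\conffilt^{j}_{\partic}(M)$. This is precisely where the affine-linear form of $D(p,\srange,\partic)$ (slope strictly below $1$) and the bound $\dim X\le n-1$ on the handle ``spine'' enter, and it is the step I would carry out most carefully; the fact that the resulting range is optimal for $M=\mathbb{R}^{2}$ (where $X=\emptyset$) serves as a check that the accounting is tight.
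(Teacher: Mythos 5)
Your proposal is correct and follows essentially the same route as the paper: downward induction on $j$, the long exact sequence of \cref{long exact sequence in csc} for the closed inclusion $\conffilt^{j+1}_{\partic}\subset\conffilt^{j}_{\partic}$, the five lemma, and then K\"unneth, Poincar\'e duality, \cref{prop:conf-stab}, and the bound $\dim X\le 1$ on the stratum $\text{Conf}_{\partic-j}(\mathbb{R}^{2})\times\text{Conf}_{j}(X)$. Your explicit verification that $D(p,2,\partic)-D(p,2,\partic-j)=\tfrac{p^{2}-1}{p^{2}}j\le j$ is exactly the range bookkeeping the paper carries out in its displayed computation of $\deg(x\otimes y)$.
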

\begin{proof}
We prove this proposition for the case $p$ is odd since the proof for the case $p$ is $2$ is similar. The proof is essentially the same as \textcite[Propositionn A.1]{MR533892} and \textcite[Proposition 4.4]{MR3344444}, but with slightly different maps and ranges. Let $\conffilt^{j}_{\partic}$ denote $\conffilt^{j}_{\partic}(M)$.

Consider the long exact sequence of \cref{long exact sequence in csc} associated to the inclusion of pairs $(\conffilt^{j+1}_{\partic},\conffilt^{j+1}_{\partic -1})\subset (\conffilt^{j}_{\partic},\conffilt^{j}_{\partic-1})$ coming from the inclusion $\conffilt^{j+1}_{\partic}\subset \conffilt^{j}_{\partic}$
$$\cdots\to H^{i}_{c}(\conffilt^{j}_{\partic}\setminus \conffilt^{j+1}_{\partic}, \conffilt^{j}_{\partic-1}\setminus \conffilt^{j+1}_{\partic-1};\mathbb{F}_{p})\to H^{i}_{c}(\conffilt^{j}_{\partic}, \conffilt^{j}_{\partic-1};\mathbb{F}_{p})\to H^{i}_{c}(
\conffilt^{j+1}_{\partic}, \conffilt^{j+1}_{\partic-1};\mathbb{F}_{p})\to \cdots .$$
The restriction of $t_{2}\colon  C_{*}(\conffilt^{j}_{\partic},\conffilt^{j}_{\partic-1};\mathbb{F}_{p})\to C_{*+1}(\conffilt^{j}_{\partic+1},\conffilt^{j}_{\partic+2};\mathbb{F}_{p})$ to $$C_{*}(\conffilt^{j}_{\partic}\setminus \conffilt^{j+1}_{\partic}, \conffilt^{j}_{\partic-1}\setminus \conffilt^{j+1}_{\partic-1};\mathbb{F}_{p})=C_{*}(\text{Conf}_{k-j}(M\setminus X)\times \text{Conf}_{j}(X),\text{Conf}_{k-1-j}(M\setminus X)\times \text{Conf}_{j}(X);\mathbb{F}_{p})$$ is the map $t_{2}\times \text{id}_{*}\colonequals t_{2}\times (\text{id}_{\text{Conf}_{j}(X)})_{*}$.
%
%
From the discussion following \cref{long exact sequence in csc}, the secondary stabilization map $t_{2}$ induces a map between long exact sequences:
\begin{center}\begin{tikzcd}
\vdots\arrow[d] & \vdots\arrow[d]\\
H^{i}_{c}(\conffilt^{j}_{\partic}\setminus \conffilt^{j+1}_{\partic}, \conffilt^{j}_{\partic-1}\setminus \conffilt^{j+1}_{\partic-1};\mathbb{F}_{p})
\arrow[r, "t_2\times\text{id}_{*}"]\arrow[d] &
H^{i+2p+2}_{c}(\conffilt^{j}_{\partic+2p}\setminus \conffilt^{j+1}_{\partic+2p}, \conffilt^{j}_{\partic-1+2p}\setminus \conffilt^{j+1}_{\partic-1+2p};\mathbb{F}_{p})
\arrow[d]\\
H^{i}_{c}(\conffilt^{j}_{\partic}, \conffilt^{j}_{\partic-1};\mathbb{F}_{p})
\arrow[r,"t_{2}"]\arrow[d] & 
H^{i+2p+2}_{c}(\conffilt^{j}_{\partic+2p},\conffilt^{j}_{\partic-1+2p};\mathbb{F}_{p})
\arrow[d]\\
H^{i}_{c}(
\conffilt^{j+1}_{\partic}, \conffilt^{j+1}_{\partic-1};\mathbb{F}_{p})
\arrow[r, "t_{2}"]\arrow[d]
& 
H^{i+2p+2}_{c}(\conffilt^{j+1}_{\partic+2p},\conffilt^{j+1}_{\partic-1+2p};\mathbb{F}_{p})
\arrow[d]
\\ \vdots & \vdots
\end{tikzcd}\end{center}

We want to show that the middle horizontal map
\begin{equation}\tag*{$(*)_{\partic}$}\label{secondary stability iso on filtration of Conf(M)}
t_{2} \colon  H^{i}_{c}(\conffilt^{j}_{\partic}, \conffilt^{j}_{\partic -1};\mathbb{F}_{p})
\to  H^{i+2p+2}_{c}(\conffilt^{j}_{\partic +2p},\conffilt^{j}_{\partic -1+2p};\mathbb{F}_{p})
\end{equation}
is an isomorphism for $i>  2k- D(p, 2 , \partic )$ and a surjection for $i=  2k- D(p, 2 , \partic )$. 

For $j>\partic +2p$, the pairs $(\conffilt^{j}_{\partic}, \conffilt^{j}_{\partic -1})$ and $(\conffilt^{j}_{\partic +2p}, \conffilt^{j}_{\partic -1+2p})$ are both empty, so \ref{secondary stability iso on filtration of Conf(M)} holds for $j>\partic +2p$.
Proceed by downward induction on $j$ and suppose that \ref{secondary stability iso on filtration of Conf(M)}
holds for $j+1$. Then by our induction hypothesis, the lower horizontal map $$t_{2} \colon H^{i}_{c}(
\conffilt^{j+1}_{\partic}, \conffilt^{j+1}_{\partic-1};\mathbb{F}_{p})
\to 
H^{i+2p+2}_{c}(\conffilt^{j+1}_{\partic+2p},\conffilt^{j+1}_{\partic-1+2p};\mathbb{F}_{p})$$ is an isomorphism for $i>  2k- D(p, 2 , \partic )$ and a surjection for $i=2k- D(p, 2 , \partic )$. To prove that \ref{secondary stability iso on filtration of Conf(M)} holds, by the five lemma it suffices to show:
\begin{equation}\tag*{$(\dagger)_{\partic, j}$}\label{isomorphism on filtration of Conf}
    t_{2}\times \text{id}_{*} \colon  H^{i}_{c}(\conffilt^{j}_{\partic}\setminus \conffilt^{j+1}_{\partic}, \conffilt^{j}_{\partic-1}\setminus \conffilt^{j+1}_{\partic-1};\mathbb{F}_{p}) \to  H^{i+2p+2}_{c}(\conffilt^{j}_{\partic +2p}\setminus \conffilt^{j+1}_{\partic+2p}, \conffilt^{j}_{\partic-1+2p}\setminus \conffilt^{j+1}_{\partic-1+2p};\mathbb{F}_{p})
\end{equation}
 is an isomorphism for $i> 2k- D(p, 2 , \partic -j)$ and a surjection for $i= 2k- D(p, 2 , \partic -j).$

Recall that since $\conffilt^{j}_{\partic}\setminus \conffilt^{j+1}_{\partic}=\text{Conf}_{\partic-j}(M\setminus X)\times \text{Conf}_{j}(X) $ and $M\setminus X$ is homeomorphic to $\mathbb{R}^{2}$, the pair $(\conffilt^{j}_{\partic}\setminus \conffilt^{j+1}_{\partic},\conffilt^{j}_{\partic-1}\setminus \conffilt^{j+1}_{\partic-1})$ is isomorphic to $\big  ( \text{Conf}_{\partic-j}(\mathbb{R}^{2})\times \text{Conf}_{j}(X),\text{Conf}_{\partic-1-j}(\mathbb{R}^{2})\times \text{Conf}_{j}(X)\big )$. By \cref{prop:conf-stab} and Poincar\'e duality,
the secondary stabilization map
$$t_{2}\colon H^{i}_{c}( \text{Conf}_{\partic -j}(\mathbb{R}^{2}), \text{Conf}_{\partic -1-j} (\mathbb{R}^{2});\mathbb{F}_{p})\to  H^{i+2p+2}_{c}( \text{Conf}_{\partic -j+2p}(\mathbb{R}^{2}), \text{Conf}_{\partic -1-j+2p}(\mathbb{R}^{2});\mathbb{F}_{p})$$
is an isomorphism for $i> 2k- D(p, 2 , \partic -j)$ and a surjection for $i= 2k- D(p, 2 , \partic -j)$.

Since we are working with cohomology with coefficients in a field, the K\"unneth theorem holds. Suppose that $x\in H^{a}_{c}(\text{Conf}_{\partic-j+2p}(M\setminus X),\text{Conf}_{\partic-j-1+2p}(M\setminus X);\mathbb{F}_{p})$ and $y\in H^{b}_{c} (\text{Conf}_{j}(X),\text{Conf}_{j-1}(X);\mathbb{F}_{p})$ ,
and that $x\otimes y$ is not in the image of $t_{2}\times \text{id}_{*}$. We have that $\text{deg}(x)<2k- D(p, 2 , \partic -j)$. Since the dimension of $X$ is at most $n-1=1$, $H^{i}_{c}\big (\text{Conf}_{j}(X);\mathbb{F}_{p}\big )=0$ for $i>j$. Since $y\neq 0$, we have $b\leq j$. Since $\text{deg}(x\otimes y)=\text{deg}(x)+\text{deg}(y)$, the homological degree of $x\otimes y$ satisfies
\begin{align*}
\text{deg}(x\otimes y)&<\big( 2(k-j) - D(p, 2 ,\partic -j)\big) +j\\
&=2(k-j) - \frac{(p^{2}-1)(\partic -j)-(2p^{2}-2p-2)}{p^{2}}+j\\
&=(\frac{p^{2}+1}{p^{2}})\partic +\frac{2p^{2}-2p-2}{p^{2}} -\frac{j}{p^{2}}\\
&< (\frac{p^{2}+1}{p^{2}})\partic +\frac{2p^{2}-2p-2}{p^{2}}\\
&=2\partic - \frac{(p^{2}-1)\partic -(2p^{2}-2p-2)}{p^{2}}=2\partic -D(p, 2 ,\partic ). 
\end{align*}
Therefore, $t_{2}\times \text{id}_{*}$ is surjective for $i\geq 2\partic -D(p, 2 ,\partic )$. 
In addition, note that since $M\setminus X$ is homeomorphic to $\mathbb{R}^{2}$, the secondary stabilization map is always injective on $H^{a}_{c}(\text{Conf}_{\partic-j+2p}(M\setminus X),\text{Conf}_{\partic-1-j+2p}(M\setminus X);\mathbb{F}_{p})$ by \cref{prop:conf-stab}. Since $H^{b}_{c}(\text{Conf}_{j}(X),\text{Conf}_{j-1}(X); \mathbb{F}_p)$ is a flat $\mathbb{F}_p$-module, it follows that $t_2 \times \text{id}_*$ is injective and so \ref{isomorphism on filtration of Conf} holds.
\end{proof}
We also have the following analogue of \cref{prop:conf-stab}.
\begin{corollary}\label{higher stab open surface}
Suppose that $M$ is an open connected surface.
Let $x_{\srange}\in H_{*}(\text{Conf}(\mathbb{R}^{2});\mathbb{F}_{2})$ and $y_{\srange}\in H_{*}(\text{Conf}(\mathbb{R}^{2});\mathbb{F}_{p})$ denote the homology classes from \cref{thm-Co}. 
Let $D(p,\srange , \partic )$ denote the constant defined in \cref{stable ranges for the plane}.
\hfill\begin{enumerate}
    \item Suppose that $p$ is $2$.  Let $\pariteratedmap{M}{t_{\displaystyle x_{j}}}{\srange}{\partic}$ denote the \srange-th iterated mapping cone associated to the collection $(t_{1},  t_{\displaystyle x_{1}}, \ldots, t_{\displaystyle x_{\srange-1}})$ as defined in \cref{iterated mapping cone def}.
    The map
    $$ t_{\displaystyle x_{\srange}}\colon H_{i}(\pariteratedmap{M}{t_{\displaystyle x_{j}}}{\srange}{\partic};\mathbb{F}_{2})\to  H_{i+2^{\srange}-1}(\pariteratedmap{M}{t_{\displaystyle x_{j}}}{\srange}{\partic+2^{\srange}};\mathbb{F}_{2})$$
    is an isomorphism for $i< D(2, \srange+1 , \partic )$ and a surjection for $i= D(2, \srange+1 , \partic ).$
    \item Suppose that $p$ is an odd prime and that $M$ is orientable. Let $\pariteratedmap{M}{t_{y_{j}}}{\srange}{\partic}$ denote the \srange-th iterated mapping cone associated to the collection $(t_{1},  t_{\displaystyle y_{1}},\ldots, t_{\displaystyle y_{\srange -1}})$.
    The map
     $$ t_{\displaystyle y_{\srange}}\colon H_{i}(\pariteratedmap{M}{t_{\displaystyle y_{j}}}{\srange}{\partic};\mathbb{F}_{p})\to  H_{i+2p^{\srange}-2}(\pariteratedmap{M}{t_{\displaystyle y_{j}}}{\srange}{\partic+2p^{\srange}};\mathbb{F}_{p})$$
    is an isomorphism for $i< D(p, \srange+1 , \partic )$ and a surjection for $i= D(p, \srange+1, \partic ).$ 
\end{enumerate}
\end{corollary}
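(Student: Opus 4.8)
The plan is to mimic the proof of \cref{filtered case}, using the $\srange$-th iterated mapping cone in place of the relative complex; I write the argument for $p$ odd, the case $p=2$ being identical after replacing the classes $y_{j}$ by the classes $x_{j}$ of \cref{thm-Co}. Since $\text{Conf}_{\partic}(M)$ is a $2\partic$-dimensional manifold, Poincar\'e duality---using the orientation local system of $\text{Conf}_{\partic}(M)$ as a twisted coefficient system when $M$ is non-orientable, exactly as in \cref{compactly supported cohomology version}---shows that the assertion is equivalent to the statement that $t_{y_{\srange}}$ induces an isomorphism
$$H^{i}_{c}(\pariteratedmap{M}{t_{y_{j}}}{\srange}{\partic})\to H^{i+2p^{\srange}+2}_{c}(\pariteratedmap{M}{t_{y_{j}}}{\srange}{\partic+2p^{\srange}})$$
for $i>2\partic-D(p,\srange+1,\partic)$ and a surjection for $i=2\partic-D(p,\srange+1,\partic)$. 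By \cref{open manifold has exhaustion by compact manifold with finite handle decomposition} and a colimit argument over an exhaustion of $M$ (as in the deduction of \cref{compactly supported cohomology version} from \cref{filtered case}), I may assume $M$ is the interior of a connected surface admitting a finite handle decomposition with a single $0$-handle, and I fix $X\subset M$ as in \cref{handle-decomp}, so that $M\setminus X\cong\mathbb{R}^{2}$ and $\dim X\leq 1$.

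Next I would set up the filtered version of the iterated mapping cone. As in the discussion preceding \cref{filtered case}, the embeddings defining the commuting stabilization maps $t_{1},t_{y_{1}},\ldots,t_{y_{\srange}}$ can be chosen simultaneously compatibly with the filtration $\conffilt^{j}_{\partic}(M)$ of \cref{filt for Conf(M)}, so that they add points only into the $M\setminus X\cong\mathbb{R}^{2}$ part; hence they restrict to the closed subspaces $\conffilt^{j}_{\partic}(M)$ and to the pieces $\conffilt^{j}_{\partic}(M)\setminus\conffilt^{j+1}_{\partic}(M)\cong\text{Conf}_{\partic-j}(\mathbb{R}^{2})\times\text{Conf}_{j}(X)$. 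This produces a filtered iterated mapping cone whose $j$-th subquotient is the $\srange$-th iterated mapping cone of $\text{Conf}_{\partic-j}(\mathbb{R}^{2})\times\text{Conf}_{j}(X)$, with the stabilization maps acting only on the first factor. Applying \cref{long exact sequence in csc} to the closed inclusions $\conffilt^{j+1}_{\partic}(M)\subset\conffilt^{j}_{\partic}(M)$ and running downward induction on $j$---the base case $j>\partic+2p^{\srange}$ being trivial since both iterated mapping cones are then acyclic---the five lemma reduces the inductive step to the analogous isomorphism/surjection statement for the $j$-th subquotients.

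For the subquotients, the K\"unneth theorem over $\mathbb{F}_{p}$ and the vanishing $H^{i}_{c}(\text{Conf}_{j}(X))=0$ for $i>j$ (which holds since $\dim X\leq 1$) reduce everything to \cref{prop:conf-stab} applied to $\text{Conf}(\mathbb{R}^{2})$: Poincar\'e duality converts the $\srange$-th part of \cref{prop:conf-stab} into the statement that $t_{y_{\srange}}$ on $H^{*}_{c}(\pariteratedmap{\mathbb{R}^{2}}{t_{y_{j}}}{\srange}{\partic-j})$ is an isomorphism in cohomological degrees $>2(\partic-j)-D(p,\srange+1,\partic-j)$ and is injective in every degree---the injectivity coming from the identification $H_{*}(\iteratedmap{\mathbb{R}^{2}}{t_{e},t_{y_{1}},\ldots,t_{y_{\srange-1}}}{\srange})\cong H_{*}(\text{Conf}(\mathbb{R}^{2});\mathbb{F}_{p})/(e,y_{1},\ldots,y_{\srange-1})$ from the proof of \cref{prop:conf-stab} together with the fact, visible from \cref{thm-Co}, that multiplication by $y_{\srange}$ is injective on this quotient. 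A class $x\otimes y$ with $x$ on the $\text{Conf}(\mathbb{R}^{2})$-factor of degree $<2(\partic-j)-D(p,\srange+1,\partic-j)$ and $\text{deg}(y)\leq j$ then satisfies
$$\text{deg}(x\otimes y)<2(\partic-j)-D(p,\srange+1,\partic-j)+j=2\partic-D(p,\srange+1,\partic)-\frac{j}{p^{\srange+1}}\leq 2\partic-D(p,\srange+1,\partic),$$
so $t_{y_{\srange}}\times\text{id}_{*}$ is surjective for $i\geq 2\partic-D(p,\srange+1,\partic)$; combined with injectivity on the $\text{Conf}(\mathbb{R}^{2})$-factor and flatness of $H^{*}_{c}(\text{Conf}_{j}(X);\mathbb{F}_{p})$ over $\mathbb{F}_{p}$, this gives that $t_{y_{\srange}}\times\text{id}_{*}$ is an isomorphism for $i>2\partic-D(p,\srange+1,\partic)$, completing the induction and hence the proof. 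The main obstacle I anticipate is purely organizational---making precise the filtered iterated mapping cone and the associated long exact sequences in compactly supported cohomology---but this is structurally identical to the $\srange=2$ case carried out before \cref{filtered case}, and once it is in place the rest is the range bookkeeping displayed above.
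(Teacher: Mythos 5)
Your proposal is correct and is essentially the paper's own argument: the paper proves this corollary by applying the argument of \cref{second stability} (i.e.\ the Poincar\'e duality reformulation, the reduction via \cref{open manifold has exhaustion by compact manifold with finite handle decomposition}, and the filtered five-lemma induction of \cref{filtered case}) with $x_{1}$ and $y_{1}$ replaced by $x_{\srange}$ and $y_{\srange}$, exactly as you describe. Your range bookkeeping, including the identity $2(\partic-j)-D(p,\srange+1,\partic-j)+j=2\partic-D(p,\srange+1,\partic)-j/p^{\srange+1}$ and the injectivity input from \cref{prop:conf-stab}, matches the paper's computation.
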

\begin{proof}
Apply the argument of \cref{second stability} by replacing $x_{1}$ and $y_{1}$ with $x_{\srange}$ and $y_{\srange}$, respectively.
\end{proof}
The case $\srange=1$ in \cref{higher stab open surface} proves \cref{secondary stability open case} in the case that $M$ is a surface and the homology is with $\mathbb{F}_{p}$ coefficients. From \cref{higher stab open surface},
we can also prove \cref{secondary stability open case} in the case that $M$ is a surface and the homology is integral homology.
\begin{proposition}\label{integral secondary stability for surface}
Suppose that $M$ is an open connected surface. 
The map 
$$t_{\displaystyle x_{1}}\colon C_{i}(\text{Conf}_{\partic}(M),\text{Conf}_{\partic-1}(M);\mathbb{F}_{2})\to C_{i+1}(\text{Conf}_{\partic+2}(M),\text{Conf}_{\partic+1}(M);\mathbb{F}_{2})$$ lifts to a map of singular chain complexes
$$ t_{2}\colon C_{i}(\text{Conf}_{\partic}(M),\text{Conf}_{\partic-1}(M);\mathbb{Z})\to C_{i+1}(\text{Conf}_{\partic+2}(M),\text{Conf}_{\partic+1}(M);\mathbb{Z})$$
and induces an isomorphism on integral homology for $i< D(3,1,\partic-1)+1=(2/3)k$ and a surjection for $i= D(3,1,\partic-1)+1$.
\end{proposition}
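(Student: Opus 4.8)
The plan is to realize the integral stabilization map as a lift of $t_{x_{1}}$ and to extract its range from the mod-$p$ ranges for all primes $p$ via a finite-generation argument. Since $\text{Conf}_{2}(\mathbb{R}^{2})$ is homotopy equivalent to $S^{1}$, the group $H_{1}(\text{Conf}_{2}(\mathbb{R}^{2});\mathbb{Z})$ is infinite cyclic; fix a generator $\tilde{x}_{1}$, whose mod-$2$ reduction is the class $x_{1}\in H_{1}(\text{Conf}_{2}(\mathbb{R}^{2});\mathbb{F}_{2})$ of \cref{thm-Co} and whose mod-$p$ reduction, for $p$ odd, is a unit multiple of $z_{0}=\browd{e}{e}$. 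Setting $t_{2}\colonequals t_{\tilde{x}_{1}}$ as in \cref{stabilization map first def} gives a map of integral chain complexes reducing mod $2$ to $t_{x_{1}}$; as in \cref{second stability}, $t_{1}$ and $t_{2}$ can be built so as to commute, so $t_{2}$ descends to a map of integral relative chain complexes of the asserted form.

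Let $D_{\partic}$ be the mapping cone of $t_{2}$ on relative chain complexes. Via the long exact sequence of the cone, the isomorphism/surjectivity statement for $t_{2}$ is equivalent to the vanishing of $H_{n}(D_{\partic};\mathbb{Z})$ in a range of degrees matching $i<(2/3)\partic$ (the one-step shift coming from $t_{2}$ raising homological degree by one). Using \cref{open manifold has exhaustion by compact manifold with finite handle decomposition} and a colimit argument over an exhaustion, exactly as in the deduction of \cref{homological stability in dimension 2}, it suffices to take $M$ to be the interior of a compact surface admitting a finite handle decomposition with a single $0$-handle; then $H_{*}(\text{Conf}_{\partic}(M);\mathbb{Z})$, hence $H_{*}(D_{\partic};\mathbb{Z})$, is finitely generated by \cref{homology of conf is finitely generated}. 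Since a finitely generated abelian group vanishes if and only if it does after $\otimes\,\mathbb{F}_{p}$ for every prime $p$, and $H_{n}(D_{\partic};\mathbb{F}_{p})=0$ forces $H_{n}(D_{\partic};\mathbb{Z})\otimes\mathbb{F}_{p}=0$, it is enough to show $H_{n}(D_{\partic};\mathbb{F}_{p})=0$ in the relevant range for every prime $p$.

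For $p=2$, $D_{\partic}\otimes\mathbb{F}_{2}$ is the mapping cone of $t_{x_{1}}$ with $\mathbb{F}_{2}$-coefficients, whose homology vanishes up to degree essentially $D(2,2,\partic)+1=(3/4)\partic+1$ by the $\srange=1$ case of \cref{higher stab open surface}, and $(3/4)\partic\geq(2/3)\partic$. For $p$ odd, $D_{\partic}\otimes\mathbb{F}_{p}$ is, after rescaling $t_{z_{0}}$ by a unit, the mapping cone of $t_{z_{0}}$; when $p\geq5$, \cref{homological stability in dimension 2} shows the relative groups $H_{i}(\text{Conf}_{\partic}(M),\text{Conf}_{\partic-1}(M);\mathbb{F}_{p})$ vanish up to degree $\tfrac{(p-1)(\partic-1)-(p-2)}{p}$, exceeding the required range for all but finitely many $\partic$, so both the source and target relative groups vanish there and $t_{z_{0}}$ is trivially an isomorphism, the finitely many remaining pairs being checked by hand. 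For $p=3$ the relative groups only vanish up to degree $\tfrac{2\partic-3}{3}$, so one must analyze $t_{z_{0}}$ directly: I would rerun the five-lemma and filtration argument of \cref{second stability} and \cref{filtered case} with $z_{0}\in H_{1}(\text{Conf}_{2}(\mathbb{R}^{2});\mathbb{F}_{3})$ in place of $y_{1}$, the base case being an explicit computation of the kernel and cokernel of multiplication by $z_{0}$ on $H_{*}(\text{Conf}(\mathbb{R}^{2});\mathbb{F}_{3})/(e)\cong\mathbb{F}_{3}[y_{1},y_{2},\ldots]\otimes\Lambda_{\mathbb{F}_{3}}[z_{0},z_{1},\ldots]$ from \cref{thm-Co} together with the estimates of \cref{stable ranges for the plane}. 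Combining across all primes then gives $H_{n}(D_{\partic};\mathbb{Z})=0$ in the desired range. The main obstacle is precisely the odd-primary case with $p=3$: since $z_{0}$ is not one of the top-operation classes handled by \cref{higher stab open surface}, its $\mathbb{F}_{3}$-range must be computed independently, and one has to check that it is no worse than $(2/3)\partic$ once intersected with the wider $\mathbb{F}_{2}$- and $\mathbb{F}_{p\geq5}$-ranges.
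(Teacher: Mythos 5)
Your proposal follows the same architecture as the paper's proof: lift $x_{1}$ to the integral generator $\omega$ of $H_{1}(\text{Conf}_{2}(\mathbb{R}^{2});\mathbb{Z})\cong H_{1}(\mathbb{R}P^{1};\mathbb{Z})$, set $t_{2}=t_{\omega}$, reduce (via \cref{open manifold has exhaustion by compact manifold with finite handle decomposition}, \cref{homology of conf is finitely generated}, and universal coefficients) to the vanishing of the mod-$p$ homology of the iterated cone for every prime, and invoke \cref{higher stab open surface} at $p=2$. The only genuine divergence is at the odd primes: the paper disposes of all odd $p$ uniformly by noting that, by \cref{homological stability in dimension 2}, the source and target relative groups already vanish in ranges containing $D(3,1,\partic-1)$, so the cone vanishes for degree reasons and $t_{z_{0}}$ is never analyzed; you instead single out $p=3$ and propose to rerun the filtration and five-lemma argument of \cref{filtered case} for $t_{z_{0}}$, with the base case an explicit computation on $H_{*}(\text{Conf}(\mathbb{R}^{2});\mathbb{F}_{3})/(e)$. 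Your caution is not wasted: at $p=3$ the blanket vanishing range is exactly $D(3,1,\partic-1)=(2\partic-3)/3$, a fraction of a degree below the claimed bound $(2\partic)/3$, so at the top degree (when $3\mid 2\partic$) one does need actual information about $t_{z_{0}}$ rather than vanishing of both sides; and your base case does go through, since on $\mathbb{F}_{3}[y_{1},\ldots]\otimes\Lambda_{\mathbb{F}_{3}}[z_{0},z_{1},\ldots]$ the kernel of $\cdot z_{0}$ is the ideal $(z_{0})$ and the cokernel is spanned by monomials in the $y_{j},z_{j}$ with $j\geq 1$, both supported in slope at least $2/3$ and hence strictly above the required injectivity and surjectivity ranges. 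In short, your route is the paper's route, executed with somewhat more care at $p=3$ (and at small $\partic$ for $p\geq 5$), where the extra analysis you propose is both correct and arguably needed to reach the stated bound on the nose.
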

\begin{proof}
The proof is essentially the same as the proof of \textcite[Theorem 1.4]{MR3344444}. Suppose that $M$ is orientable (if $M$ is not orientable or odd dimensional, one modifies the argument by working with compactly supported cohomology with coefficients in the orientation systems of the configuration spaces instead---see \textcite[p.~71]{MR533892}).
The class $x_{1}\in H_{1}(\text{Conf}_{2}(\mathbb{R}^{2});\mathbb{F}_{2})\cong H_{1}(\mathbb{R}P^{1};\mathbb{F}_{2})$ comes from the class of a loop $\omega\in H_{1}(\mathbb{R}P^{1};\mathbb{Z})$ in $\mathbb{R}P^{1}$. Therefore, we have a map $$t_{2}\colonequals t_{\omega}\colon C_{i}(\pariteratedmap{M}{t_{1}}{1}{\partic};\mathbb{Z}) \to C_{i+1}(\pariteratedmap{M}{t_{1}}{1}{\partic+2};\mathbb{Z})$$
and we just need to show that this map induces an isomorphism on integral homology for $i< D(3,1,\partic-1)+1$ and a surjection for $i= D(3,1,\partic-1)+1$. 

It suffices to show that the homology $H_{i+1}(\pariteratedmap{M}{t_{1},t_{2}}{2}{\partic+2};\mathbb{Z})$ vanishes for $i\leq D(3,1,\partic-1)+1$. We first show that it suffices to show that the homology $H_{i+1}(\pariteratedmap{M}{t_{1},t_{2}}{2}{\partic+2};\mathbb{F}_{p})$ vanishes in the same range for all primes $p$ by an argument in the proof of \textcite[Theorem 18.2]{galatius2018cellular} (see Reduction 2 and Reduction 3) which we now recall. 

Let $\text{Cone}^{2}_{k+2}$ denote $\pariteratedmap{M}{t_{1},t_{2}}{2}{\partic+2}$. By \cref{stabilization by embedding} and \cref{open manifold has exhaustion by compact manifold with finite handle decomposition}, we can assume that $M$ is the interior of a compact manifold with a finite handle decomposition with only one $0$-handle. Since $M$ is the interior of a compact manifold with a finite handle decomposition, by \cref{homology of conf is finitely generated}, $H_{i}(\text{Conf}_{\partic}(M);\mathbb{Z})$ is a finitely generated abelian group for all $i,\partic\geq 0$. Since $H_{i+1}(\text{Cone}^{2}_{k+2};\mathbb{Z})$ is a quotient of $H_{i+1}(\text{Conf}_{\partic+2}(M);\mathbb{Z})$, $H_{i+1}(\text{Cone}^{2}_{k+2};\mathbb{Z})$ is also a finitely generated abelian group.

By the universal coefficient theorem, we have the following short exact sequence:
$$0\to H_{i+1}(\text{Cone}^{2}_{k+2};\mathbb{Z})\otimes \mathbb{F}_{p}\to H_{i+1}(\text{Cone}^{2}_{k+2};\mathbb{F}_{p})\to \text{Tor}^{\mathbb{Z}}_{1}(H_{i}(\text{Cone}^{2}_{k+2};\mathbb{Z}),\mathbb{F}_{p})\to 0.$$
If $H_{i+1}(\text{Cone}^{2}_{k+2};\mathbb{Z})\neq 0$, then $H_{i+1}(\text{Cone}^{2}_{k+2};\mathbb{Z})\otimes \mathbb{F}_{p}$ is also non-zero for some prime $p$ (here, we implicitly use that $H_{i+1}(\text{Cone}^{2}_{k+2};\mathbb{Z})$ is a finitely generated abelian group). From the short exact sequence, $H_{i+1}(\text{Cone}^{2}_{k+2};\mathbb{Z})\otimes \mathbb{F}_{p}$ injects into $H_{i+1}(\text{Cone}^{2}_{k+2}; \mathbb{F}_{p})$, so $H_{i+1}(\text{Cone}^{2}_{k+2}; \mathbb{F}_{p})$ must also be non-zero for some prime $p$. On the other hand, if $H_{i+1}(\text{Cone}^{2}_{k+2}; \mathbb{F}_{p})=0$ for all primes $p$ in a range then $H_{i+1}(\text{Cone}^{2}_{k+2}; \mathbb{Z})$ also vanishes in the same range (here, we again use that $H_{i+1}(\text{Cone}^{2}_{k+2};\mathbb{Z})$ is a finitely generated abelian group).

We now show that the homology $H_{i+1}(\text{Cone}^{2}_{k+2};\mathbb{F}_{p})$ vanishes for $i\leq D(3,1,\partic-1)+1$ and for all primes $p$. We first deal with the case that $p$ is an odd prime.
By \cref{homological stability in dimension 2}, we have that $H_{i}(\pariteratedmap{M}{t_{1}}{1}{\partic};\mathbb{F}_{p})$ and $H_{i+1}(\pariteratedmap{M}{t_{1}}{1}{\partic+2};\mathbb{F}_{p})$ vanish for $i\leq D(p,1,\partic-1)$ and $i\leq D(p,1,\partic+1)-1$, respectively. Since $D(3,1,\partic-1)\leq D(p,1,\partic-1)< D(p,1,\partic +1)-1$, the homology $H_{i+1}(\text{Cone}^{2}_{k+2};\mathbb{F}_{p})$ trivially vanishes for $i\leq D(3,1,\partic-1)+1$ when $p$ is an odd prime. When $p=2$, the homology $H_{i+1}(\text{Cone}^{2}_{k+2};\mathbb{F}_{2})$ vanishes for $i\leq D(2, 2 , \partic )$ by \cref{higher stab open surface}. Since $D(3,1,\partic-1 )+1\leq D(2, 2 , \partic )$, we have that the homology $H_{i+1}(\text{Cone}^{2}_{k+2};\mathbb{F}_{2})$ vanishes for $i\leq D(3,1,\partic-1)+1$.
\end{proof}
\subsection{Secondary Homological Stability for Configuration Spaces of Higher Dimensional Manifolds}\label{sec stab in dim n greater than 2}
If $M$ is an open connected manifold of dimension $n\geq 3$, Kupers--Miller showed that the relative homology $H_{*}(\text{Conf}_{\partic+1}(M),\text{Conf}_{\partic}(M);\mathbb{Z})$ is a finite abelian $2$-group for $*\leq \partic$ (see \cref{best known stable ranges}). Thus in this range, all secondary stability (and higher-order stability) results only concern $2$-torsion.
Consider the case $M$ is $\mathbb{R}^{n}$. The relative homology groups $H_{i}(\text{Conf}_{\partic}(\mathbb{R}^{n}), \text{Conf}_{\partic -1}(\mathbb{R}^{n});\mathbb{F}_{2})$ vanish only for $i\leq k/2$, so there might be non-trivial secondary homological stability for $\text{Conf}(\mathbb{R}^{n})$. To figure out whether there actually is secondary homological stability, we need the following description of $H_{*}(\text{Conf}(\mathbb{R}^{n});\mathbb{F}_{2})$, which follows from combining \cref{homology of free En algebra}, \cref{Lie word lemma}, and the fact that $\browd{e}{e}=0$ when working with $\mathbb{F}_{2}$ coefficients.
\begin{theorem}\label{description of homology of Conf of disk in dim greater than 2}
Suppose that $n\geq 3$. Let $e\in H_{0}(\text{Conf}_{1}(\mathbb{R}^{n});\mathbb{F}_{2})$ be the class of a point. Let $S$ be the set of formal symbols constructed by iterated formal applications of the Dyer-Lashof operations $Q^{s}$ and the top homology operation $\xi $ to $e$ (e.g. $Q^{2}Q^{1}e$ is an element of $S$). We also allow zero applications of $Q^{s}$ and $\xi $ to $e$. Let $D\subset S$ denote the subset of $S$ containing elements of the form $e$ and $Q^{2^{j}}Q^{2^{j-1}}\ldots Q^{2}Q^{1}e$ for all non-negative $j$.
There is a subset $G_{n}\subset S$ containing $D$ such that $H_{*}(\text{Conf}(\mathbb{R}^{n});\mathbb{F}_{2})$ is isomorphic to the free graded commutative algebra on $G_{n}$.
\end{theorem}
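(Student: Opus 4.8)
This statement is the Cohen--Lada--May computation of $H_*(\text{Conf}(\mathbb{R}^n);\mathbb{F}_2)$, and I would deduce it from the Dyer--Lashof structure theorem for the homology of free $E_n$-algebras. The key input is that $\text{Conf}(\mathbb{R}^n)=\bigsqcup_{\partic\geq 0}\text{Conf}_{\partic}(\mathbb{R}^n)/\Sigma_{\partic}$ is weakly equivalent to the free $E_n$-algebra on a point (the value of the little $n$-cubes monad on a point, using $E_n(k)\simeq\text{Conf}_k(\mathbb{R}^n)$). Consequently $H_*(\text{Conf}(\mathbb{R}^n);\mathbb{F}_2)$ is the free object, in Cohen--Lada--May's category of graded-commutative $\mathbb{F}_2$-algebras equipped with a degree-$(n-1)$ Browder bracket and admissible (suitably truncated) Dyer--Lashof operations, generated by the single class $e\in H_0(\text{Conf}_1(\mathbb{R}^n);\mathbb{F}_2)$. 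The plan is to record this free description, read off the generating set $G_n$ from it, and then check $D\subseteq G_n$ by hand.

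The first point is that the Browder bracket contributes nothing. By \cref{Jacobi identity of Browder bracket} we have $\browd{e}{e}=0$, by \cref{Browder bracket with Dyer-Lashof} all brackets of Dyer--Lashof classes vanish, and an induction on word-length using the derivation property \cref{derivation property of Browder bracket} together with the Jacobi identity shows that $\browd{-}{-}$ vanishes identically on the subalgebra generated by $e$ --- which, since everything is generated by $e$, is all of $H_*(\text{Conf}(\mathbb{R}^n);\mathbb{F}_2)$. Hence $H_*(\text{Conf}(\mathbb{R}^n);\mathbb{F}_2)$ is the free graded-commutative $\mathbb{F}_2$-algebra on the admissible iterated Dyer--Lashof monomials of positive excess applied to $e$, once one rewrites non-admissible words by the Adem relations and uses $Q^s y=0$ for $s<\deg y$ and $Q^s y=y^2$ for $s=\deg y$, with the top operation $\xi$ appearing at the edge $s=\deg y+n-1$ of the allowable range for this $n$. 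Declaring $G_n\subset S$ to be $\{e\}$ together with this set of admissible monomials gives $H_*(\text{Conf}(\mathbb{R}^n);\mathbb{F}_2)\cong\mathbb{F}_2[G_n]$.

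It then remains to check $D\subseteq G_n$, which is elementary bookkeeping. For each $j\geq 0$ the word $Q^{2^{j}}Q^{2^{j-1}}\cdots Q^{2}Q^{1}e$ is admissible, since consecutive exponents satisfy $2^{i}=2\cdot 2^{i-1}$; it is nonzero because the $i$-th operation $Q^{2^{i}}$ acts on a class of degree $2^{i}-1\leq 2^{i}$; and its excess is $2^{j}-(2^{j-1}+\cdots+2+1)=1>0=\deg(e)$, so it is a genuine polynomial generator rather than a product of lower ones. When $n>2$ these are honest Dyer--Lashof monomials, whereas when $n=2$ the outermost operation in the allowable range is $\xi$ and each such word degenerates to $\xi^{\,j+1}e$, recovering the classes $x_j$ of \cref{thm-Co}. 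In every case $e$ and all of these elements lie in $G_n$, so $D\subseteq G_n$.

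The genuine content here is the Dyer--Lashof structure theorem invoked above --- pinning down exactly which monomials are admissible and of the correct excess for the $n$-truncated theory (the interplay of the bound $s-q<n-1$ on the ordinary operations with the top operation $\xi$), and proving the freeness assertion --- and I expect that to be the main obstacle. Since this is precisely the computation carried out by Cohen--Lada--May, I would cite it rather than reprove it, and only supply the bookkeeping above that identifies $G_n$ and verifies $D\subseteq G_n$.
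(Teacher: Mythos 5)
The paper gives no proof of this statement --- it is quoted directly from Cohen--Lada--May --- and your proposal likewise rests on their structure theorem for the free $E_{n}$-algebra on a point, so the approaches coincide. Your added bookkeeping (the vanishing of the Browder bracket on the algebra generated by $e$, and the admissibility/excess check that each $Q^{2^{j}}\cdots Q^{1}e$ is a genuine indecomposable generator, so $D\subseteq G_{n}$) is correct and simply makes explicit what the paper delegates to the cited reference.
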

We have the following analogue of \cref{stable ranges for the plane}.
\begin{proposition}\label{stable ranges for higher dimensions with F2 coefficients}
Let $e\in H_{0}(\text{Conf}_{1}(\mathbb{R}^{n});\mathbb{F}_{2})$ be the class of a point, with $n>2$. Let $j$ be a positive integer and let $A(j)\colonequals  2^{j}-1$. Define a class $\omega_{j}\in H_{A(j)}(\text{Conf}_{A(j)+1}(\mathbb{R}^{n});\mathbb{F}_{2})$ recursively by setting $\omega_{1}\colonequals Q^{1}e$ and $\omega_{j}\colonequals Q^{A(j-1)+1}\omega_{j-1}$. Let $D(p, \srange, \partic )$ denote the constant from \cref{stable ranges for the plane}.

 Let $x$ be a homology class in $H_{i+A(\srange -1)}(\text{Conf}_{\partic +A(\srange -1)+1}(\mathbb{R}^{n});\mathbb{F}_{2})$ and suppose that $i\leq D(2, \srange, \partic )=A(\srange )k/(A(\srange)+1)$. Then $x$ is in the ideal $(e, \omega_{1},\ldots , \omega_{\srange -1})$.
\end{proposition}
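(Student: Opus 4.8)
The plan is to run the $p=2$ analogue of the argument behind \cref{stable ranges for the plane}, using Cohen--Lada--May's description of $H_*(\text{Conf}(\mathbb{R}^n);\mathbb{F}_2)$ from \cref{description of homology of Conf of disk in dim greater than 2} in place of \cref{thm-Co}. Over $\mathbb{F}_2$ the free graded commutative algebra on $G_n$ is an ordinary polynomial algebra $\mathbb{F}_2[G_n]$, so it has a monomial basis indexed by monomials in the generators. A homogeneous class $x$ fails to lie in $K_{m-1}=(e,\omega_1,\ldots,\omega_{m-1})$ exactly when $x$ has a monomial summand $\mu$ all of whose factors lie in $G_n\setminus\{e,\omega_1,\ldots,\omega_{m-1}\}$, and such a $\mu$ has $\text{deg}(\mu)=\text{deg}(x)$ and $\text{par}(\mu)=\text{par}(x)$ since both gradings are fixed on $x$. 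Arguing by contraposition, it therefore suffices to show that any such $\mu$ lies in the ``$m$-th unstable range'', i.e.\ satisfies $\text{deg}(\mu)\geq\frac{A(m)}{A(m)+1}\text{par}(\mu)$.

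The technical heart is a slope bound on the generators. Every $g\in G_n$ with $g\neq e$ is obtained by applying a word of some length $r\geq1$ in the operations $Q^s$ (taken with $s$ strictly larger than the degree of the class it is applied to, so as to land on a genuine generator rather than a square) and the top operation $\xi$; each such operation doubles the particle count, so $\text{par}(g)=2^r$. I would prove by induction on $r$ that $\text{deg}(g)\geq 2^r-1=\text{par}(g)-1$, with equality if and only if $g=\omega_r$: at length one the generators are $Q^se$ with $1\leq s\leq n-2$ and $\xi e$, of degrees $s\geq1$ and $n-1\geq2$ (using $n\geq3$), so the minimum $1$ is realized only by $\omega_1=Q^1e$; at the inductive step, applying $\xi$ to a length-$(r-1)$ generator of degree $\geq 2^{r-1}-1$ gives degree $\geq 2^r+(n-3)\geq 2^r$, while applying $Q^s$ with $s$ above the current degree gives degree $\geq 2(2^{r-1}-1)+1=2^r-1$, with equality forcing the previous class to be $\omega_{r-1}$ and $s=2^{r-1}=A(r-1)+1$, i.e.\ $g=Q^{A(r-1)+1}\omega_{r-1}=\omega_r$. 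Hence a generator $g\neq e$ has $\text{deg}(g)\geq\text{par}(g)$ unless $g$ is some $\omega_r$, in which case its slope is $(2^r-1)/2^r$; consequently every generator in $G_n\setminus\{e,\omega_1,\ldots,\omega_{m-1}\}$ has slope at least $(2^m-1)/2^m=A(m)/(A(m)+1)$ (the surviving $\omega_r$ have $r\geq m$, so slope $\geq(2^m-1)/2^m$; everything else has slope $\geq1$).

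Since $\text{deg}$ and $\text{par}$ are additive under multiplication, $\mu$ then also has slope $\geq A(m)/(A(m)+1)$. Substituting $\text{deg}(\mu)=i+A(m-1)$ and $\text{par}(\mu)=k+A(m-1)+1$ and simplifying with $A(j)=2^j-1$ yields $i\geq\frac{A(m)}{A(m)+1}k+\frac12$, so $i>D(2,m,k)$, which contradicts the hypothesis $i\leq D(2,m,k)$. Therefore $x\in K_{m-1}$, as claimed.

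I expect the main obstacle to be the generator slope lemma, specifically establishing that $\omega_r$ is the \emph{unique} minimal-degree generator of length $r$, so that every other generator of that length jumps to slope $\geq1$, comfortably above the threshold $(2^m-1)/2^m$ for all $m$. I would also want to reconcile the indexing of the $\omega_j$ with the set $D$ of \cref{description of homology of Conf of disk in dim greater than 2}, reading the recursion as $\omega_j=Q^{A(j-1)+1}\omega_{j-1}$ (the operation $Q^{2^{j-1}}$ occurring in $D$), which is what makes $\omega_j$ land in $H_{A(j)}(\text{Conf}_{A(j)+1}(\mathbb{R}^n);\mathbb{F}_2)$. Once the slope lemma is in hand, the remaining bookkeeping is entirely parallel to the $p=2$ case of \cref{stable ranges for the plane}.
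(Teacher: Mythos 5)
Your proposal is correct and follows essentially the same route as the paper: both arguments reduce to showing that every generator of Cohen--Lada--May's algebra outside $\{e,\omega_{1},\ldots,\omega_{\srange-1}\}$ lies in the $\srange$-th unstable range and then use additivity of degree and particle count under products; your induction on the length of the word of operations (with $\omega_{r}$ as the unique slope-minimizing generator at each length) is just a cleaner packaging of the paper's case-by-case enumeration. You are also right that the recursion in the statement should be read as $\omega_{j}=Q^{A(j-1)+1}\omega_{j-1}$, consistent with the set $D$ and with the degree and particle count asserted for $\omega_{j}$.
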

\begin{proof}
The proof is similar to that of \cref{stable ranges for the plane}. We say that a class $x\in H_{i}(\text{Conf}_{\partic}(\mathbb{R}^{n});\mathbb{F}_{2})$ is in the \textit{\srange-th unstable range} if $i\geq D(2, \srange, \partic )$. Clearly, for $j$ in the range $0\leq j \leq \srange-1$, the classes $\omega_{j}$ are not in the \srange-th unstable range. Let $R_{\srange}\subset H_{*}(\text{Conf}(\mathbb{R}^{n});\mathbb{F}_{2})$ denote the complement of the ideal $(e, \omega_{1},\ldots , \omega_{\srange -1})$. It suffices to show that if $x$ is in $R_{\srange}$, then $x$ is in the \srange-th unstable range. 

We need the following results on how homology operations affect the homological degree and number of particles of a class.
\hfill\begin{enumerate}[label=\roman*)]
    \item\label{higher omegas are unstable} For all $j\geq \srange$, the class $\omega_{j}$ is in the \srange-th unstable range because $A(j)\geq A(\srange)$ for all $j\geq m$. 
    \item\label{top operation on omega is unstable} For all $j$, the class $\xi\omega_{j}\in H_{2A(j)+n-1}(\text{Conf}_{2(A(j)+1)}(\mathbb{R}^{n});\mathbb{F}_{2})$ is in the \srange-th unstable range. Since $A(j)=2^{j}-1$ and $n-1$ is at least $2$, we have that $2A(j)+n-1\geq 2(A(j)+1)$.
    \item\label{Dyer-Lashof and top applied to unstable is unstable} If $y\in H_{a}(\text{Conf}_{b}(\mathbb{R}^{n});\mathbb{F}_{2})$ is in the \srange-th unstable range, then the classes $\xi y$ and  $Q^{s}y$, for all $s$ in the range $a\leq s< a+ n-1$, are also in the \srange-th unstable range. This is because $\xi y$ and $Q^{s}y$ are classes of degree $2a+n-1$ and $a+s$, respectively, in  $\text{Conf}_{2b}(\mathbb{R}^{n})$ and $2a+n-1, a+s\geq 2a\geq D(2, \srange , 2b)$.
    \item\label{product of unstables is unstable} If $y\in H_{a}(\text{Conf}_{u}(\mathbb{R}^{n});\mathbb{F}_{2})$ and $z\in H_{b}(\text{Conf}_{v}(\mathbb{R}^{n});\mathbb{F}_{2})$ is in the \srange-th unstable range, then the class $yz$ is also in the \srange-th unstable range. This is because if $y$ and $z$ are in the \srange-th unstable range, then $a\geq D(2, \srange, u)$ and $b\geq D(2, \srange, v)$. Since $yz$ is a class in $H_{a+b}(\text{Conf}_{u+v}(\mathbb{R}^{n});\mathbb{F}_{2})$, we have that $a+b\geq D(2, \srange, u+v)$.
    \item\label{Dyer-Lashof on stable omega is unstable}
    If $j$ is in the range $1\leq j\leq \srange -1$ and $s$ is in the range $A(j)+1 <s < A(j)+ n-1$, then the class $Q^{s}\omega_{j}$ is in the \srange-th unstable range. This is because $Q^{s}\omega_{j}$ is a class in $H_{s+A(j)}(\text{Conf}_{2(A(j)+1)}(\mathbb{R}^{n});\mathbb{F}_{2})$ and $s+A(j)$ is at least $A(j+1)+1=2(A(j)+1)$ in this range. 
    \item\label{Dyer-Lashof and top on point class is unstable} The classes $\xi e$ and $Q^{s}e$ for all $s$ in the range $1<s< n-1$ are in the \srange-th unstable range. This is because $\xi e$ and $Q^{s}e$ are classes in $H_{n-1}(\text{Conf}_{2}(\mathbb{R}^{n});\mathbb{F}_{2})$ and $H_{s}(\text{Conf}_{2}(\mathbb{R}^{n});\mathbb{F}_{2})$ respectively, and $n-1>s\geq 2$.
\end{enumerate}
By \ref{higher omegas are unstable}, for all $j\geq \srange$ the classes $\omega_{j}$ are in the \srange-th unstable range so by \ref{Dyer-Lashof and top applied to unstable is unstable} and \ref{product of unstables is unstable}, applications of homology operations and products of these classes are also in the \srange-th unstable range as well. Therefore, the only classes that are possibly not in the \srange-th unstable range come from applications of homology operations to $e$ or $\omega_{j}$ for $j< \srange$. By \ref{Dyer-Lashof and top on point class is unstable} and \ref{top operation on omega is unstable}, the classes $\xi e$ and $\xi\omega_{j}$ are always in the \srange-th unstable range. Note that $Q^{0}e=e^2$, $Q^{1}e=\omega_{1}$, and $Q^{s}e$, for all $s$ in the range $1<s< n-1$, is in the \srange-th unstable range by \ref{Dyer-Lashof and top on point class is unstable} (we ignore $Q^{s}e$ for $s$ negative because $Q^{s}e=0$ in this range). Similarly, $Q^{s}\omega_{j}$ is equal to $0$ if $s<A(j)$ and it is equal to $\omega_{j}^{2}$ if $s=A(j)$. If $s=A(j)+1$, then $Q^{s}\omega_{j}$ is equal to $\omega_{j+1}$, and for all $s$ in the remaining range $A(j)+1 <s \leq A(j)+ n-1$, the class $Q^{s}\omega_{j}$ is in the \srange-th unstable range by \ref{Dyer-Lashof on stable omega is unstable}. Therefore, for $j \leq \srange -1$, the class $Q^{s}\omega_{j}$ is in the \srange-th unstable range only when $s$ is in the range $A(j)+1 <s \leq A(j)+ n-1$. Applications of homology operations and products of these $Q^{s}e$ and $Q^{s}\omega_{j}$ are also in the \srange-th unstable range by \ref{Dyer-Lashof and top applied to unstable is unstable} and \ref{product of unstables is unstable}. It follows that the classes that are not in the \srange-th unstable must be in the ideal $(e, \omega_{1},\ldots , \omega_{\srange -1})$.
\end{proof}
From \cref{stable ranges for higher dimensions with F2 coefficients}, one can show:
\begin{proposition}\label{higher stab open higher dimensional manifold}
Suppose that $M$ is an open connected  manifold of dimension $n>2$. 
Let $\omega_{j}\in H_{*}(\text{Conf}(\mathbb{R}^{n});\mathbb{F}_{2})$ denote the homology classes defined in \cref{stable ranges for higher dimensions with F2 coefficients}. Let $D(p, \srange, \partic )$ denote the constant from \cref{stable ranges for the plane}. Let $\pariteratedmap{M}{t_{\displaystyle \omega_{j}}}{\srange}{\partic}$ denote the \srange-th iterated mapping cone associated to the collection $(t_{1},  t_{\displaystyle \omega_{1}}, \ldots, t_{\displaystyle \omega_{\srange-1}})$ as defined in \cref{iterated mapping cone def}.
\begin{enumerate}
    \item The map
    $t_{1}\colon H_{i}(\text{Conf}_{\partic}(M);\mathbb{F}_{2})\to H_{i}(\text{Conf}_{\partic+1}(M);\mathbb{F}_{2})$ is an isomorphism for $i< k/2$ and a surjection for $i= k/2$.
    \item The map
    $$ t_{\displaystyle \omega_{\srange}}\colon H_{i}(\pariteratedmap{M}{t_{\displaystyle \omega_{j}}}{\srange}{\partic};\mathbb{F}_{2})\to  H_{i+2^{\srange}-1}(\pariteratedmap{M}{t_{\displaystyle \omega_{j}}}{\srange}{\partic+2^{\srange}};\mathbb{F}_{2})$$
    is an isomorphism for $i< D(2, \srange+1 , \partic )$ and a surjection for $i= D(2, \srange+1 , \partic ).$
\end{enumerate}
\end{proposition}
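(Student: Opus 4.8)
The plan is to follow the template of \cref{second stability} and \cref{filtered case}, replacing \cref{stable ranges for the plane} by \cref{stable ranges for higher dimensions with F2 coefficients} throughout. First I would reduce to the case where $M$ is the interior of a compact manifold admitting a finite handle decomposition with a single $0$-handle: this follows from \cref{open manifold has exhaustion by compact manifold with finite handle decomposition} together with a colimit argument over an exhaustion of $M$, as in \cite[p.~10]{MR3344444}. For such an $M$, Poincar\'e duality (using twisted coefficients when $\text{Conf}_{\partic}(M)$ is non-orientable, as in \cite[Appendix~A]{MR533892}) translates the assertion into a statement about compactly supported cohomology, after which one runs a five lemma argument over the filtration $\conffilt^{\bullet}_{\partic}(M)$ of \cref{filt for Conf(M)} attached to a choice of $X\subset M$ as in \cref{handle-decomp}, so that $M\setminus X\cong\mathbb{R}^{n}$ and $\dim X\le n-1$. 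That the commuting-maps construction of \cref{iterated mapping cone def} restricts to this filtration is verified exactly as in the discussion preceding \cref{filtered case}.

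For part (2), I would first record the $\text{Conf}(\mathbb{R}^{n})$ analogue of \cref{prop:conf-stab}: the map $t_{\omega_{\srange}}$ on $H_{*}(\pariteratedmap{\mathbb{R}^{n}}{t_{\omega_{j}}}{\srange}{\partic};\mathbb{F}_{2})$ is an isomorphism for $i<D(2,\srange+1,\partic)$ and a surjection at equality. This is deduced from \cref{stable ranges for higher dimensions with F2 coefficients} in exactly the way \cref{prop:conf-stab} was deduced from \cref{stable ranges for the plane}: multiplication by $\omega_{\srange}$ is injective because $\omega_{\srange}$ is a free polynomial generator of $H_{*}(\text{Conf}(\mathbb{R}^{n});\mathbb{F}_{2})$ (it lies in the set $D\subset G_{n}$ of \cref{description of homology of Conf of disk in dim greater than 2}), and the vanishing of the homology of the next iterated cone in the relevant range is precisely the ideal-membership statement of \cref{stable ranges for higher dimensions with F2 coefficients}. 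Dualizing and applying the K\"unneth theorem (valid over the field $\mathbb{F}_{2}$) splits off each filtration layer $\conffilt^{j}_{\partic}\setminus\conffilt^{j+1}_{\partic}\cong\text{Conf}_{\partic-j}(\mathbb{R}^{n})\times\text{Conf}_{j}(X)$ and reduces the inductive step of the five lemma to this base case together with the bound $H^{b}_{c}(\text{Conf}_{j}(X);\mathbb{F}_{2})=0$ for $b>j(n-1)$.

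The main obstacle I anticipate is the degree bookkeeping in the five lemma step, which is genuinely different from the surface case: there $\dim X\le 1$ gave $H^{b}_{c}(\text{Conf}_{j}(X))=0$ for $b>j$, whereas here one only gets $b>j(n-1)$. I expect this weaker vanishing still to suffice. Indeed, dualizing, a K\"unneth summand $x\otimes y$ with $x$ below the $\text{Conf}(\mathbb{R}^{n})$-range on $\text{Conf}_{\partic-j}(\mathbb{R}^{n})$ and $y\ne 0$ has $\deg x<n(\partic-j)-D(2,\srange+1,\partic-j)$ and $\deg y\le j(n-1)$, so $\deg(x\otimes y)<n\partic-j-D(2,\srange+1,\partic-j)$, and since $D(2,\srange+1,\partic)-D(2,\srange+1,\partic-j)=\tfrac{2^{\srange+1}-1}{2^{\srange+1}}j<j$, this lies strictly below $n\partic-D(2,\srange+1,\partic)$; hence the map is surjective in the claimed range. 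In other words the extra $j(n-1)$ from the $X$-factor is absorbed by the $nj$-dimensional drop from removing $j$ points of $\text{Conf}_{\partic}(\mathbb{R}^{n})$, up to the sublinear correction in $D(2,\srange+1,-)$. Injectivity of $t_{\omega_{\srange}}\times\text{id}$ is automatic since the stabilization map is injective on $H^{*}_{c}(\text{Conf}_{*}(M\setminus X))\cong H^{*}_{c}(\text{Conf}_{*}(\mathbb{R}^{n}))$ and $H^{*}_{c}(\text{Conf}_{j}(X);\mathbb{F}_{2})$ is $\mathbb{F}_{2}$-flat. Part (1) is homological stability for $t_{1}$ with $\mathbb{F}_{2}$-coefficients, which follows from the argument of \cite[Sections~3 and~4]{MR3344444} together with the $\srange=1$ case of \cref{stable ranges for higher dimensions with F2 coefficients}, giving the range $i<D(2,1,\partic)=\partic/2$.
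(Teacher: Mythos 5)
Your proposal is correct and follows essentially the same route as the paper, which simply invokes the arguments of \cref{second stability} with the ranges supplied by \cref{stable ranges for higher dimensions with F2 coefficients} in place of \cref{stable ranges for the plane}. The one point where the higher-dimensional case genuinely differs from the surface case --- the weaker vanishing $H^{b}_{c}(\text{Conf}_{j}(X);\mathbb{F}_{2})=0$ only for $b>j(n-1)$ --- is exactly the point you isolate, and your bookkeeping showing that the extra $j(n-1)$ is absorbed by the $nj$ drop in dimension (up to the sublinear term $j/2^{\srange+1}$) is the correct resolution.
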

\begin{proof}
This immediately follows by applying the arguments of \cref{second stability} and the range is obtained from \cref{stable ranges for higher dimensions with F2 coefficients} just like in \cref{prop:conf-stab}.
\end{proof}
The case $\srange=1$ in \cref{higher stab open higher dimensional manifold} proves \cref{secondary stability open case} in the case that the dimension of $M$ is greater than $2$ and the homology is with $\mathbb{F}_{2}$ coefficients. From \cref{higher stab open higher dimensional manifold},
we can also prove \cref{secondary stability open case} in the case that the dimension of $M$ is greater than $2$ and the homology is integral homology.  We need the following result on homological stability of $H_{*}(\text{Conf}_{\partic}(M);\fieldc)$ when the dimension of $M$ is greater than $2$.
\begin{theorem}[{\textcite[Proposition A.1]{MR533892}}, {\textcite[ Theorem 1.4]{MR3344444}}]\label{best known stable ranges}
Let $M$ be an open connected manifold of dimension $n> 2$ and let $\fieldc$ be a ring such that $2$ is a unit.
The stabilization map
$$t_{1}\colon H_{i}(\text{Conf}_{\partic}(M);\fieldc)\to H_{i}(\text{Conf}_{\partic +1}(M);\fieldc)$$ is an isomorphism for $i<f(\fieldc,\partic)$ and a surjection for $i=f(\fieldc,\partic)$, 
where
\[
f(\fieldc,\partic)= \begin{cases}
      \partic & \text{if }2\text{ is a unit in } R\\
      \partic/2 & \text{otherwise.}
   \end{cases}
   \]   
\end{theorem}
\begin{proposition}\label{integral secondary stab open manifold}
Suppose that $M$ is an open connected manifold of dimension $n>2$. The map 
$$t_{\displaystyle \omega_{1}}\colon C_{i}(\text{Conf}_{\partic}(M),\text{Conf}_{\partic-1}(M);\mathbb{F}_{2})\to C_{i+1}(\text{Conf}_{\partic +2}(M),\text{Conf}_{\partic+1}(M);\mathbb{F}_{2})$$ lifts to a map of singular chain complexes
$$ t_{\displaystyle \omega_{1}}\colon C_{i}(\text{Conf}_{\partic}(M),\text{Conf}_{\partic-1}(M);\mathbb{Z})\to C_{i+1}(\text{Conf}_{\partic+2}(M),\text{Conf}_{\partic+1}(M);\mathbb{Z})$$
and induces an isomorphism on integral homology for $i< D(2, 2, \partic )$ and a surjection for $i= D(2, 2, \partic )$.
\end{proposition}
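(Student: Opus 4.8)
The plan is to run the argument of \cref{integral secondary stability for surface} almost verbatim, replacing the circle $\mathbb{RP}^{1}\simeq\text{Conf}_{2}(\mathbb{R}^{2})$ by $\mathbb{RP}^{n-1}\simeq\text{Conf}_{2}(\mathbb{R}^{n})$. First I would construct the integral lift of the secondary stabilization map. Since $n>2$, the space $\text{Conf}_{2}(\mathbb{R}^{n})$ is homotopy equivalent to $\mathbb{RP}^{n-1}$ (via center of mass together with the normalized difference vector, defined up to sign), and under this equivalence $H_{1}(\text{Conf}_{2}(\mathbb{R}^{n});\mathbb{F}_{2})\cong H_{1}(\mathbb{RP}^{n-1};\mathbb{F}_{2})\cong\mathbb{F}_{2}$ is generated by the mod-$2$ reduction of $\gamma_{*}[S^{1}]$, where $\gamma\colon S^{1}\to\text{Conf}_{2}(\mathbb{R}^{n})$ is a loop representing the generator of $\pi_{1}$; this reduction is $\omega_{1}=Q^{1}e$. (Equivalently, one may invoke the lifting criterion for $H_{*}(\text{Conf}_{\partic}(\mathbb{R}^{n});\mathbb{F}_{p})$ mentioned in the introduction.) I would fix the integral $1$-cycle $\widetilde{\omega}_{1}\colonequals\gamma_{*}[S^{1}]\in C_{1}(\text{Conf}_{2}(\mathbb{R}^{n});\mathbb{Z})$, which has support in a disk. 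Applying the construction following \cref{iterated mapping cone def} with $\srange=2$ (the embedding of \cref{stabilization by embedding}, with the point class $e$ placed in one disk and $\widetilde{\omega}_{1}$ in the other) yields chain maps $t_{1}$ and $t_{\widetilde{\omega}_{1}}$ that commute at the chain level, so $t_{\widetilde{\omega}_{1}}$ induces the asserted map on the relative complex $\pariteratedmap{M}{t_{1}}{1}{\partic}=\text{Cone}(t_{1})$; and its reduction mod $2$ induces on $\mathbb{F}_{2}$-homology of the cone the same map as the operation $t_{\omega_{1}}$ of \cref{higher stab open higher dimensional manifold}, because two chain representatives of $\omega_{1}$ differ by a boundary and the corresponding maps $t_{(-)}$ are then chain homotopic compatibly with the rest of the construction. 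As in the statement I write $t_{\omega_{1}}$ for $t_{\widetilde{\omega}_{1}}$.

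Next I would reduce the integral statement to a mod-$p$ statement. By the long exact sequence of the mapping cone of $t_{\omega_{1}}$, it suffices to show that $H_{j}(\pariteratedmap{M}{t_{1},t_{\omega_{1}}}{2}{\partic+2};\mathbb{Z})$ vanishes for $j\leq D(2,2,\partic)+1$. By \cref{stabilization by embedding} and \cref{open manifold has exhaustion by compact manifold with finite handle decomposition} I may assume $M$ is the interior of a compact manifold admitting a finite handle decomposition with a single $0$-handle; then \cref{homology of conf is finitely generated} shows each $H_{i}(\text{Conf}_{\partic}(M);\mathbb{Z})$ is finitely generated, hence so is $H_{*}(\pariteratedmap{M}{t_{1},t_{\omega_{1}}}{2}{\partic+2};\mathbb{Z})$, since it is assembled from finitely many such groups through long exact sequences. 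Exactly as in the proof of \cref{integral secondary stability for surface} (Reduction $2$ and Reduction $3$ in the proof of \textcite[Theorem 18.2]{galatius2018cellular}), the universal coefficient short exact sequence then reduces the claim to showing $H_{j}(\pariteratedmap{M}{t_{1},t_{\omega_{1}}}{2}{\partic+2};\mathbb{F}_{p})=0$ for $j\leq D(2,2,\partic)+1$ and every prime $p$.

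For $p=2$ this is precisely \cref{higher stab open higher dimensional manifold} with $\srange=1$: the map $t_{\omega_{1}}$ is an isomorphism on $\mathbb{F}_{2}$-homology for $i<D(2,2,\partic)$ and a surjection for $i=D(2,2,\partic)$, and the mapping cone long exact sequence then gives the required vanishing through degree $D(2,2,\partic)+1$. For $p$ odd, since $2$ is a unit in $\mathbb{F}_{p}$ and $n>2$, \cref{best known stable ranges} gives that $t_{1}$ is an isomorphism on $\mathbb{F}_{p}$-homology in degrees $<\partic$, so the relative homology $H_{i}(\pariteratedmap{M}{t_{1}}{1}{\partic'};\mathbb{F}_{p})$ vanishes for $i\leq\partic'-1$ for both $\partic'=\partic$ and $\partic'=\partic+2$; feeding this into the long exact sequence of the cone of $t_{\omega_{1}}$ forces $H_{j}(\pariteratedmap{M}{t_{1},t_{\omega_{1}}}{2}{\partic+2};\mathbb{F}_{p})=0$ for $j\leq\partic+1$, which contains the range $j\leq D(2,2,\partic)+1=(3/4)\partic+1$. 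Combining the two cases completes the proof.

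The one step that is not a routine transcription of the surface argument, and hence the main obstacle, is the construction in the first paragraph: one must check that $Q^{1}e$ is carried by an honest integral $1$-cycle supported in a disk, that the resulting map commutes at the chain level with $t_{1}$ (so that the iterated mapping cone is actually defined), and that its mod-$2$ reduction agrees on homology with the operation used in \cref{higher stab open higher dimensional manifold}. The homotopy equivalence $\text{Conf}_{2}(\mathbb{R}^{n})\simeq\mathbb{RP}^{n-1}$ makes all of this transparent, and it is exactly here that the hypothesis $n>2$ is genuinely used (for $n=2$ one instead has $\mathbb{RP}^{1}=S^{1}$ and a non-torsion lift, which is the situation of \cref{integral secondary stability for surface}); once this is in place, the remaining steps are formal and identical to the surface case.
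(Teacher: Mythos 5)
Your proposal is correct and follows essentially the same route as the paper, which simply reruns the proof of \cref{integral secondary stability for surface} with \cref{best known stable ranges} and \cref{higher stab open higher dimensional manifold} substituted in the relevant places. The extra care you take with the integral lift of $\omega_{1}=Q^{1}e$ via the generator of $H_{1}(\mathbb{RP}^{n-1};\mathbb{Z})\cong\mathbb{Z}/2$ is exactly the right justification for the step the paper leaves implicit (and is consistent with the paper's remark that only the $\omega_{j}$ with $j>1$ fail to lift integrally).
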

\begin{proof}
The proof is the same as the proof of \cref{integral secondary stability for surface}. The only difference is that we have to apply \cref{higher stab open higher dimensional manifold} and the range $f(\fieldc,\partic)$ from \cref{best known stable ranges} when proving that the $t_{\displaystyle \omega_{1}}$ induces an isomorphism on relative integral homology groups in a range.
\end{proof}
\begin{remark}
The classes $\omega_{j}\in H_{*}(\text{Conf}(\mathbb{R}^{n});\mathbb{F}_{2})$ for $j>1$ do not lift to classes in $H_{*}(\text{Conf}(\mathbb{R}^{n});\mathbb{Z})$. To see this, consider the mod-$2$ Bockstein spectral sequence on $H_{*}(\text{Conf}(\mathbb{R}^{n});\mathbb{F}_{2})$. If $\omega_{j}$ lifted to a class in $H_{*}(\text{Conf}(\mathbb{R}^{n});\mathbb{Z})$, then it would survive to the $E^{\infty}$-page of this spectral sequence. The $d^{1}$-differential of this spectral sequence is the Bockstein homomorphism $\beta \colon H_{*}(\text{Conf}(\mathbb{R}^{n});\mathbb{F}_{2})\to H_{*-1}(\text{Conf}(\mathbb{R}^{n});\mathbb{F}_{2})$ coming from the short exact sequence $0\to \mathbb{Z}/2 \to \mathbb{Z}/4 \to \mathbb{Z}/2\to 0$. We now show that $\beta(\omega_{j})$ is non-zero for $j>1$, and so $\omega_{j}$ does survive to the $E^{\infty}$-page.

Recall that $\beta$ is dual to the Steenrod square operation $\text{Sq}^{1}$. Therefore, by the Nishida relation for Dyer-Lashof operations (see Property 6 in \textcite[214]{MR0436146}), for any $s>0$, we have that $\beta Q^{s}=(s-1)Q^{s-1}$. It follows that for $j>1$,
$$\beta (\omega_{j})=\beta (Q^{2^{j-1}}\omega_{j-1})= Q^{2^{j-1}-1}\omega_{j-1}.$$
Since $\omega_{j-1}$ is a class of degree $2^{j-1}-1$, by Property 2 in \textcite[213]{MR0436146}), $Q^{2^{j-1}-1}\omega_{j-1}=\omega_{j-1}^{2}$, which is non-zero for $j>1$.
\end{remark}
We now prove \cref{secondary stability open case}.
\begin{proof}[Proof of \cref{secondary stability open case}]
\cref{secondary stability open case} immediately follows from \cref{higher stab open surface}, \cref{integral secondary stability for surface}, \cref{higher stab open higher dimensional manifold}, and \cref{integral secondary stab open manifold} in the case $\srange=1$.
\end{proof}

\section{Models for the Configuration Spaces of a Manifold and for the Configuration Spaces of a Disk}\label{sec:various models of conf}
In this section, we give various models of configuration spaces. 
In \cref{models of conf space}, we begin by introducing a topological monoid and a topological module over it 
and we use these spaces to produce a model of $\text{Conf}(M)$. Then in \cref{stabilization map construction}, we work with $\text{Conf}(D^{n})$, now viewed as a module over $\text{Conf}(S^{n-1}\times (0,\infty))$.
Roughly speaking, we use free modules over $\text{Conf}(S^{n-1}\times (0,\infty))$ to construct a model for $\text{Conf}(D^{n})$. This model for $\text{Conf}(D^{n})$ has the additional structure of being a module over $\text{Conf}(S^{n-1}\times (0,\infty))$ and is different from our model for $\text{Conf}(M)$.
%
%
Ultimately, the stabilization map we construct on $\nch{\text{Conf}(M)}$  comes from a stabilization map on $\nch{\text{Conf}(D^{n})}$ and we will define a stabilization map on the latter by using this other model for $\text{Conf}(D^{n})$.
\subsection{Models for the Configuration Spaces of a Manifold}\label{models of conf space}
In this subsection, we describe two related models of configuration spaces for two different contexts.
The first model associates to a space $\smon$ a topological monoid $\mon$ whose underlying space is homotopy equivalent to $\text{Conf}(\smon\times (0,\infty ))$. The second model associates to a manifold $M_{1}$ with boundary and an ordered $\srange$-tuple $(\smon_{1},\ldots, \smon_{\srange})$ of disjoint subspaces contained in $\partial M_{1}$, a space $\rimodc[\sqcup_{j}\smon_{j}]{M_{1}}{\srange}$ whose underlying space is homotopy equivalent to $\text{Conf}(M_{1})$ and which is a module over $\prod_{j=1}^{\srange}\mon[\smon_{j}]$.
We also construct a semi-simplicial space
$\iterxbarc{M_{1}}{\smon}{M_{2}}{\bullet}{\srange}$
which is
built from the spaces $\prod_{j=1}^{\srange}\mon[\smon_{j}]$, $\rimodc[\sqcup_{j}\smon_{j}]{M_{1}}{\srange}$, and $\rimodc[\sqcup_{j}\smon_{j}]{M_{2}}{\srange}$. 
Its geometric realization is a space which is weakly equivalent to $\text{Conf}(M_{1}\cup_{\sqcup_{j}\smon_{j}}M_2)$ and we will use this semi-simplicial space to define a stabilization map in \cref{new stab maps for conf(M)}.
\begin{definition}\label{shift map}
Suppose that $\smon$ is a space and that $a\in \mathbb{R}_{\geq 0}$. Let $\text{sh}_{a} \colon \text{Conf}(\smon\times (-\infty,\infty ))\to  \text{Conf}(\smon\times (-\infty,\infty ))$ be the ``shift by $a$'' map
\begin{align*}
\text{sh}_{a} \colon \text{Conf}(\smon\times (-\infty,\infty ))&\to  \text{Conf} ({\smon}\times (-\infty,\infty))
\\ \{(q_{1},u_{1}),\ldots , (q_{k},u_{k})\}&\mapsto \{(q_{1},a + u_{1}),\ldots , (q_{k},a + u_{k})\}.
\end{align*}
\end{definition}
\begin{definition}
Suppose that $\smon$ is a space and  $a\in \mathbb{R}\setminus\{0\}$. Let $\text{sc}_{a} \colon \text{Conf}(\smon\times (-\infty,\infty))\to  \text{Conf}(\smon\times (-\infty,\infty))$ be 
the ``scale by $a$'' map
\begin{align*}
\text{sc}_{a} \colon \text{Conf}(\smon\times (-\infty,\infty ))&\to  \text{Conf} ({\smon}\times (-\infty,\infty))
\\ \{(q_{1},u_{1}),\ldots , (q_{k},u_{k})\}&\mapsto \{(q_{1},a\cdot u_{1}),\ldots , (q_{k},a\cdot u_{k})\}.
\end{align*}
\end{definition}
\begin{definition}\label{Moore monoid}
Let $\smon$ be a space. Define the \textbf{strict monoid replacement} $\mon$ for the configuration space $\text{Conf}(\smon\times (0,\infty))$ to be the subspace of $\text{Conf}(\smon\times (0,\infty))\times \left [0,\infty \right )$ consisting of pairs $(\moconfigtwo , \neck)$ with $\moconfigtwo \in \text{Conf}(\smon\times (0,\neck ))\subset \text{Conf}(\smon\times \mathbb{R})$ for $\neck>0$ and $(\{\emptyset \},0)$ for $\neck=0$.

Addition of elements in $\mon$ is given by
\begin{align*}
-\cdot -  \colon \mon\times \mon&\to  \mon
\\\big ( (\moconfigtwo_{1},\neck_{1}), (\moconfigtwo_{2}, \neck_{2})\big)&\mapsto 
(\moconfigtwo_{1}\cup \text{sh}_{\neck_{1}}(\moconfigtwo_{2}) ,\neck_{1}+\neck_{2}).
\end{align*}
The space $\mon$ is a topological associative monoid with unit $(\{\emptyset \},0)$. Given an $\srange$-tuple of spaces $(\smon_{1},\ldots,\smon_{\srange})$, we use the notation $(\vec{\moconfigone},\vec{\neck})$ to denote an element of $\prod_{j=1}^{\srange}\mon[\smon_{j}]$, with $$\vec{\moconfigone}\colonequals(\vec{\moconfigone}(1),\ldots,\vec{\moconfigone}(\srange))\in\prod_{j=1}^{\srange}\text{Conf}(\smon_{j}\times (0,\infty))\text{ and }\vec{\neck}\colonequals(\vec{\neck}(1),\ldots,\vec{\neck}(\srange))\in \left[0,\infty\right)^{\srange}$$ satisfying the condition that $(\vec{\moconfigone}(j),\vec{\neck}(j))$ is an element of $\mon[\smon_{j}]$ for all $j=1,\ldots,\srange$.
\end{definition}
\begin{remark}
The motivation behind \cref{Moore monoid} is a construction, due to Moore, which turns the loop space $\Omega N$ of a space $\smon$ into a strict monoid instead of a monoid up to homotopy (see \textcite[p.~529--530]{MR1361898}
for the definition of the Moore loop space).
See \textcite[p.~9]{hatcher2014short} for applications of the strict monoid replacement $\mon$ to the Barrat--Priddy--Quillen theorem.
\end{remark}
\begin{definition}\label{rev notation for iterated configuration space model}
Suppose that $M$ is a manifold with boundary and that  $(\smon_{1},\ldots, \smon_{\srange})$ is an $\srange$-tuple of disjoint subspaces $\smon_{i}\subset\partial M$. Let $\smon$ denote $\bigsqcup\limits_{j=1}^{\srange}\smon_{j}$.
Define the \textbf{strict module replacement} $\rimodc[\smon]{M}{\srange}$ over the strict monoid replacement $\prod\limits_{j=1}^{\srange}\mon[\smon_{j}]$ for the configuration space $\text{Conf}(M)$ to be the subspace of 
$\text{Conf}\Big(M\cup_{\smon}\big({\smon}\times \left[0,\infty\right)\big)\Big)\times \left[0,\infty\right)^{\srange}$ consisting of elements 
$(\moconfigone , \vec{\neck})$, with $\moconfigone\in \text{Conf}\Big(M\cup_{\smon}\big({\smon}\times \left[0,\infty\right)\big)\Big)$ and $\vec{\neck}=(\vec{\neck}(1),\ldots, \vec{\neck}(\srange))\in [0,\infty)^{\srange}$ satisfying the condition that $$\moconfigone\in \text{Conf}\Big(M\cup_{\smon}\big(\bigsqcup\limits_{j=1,\ldots,\srange, \text{ with } \vec{\neck}(j)\neq 0}^{\srange} \smon_{j}\times [0,\vec{\neck}(j))\big)\setminus (\bigcup\limits_{k=1\ldots,\srange, \text{ with } \vec{\neck}(k)=0}^{\srange} \smon_{k})\Big).$$ If $\srange=1$, 
we denote an element of $\modc{M}$ by $(\moconfigone , \neck)$. 


Let $$\embed_{j} \colon \smon_{j}\times (0,\infty)\hookrightarrow \smon_{j}\times (0,\infty)\subset M\cup_{\smon}( {\smon}\times \left [0,\infty\right))$$ be the inclusion under the identity map and let $$\text{Conf}(\embed_{j}) \colon \text{Conf}(\smon_{j}\times (0,\infty))\to \text{Conf}\big(M\cup_{\smon}( {\smon}\times \left [0,\infty\right))\big)$$ denote the induced map of configuration spaces.  
The map
\begin{align*}
m_{r} \colon  \rimodc[\smon]{M}{\srange}\times \prod_{j=1}^{\srange}\mon[\smon_{j}] &\to  \rimodc[\smon]{M}{\srange}
\\\big( (\moconfigone_{1},\vec{\lambda}_{1} ), (\vec{\moconfigone}_{2},\vec{\lambda}_{2} )\big)&\mapsto \big(\bigsqcup_{j=1}^{\srange} \text{Conf}(\embed_{j})\big (\text{sh}_{\displaystyle\vec{\neck}_{1}(j)}(\vec{\moconfigtwo}_{2}(j))\big )\cup\moconfigone_{1}, \vec{\lambda}_{1}+\vec{\lambda}_{2}\big)
\end{align*}
gives the space $\rimodc[\smon]{M}{\srange}$ the structure of a right module over the monoid $\prod_{j=1}^{\srange}\mon[\smon_{j}]$ (see \cref{fig:iterated mod-conf}).
\begin{figure}
  \centering
  \includegraphics[scale=.68]{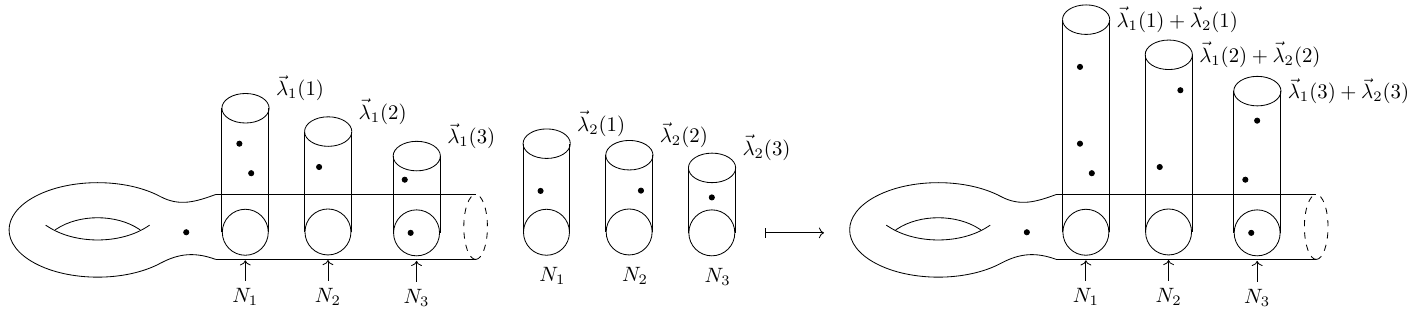}
  \caption{The right $\prod_{j=1}^{\srange}\mon[\smon_{j}]$-module structure of $\rimodc[\sqcup_{j}\smon_{j}]{M}{\srange}$}
  \label{fig:iterated mod-conf}
\end{figure}
Similarly, the map
\begin{align*}
m_{l} \colon  \big(\prod_{j=1}^{\srange}\mon[\smon_{j}]\big)\times\rimodc[\smon]{M}{\srange} &\to  \rimodc[\smon]{M}{\srange}
\end{align*}
$$\big( (\vec{\moconfigone}_{1},\vec{\lambda}_{1} ), (\moconfigone_{2},\vec{\lambda}_{2} )\big)\mapsto \big(\bigsqcup_{j=1}^{\srange} \text{Conf}(\embed_{j})\big (\text{sh}_{\displaystyle\vec{\neck}_{1}(j) + \vec{\neck}_{2}(j) }\text{sc}_{-1}(\vec{\moconfigtwo}_{1}(j) )\big )\cup\moconfigone_{2}, \vec{\lambda}_{1}+\vec{\lambda}_{2}\big)$$
gives the space $\rimodc[\smon]{M}{\srange}$ the structure of a left module over the monoid $\prod_{j=1}^{\srange}\mon[\smon_{j}]$. When $\srange=1$, 
we will write $\embed$ instead of $\embed_{1}$.
\end{definition}
\begin{definition}
Given an element $(\moconfigtwo,\neck)$ of $\mon$ or an element $(\moconfigtwo,\vec{\neck})$ of $\rimodc[\smon]{M}{\srange}$, we define a \emph{point} of $(\moconfigtwo ,\neck)$ or $(\moconfigtwo,\vec{\neck})$ to be any point of the configuration $\moconfigtwo$.
We will denote a point of $(\moconfigtwo,\neck)$ or $(\moconfigone,\vec{\neck})$ by $(q,u)$, with $u\in \left[0,\neck\right)$ and $q\in \smon$ for $u>0$ and $q\in M$ for $u=0$.
\begin{figure}[htb]
  \centering
  \includegraphics{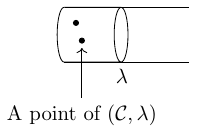}
  \label{fig:wall-of-replacement}
\end{figure}
\end{definition}
\begin{lemma}\label{replacements are homotopy equivalent to config spaces}
Suppose that $M$ is a manifold with boundary and that $(\smon_{1},\ldots, \smon_{\srange})$ is an $\srange$-tuple of disjoint subspaces $\smon_{i}\subset\partial M$. Let $\smon$ denote $\bigsqcup_{j=1}^{\srange}\smon_{j}$.
The spaces $\mon$ and $\rimodc[\smon]{M}{\srange}$ are homotopy equivalent to $\text{Conf}(\smon\times (0,1))$ and $\text{Conf} \big (M\cup_{\smon} ({\smon}\times \left [0,1\right ))\big)$, respectively.
\end{lemma}
\begin{proof}
There is a deformation retract of the Moore loop space to the loop space (see, for example, the proof of \textcite[Proposition 5.1.1]{MR1361898}) and a similar deformation retract of $\mon$ onto $\text{Conf}(\smon\times (0,1))\times\{1\}$ works in our setting.
The argument for $\rimodc[\smon]{M}{\srange}$ is similar so we omit it.

Let $\text{MConf}^{\geq 1}(\smon)\subset \mon$ denote the subspace of pairs $(\moconfigtwo ,\neck)\in\mon$ with $\neck \geq 1$. A deformation retract $H \colon \left [0,1\right ]\times\mon\to  \mon$ to $\text{MConf}^{\geq 1}(\smon)$ is given by the following formula:
\[
H\big (s,(\moconfigtwo ,\neck )\big )=\begin{cases} 
      (\moconfigtwo , s+\neck ) &s+\neck\leq 1 \\
      (\moconfigtwo ,1) & \neck\leq 1 \text{ and } s+\neck\geq 1\\ 
      (\moconfigtwo ,\neck) & \neck\geq 1.
   \end{cases}
   \]

Let $f \colon \left [0,1\right ]\times \left [1,\infty \right)\to  \left [0,\infty\right )$ be the function sending $(s,\neck )$ to $\displaystyle\frac{(1-s)\neck+s}{\neck}$. The map 
\begin{align*}
G\colon \left [0,1 \right ]\times \text{MConf}^{\geq 1}(\smon)& \to  \text{MConf}^{\geq 1}(\smon)
\\(s,(\moconfigtwo ,\neck))&\mapsto (\text{sc}_{f(s,\neck )}(\moconfigtwo ), (1-s)\neck+s)
\end{align*}
is a deformation retract of $\text{MConf}^{\geq 1}(\smon)$ to $\text{Conf}(\smon\times (0,1))\times \{1\}$.
\end{proof}
\begin{definition}[Bar Construction]\label{def: bar construction}
Suppose that $V$ is a monoid, $U$ is a right $V$-module, and $W$ is a left $V$-module. The \textbf{bar construction} $B_{\bullet}(U,V,W)$ is the semi-simplicial space whose $p$-simplices $B_{p}(U,V,W)$ are $U\times V^{p}\times W$. Given $u\in U$, $v_{1},\ldots , v_{p}\in V$, and $w\in W$, the face maps $d_{i} \colon B_{p}(U,V,W)\to  B_{p-1}(U,V,W)$ for $0\leq i \leq p$ send $(u,v_{1},\ldots , v_{p},w)$ to 
\begin{align*}
  d_{0}(u,v_{1},\ldots , v_{p},w)&=(m_{r}( u, v_{1}),v_{2},\ldots , v_{p},w)\\
  d_{i}(u,v_{1},\ldots , v_{p},w)&=(u, v_{1},\ldots ,v_{i-1},v_{i}\cdot v_{i+1},v_{i+2},\ldots , v_{p},w) \text{ for } 0<i<p \\ 
  d_{p}(u,v_{1},\ldots , v_{p},w)&=(u, v_{1},\ldots , v_{p-1}, m_{l}(v_{p}, w)),  
\end{align*}
where $m_{r}\colon U\times V\to U$ is the right module map of $U$ and $m_{l}\colon V\times W\to W$ is the left module map of $W$.

\end{definition}
Suppose that $M_{1}$ and $M_{1}$ are manifolds with boundary and that $(\smon_{1},\ldots, \smon_{\srange})$ is an $\srange$-tuple of disjoint subspaces $\smon_{i}$ contained in $\partial M_{1}$ and $\partial M_{2}$. Let $\smon\colonequals \bigsqcup_{j=1}^{\srange}\smon_{j}$.
Since $\rimodc[\smon]{M_{1}}{\srange}$ is a left module over the monoid $\prod_{j=1}^{\srange}\mon[\smon_{j}]$, and $\rimodc[\smon]{M_{2}}{\srange}$) is a right module over it, we can take the bar construction $$B_{\bullet}(\rimodc[\smon]{M_{1}}{\srange}, \prod_{j=1}^{\srange}\mon[\smon_{j}], \rimodc[\smon]{M_{2}}{\srange}).$$
\begin{definition}\label{iterated bar construction model for conf}
Suppose that $M_{1}$ and $M_{2}$ are two manifolds with boundary and that $(\smon_{1},\ldots, \smon_{\srange})$ is an $\srange$-tuple of disjoint subspaces contained in $\partial M_{1}$ and $\partial M_{2}$. Let $\smon$ denote $\bigsqcup_{j=1}^{\srange}\smon_{j}$. Define $\iterxbarc{M_{1}}{\smon}{M_{2}}{\bullet}{\srange}$ to be the semi-simplicial space
$$\iterxbarc{M_{1}}{\smon}{M_{2}}{\bullet}{\srange}\colonequals B_{\bullet}\big (\rimodc[\smon]{M_{1}}{\srange},\prod_{j=1}^{\srange}\mon[\smon_{j}], \rimodc[\smon]{M_{2}}{\srange}\big).$$
Let $\piterxbarc{M_{1}}{\smon}{M_{2}}{\bullet}{\srange}{\partic}\subset \iterxbarc{M_{1}}{\smon}{M_{2}}{\bullet}{\srange}$ (respectively, $\iterxpbarc{M_{1}}{\smon}{M_{2}}{\bullet}{\srange} \subset \iterxbarc{M_{1}}{\smon}{M_{2}}{\bullet}{\srange}$) denote the semi-simplicial space whose $p$-simplices consist of elements
$\big ((\moconfigtwo_{0},\vec{\neck}_{0}),(\vec{\moconfigtwo}_{1},\vec{\neck}_{1}),\ldots,(\vec{\moconfigtwo}_{p},\vec{\neck}_{p}), (\moconfigtwo_{p+1},\vec{\neck}_{p+1})\big )\in\iterxbarc{M_{1}}{\smon}{M_{2}}{p}{\srange}$ such that 
$\vert \moconfigtwo_{0}\vert + \vert \vec{\moconfigtwo}_{1}\vert+\ldots+ \vert \vec{\moconfigtwo}_{p}\vert+\vert \moconfigtwo_{p+1}\vert=\partic$
(respectively, $\sum_{j=0}^{p+1}\vec{\neck}_{j}(i)=1$ for all $i=1,\ldots,\srange$).
\end{definition}
We want to compare the geometric realizations $\Vert \iterxpbarc{M_{1}}{\smon}{M_{2}}{\bullet}{\srange}\Vert$ and $\Vert \iterxbarc{M_{1}}{\smon}{M_{2}}{\bullet}{\srange}\Vert$. We will need the following well-known fact concerning the geometric realization of a semi-simplicial space: 
\begin{theorem}[{e.g. \textcite[Theorem 2.2]{MR3995026}}]\label{levelwise weak homotopy equivalence}
Let $g_{\bullet} \colon Y_{\bullet}\to  Z_{\bullet}$ be a map of semi-simplicial spaces which is a levelwise weak homotopy equivalence, i.e. $g_{p} \colon Y_{p}\to  Z_{p}$ is a weak homotopy equivalence for all $p$. Then $\Vert g_{\bullet}\Vert  \colon  \Vert Y_{\bullet}\Vert\to  \Vert Z_{\bullet}\Vert$ is a weak homotopy equivalence.
\end{theorem}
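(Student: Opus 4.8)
The plan is to prove the theorem by induction over the skeletal filtration of the thick geometric realization, using that this filtration is built out of homotopy pushouts. Recall that $\Vert Y_{\bullet}\Vert$ is filtered by subspaces $\mathrm{sk}_{n}\Vert Y_{\bullet}\Vert$ with $\mathrm{sk}_{-1}\Vert Y_{\bullet}\Vert=\emptyset$, and $\mathrm{sk}_{n}\Vert Y_{\bullet}\Vert$ is obtained from $\mathrm{sk}_{n-1}\Vert Y_{\bullet}\Vert$ via the pushout square
\begin{center}\begin{tikzcd}
Y_{n}\times \partial\Delta^{n} \arrow[r]\arrow[d] & \mathrm{sk}_{n-1}\Vert Y_{\bullet}\Vert \arrow[d]\\
Y_{n}\times \Delta^{n} \arrow[r] & \mathrm{sk}_{n}\Vert Y_{\bullet}\Vert,
\end{tikzcd}\end{center}
where the top map is assembled from the face maps $d_{i}$ and the left map is induced by $\partial\Delta^{n}\hookrightarrow\Delta^{n}$. (It is exactly here that ``thick'' matters: since there are no degeneracies in a semi-simplicial space, the attaching happens along the genuine simplicial boundary.) Because $\partial\Delta^{n}\hookrightarrow\Delta^{n}$ is a cofibration, so is $Y_{n}\times\partial\Delta^{n}\hookrightarrow Y_{n}\times\Delta^{n}$, hence $\mathrm{sk}_{n-1}\Vert Y_{\bullet}\Vert\hookrightarrow\mathrm{sk}_{n}\Vert Y_{\bullet}\Vert$ is a cofibration and the square above is a homotopy pushout. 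The same holds verbatim for $Z_{\bullet}$, and $\Vert g_{\bullet}\Vert$ is filtered, restricting to maps $\mathrm{sk}_{n}\Vert g_{\bullet}\Vert$ compatible with these squares.

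First I would prove by induction on $n$ that $\mathrm{sk}_{n}\Vert g_{\bullet}\Vert\colon\mathrm{sk}_{n}\Vert Y_{\bullet}\Vert\to\mathrm{sk}_{n}\Vert Z_{\bullet}\Vert$ is a weak homotopy equivalence. The case $n=-1$ is trivial. For the inductive step, observe that $g_{n}\colon Y_{n}\to Z_{n}$ is a weak equivalence by hypothesis, so $g_{n}\times\mathrm{id}$ is a weak equivalence on both $Y_{n}\times\Delta^{n}\to Z_{n}\times\Delta^{n}$ and $Y_{n}\times\partial\Delta^{n}\to Z_{n}\times\partial\Delta^{n}$, and $\mathrm{sk}_{n-1}\Vert g_{\bullet}\Vert$ is a weak equivalence by the inductive hypothesis. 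The gluing lemma for homotopy pushouts (equivalently, left properness of the standard model structure on spaces) then shows that $\mathrm{sk}_{n}\Vert g_{\bullet}\Vert$ is a weak equivalence.

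Finally I would pass to the colimit. One has $\Vert Y_{\bullet}\Vert=\mathrm{colim}_{n}\,\mathrm{sk}_{n}\Vert Y_{\bullet}\Vert$ along the cofibrations established above, and likewise for $Z_{\bullet}$, so this sequential colimit computes the homotopy colimit; since $\mathrm{sk}_{n}\Vert g_{\bullet}\Vert$ is a weak equivalence for every $n$, the induced map $\Vert g_{\bullet}\Vert$ is a weak equivalence. Concretely, any continuous map of a sphere $S^{k}$ or disk $D^{k}$ into $\Vert Z_{\bullet}\Vert$ or into $\Vert Y_{\bullet}\Vert$ has compact image and so factors through a finite skeleton, which reduces the required statements about homotopy groups to the skeletal case already handled.

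The main obstacle is entirely point-set-topological bookkeeping rather than anything conceptual: to make the cofibration and gluing-lemma arguments go through one must work in a convenient category of topological spaces (for instance compactly generated weak Hausdorff spaces), where products with the cofibration $\partial\Delta^{n}\hookrightarrow\Delta^{n}$ remain cofibrations and pushouts along cofibrations are homotopy pushouts. Once that framework is fixed, the proof is the standard filtered-colimit induction sketched above; alternatively one may simply cite the referenced appendix, but the skeletal argument is self-contained and is the one I would write out.
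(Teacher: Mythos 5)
Your argument is correct. The paper does not actually prove this statement; it is quoted as a known result with a pointer to Segal's appendix (Proposition A.1(ii) of \textcite{MR353298}), so there is no in-paper proof to compare against. What you have written is the standard self-contained proof of that cited result: filter the thick realization by skeleta, observe that $\mathrm{sk}_{n}$ is the pushout of $\mathrm{sk}_{n-1}\leftarrow Y_{n}\times\partial\Delta^{n}\rightarrow Y_{n}\times\Delta^{n}$ along a closed cofibration (which is where thickness, i.e.\ the absence of degeneracy identifications, is used), apply the gluing lemma inductively, and pass to the sequential colimit using compactness of spheres and disks. All three ingredients are valid in a convenient category of spaces, as you note, and this is essentially the argument Segal gives; nothing further is needed.
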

\begin{lemma}\label{postive mconf is equivalent to mconf}
Let $g_{\bullet}\colon \iterxpbarc{M_{1}}{\smon}{M_{2}}{\bullet}{\srange}\to \iterxbarc{M_{1}}{\smon}{M_{2}}{\bullet}{\srange}$ be the map of semi-simplicial spaces coming from the natural inclusion $\iterxpbarc{M_{1}}{\smon}{M_{2}}{p}{\srange}\hookrightarrow\iterxbarc{M_{1}}{\smon}{M_{2}}{p}{\srange}$. The map on geometric realizations $$\Vert g_{\bullet}\Vert\colon \Vert\iterxpbarc{M_{1}}{\smon}{M_{2}}{\bullet}{\srange}\Vert\to \Vert\iterxbarc{M_{1}}{\smon}{M_{2}}{\bullet}{\srange}\Vert$$ is a weak homotopy equivalence.
\end{lemma}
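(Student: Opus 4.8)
The plan is to exhibit, for each $p$, a levelwise weak homotopy equivalence between $\xpbarc{M_{1}}{N}{M_{2}}{\bullet}$ and $\xbarc{M_{1}}{N}{M_{2}}{\bullet}$ and then invoke \cref{levelwise weak homotopy equivalence}. However, the naive inclusion $g_p$ is not itself a levelwise equivalence --- the $p$-simplices of $\xpbarc{M_{1}}{N}{M_{2}}{\bullet}$ form a ``slice'' $\{\sum \neck_i = 1\}$ inside a space where the $\neck_i$ range over all of $\left[0,\infty\right)$. So the first step is to recognize that $\xbarc{M_{1}}{N}{M_{2}}{p}$ deformation retracts onto this slice, and that this retraction is compatible with $g_p$. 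Concretely, on the locus where $\sum_{i=0}^{p+1}\neck_i > 0$ one can rescale all the ``collar'' coordinates $u$ of the points lying in the various cylinder pieces by the factor $1/\sum_i \neck_i$ and simultaneously rescale each $\neck_i$ by the same factor; this is exactly the kind of ``scale by $a$'' map $\text{sc}_a$ introduced in \cref{Moore module replacement}, applied compatibly across all the factors $\modc{M_{1}}$, $\mon^p$, $\modc{M_{2}}$. The subtlety is the locus $\sum_i \neck_i = 0$: there all the $\neck_i$ are zero, the configuration lives entirely in $M_1 \setminus N$ (respectively $M_2 \setminus N$) with no cylinder part, and this stratum is not in the image of $g_p$ and cannot be rescaled into the slice.

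To handle that, the key observation is that the bad stratum is a neighborhood deformation retract that can be pushed off: the map $H$ from the proof of \cref{replacements are homotopy equivalent to config spaces} (the ``grow the collar'' homotopy $H(s,(\moconfigtwo,\neck)) = (\moconfigtwo, s+\neck)$, and its analogue on $\modc{M_i}$) shows that $\mon$ and $\modc{M_i}$ each deformation retract onto their respective subspaces with $\neck \geq 1$. Applying such homotopies componentwise to $\xbarc{M_{1}}{N}{M_{2}}{p}$ gives a deformation retraction onto the subspace where every $\neck_i \geq 1$, hence in particular where $\sum_i \neck_i \geq p+2 > 0$; after that, the rescaling map $\text{sc}_{1/\sum\neck_i}$ is defined everywhere and continuous, and it retracts onto the slice $\{\sum\neck_i = 1\}$, which is precisely $\xpbarc{M_{1}}{N}{M_{2}}{p}$. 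One must check that these two homotopies, composed, are compatible with the face maps $d_i$ --- but the face maps are built from $\text{sh}_a$ and the module/monoid multiplications, and both the collar-growing homotopy and the global rescaling commute with those operations, so the homotopies assemble into maps of semi-simplicial spaces. This shows $g_\bullet$ admits a levelwise homotopy inverse, so each $g_p$ is a weak (indeed genuine) homotopy equivalence.

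Then \cref{levelwise weak homotopy equivalence} applies directly: since $g_p \colon \xpbarc{M_{1}}{N}{M_{2}}{p} \to \xbarc{M_{1}}{N}{M_{2}}{p}$ is a weak homotopy equivalence for all $p$, the induced map $\Vert g_\bullet \Vert$ on thick geometric realizations is a weak homotopy equivalence. The main obstacle I anticipate is the first paragraph's issue: verifying that the composite homotopy (first grow collars to make all $\neck_i \geq 1$, then globally rescale to make $\sum_i \neck_i = 1$) is genuinely continuous across the stratum where some point sits exactly on a wall $N \times \{u\}$ as $u \to 0$, and that it respects the face and (if one works simplicially rather than semi-simplicially) degeneracy structure; this is the kind of bookkeeping that is routine in spirit but easy to get wrong, because the identifications gluing $M_i$ to its collar must be tracked through every rescaling. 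A clean way to organize this is to first prove the statement for a single factor --- i.e. reprove \cref{replacements are homotopy equivalent to config spaces} in the slightly stronger form that the inclusion of the ``$\neck$ fixed to $1$'' subspace is a deformation retract --- and then observe that the bar construction's $p$-simplices are a subspace of a product of such factors cut out by the single equation $\sum_i \neck_i = 1$, so the factorwise retractions can be combined after one global rescaling.
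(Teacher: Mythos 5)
Your overall strategy coincides with the paper's: build a levelwise homotopy inverse to $g_{p}$ by rescaling the wall coordinates into the slice $\{\sum_{i}\neck_{i}=1\}$, and then quote \cref{levelwise weak homotopy equivalence}. The paper's implementation is slicker than your two-stage process (first grow all collars to $\neck_{i}\geq 1$, then divide by $\sum_{i}\neck_{i}$): it defines in a single step the map $f_{p}$ sending $\big((\moconfigtwo_{0},\neck_{0}),\ldots,(\moconfigtwo_{p+1},\neck_{p+1})\big)$ to $\big((\text{sc}_{1/(1+\neck)}(\moconfigtwo_{0}),\tfrac{1+\neck_{0}}{1+\neck}),(\text{sc}_{1/(1+\neck)}(\moconfigtwo_{1}),\tfrac{\neck_{1}}{1+\neck}),\ldots\big)$ with $\neck=\sum_{j}\neck_{j}$. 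Dividing by $1+\neck$ rather than by $\neck$, and absorbing the extra unit of length into the zeroth wall, makes this continuous on all of $\xbarc{M_{1}}{N}{M_{2}}{p}$ including the stratum $\neck=0$ — precisely the stratum that forced you into the preliminary collar-growing step.

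Two points in your write-up need repair. First, your claim that the collar-growing homotopy commutes with the face maps is false: growing every wall by $s$ and then applying $d_{i}$ yields total wall height $\neck_{i}+\neck_{i+1}+2s$ and a shift by $\neck_{i}+s$, whereas applying $d_{i}$ first and then growing yields $\neck_{i}+\neck_{i+1}+s$ and a shift by $\neck_{i}$. Fortunately this compatibility is not needed: \cref{levelwise weak homotopy equivalence} only requires the semi-simplicial map $g_{\bullet}$ itself to be a levelwise weak equivalence; the homotopy inverses and the homotopies may be chosen degree by degree with no simplicial compatibility. Second, and more substantively, your composite $r=(\text{rescale})\circ(\text{grow})$ is not a retraction onto the slice: it moves points of $\xpbarc{M_{1}}{N}{M_{2}}{p}$ (their collars get grown and then everything is rescaled), so your homotopy only establishes $g_{p}\circ r\simeq \mathrm{id}$, i.e.\ that $[g_{p}]$ has a right inverse in the homotopy category. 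You still need $r\circ g_{p}\simeq \mathrm{id}$ \emph{through maps into the slice}, and the concatenated ambient homotopy does not restrict to the slice because its intermediate stages violate $\sum_{i}\neck_{i}=1$. This is easy to supply — for instance, at time $t$ grow each wall by $t$ and simultaneously rescale by $1/(1+(p+2)t)$, which preserves the slice — but as written the conclusion that $g_{p}$ is a homotopy equivalence is not yet justified. The paper's one-step homotopy $H$ avoids this issue entirely because it visibly preserves the subspace $\sum_{i}\neck_{i}=1$ at every time $t$, so a single formula witnesses both composites.
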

\begin{proof}
For simplicity, we prove this result only for the case $\srange=1$. Given $\big((\moconfigtwo_{0} ,\neck_{0}),\ldots, (\moconfigtwo_{p+1} ,\neck_{p+1})\big)\in \iterxbarc{M_{1}}{\smon}{M_{2}}{p}{1}$, let $\neck\colonequals \sum_{j=0}^{p+1}\neck_{j}$.  
Let $f_{p}\colon \iterxbarc{M_{1}}{\smon}{M_{2}}{p}{1}\to \iterxpbarc{M_{1}}{\smon}{M_{2}}{p}{1}$ denote the map
\begin{align*}
    f_{p}\colon \iterxbarc{M_{1}}{\smon}{M_{2}}{p}{1}&\to \iterxpbarc{M_{1}}{\smon}{M_{2}}{p}{1}\\
    \big((\moconfigtwo_{0} ,\neck_{0}),\ldots, (\moconfigtwo_{p+1} ,\neck_{p+1})\big)&\mapsto \big(\big(\text{sc}_{1/(1+\neck)}(\moconfigtwo_{0}),\frac{1+\neck_{0}}{1+\neck}\big),\big(\text{sc}_{1/(1+\neck)}(\moconfigtwo_{1}),\frac{\neck_{1}}{1+\neck}\big),\ldots,\big(\text{sc}_{1/(1+\neck)}(\moconfigtwo_{p+1}),\frac{\neck_{p+1}}{1+\neck}\big)\big).
\end{align*}
and let $g_{p}\colon \iterxpbarc{M_{1}}{\smon}{M_{2}}{p}{1}\hookrightarrow \iterxbarc{M_{1}}{\smon}{M_{2}}{p}{1}$ be the inclusion map. The maps $f_{p}$ and $g_{p}$ assemble to define maps of semi-simplicial spaces $f_{\bullet}\colon \iterxbarc{M_{1}}{\smon}{M_{2}}{\bullet}{1}\to \iterxpbarc{M_{1}}{\smon}{M_{2}}{\bullet}{1}$ and $g_{\bullet}\colon \iterxpbarc{M_{1}}{\smon}{M_{2}}{\bullet}{1}\to \iterxbarc{M_{1}}{\smon}{M_{2}}{\bullet}{1}$.

We now check that the map $f_{p}\colon \iterxbarc{M_{1}}{\smon}{M_{2}}{p}{1}\to \iterxpbarc{M_{1}}{\smon}{M_{2}}{p}{1}$ is a homotopy equivalence. Let $r\colon \left[0,1\right]\times\mathbb{R}_{>0}\to \mathbb{R}_{>0}$ be the function sending $(t,a)$ to $1/(1+at)$. The map
\begin{align*}
    H\colon \iterxbarc{M_{1}}{\smon}{M_{2}}{p}{1}\times\left[0,1\right]&\to \iterxbarc{M_{1}}{\smon}{M_{2}}{p}{1}\\
    \big(((\moconfigtwo_{0} ,\neck_{0}),\ldots, (\moconfigtwo_{p+1} ,\neck_{p+1})),t\big)&\mapsto \big(\big(\text{sc}_{r(t,\neck)}(\moconfigtwo_{0}),\frac{t+\neck_{0}}{1+t\neck}\big),\big(\text{sc}_{r(t,\neck)}(\moconfigtwo_{1}),\frac{\neck_{1}}{1+t\neck}\big),\ldots,\big(\text{sc}_{r(t,\neck)}(\moconfigtwo_{p+1}),\frac{\neck_{p+1}}{1+t\neck}\big)\big)
\end{align*}
defines a homotopy equivalence between $\iterxbarc{M_{1}}{\smon}{M_{2}}{p}{1}$ and $\iterxpbarc{M_{1}}{\smon}{M_{2}}{p}{1}$. Since $f_{p}\colon \iterxbarc{M_{1}}{\smon}{M_{2}}{p}{1}\to \iterxpbarc{M_{1}}{\smon}{M_{2}}{p}{1}$ is a homotopy equivalence for all $p$, by \cref{levelwise weak homotopy equivalence}, the map $$\Vert f_{\bullet}\Vert\colon \Vert\iterxbarc{M_{1}}{\smon}{M_{2}}{\bullet}{1}\Vert\to \Vert\iterxpbarc{M_{1}}{\smon}{M_{2}}{\bullet}{1}\Vert$$ is a weak homotopy equivalence.

 The proof for $\srange>1$ is similar to the case $\srange=1$--one just needs to rescale additional terms.
\end{proof}
Suppose that $M_{1}$ and $M_{2}$ are manifolds with boundary and that $(\smon_{1},\ldots, \smon_{\srange})$ is an $\srange$-tuple of disjoint subspaces contained in $\partial M_{1}$ and $\partial M_{2}$. Let $\smon\colonequals \bigsqcup_{j=1}^{\srange}\smon_{j}$. 
Let $M$ be the space 
$M_{1}\cup_{\smon}({\smon}\times [0,1]^{\srange})\cup_{\smon}M_{2}$, where $M_{1}$ and $M_{2}$ attached along ${\smon}\times \{(0,0,\ldots,0)\}$ and ${\smon}\times \{(1,1,\ldots,1)\}$, respectively. 
We now introduce a semi-simplicial model for $\text{Conf}(M)$.
\begin{definition}
Suppose that $M_{1}$ and $M_{2}$ are two manifolds with boundary and that $(\smon_{1},\ldots, \smon_{\srange})$ is an $\srange$-tuple of disjoint subspaces contained in $\partial M_{1}$ and $\partial M_{2}$. Let $\smon$ denote $\bigsqcup_{j=1}^{\srange}\smon_{j}$.
Let $M$ be the space $M_{1}\cup_{\smon}({\smon\times\{0\}}\times [0,1])\cup_{\smon\times\{1\}}M_{2}$, where $M_{1}$ and $M_{2}$ attached along ${\smon}\times \{0\}$ and ${\smon}\times \{1\}$, respectively. 
Let $B_{\bullet}(M,\smon)$ denote the semi-simplicial space with
\begin{equation*}
    B_{p}(M,\smon)\colonequals \left\{ (\moconfigone; \vec{\neck}_{0},\ldots,\vec{\neck}_{p})\in \text{Conf}(M)\times (0,1)^{\srange(p+1)}:\begin{aligned} &  \vec{\neck}_{0}(j)<\cdots<\vec{\neck}_{p+1}(j)\text{ and } \moconfigone\cap \smon_{j}\times\{\vec{\neck}_{i}(j)\}=\emptyset \\&\text{ for all } i=1,\ldots, p, \text{ and } j=1,\ldots,\srange\end{aligned}\right\}
\end{equation*}
and with $d_{i}\colon B_{p}(M,\smon)\to B_{p-1}(M,\smon)$ the map that forgets $\vec{\neck_{i}}$.
\end{definition}
\begin{definition}\label{augmentation map defintion}
An \textbf{augmented semi-simplicial space} is a triple $(Z_{\bullet},Z_{-1},\epsilon )$ consisting of a semi-simplicial space $Z_{\bullet}$, a space $Z_{-1}$, and a map $\epsilon  \colon  Z_{0}\to  Z_{-1}$ such that $\epsilon d_{0}=\epsilon d_{1}$. The map $\epsilon  \colon  Z_{0}\to  Z_{-1}$ is called the \textbf{augmentation map}.
\end{definition}
An augmented semi-simplicial space $(Z_{\bullet},Z_{-1},\epsilon )$ induces a map
$\Vert\epsilon\Vert\colon\Vert Z_{\bullet}\Vert\to Z_{-1}$.
\begin{definition}\label{resolution defintion}
A \textbf{semi-simplicial resolution} of a space $Z_{-1}$ is an augmented semi-simplicial space $(Z_{\bullet},Z_{-1},\epsilon )$ such that the induced map $\Vert\epsilon\Vert\colon\Vert Z_{\bullet}\Vert \to  Z_{-1}$ is a weak homotopy equivalence.
\end{definition}
Let $\epsilon\colon B_{0}(M,\smon)\to\text{Conf}(M)$ be the map which forgets $\vec{\neck}_{0}$. The triple $(B_{\bullet}(M,\smon),\text{Conf}(M),\epsilon)$ is an augmented semi-simplicial space. To show that $(B_{\bullet}(M,\smon),\text{Conf}(M),\epsilon)$ is a semi-simplicial resolution of $\text{Conf}(M)$, we now introduce a technique of \textcite[Section 6.2]{MR3207759}.
\begin{definition}
An \textbf{augmented topological flag complex} is an augmented semi-simplicial space $(Z_{\bullet},Z_{-1},\epsilon)$ such that
\hfill\begin{enumerate}
    \item The map $Z_{p}\to Z_{0}\times_{Z_{-1}}Z_{0}\times_{Z_{-1}}\cdots\times_{Z_{-1}}Z_{0}$ to the $(p+1)$-fold product--which takes a $p$-simplex to its $(p+1)$ vertices--is a homeomorphism onto its image, which is an open subset, and
    \item A tuple $(v_{0},\ldots,v_{p+1})\in Z_{0}\times_{Z_{-1}}Z_{0}\times_{Z_{-1}}\cdots\times_{Z_{-1}}Z_{0}$ lies in $Z_{p+1}$ if and only if $(z_{i},z_{j})\in Z_{1}$ for all $i<j$.
\end{enumerate}
\end{definition}
\begin{theorem}[{\textcite[Theorem 6.2]{MR3207759}}]\label{flag complex}
Let $(Z_{\bullet},Z_{-1},\epsilon)$ be an augmented topological flag complex. Suppose that
\hfill\begin{enumerate}
    \item\label{local setion} The map $\epsilon\colon Z_{0}\to Z_{-1}$ has local sections (in the sense that given any $z\in Z_{0}$, there is a neighborhood $U\subset Z_{-1}$ of $\epsilon (z)$ and a section $s\colon U\to Z_{0}$ with $s(\epsilon(z))=z$),
    \item\label{map from zero simplices is onto} The map $\epsilon\colon Z_{0}\to Z_{-1}$ is surjective, and
    \item\label{flag can be inserted} For any $x\in Z_{-1}$ and any (non-empty) finite set $\{z_{1},\ldots,z_{n}\}\subset \epsilon^{-1}(x)$, there exists a $z\in \epsilon^{-1}(x)$ with $(z_{i},z)\in Z_{1}$ for all $i$.
\end{enumerate}
Then $\Vert\epsilon\Vert\colon\Vert Z_{\bullet}\Vert\to Z_{-1}$ is a weak homotopy equivalence.
\end{theorem}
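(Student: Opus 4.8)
The plan is to show that $\Vert\epsilon\Vert\colon\Vert Z_{\bullet}\Vert\to Z_{-1}$ is a surjective Serre microfibration with weakly contractible fibers, and then to invoke the principle of Weiss that a Serre microfibration with weakly contractible fibers is a Serre fibration onto its image; a surjective Serre fibration with weakly contractible fibers induces isomorphisms on all homotopy groups and hence is a weak homotopy equivalence.

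\emph{Surjectivity and the fibers.} Surjectivity of $\Vert\epsilon\Vert$ is immediate from hypothesis \ref{map from zero simplices is onto}, since any $v\in Z_{0}$ with $\epsilon(v)=x$ already gives a point of $\Vert Z_{\bullet}\Vert$ over $x$. For $x\in Z_{-1}$, the fiber $\Vert\epsilon\Vert^{-1}(x)$ is the geometric realization of the sub-semi-simplicial space $Z_{\bullet}^{x}\subset Z_{\bullet}$ of simplices all of whose vertices lie over $x$; by the flag-complex structure this is again a topological flag complex on $\epsilon^{-1}(x)$. I would then deduce weak contractibility of $\Vert Z_{\bullet}^{x}\Vert$ from hypothesis \ref{flag can be inserted} by the standard star argument: any map of a sphere into $\Vert Z_{\bullet}^{x}\Vert$ has image in a finite subcomplex, whose finitely many vertices admit a common neighbor $v$ by \ref{flag can be inserted}, so that this subcomplex lies in the star of $v$, which deformation retracts onto $v$; hence the map is nullhomotopic and every homotopy group of $\Vert Z_{\bullet}^{x}\Vert$ vanishes.

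\emph{The microfibration property.} This is where hypothesis \ref{local setion} enters. Given a lifting problem for $\Vert\epsilon\Vert$ over $D^{k}\times[0,1]$ with a lift prescribed on $D^{k}\times\{0\}$, I would first use the local sections of $\epsilon\colon Z_{0}\to Z_{-1}$ to lift the vertex data over a neighborhood, and then use \ref{flag can be inserted} to organize these vertices into simplices compatibly, producing a partial lift over $D^{k}\times[0,\varepsilon]$ for some $\varepsilon>0$. Arranging the combinatorics to vary continuously --- choosing the open cover coming from the local sections so that the resulting simplex data is coherent --- is the delicate point, and I expect it to be the main obstacle: the fibers $\epsilon^{-1}(x)$ of the augmentation need not be discrete, so the ``flag complex over $x$'' carries a genuine topology that must be controlled uniformly in $x$. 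This is carried out in \textcite[Section 6]{MR3207759}.
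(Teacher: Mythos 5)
The paper does not prove \cref{flag complex}; it is imported verbatim from \textcite[Theorem 6.2]{MR3207759}, so the relevant comparison is with their Section 6. Their argument is not the one you propose. They verify directly that the relative homotopy groups of the pair $(Z_{-1},\Vert Z_{\bullet}\Vert)$ vanish: given $f\colon D^{k}\to Z_{-1}$ with a prescribed lift over $\partial D^{k}$, they cover $D^{k}$ by finitely many open sets over which Condition \ref{local setion} provides sections, use Condition \ref{flag can be inserted} together with the openness of $Z_{1}$ inside $Z_{0}\times_{Z_{-1}}Z_{0}$ to arrange (after shrinking the cover) that the section values over each point span a simplex, and then glue these sections with a partition of unity to produce a lift; the prescribed boundary lift is absorbed by a join-type homotopy. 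No microfibration property of $\Vert\epsilon\Vert$ is claimed or used there.

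Both halves of your strategy have genuine gaps. For the fibers: the star argument needs the image of a sphere in $\Vert Z_{\bullet}^{x}\Vert$ (the full sub-flag-complex on $\epsilon^{-1}(x)$) to lie in a finite subcomplex, but $Z_{p}$ carries a topology and $\epsilon^{-1}(x)$ is typically non-discrete --- in the paper's own application (\cref{augmented flag version of Conf(M)}) the fiber is an open subset of $(0,1)$ --- so a compact subset of the realization can meet uncountably many simplices. Condition \ref{flag can be inserted} only supplies a common neighbour for a \emph{finite} set of vertices, so there is nothing to cone with; establishing weak contractibility of the fibers is essentially the case $Z_{-1}=\mathrm{pt}$ of the theorem and already requires the partition-of-unity argument. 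For the microfibration property: this is not a formal consequence of the hypotheses. The criterion that is available, \cref{Soren and Oscar's microfibration prop}, requires the simplicial data to form a semi-simplicial \emph{set} over the base, i.e.\ to be fiberwise discrete, which is precisely what an augmented topological flag complex does not guarantee; choosing local sections simplex-by-simplex yields local lifts that need not agree on shared faces in the realization, so no uniform $\varepsilon$-lift over a compact family results. Deferring this point to \textcite[Section 6]{MR3207759} does not close the gap, since that is not what is proved there. If you want a proof in the spirit of \cref{Weiss' Lemma}, you would first have to replace $Z_{\bullet}$ by a model to which \cref{Soren and Oscar's microfibration prop} applies, which is in effect what the paper does in its applications rather than in the proof of this theorem.
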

\begin{proposition}\label{augmented flag version of Conf(M)}
The map $\Vert\epsilon\Vert\colon\Vert B_{\bullet}(M,\smon)\Vert\to\text{Conf}(M)$ is a weak homotopy equivalence.
\end{proposition}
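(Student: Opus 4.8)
The plan is to apply \cref{flag complex} to the augmented semi-simplicial space $(B_{\bullet}(M,N),\text{Conf}(M),\epsilon)$. First I would verify that this augmented semi-simplicial space is an augmented topological flag complex. A $p$-simplex $(\moconfigone;\neck_{0},\ldots,\neck_{p})$ maps to the tuple of vertices $\big((\moconfigone;\neck_{0}),\ldots,(\moconfigone;\neck_{p})\big)$, which all lie over the same point $\moconfigone\in\text{Conf}(M)$; the $(p+1)$-fold fibre product $B_{0}(M,N)\times_{\text{Conf}(M)}\cdots\times_{\text{Conf}(M)}B_{0}(M,N)$ is exactly the set of such tuples with each $\neck_{i}$ avoiding the finite set of heights at which $\moconfigone$ meets the embedded collar $N\times\left[0,1\right]$, and $B_{p}(M,N)$ is carved out inside it by the open condition $\neck_{0}<\cdots<\neck_{p}$. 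Hence the map $B_{p}(M,N)\to B_{0}(M,N)^{\times_{\text{Conf}(M)}(p+1)}$ is a homeomorphism onto an open subset. For the second flag-complex axiom, a tuple of vertices over a common configuration $\moconfigone$ with heights $\neck_{0},\ldots,\neck_{p+1}$ lies in $B_{p+1}(M,N)$ precisely when $\neck_{0}<\cdots<\neck_{p+1}$, and the pair $\big((\moconfigone;\neck_{i}),(\moconfigone;\neck_{j})\big)$ lies in $B_{1}(M,N)$ precisely when $\neck_{i}<\neck_{j}$; by transitivity of $<$ the two conditions coincide.

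Next I would check the three hypotheses of \cref{flag complex}. For \ref{map from zero simplices is onto}: given $\moconfigone\in\text{Conf}(M)$, only finitely many heights $t\in(0,1)$ satisfy $\moconfigone\cap N\times\{t\}\neq\emptyset$, since $\moconfigone$ is finite and $N\times\left[0,1\right]\hookrightarrow M$ is an embedding, so we may choose $\neck_{0}$ avoiding them and obtain $(\moconfigone;\neck_{0})\in\epsilon^{-1}(\moconfigone)$. For \ref{local setion}: the slice $N\times\{\neck_{0}\}$ is a closed subset of $M$ (as in all our applications $N=S^{n-1}$ is compact), so $U\colonequals\{\moconfigone'\in\text{Conf}(M):\moconfigone'\cap N\times\{\neck_{0}\}=\emptyset\}$ is an open neighbourhood of $\moconfigone$, and $s\colon U\to B_{0}(M,N)$, $s(\moconfigone')=(\moconfigone';\neck_{0})$, is a continuous section with $s\big(\epsilon(\moconfigone;\neck_{0})\big)=(\moconfigone;\neck_{0})$. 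For \ref{flag can be inserted}: given $\moconfigone$ and finitely many $z_{i}=(\moconfigone;\mu_{i})\in\epsilon^{-1}(\moconfigone)$, pick $\neck\in(\max_{i}\mu_{i},1)$ avoiding the finitely many heights at which $\moconfigone$ meets $N\times\left[0,1\right]$; then $z\colonequals(\moconfigone;\neck)\in\epsilon^{-1}(\moconfigone)$ and $(z_{i},z)\in B_{1}(M,N)$ for all $i$ because $\mu_{i}<\neck$.

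With all the hypotheses verified, \cref{flag complex} yields that $\Vert\epsilon\Vert\colon\Vert B_{\bullet}(M,N)\Vert\to\text{Conf}(M)$ is a weak homotopy equivalence. I do not expect a serious obstacle here: the only mildly delicate points are the openness and closedness claims, and both are immediate from the fact that $\moconfigone$ is a finite set, that the slices $N\times\{t\}$ are pairwise disjoint, and that $N\times\left[0,1\right]$ is embedded (as a closed subset) in $M$. The rest is a direct application of the Ebert–Randal-Williams criterion.
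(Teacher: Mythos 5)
Your proposal is correct and follows exactly the paper's argument: both apply the Galatius–Randal-Williams flag-complex criterion (\cref{flag complex}) to $(B_{\bullet}(M,N),\text{Conf}(M),\epsilon)$, with the key point being that a configuration has only finitely many points and hence only finitely many forbidden heights, so one can always insert a new cone vertex. You simply spell out the verifications of the flag-complex axioms and of conditions \ref{local setion} and \ref{map from zero simplices is onto} that the paper declares ``clear.''
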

\begin{proof}
Our argument is the same as the proof of \textcite[Proposition 2.1]{randal2013topological}. We want to apply \cref{flag complex}. The augmented semi-simplicial space $(B_{\bullet}(M,\smon),\text{Conf}(M),\epsilon)$ is an augmented topological flag complex. Conditions \ref{local setion} and \ref{map from zero simplices is onto} are clear. For Condition \ref{flag can be inserted}, given a configuration $\moconfigone\in \text{Conf}(M)$, the set 
$$\epsilon^{-1}(\moconfigone)\cong\{\vec{\neck}\in (0,1)^{\srange}: \moconfigone\cap \smon_{j}\times\{\vec{\neck}(j)\}=\emptyset\text{ for all }j=1,\ldots,\srange\}$$ is infinite since $\moconfigone$ has finitely many points. Therefore, for any finite subset $\{\vec{\neck}_{1},\ldots,\vec{\neck}_{n}\}\subset \{\vec{\neck}\in (0,1)^{\srange}: \moconfigone\cap \smon_{j}\times\{\vec{\neck}(j)\}=\emptyset\text{ for all }j=1,\ldots,\srange\}$ we can pick an element $\vec{\neck}\in \epsilon^{-1}(\moconfigone)$ such that $\vec{\neck}_{i}(j)<\vec{\neck}(j)$ for all $i=1,\ldots, n$ and $j=1,\ldots,\srange$, so Condition \ref{flag can be inserted} holds. By \cref{flag complex}, the map $\Vert\epsilon\Vert\colon\Vert B_{\bullet}(M,\smon)\Vert\to\text{Conf}(M)$ is a weak homotopy equivalence.
\end{proof}
\begin{lemma}\label{map from xpbarc to conf is weak homotopy equivalence}
The space $\Vert \iterxbarc{M_{1}}{\smon}{M_{2}}{\bullet}{\srange}\Vert$is weakly homotopy equivalent to  $\Vert B_{\bullet}(M,\smon)\Vert$.
\end{lemma}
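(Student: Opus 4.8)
The plan is to realize $B_{\bullet}(M,N)$ as a sub-semi-simplicial space of $\xpbarc{M_1}{N}{M_2}{\bullet}$ and then to show the inclusion is a weak homotopy equivalence after geometric realization. Let $Z_{\bullet}\subseteq \xpbarc{M_1}{N}{M_2}{\bullet}$ denote the sub-semi-simplicial space consisting of those tuples $\big((\moconfigtwo_{0},\neck_{0}),\ldots,(\moconfigtwo_{p+1},\neck_{p+1})\big)$ all of whose length parameters $\neck_{i}$ are strictly positive; this is closed under the face maps, since $d_{i}$ only ever replaces $\neck_{i},\neck_{i+1}$ by their sum.

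First I would build a cutting map $c_{\bullet}\colon B_{\bullet}(M,N)\to \xpbarc{M_1}{N}{M_2}{\bullet}$. Given $(\moconfigone;\neck_{0},\ldots,\neck_{p})\in B_{p}(M,N)$, the $p+1$ walls $N\times\{\neck_{i}\}$ are disjoint from $\moconfigone$ and cut $M$ into the region below $\neck_{0}$ (homeomorphic, after rescaling the collar, to the space underlying $\modc{M_1}$ at length $\neck_{0}$), the slabs $N\times(\neck_{i-1},\neck_{i})$ for $1\le i\le p$ (homeomorphic to $N\times(0,\neck_{i}-\neck_{i-1})$), and the region above $\neck_{p}$ (homeomorphic to the space underlying $\modc{M_2}$ at length $1-\neck_{p}$). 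Restricting $\moconfigone$ to these pieces gives a point of $\xpbarc{M_1}{N}{M_2}{p}$ whose $p+2$ length parameters $\neck_{0},\neck_{1}-\neck_{0},\ldots,\neck_{p}-\neck_{p-1},1-\neck_{p}$ are positive and sum to $1$, so $c_{\bullet}$ lands in $Z_{\bullet}$. The inverse re-glues the pieces end to end and places the walls at the partial sums of the lengths (these sums are strictly increasing and lie in $(0,1)$, and the configuration misses them by the very definition of $\mon$ and $\modc{-}$), so $c_{p}$ is a homeomorphism onto $Z_{p}$; and forgetting $\neck_{i}$ corresponds exactly to the bar-construction face map $d_{i}$, which composes the two adjacent monoid/module actions. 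Hence $c_{\bullet}$ is an isomorphism of semi-simplicial spaces onto $Z_{\bullet}$, and by \cref{levelwise weak homotopy equivalence} it remains only to prove that the inclusion $Z_{\bullet}\hookrightarrow \xpbarc{M_1}{N}{M_2}{\bullet}$ induces a weak homotopy equivalence on geometric realizations.

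This is the main obstacle. The inclusion is \emph{not} a levelwise weak equivalence: on the locus where a middle length parameter vanishes the corresponding $\mon$-factor collapses to the unit, and the homotopy type of $\xpbarc{M_1}{N}{M_2}{p}$ genuinely changes. My plan is to deformation retract $\xpbarc{M_1}{N}{M_2}{p}$ onto a copy of $Z_{p}$ by pushing the length parameters $(\neck_{0},\ldots,\neck_{p+1})$ toward the barycenter of the simplex $\{\neck_{i}\ge 0,\ \sum_{i}\neck_{i}=1\}$ while simultaneously reparametrizing the half-open collars of the $p+2$ pieces so that each sub-configuration continues to fit in the resized piece. This is well-defined and continuous precisely because configurations are finite and the module spaces $\modc{-}$ ``taper'' near length $0$ — a configuration in the length-$0$ fibre lies in $M_{j}\setminus N$ and hence in every positive-length enlargement of that piece. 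Since this deformation does not commute with the face maps on the nose, I would either carry it out directly on the geometric realization, using the barycentric simplicial coordinates to absorb the mismatch in the face relations, or else filter $\xpbarc{M_1}{N}{M_2}{\bullet}$ by the number of vanishing length parameters and peel off one stratum at a time. (An alternative route avoids the positive locus altogether: one checks that ``glue up the configuration and forget the decomposition'' exhibits $\xpbarc{M_1}{N}{M_2}{\bullet}$ as an augmented topological flag complex over $\text{Conf}(M)$ whose $0$-simplices over a configuration $\moconfigone$ are the walls disjoint from $\moconfigone$ — the flag condition being automatic since walls are totally ordered — and then applies \cref{flag complex} and \cref{augmented flag version of Conf(M)}; this requires a little care at the endpoints of the collar.) Combining the isomorphism of the first step with either argument yields $\Vert B_{\bullet}(M,N)\Vert\cong\Vert Z_{\bullet}\Vert\simeq\Vert \xpbarc{M_1}{N}{M_2}{\bullet}\Vert$, which is the assertion of the lemma.
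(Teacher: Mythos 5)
Your route is the same as the paper's: cut $M$ along the walls to identify $B_{\bullet}(M,N)$ with the sub-semi-simplicial space of $\xpbarc{M_{1}}{N}{M_{2}}{\bullet}$ on which the length parameters are strictly positive, and then compare that subspace with all of $\xpbarc{M_{1}}{N}{M_{2}}{\bullet}$. Your first step is exactly the paper's map $f_{p}$ (written in the gluing rather than the cutting direction); your requirement that \emph{all} $\neck_{i}$ be positive is the correct one for the partial sums to give strictly increasing walls in $(0,1)$.

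The only substantive issue is your diagnosis of the second step. You assert that the inclusion $Z_{\bullet}\hookrightarrow\xpbarc{M_{1}}{N}{M_{2}}{\bullet}$ fails to be a levelwise weak equivalence and that you are therefore forced either onto the geometric realization or into a stratification. That is a misdiagnosis, and it makes the step look harder than it is: the inclusion \emph{is} a map of semi-simplicial spaces, so to apply \cref{levelwise weak homotopy equivalence} you only need each $Z_{p}\hookrightarrow\xpbarc{M_{1}}{N}{M_{2}}{p}$ to be a weak equivalence --- the homotopies witnessing this need not commute with the face maps. The barycentric rescaling you describe already does this levelwise: the straight-line homotopy $\neck_{i}\mapsto(1-t)\neck_{i}+t/(p+2)$, combined with the collar reparametrizations of the sort used in \cref{postive mconf is equivalent to mconf} and \cref{replacements are homotopy equivalent to config spaces}, carries $\xpbarc{M_{1}}{N}{M_{2}}{p}$ into $Z_{p}$ at time $1$ and preserves $Z_{p}$ for all $t$ (positive parameters stay positive), so $Z_{p}$ is a deformation retract of $\xpbarc{M_{1}}{N}{M_{2}}{p}$; continuity at $\neck_{i}=0$ is harmless because a length-zero factor carries only the empty configuration. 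With that correction your argument closes without any realization-level or filtration argument, and it in fact supplies a proof of the one step the paper itself only asserts (the weak equivalence between $\Vert\xpbarc{M_{1}}{N}{M_{2}}{\bullet}\Vert$ and its positive-length locus). Your flag-complex alternative would also work, but it proves the composite comparison with $\text{Conf}(M)$ rather than this lemma as stated, and is not needed.
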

\begin{proof}
Let $\wpbarc{M_{1}}{\smon}{M_{2}}{\bullet}\subset \iterxpbarc{M_{1}}{\smon}{M_{2}}{\bullet}{\srange}$ denote the semi-simplicial space whose $p$-simplices consist of elements $\big ((\moconfigtwo_{0},\vec{\neck}_{0}),(\vec{\moconfigtwo}_{1},\vec{\neck}_{1}),\ldots,(\vec{\moconfigtwo}_{p},\vec{\neck}_{p}), (\moconfigtwo_{p+1},\vec{\neck}_{p+1})\big )\in\iterxpbarc{M_{1}}{\smon}{M_{2}}{p}{\srange}$ such that $\vec{\neck}_{i}(j)>0$ for all $i=1,\ldots,p$ and all $j$. We have a weak homotopy equivalence
$$\Vert\iterxpbarc{M_{1}}{\smon}{M_{2}}{\bullet}{\srange}\Vert\to \Vert\wpbarc{M_{1}}{\smon}{M_{2}}{\bullet}\Vert.$$ 
Given $\big ((\moconfigtwo_{0},\vec{\neck}_{0}),(\vec{\moconfigtwo}_{1},\vec{\neck}_{1}),\ldots,(\vec{\moconfigtwo}_{p},\vec{\neck}_{p}), (\moconfigtwo_{p+1},\vec{\neck}_{p+1})\big )\in \wpbarc{M_{1}}{\smon}{M_{2}}{p}$, let $\vec{\neck}_{i}'=\sum_{k=0}^{i}\vec{\neck}_{k}$. We can view $\bigcup_{i=0}^{p+1}\moconfigtwo_{i}$ as an element of $\text{Conf}(M)$ under the identifications of
\begin{enumerate}
    \item $\sqcup_{j=1}^{\srange}\smon_{j}\times [0,\vec{\neck}_{0}(j)]$ with $\sqcup_{j=1}^{\srange}\smon_{j}\times [0,\vec{\neck}_{0}(j)]\subset M$ via the identity map,
    \item $\sqcup_{j=1}^{\srange}\smon_{j}\times [0,\vec{\neck}_{i}(j)]$ with $\sqcup_{j=1}^{\srange}\smon_{j}\times [\vec{\neck}_{i-1}'(j),\vec{\neck}_{i}'(j)]\subset M$,
    for $i=1,\ldots,p$, via the map $(n,t)\mapsto (n,\vec{\neck}_{i-1}'(j) +t)$, and
    \item $\sqcup_{j=1}^{\srange}\smon_{j}\times [0,\vec{\neck}_{p+1}(j)]$ with $\sqcup_{j=1}^{\srange}\smon_{j}\times [\vec{\neck}_{p+1}'(j),1]\subset M$ via the map $(n,t)\mapsto (n,1-t)$ (see \cref{fig:another picture of augmentation map}).
\end{enumerate}
\begin{figure}
    \centering
    \includegraphics{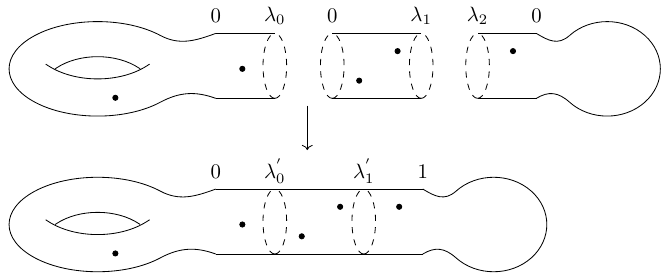}
    \caption{The map $f_{\bullet}\colon \wpbarc{M_{1}}{\smon}{M_{2}}{\bullet}\to B_{\bullet}(M,\smon)$}
    \label{fig:another picture of augmentation map}
\end{figure}
There are maps 
\begin{align*}
    f_{p}\colon \wpbarc{M_{1}}{\smon}{M_{2}}{p}&\to B_{p}(M,\smon)\\
    \big ((\moconfigtwo_{0},\vec{\neck}_{0}),(\vec{\moconfigtwo}_{1},\vec{\neck}_{1}),\ldots,(\vec{\moconfigtwo}_{p},\vec{\neck}_{p}), (\moconfigtwo_{p+1},\vec{\neck}_{p+1})\big )&\mapsto (\bigcup_{i=0}^{p+1}\moconfigtwo_{i}; \vec{\neck}'_{0}, \vec{\neck}'_{1},\ldots,\vec{\neck}'_{p})
\end{align*}
that assemble to define a map of semi-simplicial spaces $f_{\bullet}\colon \wpbarc{M_{1}}{\smon}{M_{2}}{\bullet}\to B_{\bullet}(M,\smon)$. Since the map $f_{p}$ is a homeomorphism for all $p$, the induced map $\Vert f_{\bullet}\Vert\colon \Vert \wpbarc{M_{1}}{\smon}{M_{2}}{\bullet}\Vert\to \Vert B_{\bullet}(M,\smon)\Vert$ is a weak homotopy equivalence by \cref{levelwise weak homotopy equivalence}. Since the composition of weak homotopy equivalences is a weak homotopy equivalence, the map $\Vert \iterxpbarc{M_{1}}{\smon}{M_{2}}{\bullet}{\srange}\Vert\to \Vert B_{\bullet}(M,\smon)\Vert$ is a weak homotopy equivalence.
\end{proof}
\begin{corollary}
\label{map from bar construction to conf is a weak equivalence}
There is a weak homotopy equivalence $\Vert \iterxbarc{M_{1}}{\smon}{M_{2}}{\bullet}{\srange}\Vert\to\text{Conf}(M)$.
\end{corollary}
\begin{proof}
We have the following composition
$$\Vert \iterxbarc{M_{1}}{\smon}{M_{2}}{\bullet}{\srange}\Vert\to\Vert\iterxpbarc{M_{1}}{\smon}{M_{2}}{\bullet}{\srange}\Vert\to \Vert B_{\bullet}(M,\smon)\Vert\to \text{Conf}(M)$$
of weak homotopy equivalences by \cref{postive mconf is equivalent to mconf}, \cref{map from xpbarc to conf is weak homotopy equivalence}, and \cref{augmented flag version of Conf(M)}. Since the composition of weak homotopy equivalences is a weak homotopy equivalence, the map $\Vert \iterxbarc{M_{1}}{\smon}{M_{2}}{\bullet}{\srange}\Vert\to \text{Conf}(M)$ is a weak homotopy equivalence.
\end{proof}
\begin{remark}
The weak homotopy equivalence $$\Vert \iterxbarc{M_{1}}{\smon}{M_{2}}{\bullet}{\srange}\Vert \to \text{Conf}(M)$$ also follows from the theory of factorization homology. Factorization homology, also known as topological chiral homology or manifoldic homology, is a gadget which takes as inputs an $n$-manifold $M$ and an algebra $A$ over the framed little $n$-disk operad $fE_{n}$ and outputs an object $\int_{M}A$ in a category (see \textcite{MR3431668} or \textcite[Section 2]{MR3286505} for more information about factorization homology). When the $fE_{n}$-algebra is $\text{Conf}(\mathbb{R}^{n})$, the factorization homology $\int_{M}A$ is $\text{Conf}(M)$. The weak homotopy equivalence $$\Vert \iterxbarc{M_{1}}{\smon}{M_{2}}{\bullet}{\srange}\Vert \to \text{Conf}(M)$$ is a special case of excision for factorization homology due to Ayala--Francis (see \textcite[ Lemma 3.18]{MR3431668}). One could prove this weak homotopy equivalence by quoting their result, but it is easier to prove this weak homotopy equivalence directly than to compare our framework with theirs. 
Recent work of An--Drummond-Cole--Knudsen has also obtained a similar excision result for configuration spaces in finite CW complexes (see \textcite[Theorem 3.20]{MR4033833}).
\end{remark}
\subsection{A Model of the Configuration Space of the Disk}\label{stabilization map construction}
By \cref{map from bar construction to conf is a weak equivalence}, there is a weak homotopy equivalence 
$$\Vert \iterxbarc{\bigsqcup_{j=1}^{\srange}\bar{D}^{n}}{\bigsqcup_{j=1}^{\srange} S^{n-1}}{M\setminus \bigsqcup_{j=1}^{\srange}D^{n}}{\bullet}{\srange}\Vert \to \text{Conf}(M).$$ We will use $\iterxbarc{\bigsqcup_{j=1}^{\srange}\bar{D}^{n}}{\bigsqcup_{j=1}^{\srange} S^{n-1}}{M\setminus \bigsqcup_{j=1}^{\srange}D^{n}}{\bullet}{\srange}$ to construct a semi-simplicial model for $\nch{\text{Conf}(M)}$ in \cref{new stab maps for conf(M)}. As a consequence of this model, to define a stabilization map on $C_{*}(\text{Conf}(M);\fieldc)$, it will suffice to construct a stabilization map on $C_{*}(\modc[S^{n-1}]{\bar{D}};\fieldc)$ that is a map of $C_{*}(\mon[S^{n-1}];\fieldc)$-modules. Our first step in constructing such a stabilization map on $C_{*}(\modc[S^{n-1}]{\bar{D}};\fieldc)$ is to build a model for $\modc[S^{n-1}]{\bar{D}}$ using free $\mon[S^{n-1}]$-modules and we will do that in this subsection. 
\begin{definition}
Define $\bdisk\subset\modc[S^{n-1}]{\bar{D}^{n}}$ to be the submodule consisting of elements $(\moconfigtwo ,\neck)\in\modc[S^{n-1}]{\bar{D}^{n}}$ such that at most one point of $(\moconfigtwo ,\neck)$ is in $\bar{D}^{n}$.
\end{definition}
\begin{lemma}\label{bdisk and mod-conf(disk) are equivalent}
The inclusion $\bdisk\hookrightarrow\modc[S^{n-1}]{\bar{D}^{n}}$ is a map of $\mon[S^{n-1}]$-modules and a homotopy equivalence.
\end{lemma}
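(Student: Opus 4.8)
The plan is to build an explicit homotopy inverse to the inclusion by ``pushing configuration points out of the core disk $\bar{D}^{n}$.'' First I would reduce to the region where the collar is wide. Recall from the proof of \cref{replacements are homotopy equivalent to config spaces} the deformation retraction $H$ of $\modc[S^{n-1}]{\bar{D}^{n}}$ onto the subspace $\{(\moconfigtwo,\neck)\in\modc[S^{n-1}]{\bar{D}^{n}}:\neck\geq 1\}$. Since $H$ only grows the parameter $\neck$ and never moves a configuration point, the number of points of $(\moconfigtwo,\neck)$ lying in the core $\bar{D}^{n}$ is constant along $H$, so $H$ restricts to a deformation retraction of $\bdisk$ onto $\bdisk\cap\{\neck\geq 1\}$. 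Hence it suffices to show that the inclusion $\bdisk\cap\{\neck\geq1\}\hookrightarrow \modc[S^{n-1}]{\bar{D}^{n}}\cap\{\neck\geq1\}$ is a homotopy equivalence.

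Next I would construct, for each $\neck\geq 1$, an isotopy of embeddings $\psi^{\neck}_{t}\colon \bar{D}^{n}\cup_{S^{n-1}}(S^{n-1}\times[0,\neck))\hookrightarrow \bar{D}^{n}\cup_{S^{n-1}}(S^{n-1}\times[0,\neck))$, depending continuously on $(t,\neck)\in[0,1]\times[1,\infty)$, with $\psi^{\neck}_{0}=\mathrm{id}$, with the image of $\psi^{\neck}_{1}$ contained in $S^{n-1}\times(0,\neck)$ (hence disjoint from the core $\bar{D}^{n}$), and --- crucially --- such that $(\psi^{\neck}_{t})^{-1}(\bar{D}^{n})$ is a decreasing family of subsets as $t$ increases. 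Applying $\psi^{\neck}_{t}$ to each point of a configuration defines a self-map $\Psi_{t}$ of $\modc[S^{n-1}]{\bar{D}^{n}}\cap\{\neck\geq1\}$ with $\Psi_{0}=\mathrm{id}$, and the monotonicity condition says exactly that the number of points in $\bar{D}^{n}$ is non-increasing along $\Psi_{t}$. Consequently $\Psi_{t}$ preserves both the submodule $\bdisk\cap\{\neck\geq1\}$ and the subspace $A$ of configurations with no point in $\bar{D}^{n}$, while $\Psi_{1}$ maps everything into $A$.

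Finally I would conclude by a two-out-of-three argument. The homotopy $\Psi_{t}$ restricted to $A$ stays in $A$ and runs from $\mathrm{id}_{A}$ to $\Psi_{1}|_{A}$, so $\Psi_{1}$ (with codomain $A$) is a homotopy inverse to the inclusion $A\hookrightarrow \modc[S^{n-1}]{\bar{D}^{n}}\cap\{\neck\geq1\}$; the identical reasoning, using that $\Psi_{t}$ preserves $\bdisk\cap\{\neck\geq1\}$, shows $\Psi_{1}$ is a homotopy inverse to $A\hookrightarrow \bdisk\cap\{\neck\geq1\}$. In the chain $A\hookrightarrow \bdisk\cap\{\neck\geq1\}\hookrightarrow \modc[S^{n-1}]{\bar{D}^{n}}\cap\{\neck\geq1\}$ the first map and the composite are then homotopy equivalences, so the second map is as well; combined with the first step this gives the lemma.

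The main obstacle is the construction of $\psi^{\neck}_{t}$ with the multiplicity-control property $(\psi^{\neck}_{t})^{-1}(\bar{D}^{n})$ decreasing to $\emptyset$. One cannot simply flow along a vector field pointing radially out of $\bar{D}^{n}$: such a field must vanish at an interior point of the disk (an outward-pointing field on $S^{n-1}$ has index $1$), and a configuration point sitting at that zero would never escape the core. So $\psi^{\neck}_{t}$ has to be an isotopy of embeddings with properly shrinking image --- this is what prevents it from dragging points of $S^{n-1}\times(0,\neck)$ back into $\bar{D}^{n}$ --- that routes points around the core while still evacuating the core's own contents out through the collar; verifying joint continuity in $(t,\neck)$ and the nestedness of $(\psi^{\neck}_{t})^{-1}(\bar{D}^{n})$ is where the genuine work lies.
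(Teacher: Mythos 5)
Your reduction to the region $\neck\geq 1$ and the two-out-of-three scaffolding at the end are fine, but the key ingredient cannot be supplied: an isotopy $\psi^{\neck}_{t}$ with the three properties you list does not exist, for any $n$. Write $X=\bar{D}^{n}\cup_{S^{n-1}}\big(S^{n-1}\times\left[0,\neck\right)\big)$ and let $S=S^{n-1}\times\{\epsilon\}$ for small $\epsilon>0$, so that $S$ bounds the closed $n$-ball $D_{\epsilon}=\bar{D}^{n}\cup\big(S^{n-1}\times\left[0,\epsilon\right]\big)$ containing the core. Your monotonicity condition together with $\psi^{\neck}_{0}=\mathrm{id}$ gives $(\psi^{\neck}_{t})^{-1}(\bar{D}^{n})\subseteq(\psi^{\neck}_{0})^{-1}(\bar{D}^{n})=\bar{D}^{n}$, i.e.\ points outside the core never enter it. Hence $t\mapsto\psi^{\neck}_{t}|_{S}$ is a homotopy of maps $S\to X\setminus\bar{D}^{n}=S^{n-1}\times(0,\neck)\simeq S^{n-1}$. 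At $t=0$ this is the inclusion, which has degree $1$. At $t=1$ it extends over $D_{\epsilon}$ via $\psi^{\neck}_{1}|_{D_{\epsilon}}$, whose image also lies in $S^{n-1}\times(0,\neck)$ by your third condition; a sphere map that extends over the ball it bounds is nullhomotopic, so $\psi^{\neck}_{1}|_{S}$ has degree $0$. Degree is a homotopy invariant, so this is a contradiction. This is the linking phenomenon you gestured at with the Poincar\'e--Hopf remark, but passing to non-surjective embeddings does not evade it: the argument uses only the restriction to $S$ and the extension over $D_{\epsilon}$, not surjectivity or any flow structure.

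The moral is that no retraction of $\modc[S^{n-1}]{\bar{D}^{n}}$ onto $\bdisk$ can be obtained by applying one fixed self-map of the underlying manifold to every point of a configuration; the deformation must depend on the configuration. That is exactly what the paper's proof does: after transporting the problem to the inclusion $\text{Conf}(\mathbb{R}^{n})^{\leq 1}\hookrightarrow\text{Conf}(\mathbb{R}^{n})$ via \cref{replacements are homotopy equivalent to config spaces}, it rescales a configuration $\moconfigone$ by the factor $f(\moconfigone)=2/\lvert q_{2}\rvert$ determined by its \emph{second}-closest point to the origin. In particular a singleton is not moved at all --- its unique point is permitted to stay in the core, since $\bdisk$ allows one point there --- and this is precisely how the degree obstruction is sidestepped. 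Any repair of your argument must make $\psi^{\neck}_{t}$ depend on the configuration in this way, at which point you have essentially rediscovered the paper's proof.
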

\begin{proof}
It is clear that $\bdisk\hookrightarrow\modc[S^{n-1}]{\bar{D}^{n}}$ is a map of $\mon[S^{n-1}]$-modules. Let $\text{Conf}(\mathbb{R}^{n})^{\leq 1}\subset \text{Conf}(\mathbb{R}^{n})$ denote the subspace of configurations $$\moconfigtwo\colonequals\{(q_{1},u_{1}),\ldots,(q_{\partic},u_{\partic})\}\in \text{Conf}(\mathbb{R}^{n})$$ such that at most one point of $\moconfigtwo$ is in the closed disk $\bar{B}(0,1)$ of radius $1$ centered at the origin.
We think of $\mathbb{R}^{n}$ as $\bar{B}(0,1)\cup_{S^{n-1}}S^{n-1}\times\left[0,\infty\right)$. From the inclusion $$\bar{B}(0,1)\cup_{S^{n-1}}S^{n-1}\times\left[0,\neck\right)\to \bar{B}(0,1)\cup_{S^{n-1}}S^{n-1}\times\left[0,\infty\right),$$
we have a map
\begin{align*}
    \modc[S^{n-1}]{\bar{B}(0,1)}&\to \text{Conf}(\mathbb{R}^{n})\\
    (\moconfigone,\neck)&\mapsto \moconfigone
\end{align*}
which restricts to a map $\modc[S^{n-1}]{\bar{B}(0,1)}^{\leq 1}\to \text{Conf}(\mathbb{R}^{n})^{\leq 1}$. We also have the following commutative diagram

\begin{center}\begin{tikzcd}
\centering
\modc[S^{n-1}]{\bar{B}(0,1)}^{\leq 1} \arrow[r]\arrow[hookrightarrow,d] & \text{Conf}(\mathbb{R}^{n})^{\leq 1}\arrow[hookrightarrow,d]\\
\modc[S^{n-1}]{\bar{B}(0,1)} \arrow[r] & \text{Conf}(\mathbb{R}^{n}).
\end{tikzcd}\end{center}
By \cref{replacements are homotopy equivalent to config spaces}, $\modc[S^{n-1}]{\bar{D}^{n}}$ is homotopy equivalent to $\text{Conf}(\mathbb{R}^{n})$.
By the argument in the proof of \cref{replacements are homotopy equivalent to config spaces}, $\text{Conf}(\mathbb{R}^{n})^{\leq 1}$ is homotopy equivalent to $\bdisk$. Therefore,
to show that $\modc[S^{n-1}]{\bar{B}(0,1)}^{\leq 1}$ is homotopy equivalent to $\modc[S^{n-1}]{\bar{B}(0,1)}$
it suffices to show that the right vertical map $\text{Conf}(\mathbb{R}^{n})^{\leq 1}\hookrightarrow \text{Conf}(\mathbb{R}^{n})$ is a homotopy equivalence.

We prove that $\text{Conf}(\mathbb{R}^{n})^{\leq 1}$ is homotopy equivalent to $\text{Conf}(\mathbb{R}^{n})$ by pushing points in a configuration $\moconfigone\in\text{Conf}(\mathbb{R}^{n})$ off the ball $B(0,1)$. We can do this by scaling a configuration $\moconfigone$ by a function $f$ depending on the configuration's second closest point to the origin, which we now define.
Given $\{q_{1},\ldots,q_{\partic}\}\in \text{Conf}(\mathbb{R}^{n})$, reorder $q_{1},\ldots,q_{\partic}$ so that $\lvert q_{1}\rvert\leq \lvert q_{2}\rvert\ldots\leq \lvert q_{\partic}\rvert$ and let $f\colon \text{Conf}(\mathbb{R}^{n})\to (0,\infty)$ be 
the ``second closest point'' function 
\[
f(\{ q_{1},\ldots ,q_{\partic}\} )=\begin{cases} 
      1 & \text{if }\partic= 0,1, \\
      2/\lvert q_{2}\rvert & \text{otherwise}. 
   \end{cases}
   \]

Define $F\colon \text{Conf}(\mathbb{R}^{n})\to \text{Conf}(\mathbb{R}^{n})^{\leq 1}$ to be 
the scaling function
\begin{align*}
    F\colon \text{Conf}(\mathbb{R}^{n})&\to \text{Conf}(\mathbb{R}^{n})^{\leq 1}\\
    \moconfigone\colonequals\{q_{1},\ldots,q_{\partic}\}&\mapsto \text{sc}_{f(\moconfigone)}(\moconfigone)=\{f(\moconfigone)\cdot q_{1},\ldots , f(\moconfigone)\cdot q_{\partic}\}
\end{align*}
Let $G\colon \text{Conf}(\mathbb{R}^{n})^{\leq 1}\to \text{Conf}(\mathbb{R}^{n})$ be the inclusion map. There is a homotopy
\begin{align*}
    H\colon \text{Conf}(\mathbb{R}^{n})\times \left[0,1\right]&\to \text{Conf}(\mathbb{R}^{n})\\
    (\moconfigone, t)&\mapsto \text{sc}_{f(\moconfigone)(1-t)+t}(\moconfigone)
\end{align*}
from $G\circ F$ to $\text{id}$. 
By restriction of the map $H$ to $\text{Conf}(\mathbb{R}^{n})^{\leq 1}\times \left[0,1\right]$, the map $H$ also gives a homotopy from $F\circ G$ to $\text{id}$. Therefore, $\text{Conf}(\mathbb{R}^{n})^{\leq 1}$ is homotopy equivalent to $\text{Conf}(\mathbb{R}^{n})$.
%
%
%
%
\end{proof}
We now introduce a semi-simplicial model for $\modc[S^{n-1}]{\bar{D}^{n}}$.
\begin{definition}
Suppose that $\smon$ is a space. Let $\bmon{k}\subset\mon$ denote the subspace of elements $(\moconfigtwo ,\neck )\in\mon$ such that the cardinality of the configuration $\moconfigtwo$ is equal to $k$.
If $M$ is a manifold with boundary and $\smon$ is contained in $\partial M$, define $\bmodc{M}{\partic}$ similarly and let $\bmodc{M}{\leq k}$ denote the subspace of elements $(\moconfigtwo ,\neck)\in\modc{M}$ such that the cardinality of the configuration $\moconfigtwo$ is at most $k$.
\end{definition}
\begin{definition}\label{resolution for Conf(Disk)}
Let $\dmodel_{\bullet}$ denote the semi-simplicial space whose $p$-simplices are
\[
\dmodel_{p}=\begin{cases}
     \bmodc[S^{n-1}]{\bar{D}^{n}}{\leq 1}\times\mon[S^{n-1}] & p=0\\
     \bmon[S^{n-1}]{1}\times\mon[S^{n-1}] & p=1\\
     \emptyset &\text{for } p>1.
   \end{cases}
   \]
The face maps $d_{0},d_{1} \colon \dmodel_{1}\to  \dmodel_{0}$ are defined as follows (see \cref{fig:face maps of semisimplicial model for Conf(D)}): given $\xi=\big ((\moconfigone_{0},\neck_{0}), (\moconfigone_{1}, \neck_{1})\big )\in \bmon[S^{n-1}]{1}\times\mon[S^{n-1}]$, let
\begin{align*}
    d_0 (\xi )\colonequals  & \big ((\emptyset,0), (\moconfigone_{0}, \neck_{0})\cdot (\moconfigone_{1},\neck_{1})\big )=\big ((\emptyset ,0), (\text{sh}_{\neck_0}(\moconfigone_{1})\cup \moconfigone_{0},\neck_{0} + \neck_{1} )\big ),\text{ and}\\
    d_1 (\xi )\colonequals  & \big ((\text{Conf}(\embed)(\moconfigone_{0}),\neck_{0}),(\moconfigone_{1},\neck_{1})\big ),
\end{align*}
where $\text{sh}_{\neck_{0}}$ is ``the shift by $\neck_{0}$'' map from \cref{shift map}, and $\embed\colon S^{n-1}\times (0,\infty)\hookrightarrow \bar{D}^{n}\cup_{S^{n-1}}(S^{n-1}\times \left [0,\infty\right))$ is the inclusion under the identity map from \cref{rev notation for iterated configuration space model}.
Let $\dmodel$ denote the geometric realization $\Vert \dmodel_{\bullet}\Vert$.
\begin{figure}
    \centering
    \includegraphics{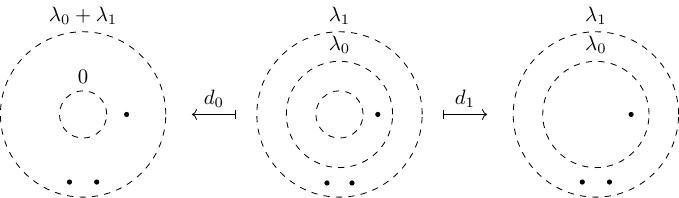}
  \caption{The face maps $d_{0}, d_{1} \colon Y_{1}\to  Y_{0}$}
    \label{fig:face maps of semisimplicial model for Conf(D)}
\end{figure}
\end{definition}
The space $\dmodel$ can be described as a homotopy coequalizer $\text{hocoeq}(d_{0},d_{1} \colon \dmodel_{1}\to  \dmodel_{0})$. Note that up to homotopy, $\dmodel_{0}$ and $\dmodel_{1}$ can be viewed as free right $\mon[S^{n-1}]$-modules over $S^{0}$ and $S^{n-1}$, respectively. We have natural maps of spaces
\begin{align*}
    \dmodel_{p}\times \mon[S^{n-1}]&\to \dmodel_{p}\\
    \big ((\moconfigone_{0},\neck_{0}), (\moconfigone_{1}, \neck_{1})\big )\times (\moconfigone_{2}, \neck_{2})&\mapsto \big ((\moconfigone_{0},\neck_{0}), (\moconfigone_{1}, \neck_{1})\cdot (\moconfigone_{2}, \neck_{2})\big ).
\end{align*}
which are compatible with the face maps of $\dmodel_{\bullet}$. As a result, we have an induced map $\dmodel \times \mon[S^{n-1}]\to \dmodel$ that gives $\dmodel$ the structure of a right $\mon[S^{n-1}]$-module.

We have a natural map $$\alpha_{0} \colon \bmodc[S^{n-1}]{\bar{D}^{n}}{\leq 1}\times\mon[S^{n-1}] \to \modc[S^{n-1}]{\bar{D}^{n}}$$ of $\mon[S^{n-1}]$-modules coming from the right $\mon[S^{n-1}]$-module map $$m_{r} \colon  \modc[S^{n-1}]{\bar{D}^{n}}\times \mon[S^{n-1}] \to \modc[S^{n-1}]{\bar{D}^{n}}.$$ Since $\alpha_{0}\circ d_0 =\alpha_{0}\circ d_1$, we have that $(\dmodel_{\bullet},\modc[S^{n-1}]{\bar{D}^{n}},\alpha_{0} )$ is an augmented semi-simplicial space and there is an induced map $$\Vert\alpha_{0}\Vert\colon \dmodel \to  \modc[S^{n-1}]{\bar{D}^{n}}$$ of $\mon[S^{n-1}]$-modules. We want to show that $\Vert\alpha_{0}\Vert \colon \dmodel \to  \modc[S^{n-1}]{\bar{D}^{n}}$ is a weak homotopy equivalence. The map $\Vert\alpha_{0}\Vert \colon \dmodel \to  \modc[S^{n-1}]{\bar{D}^{n}}$ factors through $\bdisk$. Since $\bdisk$ is homotopy equivalent to $\modc[S^{n-1}]{\bar{D}^{n}}$, 
showing that the map $\Vert\alpha_{0}\Vert \colon \dmodel\to \bdisk$ is a weak homotopy equivalence implies that the map $\Vert\alpha_{0}\Vert \colon \dmodel \to  \modc[S^{n-1}]{\bar{D}^{n}}$ is a weak homotopy equivalence.
We will show that the map $\Vert\alpha_{0}\Vert \colon \dmodel \to  \bdisk$ is a weak homotopy equivalence by using Gray's theorem.
\begin{theorem}[{Gray's theorem, e.g. \textcite[Proposition 2.2]{MR2045835}}]\label{Gray's thm}
    Let $f\colon X_{1}\to X_{2}$ be a map of spaces and let $U$ and $V$ form an open cover of $X_{2}$. Suppose that the induced maps $$f^{-1}(U)\to U, \qquad f^{-1}(V)\to V, \quad  \text{ and } \quad f^{-1}(U\cap V)\to U\cap V$$ are all weak homotopy equivalences. Then $f\colon X_{1}\to X_{2}$ is also a weak homotopy equivalence.
\end{theorem}
\begin{proposition}\label{res is weakly equivalent to Conf(Disk)}
The map $$\Vert\alpha_{0}\Vert\colon \Vert \dmodel_{\bullet}\Vert \to  \modc[S^{n-1}]{\bar{D}^{n}}$$ is a weak homotopy equivalence of right $\mon[S^{n-1}]$-modules.
\end{proposition}
\begin{proof}
The image of
the map $\Vert\alpha_{0}\Vert\colon \Vert \dmodel_{\bullet}\Vert \to  \modc[S^{n-1}]{\bar{D}^{n}}$ is contained in $\bdisk$.
Since the inclusion $\bdisk\hookrightarrow \modc[S^{n-1}]{\bar{D}^{n}}$ is a homotopy equivalence by \cref{bdisk and mod-conf(disk) are equivalent}, it suffices to show that the map $\Vert\alpha_{0}\Vert\colon \Vert \dmodel_{\bullet}\Vert \to\bdisk$ is a weak homotopy equivalence. 

Let $U\subset \bdisk$ denote the subspace of configurations with no point in the closed disk $\bar{B}(0,1)$ of radius $1$ centered at the origin. Let $V\subset \bdisk$ denote the subspace of configurations where there is a unique closest point to the origin. The map $\Vert\alpha_{0}\Vert^{-1}(U)\to U$ is a weak homotopy equivalence because $U$ is homotopy equivalent to $\text{Conf}(S^{n-1}\times (0,\infty))$ and $\Vert\alpha_{0}\Vert^{-1}(U)$ deformation retracts onto the subspace 
$$\{((\{\emptyset\}, 0), (\moconfigone,\neck))\}\cong \mon[S^{n-1}]\subset Y_{0},$$ 
which is homotopy equivalent to $\text{Conf}(S^{n-1}\times (0,\infty))$ by \cref{replacements are homotopy equivalent to config spaces}.

One can show that $\Vert\alpha_{0}\Vert^{-1}(V)\to V$ is a weak homotopy equivalence by showing that the map is a fibration with contractible fibers as follows. Given $(\moconfigone, \neck)\in V$, we write $\moconfigone$ as $\moconfigone=\{(q_{1}, u_{1}),\ldots, (q_{\partic}, u_{\partic})\}$ such that $u_{1}<u_{2}\leq\ldots u_{k}$. Given any map $f\colon D^{r}\times [0,1]\to V$, we have that $f(x,t)\equalscolon(\moconfigone(x,t), \neck(x,t))$ satisfies that $u_{1}(x,t)< u_{i}(x,t)$ for all $i=2,\ldots, \partic$ and $(x,t)\in D^{r}\times[0,1]$. In addition, since $D^{r}\times [0,1]$ is compact, there is some constant $K>0$ such that $u_{i}(x,t)-u_{1}(x,t)\geq K$ for all $i=2,\ldots, \partic$ and $(x,t)\in D^{r}\times [0,1]$. As a result, given any commutative diagram, 
\begin{center}\begin{tikzcd}
 D^{r} \times \{ 0 \}\arrow[r,"f_{0}"]\arrow[d] & \Vert\alpha_{0}\Vert^{-1}(V)\arrow[d,"\Vert\alpha_{0}\Vert"]\\
  D^{r}\times [0, 1]\arrow[r,"f"] & V
\end{tikzcd}\end{center}
we can lift $f$ to a map $$\tilde{f}(x,t)\colonequals((\moconfigone_{0}(x,t), \neck_{0}(x,t)),(\moconfigone_{1}(x,t), \neck_{1}(x,t))\colon D^{r}\times [0, 1]\to  \Vert\alpha_{0}\Vert^{-1}(V).$$ Roughly speaking, using the data of $f$ and the fact that $u_{i}(x,t)-u_{1}(x,t)\geq K$ for all $i>1$ and $(x,t)\in D^{r}\times [0,1]$, we can continuously pick $\neck_{0}(x,t)$ to be an element in $\{0\}\cup (u_{1}(x,t), u_{2}(x,t))$. We can use the data of the underlying configuration $\moconfigone(x,t)$ in $V$ to continuously assign underlying configurations $\moconfigone_{0}(x,t)$ and $ \moconfigone_{1}(x,t)$ in $\Vert\alpha_{0}\Vert^{-1}(V)$. Finally, the data of $\neck(x,t)$ in $V$ and $\neck_{0}(x,t)$ uniquely determines $\neck_{1}(x,t)$.

We now show that the fiber of the map $\Vert\alpha_{0}\Vert^{-1}(V) \to  V$ is contractible. Suppose that $\zeta\colonequals (\moconfigtwo,\neck)$ is an element of $V$, with $\moconfigtwo \colonequals\{ (q_{1},u_{1}),\ldots , (q_{k},u_{k})\}$ and $u_{1}< u_{2}\leq \ldots \leq u_{k}$. 
The fiber $\Vert\alpha_{0}\Vert^{-1}(\zeta)$ is given by the geometric realization of the sub-semi-simplicial space of $\dmodel_{\bullet}$ whose image under $\alpha_{0}$ is $\zeta$.
Denote this sub-semi-simplicial space by $\dmodel_{\bullet}(\zeta)$. We have two cases to consider: $u_{1}=0$ and $u_{1}>0$.

Suppose $u_{1}=0$. Then the space $\dmodel_{1}(\zeta)$ is empty, so $\Vert \dmodel_{\bullet}(\zeta)\Vert=\dmodel_{0}(\zeta)$. We have that $\dmodel_{0}(\zeta)$ is contractible because it is homeomorphic to an open interval or a half-open interval, depending on whether $q_{1}$ is in the interior of $\bar{D}^{n}$ or the boundary of it:
\begin{align*}
    \dmodel_{0}(\zeta)= 
    \begin{cases}
    \big\{\big(((q_{1},0),\neck_{0}),\text{sh}_{-\neck_{0}}(\moconfigtwo \setminus (q_{1},0) ,\neck - \neck_{0})\big):\neck_{0}\in \left[0, u_{2}\right)\big\}&\text{if } q_{1}\in D^{n},\\
        \big\{\big(((q_{1},0),\neck_{0}),\text{sh}_{-\neck_{0}}(\moconfigtwo \setminus (q_{1},0) ,\neck - \neck_{0})\big):\neck_{0}\in \left(0, u_{2}\right)\big\}:&\text{if } q_{1}\in \partial \bar{D}^{n}.
    \end{cases}
\end{align*}
Suppose $u_{1}>0$. We have that $\dmodel_{0}(\zeta)=F_{0}\cup F_{1}$, where
\begin{align*}
    F_{0}&=\big\{\big((\{\emptyset\},\neck_{0}), \text{sh}_{-\neck_{0}}(\moconfigtwo,\neck - \neck_{0})\big) :
    \neck_{0}\in \left[0, u_{1}\right)\big\}\cong \left[0, u_{1}\right), \text{ and}
    \\  
    F_{1}&=\big\{\big(((q_{1}, u_{1}), \neck_{0}), \text{sh}_{-\neck_{0}} (\moconfigtwo\setminus (q_{1},u_{1}),\neck - \neck_{0})\big):\neck_{0}\in \left(u_{1}, u_{2}\right)\big\}\cong  (u_{1}, u_{2}),
\end{align*}
and $\dmodel_{1}(\zeta)$ is equal to $F_{1}$.  The face maps $d_{0}, d_{1} \colon \dmodel_{1}(\zeta)\to  \dmodel_{0}(\zeta)$ correspond to the constant map $(u_{1}, u_{2})\to  0\in [0, u_{1})$ 
and the identity map $(u_{1}, u_{2})\to  (u_{1}, u_{2})$, respectively.
The geometric realization is homeomorphic to a half-open interval and so it is contractible.

Finally, by a similar argument, the map $\Vert\alpha_{0}\Vert^{-1}(U\cap V)\to U\cap V$ is a weak homotopy equivalence. Therefore, by \cref{Gray's thm}, the map $\Vert\alpha_{0}\Vert\colon \Vert \dmodel_{\bullet}\Vert \to\bdisk$ is a weak homotopy equivalence.
\end{proof}
\begin{remark}\label{explanation for resolution}
We have given a resolution of $\modc[S^{n-1}]{\bar{D}^{n}}$ without any motivation. We will now show that there is a completely conceptual explanation for how to come up with a guess for a resolution.

We have that $S\colonequals C_{*}(\mon[S^{n-1}];\mathbb{Z})$ is a differential graded ring and $C_{*}(\modc[S^{n-1}]{\bar{D}^{n}};\mathbb{Z})$ is a differential graded module over $S$. We want to find a resolution of  $C_{*}(\modc[S^{n-1}]{\bar{D}^{n}};\mathbb{Z})$ by free $S$-modules. 
The ring $S$ is augmented over $\mathbb{Z}$ via the map which sends a chain $\sum_{i}n_{i}\sigma_{i}$ to $\sum_{i}n_{i}$ (here, $n_{i}\in \mathbb{Z}$ and $\sigma_{i}\colon \bar{D}^{r}\to \modc[S^{n-1}]{\bar{D}^{n}}$ is a singular $r$-simplex). Therefore, the hyper Tor groups $$\text{Tor}^{S}_{*}(C_{*}(\modc[S^{n-1}]{\bar{D}^{n}};\mathbb{Z}),\mathbb{Z})$$ are defined. These hyper Tor groups provide lower bounds on the ranks of the free $S$-modules in a free resolution of $C_{*}(\modc[S^{n-1}]{\bar{D}^{n}};\mathbb{Z})$.
We can calculate these hyper Tor groups by calculating the hyper homology of the following bar construction
$$B_{\bullet}(C_{*}(\modc[S^{n-1}]{\bar{D}^{n}};\mathbb{Z}),C_{*}(\mon[S^{n-1}];\mathbb{Z}), \mathbb{Z})$$
which is equivalent to $ C_{*}(B_{\bullet}(\modc[S^{n-1}]{\bar{D}^{n}},\mon[S^{n-1}],\text{pt}) ;\mathbb{Z})$.
The homology of this double complex is isomorphic to the singular homology $ H_{*}(\Vert B_{\bullet}(\modc[S^{n-1}]{\bar{D}^{n}},\mon[S^{n-1}],\text{pt})\Vert ;\mathbb{Z})$.

We can understand the space $\Vert B_{\bullet}(\modc[S^{n-1}]{\bar{D}^{n}},\mon[S^{n-1}],\text{pt})\Vert$ better by working with relative configuration spaces. 
Let $L\subset X$ be a closed subspace of a space $X$. The relative configuration space $\text{Conf}(X,L)$ is the quotient space of $\text{Conf}(X)$ by the equivalence relation which identifies two finite subsets $s$ and $s_{1}$ of $X$ if $s\cap (X\setminus L)=s_{1}\cap (X\setminus L)$. The relative configuration space $\text{Conf}(D^{n},\text{pt})$ is homotopy equivalent to a point, so
$\modc[S^{n-1}]{\bar{D}^{n},\text{pt}}$ is homotopy equivalent to a point by a relative version of  \cref{replacements are homotopy equivalent to config spaces}. By a relative version of \cref{map from bar construction to conf is a weak equivalence}, the space $$\Vert B_{\bullet}(\modc[S^{n-1}]{\bar{D}^{n}},\mon[S^{n-1}],\modc[S^{n-1}]{\bar{D}^{n},\text{pt}})\Vert$$ is weakly equivalent to the relative configuration space $\text{Conf}(S^{n},x_{0})$. By pushing all but one point of a configuration in $\text{Conf}(S^{n},x_{0})$ to $x_{0}$, we have that $\text{Conf}(S^{n},x_{0})$ is homotopy equivalent to $S^{n}$ (this is a classic argument which happens in the construction of the scanning map--see \textcite[p.~92--93]{MR358766}). 

Since the homology $H_{*}(S^{n})$ is only nonzero in degrees $0$ and $n$, this suggests we might be able to give a resolution of $C_{*}(\modc[S^{n-1}]{\bar{D}^{n}};\mathbb{Z})$ from two free $S$-modules based on a suitable model of $S^{n}$. One such model for $S^{n}$ is given by 
the homotopy coequalizer $\text{hocoeq}(S^{n-1}\rightrightarrows S^{0})$.
This model for $S^{n}$ can be viewed as a length two \textit{semi-simplicial resolution} $$S^{n-1}\to S^{0}\rightrightarrows S^{n}$$ of $S^{n}$ (the description mentioned in \cref{proof sketch of stabilization map construction} involves a different model for $S^{n}$, the cone on $S^{n-1}\to\text{pt}$, which we will not use). As a result, it is natural for us to guess that a resolution of $C_{*}(\modc[S^{n-1}]{\bar{D}^{n}};\mathbb{Z})$ of length two can be built from free $S$-modules using $C_{*}(S^{n-1};\mathbb{Z})$ and $C_{*}(S^{0};\mathbb{Z})$.
\end{remark}
\section{Constructing Stabilization Maps}\label{sec:constructing stab maps}
In this section, we construct various stabilization maps: first on $\mon[S^{n-1}]$ in \cref{stab maps on conf(cyl)}, then on $\modc[S^{n-1}]{\bar{D}^{n}}$ in \cref{stab maps on conf(disk)}. Finally, in \cref{new stab maps for conf(M)} we define a stabilization map $\newstab{z}$ on $\text{Conf}(M)$ when $\browd{z}{e}=0$ using a stabilization map on $\modc[S^{n-1}]{\bar{D}^{n}}$ from \cref{stab maps on conf(disk)}. When $M$ is an open manifold, we also relate $\newstab{z}$ to $t_{z}$ at the end of \cref{new stab maps for conf(M)} in \cref{new stabilization on open manifold}. Due to technicalities that only appear in the proof of this proposition, we will need to work with various stabilization maps on both $\mon[S^{n-1}]$ and $\modc[S^{n-1}]{\bar{D}^{n}}$.
\subsection{Stabilization Maps for the Configuration Space of a Cylinder}\label{stab maps on conf(cyl)}
In this subsection, we construct a stabilization map $t_{z}^{l}$ on $C_{*}(\mon[S^{n-1}];\fieldc)$.  In \cref{new stab maps for conf(M)}, we will use $t_{z}^{l}$ to define a new stabilization map on $C_{*}(\text{Conf}(M);\fieldc)$ when $\browd{z}{e}=0$. In order to obtain stability results for the configuration spaces of a closed manifold, we will need to show that when $M$ is open, the new stabilization map and $t_{z}$ are chain homotopic in a certain sense (see \cref{new stabilization on open manifold} for a more precise statement). Our proof that they are chain homotopic will essentially involve showing that various stabilization maps on $C_{*}(\mon[S^{n-1}];\fieldc)$ are chain homotopic. For this reason, we begin this section by introducing a model for $C_{*}(\mon[S^{n-1}];\fieldc)$. 
Then we construct stabilization maps $s_{z}^{l}$ and $s_{z}^{r}$ on
this model
and relate them to $t_{z}^{l}$ (see \cref{explanation for why we have three different stabilizations} for more of an explanation of why we introduce $s_{z}^{l}$ and $s_{z}^{r}$ instead of working only with $t_{z}^{l}$).
%
In \cref{stab maps on conf(disk)}, we will use these stabilization maps to construct stabilization maps on various models of $C_{*}(\modc[S^{n-1}]{\bar{D}^{n}};\fieldc)$.
\begin{definition}
Given spaces $W$ and $X$, let
$EZ\colon C_{*}(W;\fieldc)\otimes C_{*}(X;\fieldc) \to C_{*}(W\times X;\fieldc)$
denote the Eilenberg--Zilber map.
\end{definition}
The Eilenberg--Zilber map is a lax monoidal functor. As a result, if $M$ is a topological monoid with monoid map $(-\cdot -)\colon M\times M \to M$ then $\nch{M}$ is a (differential-graded) ring under the map $(-\cdot -)_{*}\circ EZ\colon \nch{M}\otimes \nch{M}\to \nch{M}$. In addition, if $Z$ is a right module over $M$ with module map $m_{r}\colon M \times Z \to Z$, then $\nch{Z}$ is a (differential graded) right module over $\nch{M}$ under the map $(m_{r})_{*}\circ EZ\colon \nch{M}\otimes \nch{Z}\to \nch{Z}$.

View $\fieldc[x]$ as a subring of $H_{*}(\mon[S^{n-1}];\fieldc)$ by identifying $x$ with a generator of $H_{n-1}((S^{n-1}\times\{1/2\})\times\{1\};\fieldc)\cong H_{n-1}(\bmon[S^{n-1}]{1};\fieldc)$ and extending to $x^{j}$ by using the ring structure on $H_{*}(\mon[S^{n-1}];\fieldc)$ 
(the ring structure on $H_{*}(\mon[S^{n-1}];\fieldc)$ comes from the ring structure on $\nch{\mon[S^{n-1}]}$).
The differential $\partial$ on $\fieldc[x] \otimes_{\fieldc}C_{*}(\modc[S^{n-1}]{\bar{D}^{n}};\fieldc)$ sends $x^{j}\otimes y$ to $x^{j}\otimes \partial y$.
\begin{definition}
Fix an embedding $f'\colon D^{n}\to D^{n-1}\times (0,1)\subset S^{n-1}\times (0,1)$. Let $\text{MConf}(f')$ 
denote the map
\begin{align*}
    \text{MConf}(f')\colon \text{Conf}(D^{n})&\to \mon[S^{n-1}].\\
    \moconfigone&\mapsto (\text{Conf}(f')(\moconfigone),1)
\end{align*}
\end{definition}
\begin{definition}\label{map between different models of Conf(cylinder)}
Let $\alpha\colon \fieldc[x] \otimes_{\fieldc}C_{*}(\modc[S^{n-1}]{\bar{D}^{n}};\fieldc)\to C_{*}(\mon[S^{n-1}];\fieldc)$ denote the map (see \cref{alpha map on cylinder})
\begin{align*}
   \alpha\colon \fieldc[x] \otimes_{\fieldc}C_{*}(\modc[S^{n-1}]{\bar{D}^{n}};\fieldc)&\to C_{*}(\mon[S^{n-1}];\fieldc)\\
   x^{j}\otimes y &\mapsto (-\cdot-)_{*}\circ EZ(x^{j}\otimes \text{MConf}(f')_{*}(y)).
\end{align*}
\begin{figure}[htb]
  \centering
  \includegraphics{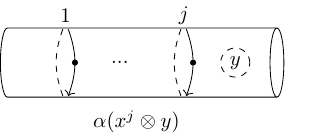}
  \caption{}
  \label{alpha map on cylinder}
\end{figure}
\end{definition}
\begin{lemma}\label{different models of conf(cyl) are equivalent}
The map $\alpha\colon \fieldc[x] \otimes_{\fieldc}C_{*}(\modc[S^{n-1}]{\bar{D}^{n}};\fieldc)\to C_{*}(\mon[S^{n-1}];\fieldc)$ is a chain homotopy equivalence.
\end{lemma}
\begin{proof}
Since $\fieldc[x] \otimes_{\fieldc} C_{*}(\modc[S^{n-1}]{\bar{D}^{n}};\fieldc)$ and $C_{*}(\mon[S^{n-1}];\fieldc)$ are bounded below chain complexes of free $\fieldc$-modules, it suffices to show that $\alpha$ is a quasi-isomorphism. 
By the universal coefficient theorem, it suffices to prove this for $\fieldc=\mathbb{F}_{p}$ and $\mathbb{Q}$. 

Note that 
$$\text{Conf}(S^{n-1}\times\mathbb{R})\simeq \text{Conf}(\mathbb{R}^{n}\setminus \text{pt})\simeq \text{Conf}(\mathbb{R}^{n}\setminus \text{pt})\times \mathbb{R}^{n}.$$
Let $\text{Conf}_{\partic,l}(\mathbb{R}^{n})$ denote the configuration space of $\mathbb{R}^{n}$ consisting of $\partic +l$ unordered distinct points in $\mathbb{R}^{n}$ with $\partic$ of the points labelled red and $l$ points labelled blue. 
The space $\text{Conf}_{\partic}(\mathbb{R}^{n}\setminus \text{pt})\times \mathbb{R}^{n}$ is the same as $\text{Conf}_{\partic,1}(\mathbb{R}^{n})$.
We can think of $\bigsqcup_{\partic\geq 0}\text{Conf}_{\partic,1}(\mathbb{R}^{n})$ as a subspace of the labelled configuration space $\text{Conf}(\mathbb{R}^{n};S^{0}\bigvee S^{0})$ (see \cref{conf with labels}). The space $\text{Conf}(\mathbb{R}^{n};S^{0}\bigvee S^{0})$ is homotopy equivalent to $\mathbf{E_{n}}(S^{0}\bigvee S^{0})$. 

Fix generators $b, r\in \tilde{H}_{0}(S^{0}\bigvee S^{0};\fieldc)$ corresponding to the two non-basepoint elements of $S^{0}\bigvee S^{0}$. Here, we think of $b$ as the class of a blue point and $r$ as the class of a red point. By \cref{homology of free En algebra}, $H_{*}(\mathbf{E_{n}}(S^{0}\bigvee S^{0});\fieldc)$ can be calculated by first forming the set of Lie words on $b$ and $r$. The free commutative algebra on the set of Lie words generates $H_{*}(\mathbf{E_{n}}(S^{0}\bigvee S^{0});\mathbb{Q})$, and for $\fieldc=\mathbb{F}_{p}$, we obtain $H_{*}(\mathbf{E_{n}}(S^{0}\bigvee S^{0});\mathbb{F}_{p})$ by formally applying $Q^{s}, \beta Q^{s}, \xi, \beta\xi,$ and $\zeta$. Note that by a generalization of the geometric interpretation of the Browder bracket (see \cref{geometric description of Browder bracket}), we have that a Lie word containing more than one $b$ (for example, $\browd{b}{\browd{r}{b}}$) corresponds to a homology class of $\text{Conf}(\mathbb{R}^{n};S^{0}\bigvee S^{0})$ with at least two blue points. As a result, instead of considering the set of all Lie words on $b$ and $r$, we only need to consider the set of Lie words containing at most one $b$ to calculate $H_{*}(\bigsqcup_{\partic\geq 0}\text{Conf}_{\partic,1}(\mathbb{R}^{n});\fieldc)$. If $w$ is  a Lie word containing only $r$'s, then by \cref{Lie word lemma} $w$ is either $r$ or $\browd{r}{r}$, so we only need to figure out the set of all Lie words containing exactly one $b$.

Suppose that $w$ is a Lie word containing exactly one $b$. We claim that we can assume that $w$ is of the form $w=\browd{r}{\browd{r}{\ldots\browd{r}{b}}\ldots}$. We prove this claim by induction on $\text{weight}(w)$. If $y$ is a Lie word of weight $1$ and contains $b$, then by definition $w=b$. Suppose that the claim holds for all Lie words of weight $1,\ldots, m$. Let $w$ be a word of weight $m+1$. Then $w=\browd{x}{y}$ with $m+1=\text{weight}(x)+\text{weight}(y)$ and $b$ contained exactly once in $x$ or $y$. Since the Browder bracket is symmetric up to a sign in $H_{*}(\mathbf{E_{n}}(S^{0}\bigvee S^{0});\fieldc)$ (Property \ref{Browder bracket symmetric up to sign} of the Browder bracket), without loss of generality we can assume that $x$ is a Lie word containing only $r$'s and $y$ is a Lie word containing exactly one $b$. Since $y$ is a Lie word of weight at most $m$ it is of the form $y=\browd{r}{\browd{r}{\ldots\browd{r}{b}}\ldots}$. For simplicity, we write $y$ as $y=\browd{r}{z}$. Since $x$ is a Lie word containing only $r$'s, by \cref{Lie word lemma}, we can assume that $x$ is either $r$ or $\browd{r}{r}$. If $x$ is $r$, then the claim holds. Otherwise, by the Jacobi identity for the Browder bracket (Property \ref{Jacobi identity of Browder bracket}), we have that
\begin{align*}
    0 =& \browd{\browd{r}{r}}{\browd{r}{z}}\pm\browd{r}{\browd{z}{\browd{r}{r}}}\pm \browd{z}{\browd{\browd{r}{r}}{r}}\\
      =& w \pm\browd{r}{\browd{z}{\browd{r}{r}}}\pm \browd{z}{\browd{\browd{r}{r}}{r}}.
\end{align*}
By induction, since $\browd{z}{\browd{r}{r}}$ is a Lie word of weight at most $m$, it is of the form $\browd{r}{\browd{r}{\ldots\browd{r}{b}}\ldots}$. Since $\browd{\browd{r}{r}}{r}=0$  (Property \ref{Jacobi identity of Browder bracket}), we have that $w$ is of the desired form.

We now describe how homology operations affect Lie words containing only $b$'s and $r$'s. We can think of $\mathbf{E_{n}}(S^{0}\bigvee S^{0})$ as an $E_{n}$-algebra graded over $\mathbb{Z}_{\geq 0}\times \mathbb{Z}_{\geq 0}$, i.e. $$\mathbf{E_{n}}(S^{0}\bigvee S^{0})=\bigsqcup_{(\partic,l)\in \mathbb{Z}_{\geq 0}\times \mathbb{Z}_{\geq 0}}A_{\partic,l},$$
where $A_{\partic,l}$ is homotopy equivalent
to $\text{Conf}_{\partic,l}(\mathbb{R}^{n})$. Similar to how the homology operations $Q^{s}, \beta Q^{s}, \xi, \beta\xi,$ and $\zeta$ act on the homology an $E_{n}$-algebra graded over $\mathbb{N}$, these homology operations send classes in $H_{*}(\text{Conf}_{\partic,l}(\mathbb{R}^{n});\mathbb{F}_{p})$ to classes in $H_{*}(\text{Conf}_{p\partic, pl}(\mathbb{R}^{n});\mathbb{F}_{p})$. As a result, the only way that applying one of the homology operations $Q^{s}, \beta Q^{s}, \xi, \beta\xi,$ or $\zeta$ to a Lie word containing at most one $b$ can be a homology class in $\bigsqcup_{\partic\geq 0}\text{Conf}_{\partic,1}(\mathbb{R}^{n})$ is if the Lie word contains only $r$'s. 

In Cohen's calculation of $H_{*}(\mathbf{E_{n}}(X);\mathbb{F}_{p})$ (see page 227 of \textcite{MR0436146}), the constraint on 
which formal applications of the homology operations $Q^{s}, \beta Q^{s}, \xi, \beta\xi,$ and $\zeta$ are allowed to act a Lie word $w$
depends only on $n$ and the homological degrees of $w$ and $p$. Therefore, we have an explicit isomorphism from $\fieldc[x]\otimes H_{*}(\text{Conf}(\mathbb{R}^{n});\fieldc)$ to $H_{*}(\bigsqcup_{\partic\geq 0}\text{Conf}_{\partic,1}(\mathbb{R}^{n});\fieldc)$ given by the map sending $x^{i}\otimes \sigma$ to $\browd{r}{\browd{r}{\ldots\browd{r}{b}}\ldots}\sigma$, where $r$ appears $i$ times in $\browd{r}{\browd{r}{\ldots\browd{r}{b}}\ldots}$, and extending linearly. Since this isomorphism corresponds to the map on homology induced by $\alpha$,
$$\alpha_{*}\colon \fieldc[x] \otimes_{\fieldc} H_{*}(\modc[S^{n-1}]{\bar{D}^{n}};\fieldc)\to H_{*}(\mon[S^{n-1}];\fieldc),$$ we have that $\alpha$ is a quasi-isomorphism.
\end{proof}
Now we construct stabilization maps and compare them to each other.
\begin{definition}\label{left stabilization on mconf(sphere)}
Given a class $z\in H_{*}(\text{Conf}(D^{n});\fieldc)$, let $$t_{z}^{l}\colon C_{*}(\mon[S^{n-1}];\fieldc)\to  C_{*}(\mon[S^{n-1}];\fieldc) $$ denote the map $(-\cdot-)_{*}\circ EZ \big( \text{MConf}(f')_{*}(z)\otimes -\big)$.
\end{definition}
\begin{definition}\label{stabilization maps on other model of Conf(cylinder)}
Given a positive integer $i$, let $\lambda^{i}(e,z)$ denote $\browd{e}{\browd{e}{\ldots\browd{e}{z}}\ldots}$, where $e$ appears in the iterated Browder bracket $i$ times, and let $\lambda^{0}(e,z)$ denote $z$.
Let $s^{l}_{z}, s^{r}_{z}\colon \fieldc[x]\otimes C_{*}(\modc[S^{n-1}]{\bar{D}^{n}};\fieldc)\to \fieldc[x]\otimes C_{*}(\modc[S^{n-1}]{\bar{D}^{n}};\fieldc)$ denote the maps
\begin{align*}
    s^{l}_{z}\colon \fieldc[x]\otimes C_{*}(\modc[S^{n-1}]{\bar{D}^{n}};\fieldc)&\to \fieldc[x]\otimes C_{*}(\modc[S^{n-1}]{\bar{D}^{n}};\fieldc),\\
     x^{j}\otimes y&\mapsto \sum_{i=0}^{j}\binom{j}{i}  x^{j-i}\otimes (m_{l})_{*}\circ EZ (\mon[f']_{*}(\lambda^{i}(e,z))\otimes y)\\
     s^{r}_{z}\colon \fieldc[x]\otimes C_{*}(\modc[S^{n-1}]{\bar{D}^{n}};\fieldc)&\to \fieldc[x]\otimes C_{*}(\modc[S^{n-1}]{\bar{D}^{n}};\fieldc).\\
    x^{j}\otimes y&\mapsto x^{j}\otimes (m_{r})_{*}\circ EZ (y\otimes \mon[f']_{*}(z))
\end{align*}
\end{definition}
\begin{remark}\label{explanation for why we have three different stabilizations}
We think of $s_{z}^{l}$ and $s_{z}^{r}$ as representing stabilizing $\nch{\mon[S^{n-1}]}$ by putting $z$ on the ``left'' and ``right'' side of $\nch{\mon[S^{n-1}]}$, respectively. For technical reasons, in the proof of \cref{new stabilization on open manifold} we will need that these two ways of stabilizing $\nch{\mon[S^{n-1}]}$ are chain homotopic when $\browd{z}{e}=0$. 
In that proof, we will also need something stronger: roughly speaking, the homotopy will also need to be a map of right $\nch{\mon[\bar{D}^{n-1}]}$-modules ($\nch{\mon[S^{n-1}]}$ is a right $\nch{\mon[\bar{D}^{n-1}]}$-module since $\nch{\mon[S^{n-1}]}$ is a right module over itself and $\bar{D}^{n-1}\subset S^{n-1}$).

An advantage of working with $\fieldc[x]\otimes \nch{\modc[S^{n-1}]{\bar{D}^{n}}}$ is that we can construct a chain homotopy between $s_{z}^{l}$ and $s_{z}^{r}$ when $\browd{z}{e}=0$ \textit{explicitly} and algebraically. This will ensure that the homotopy that we need in the proof of \cref{new stabilization on open manifold} will also be a map of right $\nch{\mon[\bar{D}^{n-1}]}$-modules. 
We still need to work with $t_{z}^{l}$ because we use it to define a stabilization map on $C_{*}(\modc[S^{n-1}]{\bar{D}^{n}};\fieldc)$ that is also a map of right $C_{*}(\mon[S^{n-1}];\fieldc)$-modules (we cannot use $s_{z}^{l}$ and $s_{z}^{r}$ because they are not maps of right $C_{*}(\mon[S^{n-1}];\fieldc)$-modules).
%
\end{remark}
\begin{lemma}\label{comparing stabilization maps on different conf(cyl) models}
The maps $t_{z}^{l} \circ\alpha$ and $\alpha\circ s_{z}^{l}$ are chain homotopic.
\end{lemma}
\begin{proof}
We have that
\begin{align*}
    \alpha\circ s^{l}_{z}(x^{j}\otimes y)&=\alpha \big(\sum_{i=0}^{j}\binom{j}{i}  x^{j-i}\otimes\big( (m_{l})_{*}\circ EZ (\mon[f']_{*}(\lambda^{i}(e,z))\otimes y)\big)\big)\\
    &=(-\cdot-)_{*}\circ EZ\big(\sum_{i=0}^{j}\binom{j}{i}  x^{j-i}\otimes \mon[f']_{*}\big( (m_{l})_{*}\circ EZ (\mon[f']_{*}(\lambda^{i}(e,z))\otimes y)\big)\big)
\end{align*}
and
\begin{align*}
    t^{l}_{z}\circ\alpha (x^{j}\otimes y)&=t^{l}_{z} \big((-\cdot-)_{*}\circ EZ \big(x^{j}\otimes \mon[f']_{*}(y)\big)\big)\\
    &=(-\cdot-)_{*}\circ EZ\big(\mon[f']_{*}(z)\otimes \big( (-\cdot-)_{*}\circ EZ (x^{j}\otimes \mon[f']_{*}(y))\big)\big).
\end{align*}
We will prove that they are chain homotopic by recursion on $j$. 
We have that $\alpha\circ s^{l}_{z}(1\otimes -)$ and $ t^{l}_{z}\circ\alpha (1\otimes -)$ are chain homotopic because the difference between these two maps corresponds to a different choice of orientation-preserving embeddings $\mathbb{R}^{n}\sqcup \mathbb{R}^{n}\hookrightarrow S^{n-1}\times (0,\infty)$ and the space of such embedding is path-connected.

Suppose now that $j>0$.
We have that
\begin{align*}
    s^{l}_{z}(x^{j}\otimes -) =& (-\cdot-)_{*}\Big((x\otimes (\emptyset,0))\otimes\Big(\sum_{i=0}^{j-1}\binom{j-1}{i}x^{j-1-i}\otimes\big((m_{l})_{*}\circ EZ (\mon[f']_{*}(\lambda^{i}(e,z))\otimes -)\big)\Big)\Big)\\
    & + \sum_{i=0}^{j-1}\binom{j-1}{i}x^{j-1-i}\otimes ((m_{l})_{*}\circ EZ (\mon[f']_{*}(\lambda^{i}(e,\browd{e}{z}))\otimes -))\\
    =& (-\cdot-)_{*}\big((x\otimes (\emptyset,0))\otimes s^{l}_{z}(x^{j-1}\otimes -)\big) + s^{l}_{\browd{e}{z}}(x^{j-1}\otimes -)
\end{align*}

since $\binom{j-1}{i}+\binom{j-1}{i-1}=\binom{j}{i}$. Therefore,
\begin{align*}
    \alpha\circ s^{l}_{z}(x^{j}\otimes -)&= \alpha \big((-\cdot-)_{*}\big((x\otimes (\emptyset,0))\otimes s^{l}_{z}(x^{j-1}\otimes -)\big) + s^{l}_{\browd{e}{z}}(x^{j-1}\otimes -)\big)\\
    &=(-\cdot-)_{*}\circ EZ (x\otimes (\alpha\circ  s^{l}_{z}(x^{j-1}\otimes -))) + (\alpha\circ s^{l}_{\browd{e}{z}})(x^{j-1}\otimes -)
\end{align*}

Similarly, by \cref{stab maps on conf(cylinder)}
we have that $$t_{z}^{l}\circ \alpha (x^{j}\otimes -) \simeq (-\cdot -)_{*}\circ EZ( x\otimes (t_{z}^{l}\circ \alpha (x^{j-1}\otimes -)))+ (t_{\browd{e}{z}}^{l}\circ \alpha) (x^{j-1}\otimes -).$$
\begin{figure}[htb]
  \centering
  \includegraphics[scale=.93]{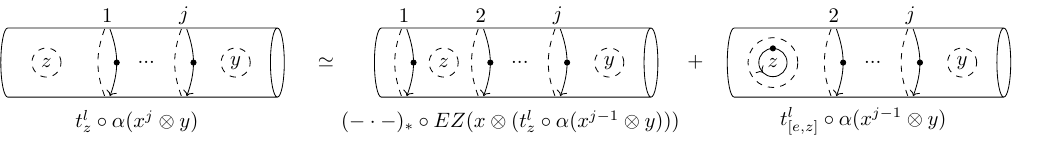}
  \caption{}
  \label{stab maps on conf(cylinder)}
\end{figure}
By recursion on $j$, it follows that 
$t_{z}^{l}\circ \alpha (x^{j}\otimes -)$ is chain homotopic to $\alpha \circ  s_{z}^{l}(x^{j}\otimes -)$.
\end{proof}
\begin{proposition}\label{comparing stabilization maps on the cylinder}
Suppose that $z\in H_{*}(\text{Conf}(D^{n});\fieldc)$ is a homology class such that the Browder bracket $\browd{z}{e}=0$. Then the maps $s^{l}_{z}$ and $s^{r}_{z}$ are chain homotopic.
\end{proposition}
\begin{proof}
We have that 
$$s^{l}_{z}(x^{j}\otimes y) = \sum_{i=0}^{j}\binom{j}{i}  x^{j-i}\otimes (m_{l})_{*}\circ EZ (\mon[f']_{*}(\lambda^{i}(e,z))\otimes y).$$ Since the map  $(x^{j}\otimes y)\mapsto x^{j}\otimes (m_{l})_{*}\circ EZ (\mon[f']_{*}(z)\otimes y)$ is homotopic to $s^{r}_{z}(x^{j}\otimes y)$, it suffices to show that the map $$F(x^{j}\otimes y)\colonequals \sum_{i=1}^{j}\binom{j}{i}  x^{j-i}\otimes (m_{l})_{*}\circ EZ (\mon[f']_{*}(\lambda^{i}(e,z))\otimes y)$$ is null-homotopic. Since $i$ is positive and $\lambda^{i}(e,z)=\lambda^{i-1}(e,\browd{e}{z})$, we have that 
$$F(x^{j}\otimes y)= \sum_{i=1}^{j}\binom{j}{i}  x^{j-i}\otimes (m_{l})_{*}\circ EZ (\mon[f']_{*}(\lambda^{i-1}(e,\browd{e}{z}))\otimes y).$$
Fix a chain $\omega\in C_{*}(\text{Conf}(D^{n});\fieldc)$ such that $\partial \omega=\browd{e}{z}$.
Consider the map 
$$H(x^{j}\otimes y)= (-1)^{\text{deg}(y)}\sum_{i=1}^{j}\binom{j}{i}  x^{j-i}\otimes (m_{l})_{*}\circ EZ (\mon[f']_{*}(\lambda^{i-1}(e,\omega))\otimes y).$$
We now check that $F=\partial H + H\partial$. We have that $\partial H(x^{j}\otimes y)$ is equal to 
$$(-1)^{\text{deg}(y)}\sum_{i=1}^{j}\binom{j}{i}  x^{j-i}\otimes (m_{l})_{*}\circ EZ \big( \big((-1)^{\text{deg}(y)}\mon[f']_{*}(\partial \lambda^{i-1}(e,\omega))\otimes y\big)+ \mon[f']_{*}( \lambda^{i-1}(e,\omega))\otimes \partial y\big).$$ We have that $\partial \lambda(e,\sigma )= \lambda(e, \partial\sigma)$ for any chain $\sigma\in C_{*}(\text{Conf}(D^{n});\fieldc)$ because $e$ is a class of even degree. Therefore, we have that $\partial \lambda^{i-1}(e,\omega)=\lambda^{i-1}(e,\browd{e}{z})=\lambda^{i}(e,z)$ and so
$$\partial H(x^{j}\otimes y)=F(x^{j}\otimes y) + (-1)^{\text{deg}(y)}\sum_{i=1}^{j}\binom{j}{i}  x^{j-i}\otimes (m_{l})_{*}\circ EZ \big(  \mon[f']_{*}( \lambda^{i-1}(e,\omega))\otimes \partial y\big). $$
Since $H\partial (x^{j}\otimes y)=H(x^{j}\otimes \partial y)=(-1)^{\text{deg}(y)-1}\sum_{i=1}^{j}\binom{j}{i}  x^{j-i}\otimes (m_{l})_{*}\circ EZ \big(  \mon[f']_{*}( \lambda^{i-1}(e,\omega))\otimes \partial y\big)$, we have that $F=\partial H + H\partial$.
\end{proof}
\subsection{Stabilization Maps for the Configuration Space of a Disk}\label{stab maps on conf(disk)}
In \cref{stabilization map construction} we constructed a semi-simplicial space $Y_{\bullet}$ whose geometric realization is weakly equivalent to $\modc[S^{n-1}]{\bar{D^{n}}}$.
In this subsection, we use $Y_{\bullet}$ to do something analogous on the chain level: we construct semi-simplicial chain complexes $U_{\bullet}$ and $V_{\bullet}$ whose totalizations $T(U)_{*}$ and $T(V)_{*}$, respectively, are chain complexes quasi-isomorphic to $C_{*}(\modc[S^{n-1}]{\bar{D^{n}}};\fieldc)$. The chain complex $T(V)_{*}$ has the structure of a right $\nch{\mon[S^{n-1}]}$-module. Then we construct stabilization maps on $T(U)_{*}$ and $T(V)_{*}$. We use the stabilization map
$t_{z}^{l}$ on $\nch{\mon[S^{n-1}]}$ from \cref{left stabilization on mconf(sphere)} to define a stabilization map $g_{z}^{l}$ on $T(V)_{*}$ that is also a map of right $\nch{\mon[S^{n-1}]}$-modules. Similarly, we use the stabilization maps $s_{z}^{l}$ and $s_{z}^{r}$ on $\fieldc[x]\otimes C_{*}(\modc[S^{n-1}]{\bar{D}^{n}};\fieldc)$ from \cref{stabilization maps on other model of Conf(cylinder)} to define stabilization maps $\gamma_{z}^{l}$ and $\gamma_{z}^{r}$ on $T(U)_{*}$. We conclude this subsection by showing that $\gamma_{z}^{l}$ and $\gamma_{z}^{r}$ are chain homotopic--we will need this in the proof of \cref{new stabilization on open manifold}.
\begin{definition}
Let $V_{\bullet}''$ denote the semi-simplicial chain complex whose $p$-simplices are
\[
V_{p}''=\begin{cases}
     \nch{\bmodc[S^{n-1}]{\bar{D}^{n}}{\leq 1}}\otimes_{\fieldc} \nch{\mon[S^{n-1}]} & p=0\\
     \nch{\bmon[S^{n-1}]{1}}\otimes_{\fieldc} \nch{\mon[S^{n-1}]} & p=1\\
     \emptyset & p>1.
   \end{cases}
   \]
The face maps $d_{0},d_{1} \colon V_{1}''\to  V_{0}''$ are defined as follows: given $\sigma\otimes\tau\in V_{1}''$,
let
\begin{align*}
    d_0 (\sigma\otimes\tau)\colonequals  & (\emptyset,0)\otimes (-\cdot-)_{*}\circ EZ (\sigma \otimes \tau),\text{ and}\\
    d_1 (\sigma\otimes\tau)\colonequals  & \text{Conf}(\embed)_{*}(\sigma)\otimes \tau.
\end{align*}
Let $V_{\bullet}'\subset V_{\bullet}''$ denote the sub-semi-simplicial chain complex with 
$V_{0}'=V_{0}''$ and $$V_{1}'= \nch{(S^{n-1}\times\{1/2\})\times\{1\}}\otimes_{\fieldc} \nch{\mon[S^{n-1}]}.$$
\end{definition}
\begin{definition}
Given a CW complex $X$,
let $C_{*}^{\text{cell}}(X;\fieldc)$ denote the cellular chain complex of $X$ with coefficients in $\fieldc$.
\end{definition}
\begin{definition}
Given a semi-simplicial chain conplex $Z_{\bullet}$, with $Z_{p}=C_{p,*}$, let $(Z_{\bullet,*}, \partial_{H}, \partial_{V})$ denote the double complex $Z_{p,q}\colonequals C_{p,q}$, with the differential $\partial_{V}\colon Z_{\bullet ,q}\to Z_{\bullet ,q-1} $ coming from the differential on the chain complex $Z_{p}$ and the other differential $\partial_{H}\colon Z_{p,*}\to Z_{p-1,*} $ coming from the alternating sum of the face maps $d_{i} \colon Z_{p}\to  Z_{p-1}$.
By abuse of notation, let $\tot{Z_{\bullet}}$ denote the total complex of the double complex $(Z_{\bullet,*}, \partial_{H}, \partial_{V})$.
\end{definition}
We will need the following results in \cref{first zig-zag between models of chains(conf(disk))}.
\begin{theorem}[see e.g. {\textcite[Theorem 4.18]{MR1802847}}]\label{singular chains versus cellular chains}
Let $X$ be a CW complex.
There is a quasi-isomorphism $ C_{*}^{\text{cell}}(X;\fieldc)\to C_{*}(X;\fieldc)$.
\end{theorem}
\begin{proposition}[see e.g. {\textcite[Proposition 1.2]{MR1010881}}]\label{total complex is quasi-isomorphic to geometric realization}
Let $X_{\bullet}$ be a semi-simplicial space, and let $Z_{\bullet}$ be the semi-simplicial chain complex with $Z_{p}\colonequals \nch{X_{p}}$ and face maps on $Z_{\bullet}$ induced from the face maps of $X_{\bullet}$. Then there is a quasi-isomorphism $\tot{Z_{\bullet}}\to \nch{\Vert Z_{\bullet}\Vert}$.
\end{proposition}
Let $\mathbb{S}^{n-1}$ denote $S^{n-1}$ equipped with a CW-complex structure. 
\begin{definition}\label{main model for chains on Conf(D^n)}
Fix a CW structure on $\mathbb{S}^{n-1}$ using a single $0$-cell and a single $(n-1)$-cell only.
We map the $0$-cell to a point $p\in (S^{n-1}\times\{1/2\})\times\{1\}$ and we map the $(n-1)$-cell to $(S^{n-1}\times\{1/2\})\times\{1\}$ via the map induced by sending its boundary to $p$, so that we may identify $\mathbb{S}^{n-1}$ with $ (S^{n-1}\times\{1/2\})\times\{1\}\subset \bmon[S^{n-1}]{1}$.
Let $V_{\bullet}\subset V_{\bullet}'$ denote the sub-semi-simplicial chain complex with 
$V_{0}=V_{0}'$ and $$V_{1}= C^{\text{cell}}_{*}(\mathbb{S}^{n-1};\fieldc)\otimes_{\fieldc} \nch{\mon[S^{n-1}]}$$ 
Let $T(V)_{*}$ denote $\tot{V_{\bullet}}$.
\end{definition}
\begin{definition}
Let $U_{\bullet}$ denote the semi-simplicial chain complex with 
\[
U_{p}=\begin{cases}
     \nch{\bmodc[S^{n-1}]{\bar{D}^{n}}{\leq 1}} \otimes_{\fieldc} \fieldc[x]\otimes_{\fieldc} \nch{\modc[S^{n-1}]{\bar{D}^{n}}} & p=0\\
     C^{\text{cell}}_{*}(\mathbb{S}^{n-1};\fieldc)\otimes_{\fieldc} \fieldc[x]\otimes_{\fieldc}  \nch{\modc[S^{n-1}]{\bar{D}^{n}}} & p=1\\
     0 & p>1.
   \end{cases}
   \]
The face maps $d_{0},d_{1} \colon U_{1}\to  U_{0}$ are defined as follows: given $\sigma\otimes x^{j}\otimes\tau\in U_{1}$, let
\[
d_{0}(\sigma\otimes x^{j}\otimes\tau)=\begin{cases}
     (\emptyset,0)\otimes x^{j+1}\otimes\tau & \text{deg}(\sigma)=n-1\\
     (\emptyset,0)\otimes x^{j}\otimes\tau & \text{deg}(\sigma)=0 \\
   \end{cases}
   \]
and \[
d_{1}(\sigma\otimes x^{j}\otimes\tau)=\begin{cases}
     \text{Conf}(\embed)_{*}(\sigma)\otimes x^{j}\otimes\tau & \text{deg}(\sigma)=n-1\\
     (\emptyset,0)\otimes x^{j}\otimes\tau & \text{deg}(\sigma)=0 .\\
   \end{cases}
   \]
Let $T(U)_{*}$ denote $\tot{U_{\bullet}}$.   
\end{definition}
\begin{lemma}\label{first zig-zag between models of chains(conf(disk))}
There is a quasi-isomorphism of right $\nch{\mon[S^{n-1}]}$-modules
$$T(V)_{*} \to \nch{\Vert Y_{\bullet}\Vert}.$$
\end{lemma}
\begin{proof}
We have that $V_{\bullet}''$ has a right $\nch{\mon[S^{n-1}]}$-module structure coming from the right module structure of $\nch{\mon[S^{n-1}]}$. 
The Eilenberg--Zilber map gives a levelwise quasi-isomorphism of semi-simplicial chain complexes $V_{\bullet}''\to \nch{Y_{\bullet}}$. It is also a map of right $\nch{\mon[S^{n-1}]}$-modules since the Eilenberg--Zilber map is a lax monoidal functor and the right $\nch{\mon[S^{n-1}]}$-module structure on $V_{\bullet}''$ and $Y_{\bullet}$ comes from the monoid map on $\mon[S^{n-1}]$. We also have levelwise quasi-isomorphisms of semi-simplicial chain complexes $V_{\bullet}'\to V_{\bullet}''$ and $V_{\bullet}\to V_{\bullet}'$, since $S^{n-1}$ is homotopy equivalent to $\bmon[S^{n-1}]{1}$ and $C^{\text{cell}}_{*}(\mathbb{S}^{n-1};\fieldc)$ is quasi-isomorphic to $\nch{S^{n-1}}$ by \cref{singular chains versus cellular chains}. By taking the totalization of these semi-simplicial chain complexes, we have a quasi-isomorphism of right $\nch{\mon[S^{n-1}]}$-modules
$T(V)_{*}\to \tot{\nch{Y_{\bullet}}}$. Finally the map $\tot{\nch{Y_{\bullet}}}\to \nch{\Vert Y_{\bullet}\Vert}$ is a quasi-isomorphism by \cref{total complex is quasi-isomorphic to geometric realization}, and it is a map of right $\nch{\mon[S^{n-1}]}$-modules.
\end{proof}
\begin{lemma}\label{alpha map gives a semi-simplcial map}
The map $$(\text{id}\otimes \alpha)_{\bullet}\colon U_{\bullet}\to V_{\bullet},$$ where $\alpha\colon \fieldc [x]\otimes_{\fieldc} \nch{\modc[S^{n-1}]{\bar{D}^{n}}}\to \nch{\mon[S^{n-1}]}$ is the map from \cref{map between different models of Conf(cylinder)}, is a map of semi-simplicial chain complexes and it induces a quasi-isomorphism $((\text{id}\otimes \alpha)_{\bullet})_{*}\colon T(U)_{*} \to T(V)_{*}$.
\end{lemma}
\begin{proof}
It is clear that $(\text{id}\otimes \alpha)_{\bullet}$ is a map of semi-simplicial chain complexes. Since $\alpha$ is a homotopy equivalence by \cref{different models of conf(cyl) are equivalent}, the map $(\text{id}\otimes \alpha)_{\bullet}$ induces a quasi-isomorphism $((\text{id}\otimes \alpha)_{\bullet})_{*}$. 
\end{proof}
\begin{lemma}
There is a composition of quasi-isomorphisms.
$$T(U)_{*} \to T(V)_{*}\to \nch{Y}.$$
\end{lemma}
\begin{proof}
By \cref{alpha map gives a semi-simplcial map}, $(\text{id}\otimes \alpha_{\bullet})_{*}\colon T(U)_{*}\to T(V)_{*}$ is a quasi-isomorphism. Since there is a quasi-isomorphism $T(V)_{*}\to \nch{Y}$ by \cref{first zig-zag between models of chains(conf(disk))}, and the composition of quasi-isomorphisms is a quasi-isomorphism, we have that $T(U)_{*}$ is quasi-isomorphic to $\nch{Y}$.
\end{proof}
Now we define stabilization maps
\begin{align*}
    \gamma_{z}^{l}, \gamma_{z}^{r}\colon T(U)_{*}&\to\nch{\modc[S^{n-1}]{\bar{D}^{n}}}\\
    g_{z}^{l}\colon T(V)_{*} &\to\nch{\modc[S^{n-1}]{\bar{D}^{n}}}.
\end{align*}
To construct these stabilization maps, first observe that $U_{\bullet}$ and $V_{\bullet}$ only have simplices in degrees $0$ and $1$. Therefore to construct $g^{l}_{z}$ (likewise for $\gamma^{l}_{z}$ and $\gamma^{r}_{z}$), it suffices to construct a map $$g^{l}_{0,z}\colon \nch{V_{0}}\to \nch{\modc[S^{n-1}]{\bar{D}^{n}}}$$ and a homotopy $H$ from $g^{l}_{0,z}\circ d_{0}$ to $g^{l}_{0,z}\circ d_{1}$.
\begin{definition}
Let $$g^{l}_{0,z}\colon V_{0} = \nch{\bmodc[S^{n-1}]{\bar{D}^{n}}{\leq 1}} \otimes \nch{\mon[S^{n-1}]} \to  \nch{\modc[S^{n-1}]{\bar{D}^{n}}}$$ be the composition $(m_{r})_{*}\circ EZ\circ \big((\text{id}_{\modc[S^{n-1}]{\bar{D}^{n}}})_{*}\otimes t^{l}_{z} \big)$.
\end{definition}
\begin{definition}
Let $$\gamma^{l}_{0,z}, \gamma^{r}_{0,z} \colon  C_{*}(\bmodc[S^{n-1}]{\bar{D}^{n}}{\leq 1};\fieldc) \otimes \fieldc[x]\otimes C_{*}(\modc[S^{n-1}]{\bar{D}^{n}};\fieldc) \to  C_{*}(\modc[S^{n-1}]{\bar{D}^{n}};\fieldc)$$ be the compositions $(m_{r})_{*}\circ EZ\circ \big(\text{id}\otimes  (\alpha\circ s^{l}_{z}) \big)$ and $(m_{r})_{*}\circ EZ\circ \big(\text{id}\otimes (\alpha\circ s^{r}_{z}) \big)$, respectively. 
\end{definition}
Since the maps $(m_{r})_{*}\circ EZ$ and $t^{l}_{z}$ are maps of right $C_{*}(\mon[S^{n-1}];\fieldc)$-modules, the map $g^{l}_{0,z}$ is a map of right $C_{*}(\mon[S^{n-1}];\fieldc)$-modules.
%
\begin{proposition}\label{p-power stabilization}
Let $\fieldc$ be a ring
and let 
$e\in H_{0}(\text{Conf}_{1}(\mathbb{R}^{n});\fieldc)$ be the class of a point. Suppose that $z\in H_{*}(\text{Conf}(\mathbb{R}^{n});\fieldc)$ is such that the Browder bracket $\browd{e}{z}=0$. Then the maps $$g^{l}_{0,z}\circ d_{0},\text{ } g^{l}_{0,z} \circ d_{1}\colon V_{1}=C_{*}^{\text{cell}}(\mathbb{S}^{n-1};\fieldc)\otimes C_{*}(\mon[S^{n-1}];\fieldc)\to C_{*}(\modc[S^{n-1}]{D^{n}};\fieldc)$$
are chain homotopic.
\end{proposition}
\begin{proof}
Suppose $\sigma\otimes (\emptyset ,0)\in V_{1}$. 
It suffices to check that $g^{l}_{0,z}\circ d_{0}$ and $g^{l}_{0,z}\circ d_{1}$ are chain homotopic when restricting to $\sigma\otimes (\emptyset ,0)\in V_{1}$, since $g^{l}_{0,z}\circ d_{0}$ and $g^{l}_{0,z}\circ d_{1}$ are maps of right $C_{*}(\mon[S^{n-1}];\fieldc)$-modules. 
We can represent $g^{l}_{0,z}\circ d_{0}(\sigma \otimes (\emptyset ,0))$ and $g^{l}_{0,z}\circ d_{1}(\sigma \otimes (\emptyset ,0))$ via \cref{comparing the maps on the one-simplices}.
 \begin{figure}[htb]
  \centering
\includegraphics[scale=.8]{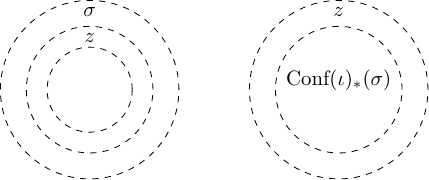}
  \caption{The maps $g^{l}_{0,z}\circ d_{0}$ and $g^{l}_{0,z}\circ d_{1}$ }
  \label{comparing the maps on the one-simplices}
\end{figure}

We can express the image of $\sigma\otimes (\emptyset ,0)$ under these two maps as
\begin{align*}
    g^{l}_{0,z}\circ d_{0} \colon & \sigma \otimes (\emptyset ,0) \mapsto (\emptyset ,0)\otimes (-\cdot -)_{*}\circ EZ(\sigma\otimes (\emptyset ,0)) \mapsto (\emptyset ,0)\otimes t_{z}^{l}(\sigma)\mapsto (m_{r})_{*}\circ EZ\big((\emptyset ,0)\otimes t_{z}^{l}(\sigma)\big),\\
    g^{l}_{0,z}\circ d_{1} \colon & \sigma \otimes (\emptyset ,0)\mapsto \text{Conf}(\embed)_{*}(\sigma) \otimes (\emptyset ,0)\mapsto 
    (m_{r})_{*}\circ EZ(\text{Conf}(\embed)_{*}(\sigma)\otimes t^{l}_{z}((\emptyset ,0))).
\end{align*}
Since $\sigma$ is an element of $C_{*}^{\text{cell}}(\mathbb{S}^{n-1};\fieldc)$ and $\mathbb{S}^{n-1}$ only has one $0$-cell and one $(n-1)$-cell, $C_{*}^{\text{cell}}(\mathbb{S}^{n-1};\fieldc)$ is generated by a cell in degree $0$ and degree $n-1$.
As a result, 
we have two cases to consider:
\begin{enumerate}[label={Case \arabic*:}]
    \item\label{sigma is class of point} $\sigma$ is the class of a point in $H_{0}(S^{n-1};\fieldc)=C_{0}^{\text{cell}}(\mathbb{S}^{n-1};\fieldc)$, and
    \item\label{sigma is fundamental class} $\sigma$ is a generator of $ H_{n-1}(S^{n-1};\fieldc)=C_{n-1}^{\text{cell}}(\mathbb{S}^{n-1};\fieldc)$.
\end{enumerate}

\ref{sigma is class of point}
Since $\sigma$ is the class of a point, 
we can construct a chain homotopy in this case by fixing a path $\omega_{0}$ in $C_{*}(\modc[S^{n-1}]{\bar{D}^{n}};\fieldc)$ that swaps $z$ and the point in \cref{comparing the maps on the one-simplices}.

\ref{sigma is fundamental class}
If $\sigma$ is a fundamental class in $H_{n-1}(S^{n-1};\fieldc)$, then $g^{l}_{0, z}\circ d_{1}(\sigma \otimes -) $ is null-homotopic, since $S^{n-1}\hookrightarrow \bar{D}^{n}$ being null-homotopic implies that $\text{Conf}(\embed)_{*}(\sigma)=\partial \omega'$ for some $\omega'\in C_{*}(\text{Conf}(D^{n});\fieldc)$. Since $$g_{0,z}^{l}\circ d_{0}(\sigma\otimes (\emptyset,0))=(m_{r})_{*}\circ EZ\big((\emptyset ,0)\otimes t_{z}^{l}(\sigma)\big) =\browd{e}{z}$$ (see \cref{fig:Browder bracket with a point}) and by assumption $\browd{e}{z}=0$, we have that $g^{l}_{0,z}\circ d_{1}(\sigma\otimes (\emptyset,0) )=\partial\omega$ for some chain $\omega$. Therefore, by picking a chain $\omega\in C_{*}(\text{Conf}(D^{n});\fieldc)$ such that $\partial\omega = \browd{e}{z}$, we can construct a null homotopy for $g^{l}_{0,z}\circ d_{0}(\sigma\otimes (\emptyset,0) )$. This gives a chain homotopy between $g^{l}_{0,z}\circ d_{0}$ and 
$g^{l}_{0,z}\circ d_{1}$ in this case as well. As a result, the map
\begin{align*}
H^{l}\colon C_{*}(V_{1};\fieldc)&\to C_{*}(\modc[S^{n-1}]{\bar{D}^{n}};\fieldc)\\
\sigma\otimes\tau &\mapsto\begin{cases}
     (m_{r})_{*}\circ EZ (\omega_{0}\otimes \tau) & \sigma\text{ a generator of } H_{0}(S^{n-1};\fieldc), \text{ and}\\
     (m_{r})_{*}\circ EZ ((\omega-\omega')\otimes\tau) &\sigma\text{ a generator of } H_{n-1}(S^{n-1};\fieldc)
   \end{cases}
\end{align*}
gives a chain homotopy from $g_{0,z}^{l}\circ d_{0}$ to $g_{0,z}^{l}\circ d_{1}$.
\end{proof}
\begin{definition}\label{definition of new stabilization map for Conf(Disk)}
Let $e\in H_{0}(\text{Conf}_{1}(\mathbb{R}^{n});\fieldc)$ be the class of a point. Suppose that $z\in H_{*}(\text{Conf}(\mathbb{R}^{n});\fieldc)$ is a homology class such that the Browder bracket $\browd{z}{e}=0$. 
Let $g^{l}_{z}\colon T(V)_{*}\to C_{*}(\modc[S^{n-1}]{\bar{D}^{n}};\fieldc)$ denote the 
map on $T(V)_{*}$ induced by taking the homotopy from $g^{l}_{0,z}\circ d_{0}$ to $g^{l}_{0,z}\circ d_{1}$ to be the homotopy $H^{l}$ constructed at the end of \cref{p-power stabilization}.
\end{definition}
\begin{lemma}\label{face maps composed with gamma maps are homotopic}
The maps $\gamma_{0,z}^{r}\circ d_{0}$ and $\gamma_{0,z}^{r}\circ d_{1}$ agree. If $\browd{e}{z}=0$, then the maps $\gamma_{0,z}^{l}\circ d_{0}$ and $\gamma_{0,z}^{l}\circ d_{1}$ are chain homotopic.
\end{lemma}
\begin{proof}
Given $\sigma\otimes x^{j}\otimes y\in U_{1}$, without loss of generality, we can assume that the degree of $\sigma$ is $n-1$, since $d_{0}$ and $d_{1}$ agree if the degree of $\sigma$ is zero.
Since $$(m_{r})_{*}\circ EZ (\text{Conf}(\embed)_{*}(\sigma)\otimes \alpha(x^{j}\otimes -))=(m_{r})_{*}\circ EZ ((\emptyset,0)\otimes \alpha(x^{j+1}\otimes -)),$$
we have that $\gamma_{0,z}^{r}\circ d_{0}$ and $\gamma_{0,z}^{r}\circ d_{1}$ agree.

We now show that $\gamma_{0,z}^{l}\circ d_{0}$ and $\gamma_{0,z}^{l}\circ d_{1}$ are chain homotopic if $\browd{e}{z}=0$. We have that 
\begin{align*}
  \gamma_{0,z}^{l}d_{0}(\sigma\otimes x^{j}\otimes y)&= \gamma_{0,z}^{l}((\emptyset,0)\otimes x^{j+1}\otimes y)\\
  &= (m_{r})_{*}\circ EZ \big((\emptyset,0)\otimes (\alpha\circ s_{z}^{l})(x^{j+1}\otimes y) \big)\\
  &= (m_{r})_{*}\circ EZ \big((\emptyset,0)\otimes \alpha\big(\sum_{i=0}^{j+1}\binom{j+1}{i} x^{j+1-i}\otimes (m_{l})_{*}\circ EZ(\mon[f']_{*}(\lambda^{i}(e,z))\otimes y)\big) \big)
\end{align*}
and 
\begin{align*}
  \gamma_{0,z}^{l}d_{1}(\sigma\otimes x^{j}\otimes y)&= \gamma_{0,z}^{l}(\text{Conf}(\embed)_{*}(\sigma)\otimes x^{j}\otimes y)\\
  &= (m_{r})_{*}\circ EZ \big(\text{Conf}(\embed)_{*}(\sigma)\otimes (\alpha\circ s_{z}^{l})(x^{j}\otimes y) \big)\\
  &= (m_{r})_{*}\circ EZ \big(\text{Conf}(\embed)_{*}(\sigma)\otimes \alpha\big(\sum_{i=0}^{j}\binom{j}{i} x^{j-i}\otimes (m_{l})_{*}\circ EZ(\mon[f']_{*}(\lambda^{i}(e,z))\otimes y)\big) \big).
\end{align*}
Since $(m_{r})_{*}\circ EZ (\text{Conf}(\embed)_{*}(\sigma)\otimes \alpha(x^{j}\otimes -))=(m_{r})_{*}\circ EZ ((\emptyset,0)\otimes \alpha(x^{j+1}\otimes -))$, we have that $\gamma_{0,z}^{l}d_{1}(\sigma\otimes x^{j}\otimes y)-\gamma_{0,z}^{l}d_{0}(\sigma\otimes x^{j}\otimes y)$ is equal to
\begin{align*}
    &(m_{r})_{*}\circ EZ \big((\emptyset,0)\otimes \alpha\big(\sum_{i=0}^{j+1}\bigg(\binom{j+1}{i}-\binom{j}{i}\bigg) x^{j+1-i}\otimes (m_{l})_{*}\circ EZ(\mon[f']_{*}(\lambda^{i}(e,z))\otimes y)\big) \big)\\
    =&(m_{r})_{*}\circ EZ \big((\emptyset,0)\otimes \alpha\big(\sum_{i=1}^{j+1}\bigg(\binom{j+1}{i}-\binom{j}{i}\bigg) x^{j+1-i}\otimes (m_{l})_{*}\circ EZ(\mon[f']_{*}(\lambda^{i}(e,z))\otimes y)\big) \big)
\end{align*}
(by abuse of notation, when $i=j+1$ we set $\binom{j}{j+1}$ to be zero).
Since $\browd{e}{z}=0$, we can fix a chain $\omega\in C_{*}(\modc[S^{n-1}]{\bar{D}^{n}};\fieldc)$ such that $\partial \omega = \browd{e}{z}$.
The map $H_{l}\colon U_{1}\to C_{*}(\modc[S^{n-1}]{\bar{D}^{n}};\fieldc)$, which sends $ \sigma\otimes x^{j}\otimes y$ to
$$(m_{r})_{*}\circ EZ \big((\emptyset,0)\otimes \alpha\big(\sum_{i=1}^{j+1}\bigg(\binom{j+1}{i}-\binom{j}{i}\bigg) x^{j+1-i}\otimes (m_{l})_{*}\circ EZ(\mon[f']_{*}(\lambda^{i-1}(e,\omega))\otimes y)\big) \big)$$ if the degree of $\sigma$ is $n-1$ and which is zero otherwise,
gives a homotopy from $\gamma_{0,z}^{l}\circ d_{0}$ to $\gamma_{0,z}^{l}\circ d_{1}$.
\end{proof}
\begin{definition}\label{stabilization maps on non-module model for Conf(D^n)}
Let $e\in H_{0}(\text{Conf}_{1}(\mathbb{R}^{n});\fieldc)$ be the class of a point. Suppose that $z\in H_{*}(\text{Conf}(\mathbb{R}^{n});\fieldc)$ is a homology class such that the Browder bracket $\browd{z}{e}=0$. 
Let $$\gamma^{l}_{z}\colon T(U)_{*}\to C_{*}(\modc[S^{n-1}]{\bar{D}^{n}};\fieldc)$$ denote the 
map on $T(U)_{*}$ induced by taking the homotopy from $\gamma^{l}_{0,z}\circ d_{0}$ to $\gamma^{l}_{0,z}\circ d_{1}$ to be the homotopy $H_{l}$ constructed at the end of \cref{face maps composed with gamma maps are homotopic}.
Let $\gamma^{r}_{z}\colon T(U)_{*}\to C_{*}(\modc[S^{n-1}]{\bar{D}^{n}};\fieldc)$ denote the map on $T(U)_{*}$ induced by taking the homotopy from $\gamma^{r}_{0,z}\circ d_{0}$ to $\gamma^{r}_{0,z}\circ d_{1}$ to be the zero map.
\end{definition}
\begin{remark}
The homotopies from $s_{z}^{l}$ to $s_{z}^{r}$ and from $\gamma_{0, z}^{l}\circ d_{0}$ to $\gamma_{0, z}^{l}\circ d_{1}$
both come from fixing a chain $\omega\in C_{*}(\text{Conf}(D^{n});\fieldc)$ which satisfies $\partial\omega =\browd{e}{z}$. We will assume that the chain $\omega$ is the same for both homotopies because we will need to assume this in the proof of \cref{induced gamma maps are homotopic}.
\end{remark}
\begin{lemma}\label{induced gamma maps are homotopic}
The maps $\gamma_{z}^{l}$ and $\gamma_{z}^{r}$ are chain homotopic.
\end{lemma}
\begin{proof}

The data
of a map $f\colon T(U)_{*} \to D_{*}$, where $D_{*}$ is a chain complex, is given by a pair $(f_{0},H_{f_{0}})$, where $f_{0}$ is a map $f_{0}\colon U_{0}\to D_{*}$ and $H_{f_{0}}$ is a homotopy $H_{f_{0}}\colon U_{1}\to D_{*+1}$ from $f_{0}\circ d_{0}$ to $f_{0}\circ d_{1}$. Therefore, to give a chain homotopy between two maps $f=(f_{0},H_{f_{0}}), g=(g_{0},H_{g_{0}})\colon T(U)_{*} \to D_{*}$,
we need the following data:
\begin{enumerate}
    \item a homotopy $H_{i}\colon U_{1}\to D_{*+1}$ from $f_{0}\circ d_{i}$ to $g_{0}\circ d_{i}$ for each $i=0,1$, and
    \item a map $K\colon U_{1}\to D_{*+2}$ that satisfies $\partial K = H_{1} - H_{f}- H_{0} + H_{g}$
\end{enumerate}
By \cref{comparing stabilization maps on the cylinder},
the map 
$$H(x^{j}\otimes -)= \sum_{i=1}^{j}\binom{j}{i}  x^{j-i}\otimes (m_{l})_{*}\circ EZ (\mon[f']_{*}(\lambda^{i-1}(e,\omega))\otimes -)$$ gives a homotopy from $s_{z}^{l}$ to $s_{z}^{r}$. Therefore, 
the map
$$H^{l,r}\colonequals (m_{r})_{*}\circ EZ \circ (\text{id}\otimes (\alpha \circ H))$$ gives a homotopy
from $\gamma_{0, z}^{l}$ to $\gamma_{0, z}^{r}$. Composing $H^{l,r}$ with $d_{i}$ gives a homotopy $H_{i}$
from $\gamma_{0, z}^{l}\circ d_{i}$ to $\gamma_{0, z}^{r}\circ d_{i}$. We set $H_{l}$ to be the homotopy $\gamma_{0,z}^{l}\circ d_{0}$ to $\gamma_{0,z}^{l}\circ d_{1}$ given at the end of \cref{face maps composed with gamma maps are homotopic}. Since $\gamma_{0,z}^{l}\circ d_{0}=\gamma_{0,z}^{l}\circ d_{1}$ by \cref{face maps composed with gamma maps are homotopic}, we can set the homotopy $H_{r}$ from $\gamma_{0,z}^{r}\circ d_{0}$ to $\gamma_{0,z}^{r}\circ d_{1}$ to be the zero map.
We now calculate $H_{1} - H_{0} + H_{l}$. Given $\sigma\otimes x^{j}\otimes y\in U_{1}$, if the degree of $\sigma$ is zero then $(H_{1} - H_{0} + H_{l})(\sigma\otimes x^{j}\otimes y)=0$ since $H_{l}(\sigma\otimes x^{j}\otimes y)=0$ by definition and $d_{0}(\sigma\otimes x^{j}\otimes y)=d_{1}(\sigma\otimes x^{j}\otimes y)$. Suppose now that the degree of $\sigma$ is $n-1$. 

By definition, the maps $H_{l}$, $H_{0}$, and $H_{1}$ send $\sigma\otimes x^{j}\otimes y$ to
\begin{align*}
    &(m_{r})_{*}\circ EZ \big((\emptyset,0)\otimes \alpha(\sum_{i=1}^{j+1}\bigg(\binom{j+1}{i}-\binom{j}{i}\bigg) x^{j+1-i}\otimes (m_{l})_{*}\circ EZ(\mon[f']_{*}(\lambda^{i-1}(e,\omega))\otimes y)) \big),\\
    &(m_{r})_{*}\circ EZ\circ ((\emptyset,0)\otimes\alpha(\sum_{i=1}^{j+1}\binom{j+1}{i}  x^{j+1-i}\otimes (m_{l})_{*}\circ EZ (\mon[f']_{*}(\lambda^{i-1}(e,\omega))\otimes y))), \text{ and}\\
    &(m_{r})_{*}\circ EZ\circ (\text{Conf}(\embed)_{*}(\sigma)\otimes\alpha(\sum_{i=1}^{j}\binom{j}{i}  x^{j-i}\otimes (m_{l})_{*}\circ EZ (\mon[f']_{*}(\lambda^{i-1}(e,\omega))\otimes y))),
\end{align*}
respectively. Since $(m_{r})_{*}\circ EZ (\text{Conf}(\embed)_{*}(\sigma)\otimes \alpha(x^{j}\otimes -))=(m_{r})_{*}\circ EZ ((\emptyset,0)\otimes \alpha(x^{j+1}\otimes -))$, we have that $(H_{1} - H_{0} + H_{l})(\sigma\otimes x^{j}\otimes y)=0$. It follows that we can set the map $K$ to be the zero map.
\end{proof}
\subsection{Stabilization Maps for the Configuration Space of a Manifold}\label{new stab maps for conf(M)}
In this subsection, we give a model for $\nch{\text{Conf}(M)}$
by using copies of a model for $\nch{\modc[S^{n-1}]{\bar{D}^{n}}}$, the chain complex $T(V)_{*}$ from \cref{main model for chains on Conf(D^n)}. We will need to work with copies of $T(V)_{*}$ instead of a single copy because this will allow us to define iterated mapping cones on our model for $\nch{\text{Conf}(M)}$. Then we use the stabilization map $g^{l}_{z}$ from \cref{definition of new stabilization map for Conf(Disk)} on $T(V)_{*}$ to get a stabilization map $\newstab{z}$ when $\browd{z}{e}=0$ on our model for $\nch{\text{Conf}(M)}$. We conclude by relating this new stabilization map to the stabilization map $t_{z}$ when $M$ is an open connected manifold.
\begin{definition}
Suppose that $U, V$, and $W$ are spaces, with $V$ a monoid, $U$ a right $V$-module, and $W$ a left $V$-module, and that $Z_{\bullet}$ is the semi-simplicial space $B_{\bullet}(U,V,W)$. Let $N_{*}(Z_{\bullet};\fieldc)$ denote
the semi-simplicial chain complex, whose  $p$-simplices in degree $q$ are
$$N_{q}(Z_{p};\fieldc)\colonequals\bigoplus_{q=q_{0}+\ldots + q_{p+1} }C_{q_{0}}(U;\fieldc)\otimes_{\fieldc} C_{q_{1}}(V;\fieldc)\otimes_{\fieldc}\cdots\otimes_{\fieldc} C_{q_{p}}(V;\fieldc)\otimes_{\fieldc} C_{q_{p+1}}(W;\fieldc).$$ 
Given $\sigma\in C_{q_{0}}(U;\fieldc)$, $\tau_{j}\in C_{q_{j}}(V;\fieldc)$, and $\omega\in C_{q_{p+1}}(W;\fieldc)$, the face maps $d_{i} \colon N_{q}(Z_{p};\fieldc)\to  N_{q}(Z_{p-1};\fieldc)$ for $0\leq i \leq p$ send $\sigma\otimes\tau_{1}\otimes\cdots\otimes\tau_{p}\otimes\omega$ to 
\begin{align*}
  d_{0}(\sigma\otimes\tau_{1}\otimes\cdots\otimes\tau_{p}\otimes\omega)&=((m_{r})_{*}\circ EZ(\sigma\otimes\tau_{1}))\otimes\tau_{2}\otimes\cdots\otimes\tau_{p}\otimes\omega\\
  d_{i}(\sigma\otimes\tau_{1}\otimes\cdots\otimes\tau_{p}\otimes\omega)&= \sigma\otimes\tau_{1}\otimes\cdots\otimes\tau_{i-1}\otimes ((-\cdot-)_{*}\circ EZ (\tau_{i}\otimes\tau_{i+1}))\otimes\tau_{i+2}\otimes\cdots\otimes\tau_{p}\otimes\omega, 0<i<p, \\ 
  d_{p}(\sigma\otimes\tau_{1}\otimes\cdots\otimes\tau_{p}\otimes\omega)&=\sigma\otimes\tau_{1}\otimes\cdots\otimes\tau_{p-1}\otimes (m_{l})_{*}\circ EZ(\tau_{p}\otimes\omega),  
\end{align*}
where $m_{r}\colon U\times V\to U$ is the right module map of $U$, $(-\cdot-)$ is the monoid map of $V$, and $m_{l}\colon V\times W\to W$ is the left module map of $W$.
If $Z_{\bullet}'\subset Z_{\bullet}$ is a sub-semi-simplicial space, we will use $N_{*}(z'_{\bullet};\fieldc)$ to denote the analogous sub-semi-simplicial chain complex for $Z_{\bullet}'$.
\end{definition}
\begin{lemma}\label{chain model for conf(M)}
Suppose
that $U, V$, and $W$ are spaces, with $V$ a monoid, $U$ a right $V$-module, and $W$ a left $V$-module. 
There is a quasi-isomorphism
$$\tot{N_{*}(B_{\bullet}(U,V,W);\fieldc)}\to \nch{\Vert B_{\bullet}(U,V,W)\Vert}.$$
\end{lemma}
\begin{proof}
By the K\"unneth theorem, the Eilenberg--Zilber map induces a quasi-isomorphism 
$$\tot{N_{*}(B_{\bullet}(U,V,W);\fieldc)}\to \tot{\nch{B_{\bullet}(U,V,W)}}. $$ 
We have a quasi-isomorphism
$$\tot{\nch{B_{\bullet}(U,V,W)}} \to \nch{\Vert B_{\bullet}(U,V,W)\Vert}$$
by \cref{total complex is quasi-isomorphic to geometric realization}.
Since the composition of quasi-isomorphisms is a quasi-isomorphism, there is a quasi-isomorphism $\tot{N_{*}(B_{\bullet}(U,V,W);\fieldc)}\to\nch{\Vert B_{\bullet}(U,V,W)\Vert}$.
\end{proof}
\begin{definition}
Suppose that $M_{1}$ and $M_{2}$ are two manifolds with boundary and that $(\smon_{1},\ldots, \smon_{\srange})$ is an $\srange$-tuple of disjoint subspaces contained in $\partial M_{1}$ and $\partial M_{2}$. Let $\smon$ denote $\bigsqcup_{j=1}^{\srange}\smon_{j}$. Let $\cpiterxbarc{M_{1}}{\smon}{M_{2}}{\bullet}{\srange}{\fieldc}{\partic}$ and $\citerxbarc{M_{1}}{\smon}{M_{2}}{\bullet}{\srange}{\fieldc}$ denote the semi-simplicial chain complexes $N_{*}(\piterxbarc{M_{1}}{\smon}{M_{2}}{\bullet}{\srange}{\partic};\fieldc)$ and \\$N_{*}(\iterxbarc{M_{1}}{\smon}{M_{2}}{\bullet}{\srange};\fieldc)$, respectively.

Suppose that we have a map $A_{*}\to \nch{\rimodc[\smon]{M_{1}}{\srange}}$ of right $\prod_{j=1}^{\srange}\nch{\mon[\smon_{j}]}$-modules.
By abuse of notation, let 
$\citerxbarc{A_{*}}{\smon}{M_{2}}{\bullet}{\srange}{\fieldc}$ denote the semi-simplicial chain complex
$$\citerxbarc{A_{*}}{\smon}{M_{2}}{\bullet}{\srange}{\fieldc}\colonequals N_{*}\big(B_{\bullet}(A_{*},\prod_{j=1}^{\srange}\nch{\mon[\smon_{j}]},\nch{\rimodc[\smon]{M_{2}}{\srange}})\big).$$
Let $\cpiterxbarc{A_{*}}{\smon}{M_{2}}{\bullet}{\srange}{\fieldc}{\partic}\subset \citerxbarc{A_{*}}{\smon}{M_{2}}{\bullet}{\srange}{\fieldc}$ denote the sub-semi-simplicial chain complex that maps to $\cpiterxbarc{M_{1}}{\smon}{M_{2}}{\bullet}{\srange}{\fieldc}{\partic}$ under the map $A_{*}\to \nch{\rimodc[\smon]{M_{1}}{\srange}}$.
\end{definition}
\begin{definition}\label{notation for iterated configuration space model}
Fix $\srange$ disjoint closed disks $\bar{D}^{n}_{1},\ldots,\bar{D}^{n}_{\srange}$ in $M$.
Let $\bar{K}_{\srange}$ denote $\bigsqcup_{j=1}^{\srange}\bar{D}^{n}_{j}\subset M$ and let $K_{\srange}$ denote $\text{int}(\bar{K}_{\srange})$. Fix $\srange$ copies $T(V^{1})_{*},\ldots, T(V^{\srange})_{*}$ of $T(V)_{*}$.
\end{definition}
\begin{definition}
Let $[n]$ denote the set $\{1,\ldots,n\}$.
Given $I\subseteq [\srange]$, let $A^{I}_{*}$ denote the chain complex
$$A^{I}_{*}\colonequals \bigoplus_{i\in I}T(V^{i})_{*}\bigoplus_{j\in I^{c}}\nch{\modc[S^{n-1}_{j}]{\bar{D}^{n}_{j}}}.$$
\end{definition}
\begin{definition}\label{stabilization maps for closed iterated barc}
Let $e\in H_{0}(\mathbb{R}^{n};\fieldc)$ be the class of a point. 
Suppose that $(z_{1},\ldots, z_{d})$ is a $d$-tuple of classes in $H_{*}(\text{Conf}(\mathbb{R}^{n});\fieldc)$ such that $\browd{z_{b}}{e}=0$ for all $b=1,\ldots, d$.
Given  $I \subseteq [\srange]$ with $[d]\subseteq I$, for each $b=1,\ldots, d$, define 
$$\newstab{z_{b}} \colon \tot{\cpiterxbarc{A^{I}_{*}}{\partial \bar{K}_{\srange}}{M\setminus K_{\srange}}{\bullet}{\srange}{\fieldc}{\partic}}\to \tot{\cpiterxbarc{A^{I\setminus\{b\}}_{*}}{\partial \bar{K}_{\srange}}{M\setminus K_{\srange}}{\bullet}{\srange}{\fieldc}{\partic+\text{par}(z_{b})}}$$
to be the stabilization map induced from the map
$g_{z_{b}}^{l}\colon T(V^{b})_{*}\to \nch{\modc[S^{n-1}_{b}]{\bar{D}^{n}_{b}}}$ from \cref{definition of new stabilization map for Conf(Disk)}, i.e. given 
\begin{align*}
    (\sigma_{1},\ldots,\sigma_{\srange})&\in \bigoplus_{i\in I}T(V^{i})_{*}\bigoplus_{j\in I^{c}}^{\srange} \nch{\modc[S^{n-1}_{j}]{\bar{D}^{n}_{j}}}\\
    \text{and } \tau &\in \nch{\prod_{q=1}^{\srange}\mon[S^{n-1}_{q}]}^{\otimes p}\otimes \nch{\rimodc[\partial \bar{K}_{\srange}]{M\setminus K_{\srange}}{\srange}},
\end{align*}
$$\newstab{z_{b}}((\sigma_{1}, \ldots, \sigma_{\srange}) \otimes\tau)\colonequals (\sigma_{1}, \ldots, \sigma_{b-1}, g_{z_{b}}^{l}(\sigma_{b}),  \sigma_{b+1}, \ldots,  \sigma_{\srange})\otimes\tau.$$
\end{definition}
\begin{remark}
Since the map $g_{z}^{l}\colon T(V)_{*}\to \nch{\modc[S^{n-1}]{\bar{D}^{n}}}$ depends on fixing a chain homotopy from $g_{0,z}^{l}\circ d_{0}$ to $g_{0,z}^{l}\circ d_{1}$, the map $\newstab{z}$ also depends on this. If we fix a different chain homotopy, we obtain a different map $\newstab{z}'$. We do not know if $\newstab{z}$ and $\newstab{z}'$ are chain homotopic in general, but if $M$ is an open connected manifold, we show that they are chain homotopic in \cref{new stabilization on open manifold}.
\end{remark}
Qualitatively, the stabilization map $\newstab{z_{i}}$ can be described as stabilizing in the $i$-th disk $D^{n}_{i}$. 
We have that the stabilization maps $\newstab{z_{i}}$ from \cref{stabilization maps for closed iterated barc} commute with each other in the following sense: given $i\neq j$, the stabilization maps $g_{z_{i}}^{l}$ and $g_{z_{j}}^{l}$ also induce maps
\begin{align*}
    \newstab{z_{i}}' \colon \tot{\cpiterxbarc{A^{I\setminus\{j\}}_{*}}{\partial \bar{K}_{\srange}}{M\setminus K_{\srange}}{\bullet}{\srange}{\fieldc}{\partic}}&\to \tot{\cpiterxbarc{A^{I\setminus\{i,j\}}_{*}}{\partial \bar{K}_{\srange}}{M\setminus K_{\srange}}{\bullet}{\srange}{\fieldc}{\partic+\text{par}(z_{i})}}\text{ and}\\
    \newstab{z_{j}}' \colon \tot{\cpiterxbarc{A^{I\setminus\{i\}}_{*}}{\partial \bar{K}_{\srange}}{M\setminus K_{\srange}}{\bullet}{\srange}{\fieldc}{\partic}}&\to \tot{\cpiterxbarc{A^{I\setminus\{i,j\}}_{*}}{\partial \bar{K}_{\srange}}{M\setminus K_{\srange}}{\bullet}{\srange}{\fieldc}{\partic+\text{par}(z_{j})}}
\end{align*}
respectively, and so we have that $\newstab{z_{i}}'\newstab{z_{j}}=\newstab{z_{j}}'\newstab{z_{i}}$.
Therefore, we can define an iterated mapping cone on $\tot{\cpiterxbarc{A^{I}_{*}}{\partial \bar{K}_{\srange}}{M\setminus K_{\srange}}{\bullet}{\srange}{\fieldc}{\partic}}$ as follows.
\begin{definition}\label{iterated mapping cone def for a general manifold}
Suppose that $(z_{1},\ldots, z_{d})$ is a $d$-tuple of classes in $H_{*}(\text{Conf}(\mathbb{R}^{n});\fieldc)$ such that $\browd{z_{b}}{e}=0$ for all $b=1,\ldots, d$.
Given  $I \subseteq [\srange]$ with $[d]\subseteq I$, 
recursively define the \textbf{d-th iterated mapping cone} $\jpariteratedmap{M}{\newstab{z_{i}}}{d}{\partic}{I}$ associated to the $d$-tuple $(\newstab{z_{1}},\ldots, \newstab{z_{d}})$ and the subset $I$ as follows: set $\jpariteratedmap{M}{\newstab{z_{1}}}{1}{\partic}{I}$ to be the mapping cone
$$\text{Cone}\big(\newstab{z_{1}}\colon \tot{\cpiterxbarc{A^{I}_{*}}{\partial \bar{K}_{\srange}}{M\setminus K_{\srange}}{\bullet}{\srange}{\fieldc}{\partic-\text{par}(z_{1})}}\to \tot{\cpiterxbarc{A^{I\setminus\{1\}}_{*}}{\partial \bar{K}_{\srange}}{M\setminus K_{\srange}}{\bullet}{\srange}{\fieldc}{\partic}}\big)$$
and for $d>1$, set $\jpariteratedmap{M}{\newstab{z_{i}}}{d}{\partic}{I}$ to be the induced mapping cone
$$\text{Cone}\big(\newstab{z_{d}}\colon \jpariteratedmap{M}{\newstab{z_{1}},\ldots, \newstab{z_{d-1}}}{d-1}{\partic-\text{par}(z_{d})}{I}\to \jpariteratedmap{M}{\newstab{z_{1}},\ldots, \newstab{z_{d-1}}}{d-1}{\partic}{I\setminus\{d\}}\big).$$
Let $\jiteratedmap{M}{\newstab{z_{i}}}{d}{I}$ denote $\bigsqcup_{\partic=0}^{\infty}\jpariteratedmap{M}{\newstab{z_{i}}}{d}{\partic}{I}$. 
\end{definition}

Now we relate the stabilization map $t_{z}$ to $\newstab{z}$ when $M$ is an open connected manifold. We have a composition of quasi-isomorphisms $$\tot{\citerxbarc{T(V)_{*}}{S^{n-1}}{M\setminus D^{n}}{\bullet}{1}{\fieldc}}\to \tot{\citerxbarc{\bar{D}^{n}}{S^{n-1}}{M\setminus D^{n}}{\bullet}{1}{\fieldc}}\to C_{*}(\text{Conf}(M);\fieldc)$$ and a diagram
\begin{equation}\label{diagram between different stabilization maps}\begin{tikzcd}
\tot{\citerxbarc{T(V)_{*}}{S^{n-1}}{M\setminus D^{n}}{\bullet}{1}{\fieldc}}
\arrow[r]\arrow[d, "\newstab{z}"] &
C_{*}(\text{Conf}(M);\fieldc)
\arrow[d, "t_{z}"]\\
\tot{\citerxbarc{\bar{D}^{n}}{S^{n-1}}{M\setminus D^{n}}{\bullet}{1}{\fieldc}}
\arrow[r]& 
C_{*}(\text{Conf}(M);\fieldc).
\end{tikzcd}\end{equation}
We will show that this diagram commutes up to chain homotopy. 
Roughly speaking, we will do this by using that certain stabilization maps on models of $\nch{\mon[S^{n-1}]}$ and $\nch{\modc[S^{n-1}]{\bar{D}^{n}}}$ are chain homotopic (\cref{comparing stabilization maps on different conf(cyl) models} and \cref{induced gamma maps are homotopic}, respectively). Then we carefully pick a disk in $M$ and a model for $\nch{\text{Conf}(M)}$ so that we can actually relate $t_{z}$ to $\newstab{z}$ by using these stabilization maps on $\nch{\mon[S^{n-1}]}$ and $\nch{\modc[S^{n-1}]{\bar{D}^{n}}}$ and the homotopies between them. 
\begin{proposition}\label{new stabilization on open manifold}
Suppose that $M$ is an open connected manifold. 
Suppose that $z\in H_{*}(\text{Conf}(\mathbb{R}^{n});\fieldc)$ is a class such that $\browd{z}{e}$ is zero.
Diagram \ref{diagram between different stabilization maps}
commutes up to chain homotopy.
\end{proposition}
\begin{proof}
First, we fix some notation. Without loss of generality, we can assume that $M$ is the interior of a manifold $\bar{M}$ with non-empty boundary (see, for example, page 41 of \textcite{MillerJeremy2015Sfoc}). Fix an embedding $\embed\colon M\sqcup D^{n}\to M$ as in \cref{stabilization by embedding} (although we took the domain of $\embed$ in \cref{stabilization by embedding} to be  $M\sqcup \mathbb{R}^{n}$, we can take the domain to be $M\sqcup D^{n}$ by first fixing a homeomorphism $D^{n}\to \mathbb{R}^{n}$ and composing). Fix a closed $n$-dimensional disk
$\bar{D}\subset \bar{M}$ whose interior $D$ strictly contains $\embed(D^{n})$ and such that $\partial \bar{M}\cap \partial \bar{D}=\bar{D}^{n-1}\equalscolon \bar{D}^{n-1}_{0}$. Let $D^{n-1}_{0}$ denote the interior of $\bar{D}^{n-1}_{0}$ and
let $M'$ denote $\bar{M}\setminus D^{n-1}_{0}$.
Let $\bar{D}^{n-1}_{1}$ denote the boundary of $\bar{D}\setminus D^{n-1}_{0}$. Fix a closed disk $\bar{D}^{n}_{2}\subset D$ not containing $D^{n}$.
We can summarize our setup with  \cref{fig:new-stabilization-map-on-open-manifold}.
\begin{figure}
  \centering
  \includegraphics[scale=.8]{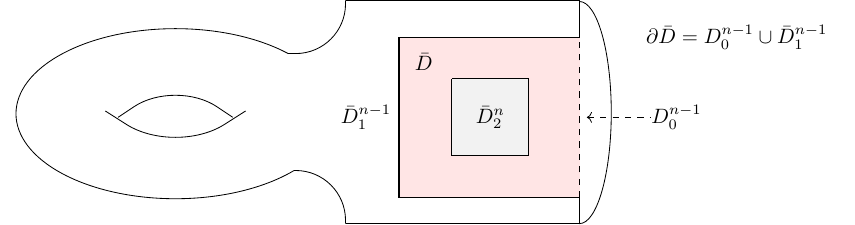}
  \caption{Regions of the manifold $\bar{M}$}
  \label{fig:new-stabilization-map-on-open-manifold}
\end{figure}

Replace $\bar{M}$ by attaching a collar neighborhood $\partial \bar{M}\times [0,1]$ to $\partial \bar{M}$ along $\partial \bar{M}\times \{0\}$ via the identity map and, by abuse of notation, denote this new space by $\bar{M}$. We attach a collar neighborhood $\partial \bar{M}\times \{0\}$ so that we can consider the space $\modc[S^{n-1}]{\bar{M}\cup_{\partial M}(\partial \bar{M}\times [0,1])\setminus D}$--otherwise the space $\bar{M}\setminus D$ is not a manifold and so $\modc[S^{n-1}]{\bar{M}\setminus D}$ is not defined.
By \cref{map from bar construction to conf is a weak equivalence} and the fact that $\text{Conf}(\bar{M})$ and $\text{Conf}(M')$ are homotopy equivalent to $\text{Conf}(M)$, we have weak homotopy equivalences
\begin{align*}
   \Vert\iterxbarc{\bar{D}^{n}}{S^{n-1}}{\bar{M}\setminus D}{\bullet}{1}\Vert &\to \text{Conf}(M)\\
   \Vert\iterxbarc{\bar{D}}{\bar{D}^{n-1}_{1}}{M'\setminus D}{\bullet}{1}\Vert &\to \text{Conf}(M).
\end{align*}
We have that $\mon[\bar{D}^{n-1}_{1}]$ is a sub-monoid of $\mon[S^{n-1}]$ by identifying $\bar{D}^{n-1}_{1}$ as a subspace of $\partial \bar{D}$.
As a result, there is an inclusion $$\modc[\bar{D}^{n-1}_{1}]{M'\setminus D}\hookrightarrow\modc[S^{n-1}]{\bar{M}\setminus D}$$ of $\mon[\bar{D}^{n-1}_{1}]$-modules. 
Therefore, we have the following commutative diagram, with the horizontal maps weak homotopy equivalences:
\begin{center}
    \begin{tikzcd}
    \Vert\iterxbarc{\bar{D}}{\bar{D}^{n-1}_{1}}{M'\setminus D}{\bullet}{1}\Vert
    \arrow[d]\arrow[r]&  \text{Conf}(M)\arrow[equals]{d}\\
    \Vert\iterxbarc{\bar{D}^{n}}{S^{n-1}}{\bar{M}\setminus D}{\bullet}{1}\Vert\arrow[r]& \text{Conf}(M).
\end{tikzcd}
\end{center}


To prove that Diagram \ref{diagram between different stabilization maps} commutes up to chain homotopy, it suffices to replace $M$ with $\bar{M}$ in Diagram \ref{diagram between different stabilization maps} and show that the diagram
\begin{equation}\label{modified diagram of stabilization maps}\begin{tikzcd}
\tot{\citerxbarc{T(V)_{*}}{S^{n-1}}{\bar{M}\setminus D}{\bullet}{1}{\fieldc}}
\arrow[r]\arrow[d, "\newstab{z}"] &
C_{*}(\text{Conf}(M);\fieldc)
\arrow[d, "t_{z}"]\\
\tot{\citerxbarc{\bar{D}^{n}}{S^{n-1}}{\bar{M}\setminus D}{\bullet}{1}{\fieldc}}
\arrow[r]& 
C_{*}(\text{Conf}(M);\fieldc)
\end{tikzcd}\end{equation}
commutes up to chain homotopy. In $T(V)_{*}$, we now identify the $\bar{D}^{n}$ in $\nch{\bmodc[S^{n-1}]{\bar{D}^{n}}{\leq 1}}$ with $\bar{D}^{n}_{2}$, the $S^{n-1}$ in $\nch{\mon[S^{n-1}]}$ with $\partial\bar{D}^{n}_{2}$, and the $\mathbb{S}^{n-1}$ in $C_{*}^{\text{cell}}(\mathbb{S}^{n-1};\fieldc)$ with $\partial\bar{D}^{n}_{2}$.

We now summarize the rest of the proof. Heuristically, we think of $\newstab{z}$ as stabilizing by putting $z$ inside $D\setminus \bar{D}^{n}_{2}$ and $t_{z}$ as stabilizing by bringing $z$ into $D$ from outside. 
In $D$ we also have the stabilization maps $\gamma^{l}_{z}$ and $\gamma^{r}_{z}$ from \cref{stabilization maps on non-module model for Conf(D^n)}, which we can use to compare $\newstab{z}$ and $t_{z}$.
We use $\gamma^{l}_{z}$ and $\gamma^{r}_{z}$ to obtain new maps $\hat{\gamma}^{l}_{z}$ and $\hat{\gamma}^{r}_{z}$ on (a model for) $\nch{\text{Conf}(\bar{D})}$ that are also maps of right $\nch{\mon[\bar{D}_{1}^{n-1}]}$-modules. These new maps induce stabilization maps $\hat{t}^{l}_{z}$ and $\hat{t}^{r}_{z}$ on $\nch{\text{Conf}(M)}$. We use the homotopy from $\gamma^{l}_{z}$ to $\gamma^{r}_{z}$ in \cref{induced gamma maps are homotopic} to get an induced homotopy from $\hat{t}^{l}_{z}$ to $\hat{t}^{r}_{z}$ and then we relate $\hat{t}^{l}_{z}$ and $\hat{t}^{r}_{z}$ to $\newstab{z}$ and  $t_{z}$. 

We introduce a sub-semi-simplicial chain complex of the semi-simplicial chain complex $V_{\bullet}$ from \cref{main model for chains on Conf(D^n)}. 
Let $F$ denote the chain complex
$$F\colonequals\alpha(\fieldc[x]\otimes \nch{\modc[S^{n-1}]{\bar{D}^{n}}})\otimes \nch{\mon[\bar{D}^{n-1}_{1}]}.$$ 
From restricting the map
$$\nch{\mon[S^{n-1}]}\otimes\nch{\mon[S^{n-1}]}\to \nch{\mon[S^{n-1}]}$$ to $F$, we can view $F$ as a right $\nch{\mon[\bar{D}^{n-1}_{1}]}$-submodule of $\nch{\mon[\partial\bar{D}]}$.
Note that 
any map $g\colon \fieldc[x]\otimes \nch{\modc[S^{n-1}]{\bar{D}^{n}}}\to \fieldc[x]\otimes \nch{\modc[S^{n-1}]{\bar{D}^{n}}}$
extends to a map $\hat{g}\colon F\to F$ of right $\nch{\mon[\bar{D}^{n-1}_{1}]}$-modules.
The construction of stabilization maps on $\fieldc[x]\otimes \nch{\modc[S^{n-1}]{\bar{D}^{n}}}$ all depend on a fixed embedding $f'\colon D^{n}\to D^{n-1}\times (0,1)\subset S^{n-1}\times (0,1)$. We identify $D^{n-1}\times (0,1)$ with a tubular neighborhood of $D^{n-1}_{0}$ in $D$ that contains $\embed(\mathbb{R}^{n})$ and that does not intersect $\bar{D}^{n-1}_{1}$.
Let $\hat{U}_{\bullet}\subset V_{\bullet}$ denote the sub-semi-simplicial chain complex whose $p$-simplices are
\[
\hat{U}_{p}=\begin{cases}
     \nch{\bmodc[S^{n-1}]{\bar{D}^{n}}{\leq 1}}\otimes F & p=0\\
     C_{*}^{\text{cell}}(\mathbb{S}^{n-1};\fieldc)\otimes F & p=1.\\
   \end{cases}
   \]
The inclusion $\tot{\hat{U}_{\bullet}}\to T(V)_{*}$ is a quasi-isomorphism
and a map of right $\nch{\mon[\bar{D}^{n-1}_{1}]}$-modules. 
$$\tot{\citerxbarc{\tot{ \hat{U}_{\bullet}}}{\bar{D}^{n-1}_{1}}{M'\setminus D}{\bullet}{1}{\fieldc}}\colonequals \tot{\citerxbarc{\tot{ \hat{U}_{\bullet}}}{\bar{D}^{n-1}_{1}}{M'\setminus D}{\bullet}{1}{\fieldc}}.$$ 
We now construct stabilization maps on $\tot{\citerxbarc{\tot{ \hat{U}_{\bullet}}}{\bar{D}^{n-1}_{1}}{M'\setminus D}{\bullet}{1}{\fieldc}}$.

Since the stabilization map $s^{l}_{z}$ from \cref{stabilization maps on other model of Conf(cylinder)} is a map from $\fieldc[x]\otimes \nch{\modc[S^{n-1}]{\bar{D}^{n}}}$ to itself, $s^{l}_{z}$ induces a map $\hat{s}^{l}_{z}\colon F\to F$ and the map $\gamma^{l}_{0,z}=(m_{r})_{*}\circ EZ\circ \big(\text{id}\otimes  (\alpha\circ s^{l}_{z}) \big)$ induces a map of right $\nch{\mon[\bar{D}^{n-1}_{1}]}$-modules $$\hat{\gamma}^{l}_{0,z}\colon \hat{U}_{0} \to \nch{\modc[S^{n-1}]{\bar{D}^{n}}}.$$ The homotopy $H_{l}$ from $\gamma^{l}_{0,z} \circ d_{0}$ to $\gamma^{l}_{0,z} \circ d_{1}$ given in \cref{face maps composed with gamma maps are homotopic} 
induces a homotopy $\hat{H}_{l}$ from $\hat{\gamma}^{l}_{0,z}\circ d_{0}$ to $\hat{\gamma}^{l}_{0,z}\circ d_{1}$ and which 
is also a map of right $\nch{\mon[\bar{D}^{n-1}_{1}]}$-modules. Therefore, the map $\gamma^{l}_{z}\colon T(U)_{*} \to \nch{\modc[S^{n-1}]{\bar{D}^{n}}}$ induces a map of right $\nch{\mon[\bar{D}^{n-1}_{1}]}$-modules $$\hat{\gamma}^{l}_{z}\colon \tot{ \hat{U}_{\bullet}} \to \nch{\modc[S^{n-1}]{\bar{D}^{n}}}.$$ Similarly, the map $\gamma^{r}_{z}\colon T(U)_{*} \to \nch{\modc[S^{n-1}]{\bar{D}^{n}}}$ induces a map of right $\nch{\mon[\bar{D}^{n-1}_{1}]}$-modules $\hat{\gamma}^{r}_{z}\colon \tot{ \hat{U}_{\bullet}} \to \nch{\modc[S^{n-1}]{\bar{D}^{n}}}$.

Therefore, we have stabilization maps
$$\hat{t}^{l}_{z,c}, \hat{t}^{r}_{z,c}\colon \tot{\citerxbarc{\tot{ \hat{U}_{\bullet}}}{\bar{D}^{n-1}_{1}}{M'\setminus D}{\bullet}{1}{\fieldc}}\to \tot{\citerxbarc{\bar{D}^{n}}{S^{n-1}}{\bar{M}\setminus D}{\bullet}{1}{\fieldc}}$$ induced from $\hat{\gamma}^{l}_{z}, \hat{\gamma}^{r}_{z}$, respectively. From the inclusions $\mon[\bar{D}^{n-1}]\hookrightarrow \mon[S^{n-1}]$ and $\hat{U}_{\bullet}\hookrightarrow V_{\bullet}$, we have the following diagram.
\begin{center}\begin{tikzcd}
\tot{\citerxbarc{\tot{ \hat{U}_{\bullet}}}{\bar{D}^{n-1}_{1}}{M'\setminus D}{\bullet}{1}{\fieldc}}\arrow[r]\arrow[swap]{dr}{\hat{t}^{l}_{z,c}, \hat{t}^{r}_{z,c}}&\tot{\citerxbarc{T(V)_{*}}{S^{n-1}}{\bar{M}\setminus D^{n}}{\bullet}{1}{\fieldc}}
\arrow[r]\arrow[d, "\newstab{z}"] &
C_{*}(\text{Conf}(M);\fieldc)
\arrow[d, "t_{z}"]\\
&\tot{\citerxbarc{\bar{D}^{n}}{S^{n-1}}{\bar{M}\setminus D}{\bullet}{1}{\fieldc}}
\arrow[r]& 
C_{*}(\text{Conf}(M);\fieldc)
\end{tikzcd}\end{center}
Since there is a quasi-isomorphism $\tot{\citerxbarc{\tot{ \hat{U}_{\bullet}}}{\bar{D}^{n-1}_{1}}{M'\setminus D}{\bullet}{1}{\fieldc}}\to \nch{\text{Conf}(M')}$ and the following composition $$\tot{\citerxbarc{\tot{ \hat{U}_{\bullet}}}{\bar{D}^{n-1}_{1}}{M'\setminus D}{\bullet}{1}{\fieldc}}\to \tot{\citerxbarc{T(V)_{*}}{S^{n-1}}{\bar{M}\setminus D^{n}}{\bullet}{1}{\fieldc}}
\to
C_{*}(\text{Conf}(M);\fieldc)$$ is a quasi-isomorphism, 
the first map in this composition is a quasi-isomorphism
and it induces an isomorphism on sets of chain homotopy classes of maps
\begin{align*}
   [\tot{\citerxbarc{T(V)_{*}}{S^{n-1}}{\bar{M}\setminus D^{n}}{\bullet}{1}{\fieldc}},\tot{\citerxbarc{\bar{D}^{n}}{S^{n-1}}{\bar{M}\setminus D}{\bullet}{1}{\fieldc}}]&\cong\\ [\tot{\citerxbarc{\tot{ \hat{U}_{\bullet}}}{\bar{D}^{n-1}_{1}}{M'\setminus D}{\bullet}{1}{\fieldc}},\tot{\citerxbarc{\bar{D}^{n}}{S^{n-1}}{\bar{M}\setminus D}{\bullet}{1}{\fieldc}}]&. 
\end{align*}
Therefore, to prove that Diagram \ref{modified diagram of stabilization maps} commutes up to chain homotopy, it suffices to show that the diagrams
\begin{equation}\label{triangle}\begin{tikzcd}
\tot{\citerxbarc{\tot{ \hat{U}_{\bullet}}}{\bar{D}^{n-1}_{1}}{M'\setminus D}{\bullet}{1}{\fieldc}}\arrow[r]\arrow[swap]{dr}{\hat{t}^{l}_{z,c}}&\tot{\citerxbarc{T(V)_{*}}{S^{n-1}}{\bar{M}\setminus D^{n}}{\bullet}{1}{\fieldc}}
\arrow[d, "\newstab{z}"]]\\
&\tot{\citerxbarc{\bar{D}^{n}}{S^{n-1}}{\bar{M}\setminus D}{\bullet}{1}{\fieldc}}
\end{tikzcd}\end{equation}
and
\begin{equation}\label{square}\begin{tikzcd}
\tot{\citerxbarc{\tot{ \hat{U}_{\bullet}}}{\bar{D}^{n-1}_{1}}{M'\setminus D}{\bullet}{1}{\fieldc}}\arrow[r]\arrow[swap]{d}{\hat{t}^{r}_{z,c}}&
C_{*}(\text{Conf}(M);\fieldc)
\arrow[d, "t_{z}"]\\
\tot{\citerxbarc{\bar{D}^{n}}{S^{n-1}}{\bar{M}\setminus D}{\bullet}{1}{\fieldc}}
\arrow[r]& 
C_{*}(\text{Conf}(M);\fieldc)
\end{tikzcd}\end{equation} commute up to chain homotopy and that $\hat{t}^{l}_{z,c}$ and $\hat{t}^{r}_{z,c}$ are chain homotopic.

Recall that $\newstab{z}$ is induced from the map $$g_{0,z}^{l}=(m_{r})_{*}\circ EZ\circ \big((\text{id}_{\modc[S^{n-1}]{\bar{D}^{n}}})_{*}\otimes t^{l}_{z} \big)\colon V_{0}\to \nch{\modc[S^{n-1}]{\bar{D}^{n}}}.$$
Since $\alpha \circ s^{l}_{z}$ is homotopic to $t^{l}_{z}\circ\alpha$ by \cref{comparing stabilization maps on different conf(cyl) models}, the restriction of $$t_{z}^{l}\colon \nch{\mon[S^{n-1}]}\to \nch{\mon[S^{n-1}]}$$ to $F$ is homotopic to $\hat{s}^{l}_{z}$. As a result,
Diagram \ref{triangle} commutes up to chain homotopy.
Since the space of orientation-preserving embeddings from $D^{n}$ to itself is path-connected, the embedding $f'\colon D^{n}\to D^{n-1}_{0}\times (0,1)$ is homotopic to $\embed\restr{D^{n}}\colon D^{n}\to D^{n-1}_{0}\times (0,1)$. As a result,
Diagram \ref{square}
commutes up to chain homotopy.
The homotopy from $\gamma^{l}_{z}$ to $\gamma^{r}_{z}$ given in \cref{induced gamma maps are homotopic} induces a homotopy from $\hat{\gamma}^{l}_{z}$ to $\hat{\gamma}^{r}_{z}$ that is also a right $\nch{\mon[\bar{D}^{n-1}_{1}]}$-module map. Therefore, $\hat{t}^{l}_{z,c}$ and $\hat{t}^{r}_{z,c}$ are chain homotopic.
\end{proof}
\begin{corollary}
Suppose that $z\in H_{*}(\text{Conf}(\mathbb{R}^{n});\fieldc)$ is a class such that $\browd{z}{e}$ is zero. There is a natural isomorphism $$H_{*}\big(\text{Cone}(g^{l}_{z}\colon  T(V)_{*}\to \nch{\modc[S^{n-1}]{\bar{D}^{n}}})\big)\to H_{*}(\iteratedmap{D^{n}}{t_{z}}{1};\fieldc).$$
\end{corollary}
\begin{corollary}\label{new stabilization induces iso on open manifold}
Suppose that $M$ is an open connected manifold 
and that $z\in H_{*}(\text{Conf}(\mathbb{R}^{n});\fieldc)$ is a class such that $\browd{z}{e}$ is zero. If  $t_{z}$ induces an isomorphism on $H_{*}(\text{Conf}(M);\fieldc)$ in a range, then $\newstab{z}$ induces an isomorphism on $H_{*}(\text{Conf}(M);\fieldc)$ in the same range.
\end{corollary}
\begin{remark}
Suppose that $M$ is an open connected manifold of dimension $n$ and that $z\in H_{i}(\text{Conf}(\mathbb{R}^{n});\fieldc)$ is a class such that $\browd{z}{e}=0$.
By \cref{new stabilization on open manifold}, $t_{z}$ and $\newstab{z}$ induce the same map on homology. Since the map $$\newstab{z}\colon H_{*}( \tot{\citerxbarc{T(V)_{*}}{S^{n-1}}{M\setminus D^{n}}{\bullet}{1}{\fieldc}})\to  H_{*}(\tot{\citerxbarc{\bar{D}^{n}}{S^{n-1}}{M\setminus D^{n}}{\bullet}{1}{\fieldc}})$$ does not depend on an embedding $\embed\colon M\sqcup \mathbb{R}^{n}\to M$, the stabilization map $$t_{z}\colon H_{*}(\text{Conf}(M);\fieldc)\to  H_{*}(\text{Conf}(M);\fieldc)$$ does not depend on a choice of an embedding $M \sqcup \mathbb{R}^{n} \to M$. 
\end{remark}
\section{The Puncture Resolution}\label{sec:puncture resolution}
We prove stability using a puncture resolution argument inspired by \textcite[Section 9.4]{MR3032101}. Our puncture resolution is slightly more similar to the puncture resolution of \textcite[Section 3]{MR3556286} than to that of \textcite[Section 9.4]{MR3032101}. 
The puncture resolution is a semi-simplicial resolution $(\ppunc{\bullet}, \text{Conf}_{\partic}(M), \epsilon )$ of $\text{Conf}_{\partic}(M)$
whose simplices
involve configuration spaces of open manifolds. We then pass to chain complexes to construct an analogous semi-simplicial chain complex $\cppunc[A^{I}_{*}]{\bullet}{\partic}$ whose simplices involve chain complexes of open manifolds and whose totalization $\tot{\cppunc[A^{I}_{*}]{\bullet}{\partic}}$ is quasi-isomorphic to $\nch{\text{Conf}_{\partic}(M)}$. By the geometric realization spectral sequence, we can compare the homology of the $p$-simplices $\cppunc[A^{I}_{*}]{p}{\partic}$ to the homology of $C_{*}(\text{Conf}_{\partic}(M);\fieldc)$ and leverage stability results for configuration spaces of open manifolds to prove analogous results for configuration spaces of closed manifolds.
\begin{definition}
Given a space $X$, let $$\text{OConf}^{\, \delta}_{\partic}(X)\colonequals \{(x_{1},\ldots,x_{\partic})\subset X^{\partic}\colon x_{i}\neq x_{j}\text{ if } i\neq j\}$$ denote the \emph{set} of ordered configurations of $X$ of cardinality $\partic$. We do \textbf{not} use the topology on $\text{OConf}^{\, \delta}_{\partic}(X)$ as a subspace of $X^{\partic}$.
\end{definition}
\begin{definition}\label{new puncture res for conf}
Let $\bar{K}_{\srange}$ and $K_{\srange}$ be as in \cref{notation for iterated configuration space model}.
%
Let $\ppunc{\bullet}$ be the semi-simplicial space whose $p$-simplices are
$$\ppunc{p}\colonequals\bigsqcup_{(m_{0}, \ldots m_{p})\in \text{OConf}^{\, \delta}_{p+1}(M\setminus \bar{K}_{\srange})}\Vert\piterxbarc{\bar{K}_{\srange}}{\partial \bar{K}_{\srange}}{M\setminus K_{\srange}\setminus\{m_{0}, \ldots m_{p}\}}{\bullet}{\srange}{\partic} \Vert.$$
The inclusions $M\setminus \{ m_{0},\ldots , m_{p} \} \hookrightarrow M\setminus \{ m_{0},\ldots ,\widehat{m_j},\ldots , m_{p}\},$ for $j=0,\ldots, p$,  induce the face maps for $\ppunc{\bullet}$. Let $\punc{\bullet}$ denote the semi-simplicial space $$\punc{\bullet}\colonequals\bigsqcup_{\partic=0}^{\infty}\ppunc{\bullet}.$$
\end{definition}
The map $\epsilon\colon \punc{0}\to  \Vert\iterxbarc{\bar{K}_{\srange}}{\partial \bar{K}_{\srange}}{M\setminus K_{\srange}}{\bullet}{\srange} \Vert$ which forgets the ordered configuration $(m_{0})$ satisfies $\epsilon d_{0}=\epsilon d_{1}$, so $(\punc{\bullet},\Vert\iterxbarc{\bar{K}_{\srange}}{\partial \bar{K}_{\srange}}{M\setminus K_{\srange}}{\bullet}{\srange} \Vert, \epsilon) $ is an augmented semi-simplicial space. To prove that it is a semi-simplicial resolution of $\Vert\iterxbarc{\bar{K}_{\srange}}{\partial \bar{K}_{\srange}}{M\setminus K_{\srange}}{\bullet}{\srange} \Vert$, we use a microfibration technique and a result from \textcite{MR3718454}.
\begin{definition}\label{microfibration definition}
A map $g \colon E\to  B$ of spaces is a \textbf{Serre microfibration} if for any $k$ and any commutative diagram 

\begin{center}\begin{tikzcd}
\bar{D}^{k} \times\{ 0 \} \arrow[r,"h"]\arrow[d] & E\arrow[d,"g"]\\
\bar{D}^{k} \times\left [0, 1 \right] \arrow[r,"H"] & B
\end{tikzcd}\end{center}
there exists an $\epsilon > 0$ and a map $\tilde{H} \colon \bar{D}^{k} \times \left [0,\epsilon \right ]\to  E$ with $\tilde{H}(x,0)=h(x)$ and $g \circ \tilde{H}(x,t)=H(x,t)$ for all $(x,t)\in \bar{D}^{k}\times \left [0,\epsilon\right ]$.
\end{definition}
\begin{lemma}[{\textcite[Lemma 2.2]{MR2175298}}]\label{Weiss' Lemma}
Suppose that $g \colon E\to  B$ is a Serre microfibration. If $g$ has weakly contractible fibers, then it is a weak homotopy equivalence.
\end{lemma}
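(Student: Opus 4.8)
The statement to prove is \cref{Weiss' Lemma}: a Serre microfibration with weakly contractible fibers is a weak homotopy equivalence. This is a citation to Weiss, so the plan is to reproduce the standard proof rather than invent anything new.

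\textbf{Overall approach.} The plan is to show that $g\colon E\to B$ induces isomorphisms on all homotopy groups by a relative lifting argument. The cleanest route is to verify that $g$ has the weak right lifting property against all sphere/disk pairs $(\bar D^k,\partial\bar D^k)$, or equivalently to directly analyze the long exact sequence of the pair $(E, g^{-1}(b))$ and of the homotopy fiber. Concretely, I would first prove the key sublemma: \emph{a Serre microfibration whose fibers are all weakly contractible is a Serre fibration up to the homotopy-theoretic information we need} — more precisely, that for any CW pair $(X,A)$ of finite dimension, a diagram with $X\times\{0\}\cup A\times[0,1]\to E$ over $X\times[0,1]\to B$ admits a full lift. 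The microfibration hypothesis gives a lift on $X\times[0,\epsilon]$ for some $\epsilon>0$; one then subdivides $[0,1]$ into finitely many intervals $[t_i,t_{i+1}]$ of length $<\epsilon$ and iterates, using at each stage that a partial lift already defined on $X\times\{t_i\}$ extends a little further. The subtlety is that the $\epsilon$ from the microfibration property may depend on the map being lifted, so one must use compactness of $X\times[0,1]$ to get a uniform $\epsilon$ after possibly shrinking; this is where finite-dimensionality / compactness of $X$ is used.

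\textbf{Key steps in order.} (1) Reduce to showing $g_*\colon \pi_k(E,e)\to\pi_k(B,g(e))$ is an isomorphism for all $k\geq 0$ and all basepoints $e$. (2) Surjectivity: given a map $\sigma\colon S^k\to B$ based at $g(e)$, view it as $\bar D^k\times\{1\}\to B$ with boundary sent to $g(e)$; build a homotopy $H\colon \bar D^k\times[0,1]\to B$ shrinking $\sigma$ to the constant map at $g(e)$ (i.e. $H(-,0)\equiv g(e)$), use the constant lift at time $0$ together with the iterated-microfibration-lifting sublemma to lift $H$ to $\tilde H\colon \bar D^k\times[0,1]\to E$, and take $\tilde H(-,1)$ as the desired element of $\pi_k(E,e)$ — here one must be careful to keep the boundary inside the fiber $g^{-1}(g(e))$ and then use \emph{weak contractibility of the fiber} to contract that boundary loop inside $E$, correcting $\tilde H(-,1)$ to an honest based sphere. (3) Injectivity: if two based spheres $\sigma_0,\sigma_1\colon S^k\to E$ become homotopic in $B$, assemble the nullhomotopy of $g\sigma_0 * \overline{g\sigma_1}$ and a chosen nullhomotopy over $E$ of pieces lying in fibers, apply the lifting sublemma over the cylinder $S^k\times[0,1]$ relative to the ends, and again invoke fiber-contractibility to close up the lift into a homotopy in $E$. (4) Conclude by Whitehead's theorem for weak homotopy equivalences (no CW hypothesis needed since we argue directly on homotopy groups).

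\textbf{Main obstacle.} The genuinely delicate point is step (1)'s sublemma — turning the \emph{local} (in the time variable, and a priori non-uniform) lifting property of a microfibration into a \emph{global} lift over $X\times[0,1]$. One handles this by: fixing the diagram, getting for each point a neighborhood-and-$\epsilon$ from the microfibration property, using compactness of the relevant domain to extract a single $\epsilon>0$ that works (after refining the cover and using a Lebesgue-number argument), and then composing finitely many short lifts, each time restarting from the endpoint of the previous one. A secondary subtlety, recurring in both surjectivity and injectivity, is that the microfibration lift need not respect basepoints or keep boundaries in a single fiber, so one must repeatedly use that \emph{every} fiber $g^{-1}(b)$ is weakly contractible to absorb the resulting discrepancies; this is exactly why "Serre microfibration" alone is not enough and the fiber hypothesis is essential. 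Since this is Weiss's lemma, I would in practice cite \textcite[Lemma 2.2]{MR2175298} and only sketch the above, but the plan records the argument in full should a self-contained proof be wanted.
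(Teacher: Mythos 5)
The paper offers no proof of \cref{Weiss' Lemma}: it is used as a black box with the citation to \textcite[Lemma 2.2]{MR2175298}, so your plan to cite Weiss is consistent with what the paper actually does. However, the sketch you give of Weiss's argument contains a genuine gap in its key sublemma. You propose to produce a full lift over $X\times\left[0,1\right]$ by taking the $\epsilon$ supplied by the microfibration property, subdividing $\left[0,1\right]$ into intervals of length less than $\epsilon$, and iterating, with compactness of $X\times\left[0,1\right]$ furnishing a uniform $\epsilon$. This cannot work: the $\epsilon$ in \cref{microfibration definition} depends on the initial lift $h$, and after each partial lift the initial condition for the next step is a new map, so the successive $\epsilon_{i}$ need not be bounded below. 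Compactness of the domain gives no control here, because the relevant parameter is the (non-compact) collection of possible partial lifts. Note that your argument for the sublemma never invokes weak contractibility of the fibers, so if it were valid it would prove that \emph{every} Serre microfibration is a Serre fibration. That is false: the inclusion $(0,1\rbrack\hookrightarrow\left[0,1\right]$ is a Serre microfibration (a compact family of initial points is bounded away from $0$, so short-time lifts exist), yet the path $t\mapsto 1-t$ starting at $1$ admits no full lift --- the iteration stalls precisely because the available $\epsilon$ shrinks to $0$ as the lift approaches the missing fiber. Weiss's hypothesis of weakly contractible (in particular nonempty) fibers is exactly what rules this out, so it must enter the lifting argument itself, not merely serve to repair basepoints and boundaries afterward as in your steps (2)--(3).

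In the actual proof one does not iterate in the time direction at all. Instead one subdivides the whole domain into small cells and builds the lift cell by cell over the skeleta of the subdivision: over each small cell the image in $B$ lies in a small neighborhood $U$ of the image of its center $c$; the microfibration property is used to compare the preimage $g^{-1}(U)$ with the single fiber $g^{-1}(c)$, and the weak contractibility of $g^{-1}(c)$ is what allows the cell to be filled. This is where the fiber hypothesis is load-bearing, and it is the step your outline is missing. Since the paper only cites the lemma, nothing downstream is affected; but as a self-contained proof your plan would fail at the sublemma.
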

\begin{proposition}[{\textcite[Proposition 2.8]{MR3718454}}]\label{Soren and Oscar's microfibration prop}
Let $W_{\bullet}$ be a semi-simplicial set, and $Z$ be a Hausdorff space. Let $X_{\bullet}\subset Z \times W_{\bullet}$ be a sub-semi-simplicial space which in each degree is an open subset. Then the map $\pi\colon \Vert X_{\bullet}\Vert\to Z$ is a Serre microfibration.
\end{proposition}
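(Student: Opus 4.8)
The plan is to realise $\pi$, up to a natural embedding, as the restriction of the projection $\mathrm{pr}_Z\colon Z\times\Vert W_\bullet\Vert\to Z$, which is a fibration, and then to check by hand that the canonical path–lift stays inside $\Vert X_\bullet\Vert$ for a short time. Concretely, viewing $Z$ as a constant semi-simplicial space, the inclusion $X_\bullet\hookrightarrow Z\times W_\bullet$ of sub-semi-simplicial spaces induces on thick geometric realizations a continuous injection $\iota\colon\Vert X_\bullet\Vert\to Z\times\Vert W_\bullet\Vert$, which one checks is a topological embedding and which satisfies $\mathrm{pr}_Z\circ\iota=\pi$ (using the set-level description $\Vert X_\bullet\Vert=\coprod_p X_p\times\mathrm{int}\,\Delta^p$, and likewise for $\Vert W_\bullet\Vert$). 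The projection $\mathrm{pr}_Z$ is a fibration: given $h'=(h_Z,h_W)\colon D^k\to Z\times\Vert W_\bullet\Vert$ and a homotopy $H\colon D^k\times[0,1]\to Z$ of $h_Z$, the formula $\widetilde H(x,t):=(H(x,t),h_W(x))$ is a lift of $H$ extending $h'$.

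So, given a lifting problem for $\pi$ with $h\colon D^k\times\{0\}\to\Vert X_\bullet\Vert$ and $H\colon D^k\times[0,1]\to Z$ satisfying $\pi\circ h=H(-,0)$, the only thing left to prove is that the canonical lift $\widetilde H(x,t)=(H(x,t),h_W(x))$, where $h_W$ is the $\Vert W_\bullet\Vert$-component of $\iota\circ h$, takes values in $\mathrm{im}(\iota)$ on $D^k\times[0,\varepsilon]$ for some $\varepsilon>0$; then $\iota^{-1}\circ\widetilde H$ is the desired partial lift. Writing $h_W(x)=[(w(x),s(x))]$ with $s(x)$ in the interior of $\Delta^{p(x)}$ and setting $X_p^w:=\{z\in Z:(z,w)\in X_p\}$, an open subset of $Z$ by hypothesis, one has $\widetilde H(x,t)\in\mathrm{im}(\iota)$ if and only if $H(x,t)\in X_{p(x)}^{w(x)}$; call this condition $\mathcal{O}\subseteq D^k\times[0,1]$. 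Since $h(x)\in\Vert X_\bullet\Vert$ we have $H(x,0)=h_Z(x)\in X_{p(x)}^{w(x)}$, so $D^k\times\{0\}\subseteq\mathcal{O}$, and it then suffices to show that $\mathcal{O}$ is a neighbourhood of $D^k\times\{0\}$, after which compactness of $D^k$ (a tube-lemma argument) produces a uniform $\varepsilon$.

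The main obstacle is exactly this last step. It is \emph{not} true in general that $\mathrm{im}(\iota)$ is open in $Z\times\Vert W_\bullet\Vert$ — the openness hypothesis is only imposed degreewise, so a point of $\Vert X_\bullet\Vert$ lying in a low-dimensional cell can have neighbouring points of $\Vert X_\bullet\Vert$ in arbitrarily high cells — so one cannot simply invoke ``the restriction of a fibration to an open subset of the total space is a fibration.'' What rescues the argument is that the $\Vert W_\bullet\Vert$-coordinate $h_W(x)$ of the canonical lift does not move with $t$, combined with the fact that $X_q^v\subseteq X_p^w$ whenever $w$ is an iterated face of $v$ (because $X_\bullet$ is a \emph{sub}-semi-simplicial space, so the face maps carry $X_q$ into $X_{q-1}$). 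Concretely, for $x_0\in D^k$ with $h(x_0)=[((z_0,w_0),s_0)]$, $w_0\in W_p$, continuity of $h$ forces $w(x)$, for $x$ near $x_0$, to equal $w_0$ or to have $w_0$ as an iterated face; in the first case openness of $X_p^{w_0}$ and continuity of $H$ give a uniform small time, and in the second case the inclusion $X_{p(x)}^{w(x)}\subseteq X_p^{w_0}$ reduces one to the first. Turning this into a clean uniform estimate — and controlling that only finitely many simplices $w$ are relevant near $x_0$, using that $h(D^k)$ is compact and hence lies in a finite skeleton of $\Vert X_\bullet\Vert$ — is the technical heart of the argument; this is precisely Galatius–Randal-Williams, Proposition 2.8, whose proof I would follow.
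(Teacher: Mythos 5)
First, a point of reference: the paper does not prove this proposition at all --- it is quoted from Galatius--Randal-Williams and used as a black box --- so your reconstruction is really being measured against their argument. Your overall shape is the standard one: view $\Vert X_{\bullet}\Vert$ inside $Z\times\Vert W_{\bullet}\Vert$, take the lift that freezes the $\Vert W_{\bullet}\Vert$-coordinate, and show it stays in the image for a short uniform time; and you correctly identify that the image need not be open, so that all the content sits in this last step.

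The gap is in your handling of the ``second case.'' Writing $h(x)=[((z(x),w(x)),s(x))]$ with $w(x)\in W_{p(x)}$, the canonical lift is valid at $(x,t)$ precisely when $H(x,t)\in X_{p(x)}^{w(x)}$ --- membership in the \emph{smaller} of the two open sets. The inclusion $X_{p(x)}^{w(x)}\subseteq X_{p}^{w_{0}}$ therefore cannot ``reduce one to the first case'': case 1 only puts $H(x,t)$ into the larger set $X_{p}^{w_{0}}$, and membership in a superset says nothing about membership in a subset. The correct dichotomy is on whether $z_{0}=\pi(h(x_{0}))$ lies in $X_{p(x)}^{w(x)}$. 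If it does, that set is itself an open neighbourhood of $H(x_{0},0)$ and continuity of $H$ gives the uniform time directly, with no reduction needed. If it does not, no inclusion helps; what is true instead is that such cells are simply not visited by $h$ near $x_{0}$: the quotient topology on $\Vert X_{\bullet}\Vert$ is fine enough near a point of the $w_{0}$-cell lying over $z_{0}\notin X_{q}^{v}$ that a sequence $h(x_{n})$ in the open $v$-cell with $\pi h(x_{n})\to z_{0}$ cannot converge to $h(x_{0})$ (one separates with a saturated open set whose intersection with the $v$-cell pinches toward the $w_{0}$-face arbitrarily fast, via a lower semicontinuous gauge, as $z\to z_{0}$). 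This exclusion argument, combined with the finite-subcomplex reduction you do mention, is the technical heart, and it is of a different nature from the face-compatibility $X_{q}^{v}\subseteq X_{p}^{w}$ you invoke. Since you defer to Galatius--Randal-Williams for the details in any case, either cite the result outright, as the paper does, or repair this step along the lines above.
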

\begin{lemma}\label{Z to iter is Serre}
The map $$\Vert\epsilon \Vert\colon  \Vert \punc{\bullet}\Vert \to \Vert\iterxbarc{\bar{K}_{\srange}}{\partial \bar{K}_{\srange}}{M\setminus K_{\srange}}{\bullet}{\srange} \Vert$$ is a Serre microfibration.
\end{lemma}
\begin{proof}
Let $W_{\bullet}$ denote the semi-simplicial set whose $p$-simplices are $\text{OConf}^{\, \delta}_{p+1}(M\setminus \bar{K}_{\srange})$ and whose face maps $d_{j}\colon W_{p}\to W_{p-1}$ send $(m_{0},\ldots,m_{p})\in W_{p}$ to $(m_{0},\ldots,\widehat{m_{j}},\ldots, m_{p})$. We can represent an element of $\punc{p}$ by a pair
\begin{align*}
    ((m_{0},\ldots,m_{p}),\moconfigone), &\text{ with }(m_{0},\ldots,m_{p})\in \text{OConf}^{\, \delta}_{p+1}(M\setminus \bar{K}_{\srange})\\
    &\text{ and } \moconfigone\in \Vert\iterxbarc{\bar{K}_{\srange}}{\partial \bar{K}_{\srange}}{M\setminus K_{\srange}\setminus\{m_{0}, \ldots m_{p}\}}{\bullet}{\srange} \Vert.
\end{align*}
We have a natural open inclusion  $$\rimodc[\partial \bar{K}_{\srange}]{M\setminus K_{\srange}\setminus\{m_{0}, \ldots m_{p}\}}{\srange}\hookrightarrow \rimodc[\partial \bar{K}_{\srange}]{M\setminus K_{\srange}}{\srange}$$ that is also a map of left $\prod_{j=1}^{\srange}\mon[S^{n-1}_{j}]$-modules. As a result, we have an induced open inclusion
$$f\colon \Vert\iterxbarc{\bar{K}_{\srange}}{ \partial\bar{K}_{\srange}}{M\setminus K_{\srange}\setminus\{m_{0},\ldots,m_{p}\}}{\bullet}{\srange} \Vert\hookrightarrow\Vert\iterxbarc{\bar{K}_{\srange}}{\partial \bar{K}_{\srange}}{M\setminus K_{\srange}}{\bullet}{\srange}\Vert .$$ Therefore, the map
\begin{align*}
    g_{p}\colon \punc{p}&\to W_{p}\times \Vert\iterxbarc{\bar{K}_{\srange}}{\partial \bar{K}_{\srange}}{M\setminus K_{\srange}}{\bullet}{\srange} \Vert\\
    ((m_{0},\ldots,m_{p}),\moconfigone)&\mapsto ((m_{0},\ldots,m_{p}),f(\moconfigone)).
\end{align*}
is an open inclusion for all $p$.
The maps $g_{p}$ assemble to define a map of semi-simplicial spaces $$g_{\bullet}\colon \punc{\bullet}\to W_{\bullet}\times \Vert\iterxbarc{\bar{K}_{\srange}}{\partial \bar{K}_{\srange}}{M\setminus K_{\srange}}{\bullet}{\srange} \Vert .$$ Therefore, by \cref{Soren and Oscar's microfibration prop} the map $$\Vert \epsilon\Vert\colon \Vert \punc{\bullet}\Vert \to \Vert\iterxbarc{\bar{K}_{\srange}}{\partial \bar{K}_{\srange}}{M\setminus K_{\srange}}{\bullet}{\srange} \Vert$$ is a Serre microfibration.
\end{proof}
\begin{lemma}\label{puncture res is equivalent to conf}
There is a weak homotopy equivalence
$\rho\colon  \Vert \punc{\bullet}\Vert\to \text{Conf}(M)$.
\end{lemma}
\begin{proof}
Our argument is similar to the proof of \textcite[Proposition 5.8]{MR3546782}, so we just summarize it. The map $\Vert \epsilon\Vert\colon\Vert \punc{\bullet}\Vert\to  \Vert\iterxbarc{\bar{K}_{\srange}}{\partial\bar{K}_{\srange}}{M\setminus K_{\srange}}{\bullet}{\srange} \Vert$ is a Serre microfibration by \cref{Z to iter is Serre}. Any fiber of this map is contractible because it is isomorphic to the complex of injective words for an infinite set $M\setminus \bar{K}_{\srange} \setminus\{ m_{1},\ldots m_{\partic}\}$, with $m_{1},\ldots, m_{\partic}$ a finite set of points in $M\setminus \bar{K}_{\srange}$. Therefore, by 
\cref{Weiss' Lemma}, the map $\Vert \epsilon\Vert$ is a weak homotopy equivalence.
By \cref{map from bar construction to conf is a weak equivalence}, we have a weak homotopy equivalence $g\colon\Vert\iterxbarc{\bar{K}_{\srange}}{\partial\bar{K}_{\srange}}{M\setminus K_{\srange}}{\bullet}{\srange} \Vert\to \text{Conf}(M)$. Since the composition of weak homotopy equivalences is a weak homotopy equivalence, the map $\rho\colonequals g\circ \Vert\epsilon\Vert$ is a weak homotopy equivalence.
\end{proof}
\begin{definition}
Given $I\subseteq [m]$, let $\cppunc{\bullet}{\partic}$ be the semi-simplicial chain complex
whose $p$-simplices are
$$\cppunc{p}{\partic}\colonequals \bigsqcup_{(m_{0}, \ldots m_{p})\in \text{OConf}^{\, \delta}_{p+1}(M\setminus \bar{K}_{\srange})} \tot{\cpiterxbarc{A^{I}_{*}}{\partial \bar{K}_{\srange}}{M\setminus K_{\srange}\setminus\{m_{0}, \ldots m_{p}\}}{\bullet}{\srange}{\fieldc}{\partic}}$$ and let $\cpunc{\bullet}$ denote the semi-simplicial chain complex $\bigsqcup_{\partic=0}^{\infty}\cppunc{\bullet}{\partic}$.
The inclusions $M\setminus \{ m_{0},\ldots , m_{p} \} \hookrightarrow M\setminus \{ m_{0},\ldots ,\widehat{m_j},\ldots , m_{p}\},$ for $j=0,\ldots , p$,  induce the face maps for both semi-simplicial chain complexes.
\end{definition}
Similar to the augmentation map for $\punc{\bullet}$, we have a map $$\epsilon'\colon \cpunc{0}\to \tot{\citerxbarc{A^{I}_{*}}{\partial \bar{K}_{\srange}}{M\setminus K_{\srange}}{\bullet}{\srange}{\fieldc}}$$ induced from the map which forgets the ordered configuration $m_{0}\in \text{OConf}^{\, \delta}_{1}(M\setminus \bar{K}_{\srange})$. As a consequence of $\epsilon'$ and the naturality of the Eilenberg--Zilber map, we have a composition of quasi-isomorphisms
$$\tot{\cpunc{\bullet}} \to \tot{\citerxbarc{A^{I}_{*}}{\partial \bar{K}_{\srange}}{M\setminus K_{\srange}}{\bullet}{\srange}{\fieldc}}\to \nch{\text{Conf}(M)}.$$
\begin{lemma}\label{stabilization map lif to puncture res}
Suppose that $(z_{1},\ldots, z_{d})$ is a $d$-tuple of classes in $H_{*}(\text{Conf}(\mathbb{R}^{n});\fieldc)$ such that $\browd{z_{b}}{e}=0$ for all $b=1,\ldots, d$. 
Given  $I \subseteq [\srange]$ with $[d]\subseteq I$, the stabilization maps 
$$\newstab{z_{b}} \colon \tot{\cpiterxbarc{A^{I}_{*}}{\partial \bar{K}_{\srange}}{M\setminus K_{\srange}}{\bullet}{\srange}{\fieldc}{\partic-\text{par}(z_{b})}}\to \tot{\cpiterxbarc{A^{I\setminus\{b\}}_{*}}{\partial \bar{K}_{\srange}}{M\setminus K_{\srange}}{\bullet}{\srange}{\fieldc}{\partic}}$$
from \cref{stabilization maps for closed iterated barc} extend to maps
$$(\newstab{z_{b}})_{\bullet}\colon  \cppunc{\bullet}{\partic-\text{par}(z_{b})} \to \cppunc[A^{I\setminus\{b\}}_{*}]{\bullet}{\partic}$$ that all commute with each other.
\end{lemma}
\begin{proof}
Let $\epsilon'_{p} \colon \cpunc{p}\to \citerxbarc{A^{I}_{*}}{\partial \bar{K}_{\srange}}{M\setminus K_{\srange}}{p}{\srange}{\fieldc}$ denote the chain map coming from the map $\epsilon'$. We can represent a chain in $\cpunc{p}$ by a pair
$((m_{0},\ldots,m_{p}),\sigma)$ with $(m_{0},\ldots,m_{p})\in \text{OConf}^{\, \delta}_{p+1}(M\setminus K_{\srange})$ and $\sigma\in \citerxbarc{A^{I}_{*}}{\partial \bar{K}_{\srange}}{M\setminus K_{\srange}\setminus\{m_{0}, \ldots m_{p}\}}{\bullet}{\srange}{\fieldc}$.
Since the points $m_{0},\ldots , m_{p}$ are not in $\bar{K}_{\srange}$, we have that 
$$\newstab{z_{b}}\circ \epsilon'_{p}(\sigma )\in \citerxbarc{A^{I\setminus\{b\}}_{*}}{\partial \bar{K}_{\srange}}{M\setminus K_{\srange}\setminus \{m_{0},\ldots , m_{p}\}}{\bullet}{\srange}{\fieldc}.$$ Therefore, $$(\newstab{z_{b}})_{p} ((m_{0},\ldots , m_{p}),\sigma)\colonequals  (\newstab{z_{b}}\circ \epsilon_{p}'(\sigma ), (m_{0},\ldots , m_{p}))$$ is a chain in $\cpunc[A^{I\setminus\{b\}}_{*}]{p}$.

To show that the maps $(\newstab{z_{b}})_{p}$ assemble to give a map $$(\newstab{z_{b}})_{\bullet} \colon \cpunc[A^{I}_{*}]{\bullet}\to \cpunc[A^{I\setminus\{b\}}_{*}]{\bullet}$$ of semi-simplicial chain complexes, we need to show that the following diagram

\begin{center}\begin{tikzcd}
\cpunc[A^{I}_{*}]{p} \arrow[d, "(\newstab{z_{b}})_{p}"]\arrow[r,"d_{q}"] & \cpunc[A^{I}_{*}]{p-1}\arrow[d, "(\newstab{z_{b}})_{p-1}"]\\
\cpunc[A^{I\setminus\{b\}}_{*}]{p}\arrow[r, "d_{q}"] & \cpunc[A^{I\setminus\{b\}}_{*}]{p-1}
\end{tikzcd}\end{center}
commutes. Since the restriction of $d_{q}$ to $\citerxbarc{A^{I}_{*}}{\partial \bar{K}_{\srange}}{M\setminus K_{\srange}\setminus \{m_{0},\ldots , m_{p}\}}{\bullet}{\srange}{\fieldc}$ commutes with $(\newstab{z_{b}})_{\bullet}$ for all $m_{0},\ldots , m_{p}\in M\setminus \bar{K}_{\srange}$, i.e.
$d_{q}\circ (\newstab{z_{b}})_{p}= (\newstab{z_{b}})_{p-1}\circ d_{q}$, it follows that the diagram commutes as well. The maps $ (\newstab{z_{b}})_{\bullet}$ all commute with each other due to how $(\newstab{z_{b}})_{\bullet}$ was constructed and the stabilization maps $\newstab{z_{b}}$ all commuting with each other. 
\end{proof}
\begin{definition}\label{iterated mapping cone def for puncture resolution}
Suppose that $(z_{1},\ldots, z_{d})$ is a $d$-tuple of classes in $H_{*}(\text{Conf}(\mathbb{R}^{n});\fieldc)$ such that $\browd{z_{b}}{e}=0$ for all $b=1,\ldots, d$.
Given  $I \subseteq [\srange]$ with $[d]\subseteq I$, let $\lnewstab{z_{b}}$ denote the induced map $\tot{(\newstab{z_{b}})_{\bullet}}$ obtained by passing to the totalization of $\cpunc{\bullet}$.
Recursively define the \textbf{d-th iterated mapping cone} $\jpariteratedmap{M}{\lnewstab{z_{i}}}{d}{\partic}{I}$ associated to the $d$-tuple $(z_{1},\ldots, z_{d})$ and the subset $I$ as follows:  
set $\jpariteratedmap{M}{\lnewstab{z_{1}}}{1}{\partic}{I}$ to be the mapping cone
$$\text{Cone}\big(\lnewstab{z_{1}}\colon \tot{\cppunc{\bullet}{\partic-\text{par}(z_{1})}}\to \tot{\cppunc[A^{I\setminus\{1\}}_{*}]{\bullet}{\partic}}\big)$$
and for $d>1$, set $\jpariteratedmap{M}{\lnewstab{z_{i}}}{d}{\partic}{I}$ to be the induced mapping cone
$$\text{Cone}\big(\lnewstab{z_{d}}\colon \jpariteratedmap{M}{\lnewstab{z_{1}},\ldots, \lnewstab{z_{d-1}}}{d-1}{\partic-\text{par}(z_{d})}{I}\to \jpariteratedmap{M}{\lnewstab{z_{1}},\ldots, \lnewstab{z_{d-1}}}{d-1}{\partic}{I\setminus\{d\}}\big).$$
\end{definition}
The following lemma lists some homology classes $z\in H_{*}(\text{Conf}(\mathbb{R}^{n});\fieldc)$ for which $\browd{z}{e}=0$.
\begin{lemma}\label{homology classes which give a stabilization map}
Let $e\in H_{0}(\text{Conf}_{1}(\mathbb{R}^{n});\fieldc)$ be the class of a point.
\hfill\begin{enumerate}
    \item\label{class is pth power} If $\fieldc$ is $\mathbb{F}_{p}$ and $\omega \in H_{*}(\text{Conf}(\mathbb{R}^{n});\mathbb{F}_{p})$ is a homology class such that $\omega =z^{p}$ for some class $z$, then $\browd{\omega}{e}=0$. It follows that $\browd{e^{p}}{e}=0$ for all $n$.
    \item\label{Browder bracket applied to Dyer-Lashof is zero} If $\fieldc$ is $\mathbb{F}_{p}$ and $\omega \in H_{*}(\text{Conf}(\mathbb{R}^{n});\mathbb{F}_{p})$, then $\browd{Q^{s}\omega}{e}=0$.
    \item\label{class of a point} If  $\fieldc$ is $\mathbb{F}_{2}$ or $n$ is odd, then $\browd{e}{e}=0$.
    \item\label{classes in dim two} 
    If $\fieldc$ is $\mathbb{F}_{p}$ and $x_{i},y_{i},z_{i}\in H_{*}(\text{Conf}(\mathbb{R}^{2});\mathbb{F}_{p})$ are the homology classes from \cref{thm-Co}, then $\browd{x_{i}}{e}=\browd{y_{i}}{e}=\browd{z_{i}}{e}=0$.
\end{enumerate}
\end{lemma}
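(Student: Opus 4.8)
The plan is to deduce everything from the formal properties of the Browder bracket listed after \cref{geometric description of Browder bracket}, together with \cref{thm-Co}. Two observations will be used repeatedly: by \cref{Browder bracket symmetric up to sign}, $\browd{a}{b}=0$ if and only if $\browd{b}{a}=0$, so the two entries may be swapped freely; and $\beta e=0$, since the class $e$ of a point lifts to integral homology. Part \ref{Browder bracket applied to Dyer-Lashof is zero} is then immediate from \cref{Browder bracket with Dyer-Lashof}: $\browd{e}{Q^{s}\omega}=0$, hence $\browd{Q^{s}\omega}{e}=0$. For part \ref{class is pth power} I would first reduce to the case that $z$ is homogeneous (the general case follows because the $p$-th power map is additive in characteristic $p$: odd-degree components square to zero, and on the even part it is the ordinary Frobenius). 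If $p$ is odd and $\deg(z)$ is odd then $z^{p}=0$ by graded-commutativity; otherwise $z^{p}$ is the top Dyer--Lashof operation applied to $z$, i.e. $z^{p}=Q^{s}z$ for the appropriate $s$ (see item \ref{Dyer-Lashof operations} in the list of operations), so $\browd{z^{p}}{e}=0$ by part \ref{Browder bracket applied to Dyer-Lashof is zero}. Specializing to $z=e$, with $e^{p}=Q^{0}e$, gives $\browd{e^{p}}{e}=0$ for all $n$. (Alternatively, \cref{derivation property of Browder bracket} shows $\browd{e}{-}$ is a graded derivation, so $\browd{e}{z^{p}}=p\,z^{p-1}\browd{e}{z}=0$.)

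For part \ref{class of a point}, observe that $\browd{e}{e}$ lies in $H_{n-1}(\text{Conf}_{2}(\mathbb{R}^{n});\fieldc)$. If $\fieldc=\mathbb{F}_{2}$ this is zero by \cref{Jacobi identity of Browder bracket} (in characteristic $2$ one has $\browd{x}{x}=0$). If $n$ is odd, then $n\geq 3$ and $n-1$ is even, and \cref{Browder bracket symmetric up to sign} gives $\browd{e}{e}=(-1)^{n}\browd{e}{e}=-\browd{e}{e}$, so $2\browd{e}{e}=0$; this already settles the case $\fieldc=\mathbb{F}_{p}$ with $p$ odd, and for $\fieldc=\mathbb{Z}$ I would use instead that $\text{Conf}_{2}(\mathbb{R}^{n})\simeq \mathbb{R}P^{n-1}$, whose top integral homology vanishes because $n-1$ is even.

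For part \ref{classes in dim two}, work with $\mathbb{F}_{p}$-coefficients on $\text{Conf}(\mathbb{R}^{2})$. The engine is the following closure property: \emph{if $\browd{w}{e}=0$, then $\browd{\xi w}{e}=0$ and $\browd{\beta w}{e}=0$.} Indeed, by \cref{iterated Browder bracket}, $\browd{e}{\xi w}=\text{ad}^{p}(w)(e)=\text{ad}^{p-1}(w)\big(\browd{w}{e}\big)=0$ (for $p=2$ one uses instead $\browd{e}{\xi w}=\browd{w}{\browd{w}{e}}=0$); and by \cref{Bockstein applied to Browder bracket} with $x=e$, using $\beta e=0$, one gets $\beta\browd{e}{w}=\pm\browd{e}{\beta w}$, so $\browd{e}{w}=0$ forces $\browd{e}{\beta w}=0$. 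One then inducts along the recursive definitions in \cref{thm-Co}. For $p=2$, the base case $\browd{x_{1}}{e}=\browd{\xi e}{e}=0$ comes from $\browd{e}{e}=0$ (part \ref{class of a point}), and $\browd{x_{j}}{e}=\browd{\xi x_{j-1}}{e}=0$ for $j>1$ by the closure property. For $p$ odd, the base case is $\browd{z_{0}}{e}=\browd{\browd{e}{e}}{e}=\pm\browd{e}{\browd{e}{e}}=0$ by \cref{Jacobi identity of Browder bracket}, after which $\browd{z_{j}}{e}=\browd{\xi z_{j-1}}{e}=0$ for $j\geq 1$ and, since $y_{j}=\beta z_{j}$, also $\browd{y_{j}}{e}=\browd{\beta z_{j}}{e}=0$.

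Everything here is a formal manipulation of the listed identities, so no step is genuinely difficult; the only non-algebraic input is the homotopy equivalence $\text{Conf}_{2}(\mathbb{R}^{n})\simeq \mathbb{R}P^{n-1}$ invoked for $\fieldc=\mathbb{Z}$ in part \ref{class of a point}. The point that requires the most care is that the induction in part \ref{classes in dim two} has different base cases for $p=2$ and for $p$ odd: when $p=2$ it rests on $\browd{e}{e}=0$, whereas when $p$ is odd $\browd{e}{e}=z_{0}$ is itself a polynomial generator of $H_{*}(\text{Conf}(\mathbb{R}^{2});\mathbb{F}_{p})$, so the recursion must begin one step later, from the identity $\browd{e}{\browd{e}{e}}=0$.
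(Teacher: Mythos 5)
Your proof is correct, and Parts \ref{class is pth power}--\ref{class of a point} essentially follow the paper's lines: Part \ref{Browder bracket applied to Dyer-Lashof is zero} is the same appeal to the vanishing of $\browd{x}{Q^{s}y}$; for Part \ref{class is pth power} your primary route (identifying $z^{p}$ with the bottom Dyer--Lashof operation in even degrees and using $z^{p}=0$ in odd degrees for $p$ odd) differs only cosmetically from the paper's, which is exactly your parenthetical derivation-property computation $\browd{e}{z^{p}}=\pm p\,z^{p-1}\browd{e}{z}=0$; and Part \ref{class of a point} is the same observation about $\text{Conf}_{2}(\mathbb{R}^{n})\simeq\mathbb{R}P^{n-1}$ (for a completely general ring with $2$-torsion you should add that the bracket is natural in the coefficient ring, so the vanishing of the integral class for $n$ odd suffices; this covers all rings the paper actually uses). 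The genuine divergence is in Part \ref{classes in dim two}, for the classes $y_{i}$. You observe that $y_{j}=\beta z_{j}$ --- legitimate here because the paper itself records that, for spaces, $\beta\xi$ is literally $\xi$ followed by the homology Bockstein --- and then propagate vanishing through $\beta$ using Property \ref{Bockstein applied to Browder bracket} together with $\beta e=0$: from $\browd{e}{z_{j}}=0$ one gets $0=\beta\browd{e}{z_{j}}=\pm\browd{e}{\beta z_{j}}=\pm\browd{e}{y_{j}}$. The paper instead expresses $y_{j}$ via the operation $\zeta(-)=\beta\xi(-)-\text{ad}^{p-1}(-)(\beta(-))$, invokes $\browd{e}{\zeta z_{j-1}}=0$, and then kills the correction term $\browd{e}{\text{ad}^{p-1}(z_{j-1})(\beta z_{j-1})}$ by an induction through iterated Jacobi identities. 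Your route is substantially shorter; the paper's longer argument is the one that would survive in settings where $\beta\xi$ is a primitive operation rather than an honest composite (for instance $E_{n}$-algebras in chain complexes), but given the space-level setting and the paper's own remark to that effect, your simplification is valid.
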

\begin{proof}
Parts \ref{class is pth power} and  \ref{class of a point} appear in \textcite[Proposition 2.5]{MR3556286} and \textcite[p.~16]{MR3032101}, respectively. Parts \ref{class is pth power} and \ref{Browder bracket applied to Dyer-Lashof is zero} are immediate consequences of the derivation property of the Browder bracket (Property \ref{derivation property of Browder bracket}) and the relationship of a Dyer-Lashof operation with a Browder bracket (Property \ref{Browder bracket with Dyer-Lashof}), respectively.
Part \ref{class of a point} follows
from the geometric description of the Browder bracket. The class $\browd{e}{e}$ corresponds to twice a generator of $H_{n-1}(\mathbb{R}P^{n-1};\fieldc)$, so it vanishes when $n$ is odd or $\fieldc=\mathbb{F}_{2}$.

For Part \ref{classes in dim two}, we first show that $\browd{x_{i}}{e}$ vanishes. Recall that $x_{i}$ is the class $\xi \cdots \xi e\in H_{*}(\text{Conf}(\mathbb{R}^{2});\mathbb{F}_{2})$. Since $\browd{x}{x}=0$ for all $x\in H_{*}(\text{Conf}(\mathbb{R}^{2});\mathbb{F}_{2})$ (Property \ref{Jacobi identity of Browder bracket} of the Browder bracket), we have that $\browd{e}{e}=0$. Since $$\browd{e}{x_{1}}=\browd{e}{\xi e}=\browd{e}{\browd{e}{e}}$$ (by Property \ref{iterated Browder bracket} of the Browder bracket) and $\browd{e}{e}=0$, we have that $\browd{x_{1}}{e}=0$. Since $x_{i}=\xi x_{i-1}$, the same argument recursively shows that $\browd{x_{i}}{e}=0$.

We now deal with the case that $p$ is odd. we first show that $\browd{z_{i}}{e}=0$ for all $i$. Recall that $z_{i}=\xi\cdots\xi \browd{e}{e}$ and $y_{i}=\beta \xi\cdots\xi \browd{e}{e}$.
By the Jacobi identity of the Browder bracket (Property \ref{Jacobi identity of Browder bracket} of the Browder bracket), $\browd{e}{\browd{e}{e}}=\browd{e}{z_{0}}=0$.
By the relation of the top homology operation $\xi$ and the Browder bracket and the Jacobi identity (Properties \ref{iterated Browder bracket} and \ref{Jacobi identity of Browder bracket} of the Browder bracket), $\browd{e}{z_{i}}$ can be recursively expressed as 
\begin{align*}
    \browd{e}{z_{i}}&=\browd{e}{\xi z_{i-1}}\\
    &=\text{ad}^{p}(z_{i-1})(e) \text{ by Property }\ref{iterated Browder bracket}\text{ of the Browder bracket}\\
    &=\browd{z_{i-1}}{\browd{z_{i-1}}{\cdots ,\browd{z_{i-1}}{\browd{z_{i-1}}{e}}\cdots}}\text{ by definition of }\text{ad}^{p}.
\end{align*}
Since $\browd{z_{0}}{e}=0$, we have that $\browd{z_{i-1}}{e}=0$ by recursion 
and so $\browd{z_{i}}{e}=0$.

We now show that $\browd{y_{i}}{e}=0$ for $i>0$. First, we show that $\browd{y_{1}}{e}=0$. Recall the homology operation $\zeta (-)\colonequals \beta\xi (-)-\text{ad}^{p-1}(-)(\beta (-))$ and that $\browd{x}{\zeta y}=0$ by Property \ref{iterated Browder bracket} of the Browder bracket. Since  $y_{i}=\beta\xi z_{i-1}$ and the Browder bracket is linear in both entries, we have that
\begin{align*}
   0&=\browd{e}{ \zeta z_{i-1}}= \browd{e}{\beta\xi (z_{i-1})-\text{ad}^{p-1}(z_{i-1})(\beta (z_{i-1}))}\\
   &=\browd{e}{y_{i}-\text{ad}^{p-1}(z_{i-1})(\beta (z_{i-1}))}\\
   &=\browd{e}{y_{i}}-\browd{e}{\text{ad}^{p-1}(z_{i-1})(\beta (z_{i-1}))}.
\end{align*}
Therefore, it suffices to show that $\browd{e}{\text{ad}^{p-1}(z_{i-1})(\beta (z_{i-1}))}=0$. For $i=1$, we have that $\beta (z_{i-1})=\beta (z_{0})=\beta (\browd{e}{e})$. By Property \ref{Bockstein applied to Browder bracket} of the Browder bracket, $$\beta \browd{e}{e}=\browd{\beta e}{e}\pm \browd{e}{\beta e}. $$
We have that $\beta e=0$ since $e$ is a homology class of degree $0$. Therefore, $\beta z_{0} =0$ and so $\browd{e}{y_{1}}=0$.

Suppose now that $i>1$. We have that $\beta z_{i-1} =y_{i-1}$ since $i>1$. Note that 
\begin{align*}
    \browd{e}{\text{ad}^{p-1}(z_{i-1})(y_{i-1})}&=\browd{e}{\text{ad}(z_{i-1})(\text{ad}^{p-2}(z_{i-1})(y_{i-1}))}\\
    &=\browd{e}{\browd{z_{i-1}}{\text{ad}^{p-2}(z_{i-1})(y_{i-1})}}
\end{align*}
By the Jacobi identity (Property \ref{Jacobi identity of Browder bracket} of the Browder bracket), we have that
\begin{align*}
   0&=\pm \browd{e}{\browd{z_{i-1}}{\text{ad}^{p-2}(z_{i-1})(y_{i-1})}}\pm \browd{z_{i-1}}{\browd{e}{\text{ad}^{p-2}(z_{i-1})(y_{i-1})}}\\
   &\pm  \browd{\text{ad}^{p-2}(z_{i-1})(y_{i-1})}{\browd{z_{i-1}}{e}}. 
\end{align*}

Since $\browd{z_{i-1}}{e}=0$, it suffices to show that $\browd{z_{i-1}}{\browd{e}{\text{ad}^{p-2}(z_{i-1})(y_{i-1})}}=0$. By iterating the Jacobi identity, it suffices to show that 
$$\browd{e}{\text{ad}(z_{i-1})(y_{i-1})}=\browd{e}{\browd{z_{i-1}}{y_{i-1}}}=0.$$ But this follows from an another application of the Jacobi identity
$$0=\pm \browd{e}{\browd{z_{i-1}}{y_{i-1}}}\pm \browd{z_{i-1}}{\browd{e}{y_{i-1}}}\pm \browd{y_{i-1}}{\browd{z_{i-1}}{e}} $$
and the fact that $\browd{e}{z_{i-1}}$ and $\browd{e}{y_{i-1}}$ are both zero ($\browd{e}{y_{i-1}}$ is zero by recursion).
\end{proof}
\begin{proposition}\label{stable hom for a closed manifold}
Suppose that $M$ is a closed connected surface. Let $p$ be an odd prime and let $D(p, \srange , \partic)$ denote the constant from \cref{stable ranges for the plane}. The stabilization map
$$\newstab{e^{p}} \colon \tot{\cpiterxbarc{T(V)_{*}}{S^{n-1}}{M\setminus D^{n}}{\bullet}{1}{\mathbb{F}_{p}}{\partic}}\to \tot{\cpiterxbarc{\modc[S^{n-1}]{\bar{D}^{n}}}{S^{n-1}}{M\setminus D^{n}}{\bullet}{1}{\mathbb{F}_{p}}{\partic+p}}$$
from \cref{stabilization maps for closed iterated barc} induces a map
$\newstab{e^{p}} \colon H_{i}(\text{Conf}_{\partic}(M);\mathbb{F}_{p})\to  H_{i}(\text{Conf}_{\partic +p}(M);\mathbb{F}_{p})$
that is an isomorphism for $i<D(p,1,\partic )$ and a surjection for $i=D(p,1,\partic )$.
\end{proposition}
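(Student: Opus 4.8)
The plan is to prove that the mapping cone of the chain map $\newstab{e^p}\colon C_*(\text{Conf}_{\partic}(M);\mathbb{F}_p)\to C_*(\text{Conf}_{\partic+p}(M);\mathbb{F}_p)$ has homology vanishing in all degrees $\le D(p,1,\partic)$; the long exact sequence of this cone then yields exactly that $\newstab{e^p}$ is an isomorphism for $i<D(p,1,\partic)$ and a surjection for $i=D(p,1,\partic)$. First observe that $\newstab{e^p}$ is defined at all: by Part \ref{class is pth power} of \cref{homology classes which give a stabilization map} the Browder bracket $\browd{e^p}{e}$ vanishes since $e^p$ is a $p$th power, so \cref{stabilization map second def} applies with $z=e^p$.

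Next I would pass to the puncture resolution. By \cref{puncture res is equivalent to conf}, $(Z_{\partic,\bullet}(M,\bar{E}_{1}),\text{Conf}_{\partic}(M),\epsilon)$ is a semi-simplicial resolution of $\text{Conf}_{\partic}(M)$, and likewise for $\partic+p$. Since $e^p$ is supported in $D^n_1\subseteq\bar{D}^n_1\subset\bar{E}_{1}$, \cref{stabilization map lif to puncture res} extends $\newstab{e^p}$ to a map $\newstab{e^p,\bullet}\colon Z_{\partic,\bullet}(M,\bar{E}_{1})\to Z_{\partic+p,\bullet}(M,\bar{E}_{1})$ of chain complexes of semi-simplicial spaces that is intertwined with $\newstab{e^p}$ by the two augmentations. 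Forming the levelwise mapping cone $C_\bullet$ of $\newstab{e^p,\bullet}$ and taking the total complex $\text{Tot}(C_\bullet)$ of the associated double complex, I obtain $\text{Tot}(C_\bullet)\simeq\text{Cone}(\newstab{e^p})$, because the mapping cone commutes with totalization and both augmentations are quasi-isomorphisms. So it suffices to show that the spectral sequence $E^1_{s,t}=H_t(C_s)\Rightarrow H_{s+t}(\text{Tot}(C_\bullet))$ associated to the skeletal filtration satisfies $E^1_{s,t}=0$ for every $t\le D(p,1,\partic)$.

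Because $\text{OConf}$ is taken as a discrete set in \cref{puncture res for conf}, the $s$-simplices $Z_{\partic,s}(M,\bar{E}_{1})=\bigsqcup_{(m_0,\dots,m_s)}\text{Conf}_{\partic}(M\setminus\{m_0,\dots,m_s\})$ are genuine disjoint unions, and $\newstab{e^p,s}$ acts on each summand as $\newstab{e^p}$ on the configuration space of the \emph{open} connected surface $M\setminus\{m_0,\dots,m_s\}$; hence $H_t(C_s)$ is the direct sum over $(m_0,\dots,m_s)$ of $H_t$ of the cone of $\newstab{e^p}$ on that open surface. By \cref{new stabilization on open manifold}, $\newstab{e^p}$ on an open connected surface is chain homotopic to $t_{e^p}$, which in turn agrees up to chain homotopy with the $p$-fold composite of $t_1=t_e$; so \cref{homological stability in dimension 2}, together with the monotonicity $D(p,1,\partic)<D(p,1,\partic+1)<\cdots$, shows $t_{e^p}$ is an isomorphism on $H_i$ for $i<D(p,1,\partic)$ and a surjection for $i=D(p,1,\partic)$, uniformly over all open connected surfaces. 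Therefore the cone of $\newstab{e^p}$ on each such surface has vanishing homology in degrees $\le D(p,1,\partic)$, so $E^1_{s,t}=0$ whenever $t\le D(p,1,\partic)$. Since $t\le s+t$, this gives $E^\infty_{s,t}=0$ whenever $s+t\le D(p,1,\partic)$, hence $H_i(\text{Cone}(\newstab{e^p}))=0$ for $i\le D(p,1,\partic)$, and the long exact sequence of the cone finishes the argument.

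The work here is bookkeeping rather than a new idea, and the main things to be careful about are twofold: verifying that the chain-level extension of \cref{stabilization map lif to puncture res} is compatible, via the augmentations, with whatever chosen chain homotopy underlies the definition of $\newstab{e^p}$, so that $\text{Tot}(C_\bullet)$ genuinely computes $\text{Cone}(\newstab{e^p})$; and checking that the stability range is not eroded by the skeletal spectral sequence. The latter works precisely because each summand cone vanishes in a \emph{full} initial range $[0,D(p,1,\partic)]$ of degrees rather than merely at the top degree, so the extra simplicial degree $s\ge 0$ cannot pull a nonzero class into total degree $\le D(p,1,\partic)$.
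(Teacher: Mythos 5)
Your proposal is correct and follows essentially the same route as the paper: puncture resolution, extension of $\newstab{e^{p}}$ over the resolution via \cref{stabilization map lif to puncture res}, identification with $t_{e^{p}}$ on the open punctured surfaces via \cref{new stabilization on open manifold}, and the (relative) geometric realization spectral sequence together with \cref{homological stability in dimension 2}. Your phrasing in terms of levelwise mapping cones and total complexes is just the paper's relative-homology formulation in different language, and your final remark about why the simplicial degree $s$ does not erode the range is the same observation the paper uses implicitly.
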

\begin{proof}
Fix a closed disk $\bar{D}^{n}$ in $M$. 
Showing that
$\newstab{e^{p}} \colon  H_{i}(\text{Conf}_{\partic}(M);\mathbb{F}_{p})\to  H_{i}(\text{Conf}_{\partic +p}(M);\mathbb{F}_{p})$ is an isomorphism for $i<D(p,1,\partic )$ and a surjection for $i=D(p,1,\partic )$
is equivalent to showing that 
$H_{i}(\jpariteratedmap{M}{\newstab{e^{p}}}{1}{\partic}{[1]};\mathbb{F}_{p})$ is zero for $i\leq D(p,1,\partic )$.

By \cref{stabilization map lif to puncture res}, the stabilization map $\newstab{e^{p}}$ extends to a map of semi-simplicial chain complexes $$(\newstab{e^{p}})_{\bullet} \colon CZ^{1}_{\partic,\bullet}(T(V)_{*},M; \mathbb{F}_{p})\to  CZ^{1}_{\partic+p,\bullet}(C_{*}(\modc[S^{n-1}]{\bar{D}^{n}};\mathbb{F}_{p}),M; \mathbb{F}_{p}).$$ 
We
can apply a relative version of the geometric realization spectral sequence:
\begin{align*}
    E^{1}_{s,t}&=H_{t}\big( CZ^{1}_{\partic+p,s}(C_{*}(\modc[S^{n-1}]{\bar{D}^{n}};\mathbb{F}_{p}),M;\mathbb{F}_{p} ),CZ^{1}_{\partic ,s}(T(V)_{*}, M;\mathbb{F}_{p})\big)\\
    &\Longrightarrow H_{s+t}( \jpariteratedmap{M}{\lnewstab{e^{p}}}{1}{\partic+p}{[1]};\mathbb{F}_{p}).
\end{align*}
We have that $\tot{CZ^{1}_{\partic,\bullet}(A^{I}_{*},M;\mathbb{F}_{p})}$ is quasi-isomorphic to $\tot{\cpiterxbarc{A^{I}_{*}}{S^{n-1}}{M\setminus D^{n}}{\bullet}{1}{\mathbb{F}_{p}}{\partic}}$. Therefore, to show that $H_{i}( \jpariteratedmap{M}{\lnewstab{e^{p}}}{1}{\partic+p}{[1]})$, which is isomorphic to
$$ H_{i}\big(\tot{\cpiterxbarc{\bar{D}^{n}}{S^{n-1}}{M\setminus D^{n}}{\bullet}{1}{\mathbb{F}_{p}}{\partic+p}},\tot{\cpiterxbarc{T(V)_{*}}{S^{n-1}}{M\setminus D^{n}}{\bullet}{1}{\mathbb{F}_{p}}{\partic}}\big),$$
vanishes for $i\leq D(p,1,\partic )$, it suffices to show that the relative homology $$H_{t}\big(CZ^{1}_{\partic+p,s}(C_{*}(\modc[S^{n-1}]{\bar{D}^{n}};\mathbb{F}_{p}), M;\mathbb{F}_{p}),CZ^{1}_{\partic ,s}(T(V)_{*}, M;\mathbb{F}_{p})\big)$$ vanishes for $s+t=i\leq D(p,1,\partic )$. Since the manifolds $M\setminus \bar{D}^{n}\setminus \{m_{0},\ldots, m_{s}\}$ appearing in $$\tot{\cpiterxbarc{A^{I}_{*}}{S^{n-1}}{M\setminus D^{n}\setminus\{m_{0}, \ldots m_{s}\}}{\bullet}{1}{\mathbb{F}_{p}}{\partic}}\subset CZ^{1}_{\partic, s}(A^{I}_{*},M; \mathbb{F}_{p})$$ are open for all nonnegative $s$, by \cref{new stabilization on open manifold}, \cref{new stabilization induces iso on open manifold}, and \cref{homological stability in dimension 2}, $E^{1}_{s,t}$ vanishes for $s\leq D(p,1,\partic )$ and hence also for $s+t\leq D(p,1,\partic )$.
\end{proof}
By applying the argument of \cref{stable hom for a closed manifold}, we can obtain additional stability results for the configuration spaces of a closed manifold and prove \cref{closed periodic secondary stability} in the case that homology is with $\mathbb{F}_{p}$ coefficients. 
\begin{proposition}\label{stability results for closed manifold}
Suppose that $M$ is a connected manifold of dimension $n\geq 2$.
Let $D(p, \srange , \partic)$ denote the constant from \cref{stable ranges for the plane}. Let $e\in H_{0}(\text{Conf}_{1}(\mathbb{R}^{n}))$ be the class of a point.
\hfill\begin{enumerate}
    \item\label{higher dim and p is two} 
    The map
    $$ \newstab{e} \colon  H_{i}(\text{Conf}_{\partic}(M);\mathbb{F}_{2})\to  H_{i}(\text{Conf}_{\partic +1}(M);\mathbb{F}_{2}) $$ is an isomorphism for $i < k/2$ and a surjection for $i=k/2$.
    \item\label{closed stability for surface F2 coefficients} Suppose that $n=2$.
    Let $x_{j}\in H_{*}(\text{Conf}(\mathbb{R}^{2});\mathbb{F}_{2})$ denote the classes from \cref{thm-Co}. Let $\jpariteratedmap{M}{\newstab{\displaystyle x_{j}}}{\srange}{\partic}{I}$ denote the \srange-th iterated mapping cone associated to the $\srange$-tuple 
    \\$(\newstab{\displaystyle e^{p}},  \newstab{\displaystyle x_{1}}, \ldots, \newstab{\displaystyle x_{\srange-1}})$ and the subset $I\subseteq [\srange+1]$, with $I=[\srange]$ or $[\srange+1]$.
    The stabilization map
$$\newstab{\displaystyle x_{\srange}} \colon \tot{\cpiterxbarc{A^{[\srange+1]}_{*}}{S^{n-1}}{M\setminus D^{n}}{\bullet}{\srange+1}{\mathbb{F}_{p}}{\partic}}\to \tot{\cpiterxbarc{A^{[\srange]}_{*}}{S^{n-1}}{M\setminus D^{n}}{\bullet}{\srange+1}{\mathbb{F}_{p}}{\partic+2^{\srange}}}$$
induces a map
%
    $$\newstab{\displaystyle x_{m}} \colon  H_{i}\big (\jpariteratedmap{M}{\newstab{\displaystyle x_{j}}}{\srange}{\partic}{[\srange+1]};\mathbb{F}_{2}\big )\to  H_{i+ 2^{\srange}-1}\big (\jpariteratedmap{M}{\newstab{\displaystyle x_{j}}}{\srange}{\partic+2^{\srange}}{[\srange]};\mathbb{F}_{2}\big )$$ that is an isomorphism for $i<D(2,\srange+1 , \partic)$ and a surjection for $i= D(2, \srange+1, \partic)$.
    \item\label{closed stability for surface odd coefficients} Suppose that $n$ is $2$ and that $p$ is an odd prime.
    Let $y_{j}\in H_{*}(\text{Conf}(\mathbb{R}^{2});\mathbb{F}_{p})$ denote the classes from \cref{thm-Co}.
    Let $\jpariteratedmap{M}{\newstab{\displaystyle y_{j}}}{\srange}{\partic}{I}$ denote the \srange-th iterated mapping cone associated to the $\srange$-tuple $(\newstab{\displaystyle e^{p}},  \newstab{\displaystyle y_{1}}, \ldots, \newstab{\displaystyle y_{\srange-1}})$ and the subset $I\subseteq [\srange+1]$, with $I=[\srange]$ or $[\srange+1]$.
    
    The map
    $$\newstab{\displaystyle y_{m}} \colon  H_{i}\big (\jpariteratedmap{M}{\newstab{\displaystyle y_{j}}}{\srange}{\partic}{[\srange+1]};\mathbb{F}_{p}\big )\to  H_{i+ 2p^{\srange}-2}\big (\jpariteratedmap{M}{\newstab{\displaystyle y_{j}}}{\srange}{\partic+2p^{\srange}}{[\srange]};\mathbb{F}_{p}\big )$$ is an isomorphism for $i<D(p,\srange+1 , \partic)$ and a surjection for $i= D(p, \srange+1, \partic)$.
    \item\label{odd dim and Z} Suppose that $n$ is odd. The map
    $$ \newstab{e} \colon  H_{i}(\text{Conf}_{\partic}(M);\mathbb{Z})\to  H_{i}(\text{Conf}_{\partic +1}(M);\mathbb{Z}) $$ is an isomorphism for $i < k/2$ and a surjection for $i=k/2$.
    \item\label{odd dim and p is odd} Suppose that $n$ is odd. Suppose that $\fieldc$ is $\mathbb{Q}$ or $\mathbb{F}_{p}$ with $p>2$. The map
    $$ \newstab{e} \colon  H_{i}(\text{Conf}_{\partic}(M);\fieldc)\to  H_{i}(\text{Conf}_{\partic +1}(M);\fieldc) $$ is an isomorphism for $i < k$ and a surjection for $i=k$.
    \item\label{higher stabilization in odd dimensions and p is 2}
    Suppose that $n$ is greater than $2$.
    Let $\omega_{j}\in H_{*}(\text{Conf}(\mathbb{R}^{n});\mathbb{F}_{2})$ denote the classes from \cref{stable ranges for higher dimensions with F2 coefficients}. 
    Let $\jpariteratedmap{M}{\newstab{\displaystyle \omega_{j}}}{\srange}{\partic}{I}$ denote the \srange-th iterated mapping cone associated to the $\srange$-tuple \\$(\newstab{\displaystyle e},  \newstab{\displaystyle \omega_{1}}, \ldots, \newstab{\displaystyle \omega_{\srange-1}})$ and the subset $I\subseteq [\srange+1]$, with $I=[\srange]$ or $[\srange+1]$.
    
    The map
    $$\newstab{\displaystyle \omega_{m}} \colon  H_{i}\big (\jpariteratedmap{M}{\newstab{\displaystyle \omega_{j}}}{\srange}{\partic}{[\srange+1]};\mathbb{F}_{2}\big )\to  H_{i+2^{\srange}-1}\big (\jpariteratedmap{M}{\newstab{\displaystyle \omega_{j}}}{\srange}{\partic+2^{\srange}}{[\srange]};\mathbb{F}_{2}\big )$$ is an isomorphism for $i<D(2,\srange+1 , \partic)$ and a surjection for $i= D(2, \srange+1, \partic)$.
    \item\label{even dim and p is odd} Suppose that $n$ is even and greater than $2$ and that $p$ is an odd prime.
    The map
    $$ \newstab{e^{p}} \colon  H_{i}(\text{Conf}_{\partic}(M);\mathbb{F}_{p})\to  H_{i}(\text{Conf}_{\partic +p}(M);\mathbb{F}_{p}) $$ is an isomorphism for $i < k$ and a surjection for $i=k$.
\end{enumerate}
\end{proposition}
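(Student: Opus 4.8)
The plan is to run, case by case, the same argument as in the proof of \cref{stable hom for a closed manifold}, only replacing the single disk by the configuration $\bar{E}_{\srange}$ of \cref{notation for iterated configuration space model}, the class $e^{p}$ by the relevant stabilizing class (or commuting collection of classes), and \cref{homological stability in dimension 2} by the appropriate stability statement for \emph{open} manifolds. First I would fix $\srange$ disjoint closed disks $\bar{E}_{\srange}\subset M$ and pass to the puncture resolution $(Z_{\partic,\bullet}(M,\bar{E}_{\srange}),\text{Conf}_{\partic}(M),\epsilon)$ of \cref{puncture res for conf}, which satisfies $\Vert Z_{\partic,\bullet}(M,\bar{E}_{\srange})\Vert\simeq\text{Conf}_{\partic}(M)$ by \cref{puncture res is equivalent to conf}. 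Every stabilizing class occurring in the proposition is one of $e$, $e^{p}$, $x_{j}$, $y_{j}$, or $\omega_{j}$; in particular it lies in the image of $H_{*}(\text{Conf}(D^{n}_{i});R)$ for one of the disks $D^{n}_{i}$ and has vanishing Browder bracket $\browd{z}{e}$ by \cref{homology classes which give a stabilization map}, so the maps $\newstab{z}$ are defined. By \cref{stabilization map lif to puncture res} each $\newstab{z}$ lifts to a map of chain complexes of semi-simplicial spaces over $Z_{\partic,\bullet}(M,\bar{E}_{\srange})$, and since the classes in any one collection live in distinct disks these lifts commute, both on $C_{*}(\text{Conf}(M))$ and levelwise. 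Hence the iterated mapping cone of \cref{iterated mapping cone def for a general manifold} also lifts to the puncture resolution.

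Proceeding by induction on $\srange$ — building $\iteratedmap{M}{\newstab{z_{1}},\dots,\newstab{z_{\srange}}}{\srange}$ from $\iteratedmap{M}{\newstab{z_{1}},\dots,\newstab{z_{\srange-1}}}{\srange-1}$, and at each step applying a relative geometric realization spectral sequence exactly as in the proof of \cref{stable hom for a closed manifold} — it suffices to show that the homology of the $\srange$-fold iterated cone formed \emph{levelwise} on $Z_{\partic,s}(M,\bar{E}_{\srange})$ vanishes in the claimed range for every $s\ge 0$. Indeed, a spectral sequence with $E^{1}_{s,t}=H_{t}$ of this levelwise iterated cone, abutting to $H_{s+t}$ of the iterated cone on $\text{Conf}_{\partic}(M)$, then forces the abutment to vanish in the same range, which is precisely the statement that $\newstab{z_{\srange}}$ is an isomorphism below the range and a surjection in the top degree. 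The crucial point is that $Z_{\partic,s}(M,\bar{E}_{\srange})$ is a disjoint union, over ordered configurations $(m_{0},\dots,m_{s})$ of points of $M\setminus\bar{E}_{\srange}$, of the configuration spaces of the \emph{open} manifold $M\setminus\{m_{0},\dots,m_{s}\}$, and — because the punctures avoid $\bar{E}_{\srange}$ — each $\newstab{z_{j}}$ restricts to a stabilization map on every summand. By \cref{new stabilization on open manifold} the restricted map $\newstab{z_{j}}$ is chain homotopic to $t_{z_{j}}$ on each summand, so $E^{1}_{s,t}$ is identified with the homology of the corresponding iterated cone of the maps $t_{z_{j}}$ on a disjoint union of configuration spaces of open manifolds.

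The required vanishing in degree $t$ is then supplied by the open-manifold results already established. For the primary stabilization maps one uses \cref{best known stable ranges}: cases \ref{higher dim and p is two} and \ref{odd dim and Z} give the range $t\le\partic/2$, and cases \ref{odd dim and p is odd} and \ref{even dim and p is odd} give $t\le\partic$, using that $2$ is a unit in $\mathbb{Q}$ and in $\mathbb{F}_{p}$ for $p>2$ and that $n\ge 3$ (for the periodic map $\newstab{e^{p}}$ one also uses that $t_{e^{p}}=(t_{e})^{p}$ has the same stable range on an open manifold as $t_{e}$). For the iterated mapping cones one uses \cref{higher stab open surface} in cases \ref{closed stability for surface F2 coefficients} and \ref{closed stability for surface odd coefficients} and \cref{higher stab open higher dimensional manifold} in case \ref{higher stabilization in odd dimensions and p is 2}, each giving the range $t\le D(p,\srange+1,\partic)$. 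Because these open-manifold ranges depend only on $p,\srange,\partic,n$ and the coefficient ring, and not on the underlying open manifold — in particular not on how many points have been deleted — the bound on $t$ is uniform in $s$, so $E^{1}_{s,t}=0$ whenever $s+t$ lies in the stated range, and the claim follows.

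The main obstacle I anticipate is the bookkeeping needed to make the levelwise lift of the \emph{iterated} cone precise and compatible across all simplicial degrees: one must check that, as $\srange$ increases, the commuting squares of \cref{stabilization map lif to puncture res} remain compatible with every face map of $Z_{\partic,\bullet}(M,\bar{E}_{\srange})$ simultaneously, so that the mapping cones can be taken levelwise and assemble into a genuine semi-simplicial chain complex to which the geometric realization spectral sequence applies. Organizing this as the induction on $\srange$ described above, with a relative puncture-resolution spectral sequence at each stage as in the proof of \cref{stable hom for a closed manifold}, keeps this manageable, leaving only the routine verification that the constants $D(p,\srange+1,\partic)$, $\partic/2$, and $\partic$ quoted above are exactly those furnished by \cref{prop:conf-stab}, \cref{best known stable ranges}, \cref{higher stab open surface}, and \cref{higher stab open higher dimensional manifold}.
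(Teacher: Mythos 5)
Your proposal is correct and follows essentially the same route as the paper: puncture resolution relative to $\bar{E}_{\srange}$, levelwise reduction to open manifolds via the relative geometric realization spectral sequence, the comparison $\newstab{z}\simeq t_{z}$ (in its iterated-mapping-cone form, which is exactly the point the paper also flags as needing a strengthened version of \cref{new stabilization on open manifold}), and the open-manifold ranges from \cref{homological stability in dimension 2}, \cref{best known stable ranges}, \cref{higher stab open surface}, and \cref{higher stab open higher dimensional manifold}.
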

\begin{proof}
The argument is exactly the same as
the proof of \cref{stable hom for a closed manifold}. 
%
The only difference is that we implicitly use the fact that for an open connected manifold $M$, there is a zig-zag of quasi-isomorphisms $H_{*}(\jiteratedmap{M}{\newstab{z_{i}}}{\srange+1}{[\srange+1]};\fieldc)\to H_{*}(\iteratedmap{M}{t_{z_{i}}}{\srange+1};\fieldc)$
by a stronger version of \cref{new stabilization on open manifold} for iterated mapping cones (the proof of this stronger version is similar to the proof of \cref{new stabilization on open manifold}, so we omit it).
The range for Part \ref{higher dim and p is two} follows from \cref{homological stability in dimension 2} and \cref{higher stab open higher dimensional manifold}.
The ranges for Parts \ref{closed stability for surface F2 coefficients} and \ref{closed stability for surface odd coefficients} follow from \cref{higher stab open surface}.
The ranges for Parts \ref{odd dim and Z}, \ref{odd dim and p is odd}, and \ref{even dim and p is odd} follow from \cref{best known stable ranges}. The ranges for Part \ref{higher stabilization in odd dimensions and p is 2} follow from \cref{higher stab open higher dimensional manifold}.
\end{proof}
When the dimension of the manifold is odd, one can apply Parts \ref{odd dim and p is odd} and \ref{higher stabilization in odd dimensions and p is 2} to obtain the following secondary homological stability result for the integral relative homology $H_{*}(\text{Conf}_{\partic}(M),\text{Conf}_{\partic-1}(M);\mathbb{Z})$, which proves \cref{closed periodic secondary stability} in the case that homology is integral homology, exactly like \cref{integral secondary stab open manifold}.
\begin{corollary}\label{integral secondary stab closed manifold}
Suppose that $M$ is a connected manifold of odd dimension greater than $2$. 
The map 
    $$\newstab{\displaystyle \omega_{1}} \colon  C_{i}(\text{Conf}_{\partic}(M),\text{Conf}_{\partic-1}(M);\mathbb{Z})\to C_{i+1}(\text{Conf}_{\partic+2}(M),\text{Conf}_{\partic+1}(M);\mathbb{Z})$$
    induces an isomorphism for $i<D(2, 2, \partic)$ and a surjection for $i= D(2, 2, \partic)$.
\end{corollary}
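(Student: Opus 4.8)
The plan is to follow the proofs of \cref{integral secondary stability for surface} and \cref{integral secondary stab open manifold} essentially verbatim, substituting the closed-manifold constructions of \cref{sec:puncture resolution} for their open-manifold analogues. First I would note that the class $\omega_{1}=Q^{1}e\in H_{1}(\text{Conf}_{2}(\mathbb{R}^{n});\mathbb{F}_{2})$ of \cref{stable ranges for higher dimensions with F2 coefficients} lifts to integral homology: since $\text{Conf}_{2}(\mathbb{R}^{n})\simeq \mathbb{R}P^{n-1}$ and $n-1\geq 2$, the mod-$2$ reduction $H_{1}(\mathbb{R}P^{n-1};\mathbb{Z})=\mathbb{Z}/2\to H_{1}(\mathbb{R}P^{n-1};\mathbb{F}_{2})$ is an isomorphism, so there is a lift $\tilde{\omega}_{1}\in H_{1}(\text{Conf}_{2}(\mathbb{R}^{n});\mathbb{Z})$. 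To feed $\tilde{\omega}_{1}$ into the closed-manifold construction of \cref{stabilization map second def} and \cref{iterated mapping cone def for a general manifold} --- which, in contrast to the open-manifold construction of \cref{stabilization map first def}, requires the Browder bracket with the point class to be null --- I would verify that $\browd{\tilde{\omega}_{1}}{e}=0$ in $H_{n}(\text{Conf}_{3}(\mathbb{R}^{n});\mathbb{Z})$. By naturality of the Browder bracket under mod-$2$ reduction this class maps to $\browd{\omega_{1}}{e}$, which vanishes by Part \ref{Browder bracket applied to Dyer-Lashof is zero} of \cref{homology classes which give a stabilization map}; since $\tilde{\omega}_{1}$ is $2$-torsion, $\browd{\tilde{\omega}_{1}}{e}$ is simultaneously $2$-torsion and divisible by $2$, and this forces it to vanish once one knows the structure of the torsion in $H_{n}(\text{Conf}_{3}(\mathbb{R}^{n});\mathbb{Z})$. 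Granting this, \cref{p-power stabilization} and \cref{stabilization map second def} with $\mathbb{Z}$-coefficients produce a chain map $\newstab{\tilde{\omega}_{1}}$ lifting the $\mathbb{F}_{2}$-coefficient map $\newstab{\omega_{1}}$; since $\newstab{\tilde{\omega}_{1}}$ commutes with $\newstab{e}$ by the construction in \cref{iterated mapping cone def for a general manifold}, it descends to the asserted map of relative chain complexes, where $C_{*}(\text{Conf}_{\partic}(M),\text{Conf}_{\partic-1}(M);\mathbb{Z})$ means $\pariteratedmap{M}{\newstab{e}}{1}{\partic}$.

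Next, by the long exact sequence of a mapping cone it suffices to show that $H_{i+1}(\pariteratedmap{M}{\newstab{e},\newstab{\tilde{\omega}_{1}}}{2}{\partic+2};\mathbb{Z})$ vanishes for $i\leq D(2,2,\partic)$. I would reduce this modulo primes exactly as in the proof of \cref{integral secondary stability for surface}: by \cref{puncture res is equivalent to conf} and \cref{homology of conf is finitely generated} (applied with $\bar{M}=M$, which admits a finite handle decomposition), $H_{*}(\text{Conf}_{\partic}(M);\mathbb{Z})$ is finitely generated, hence so is the homology of this iterated mapping cone, being a subquotient of $H_{*}(\text{Conf}_{\partic+2}(M);\mathbb{Z})$; so by the universal coefficient theorem it vanishes in a given degree if and only if $H_{i+1}(\pariteratedmap{M}{\newstab{e},\newstab{\omega_{1}}}{2}{\partic+2};\mathbb{F}_{p})$ vanishes there for every prime $p$.

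It then remains to treat $p$ odd and $p=2$ separately. For odd $p$: since $n$ is odd, Part \ref{odd dim and p is odd} of \cref{stability results for closed manifold} gives that $\newstab{e}\colon H_{i}(\text{Conf}_{N}(M);\mathbb{F}_{p})\to H_{i}(\text{Conf}_{N+1}(M);\mathbb{F}_{p})$ is an isomorphism for $i<N$, so the relative groups $H_{*}(\text{Conf}_{N}(M),\text{Conf}_{N-1}(M);\mathbb{F}_{p})$ vanish through degree $N-1$; since $D(2,2,\partic)=3\partic/4<\partic$, the cone long exact sequence then forces $H_{i+1}(\pariteratedmap{M}{\newstab{e},\newstab{\omega_{1}}}{2}{\partic+2};\mathbb{F}_{p})=0$ for $i\leq D(2,2,\partic)$. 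For $p=2$: Part \ref{higher stabilization in odd dimensions and p is 2} of \cref{stability results for closed manifold} with $\srange=1$ asserts that $\newstab{\omega_{1}}\colon H_{i}(\pariteratedmap{M}{\newstab{e}}{1}{\partic};\mathbb{F}_{2})\to H_{i+1}(\pariteratedmap{M}{\newstab{e}}{1}{\partic+2};\mathbb{F}_{2})$ is an isomorphism for $i<D(2,2,\partic)$ and a surjection for $i=D(2,2,\partic)$; feeding the surjectivity in degree $i$ and the injectivity in degree $i-1$ into the long exact sequence of the cone $\pariteratedmap{M}{\newstab{e},\newstab{\omega_{1}}}{2}{\partic+2}$ yields the vanishing of its degree-$(i+1)$ homology with $\mathbb{F}_{2}$-coefficients for $i\leq D(2,2,\partic)$. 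Combining the two cases gives the required integral vanishing, and hence the Corollary.

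The main obstacle is the very first point: showing that $\browd{\tilde{\omega}_{1}}{e}$ vanishes in integral homology. This is the one ingredient with no counterpart in the open-manifold argument --- there the secondary stabilization map is built directly from explicit embeddings and no Browder-bracket hypothesis is needed --- yet here it is indispensable, since the closed-manifold stabilization map of \cref{stabilization map second def} is only defined when the Browder bracket with the point class is null. Everything downstream is a formal transcription of the open-surface and open-higher-dimensional arguments, up to routine verification that the numerical ranges $D(2,2,\partic)$, $N-1$, and the constants appearing in \cref{stability results for closed manifold} are mutually compatible.
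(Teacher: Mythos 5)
Your proposal is correct and follows the paper's own route for this corollary: reduce to $\mathbb{F}_{p}$-coefficients using finite generation of $H_{*}(\text{Conf}_{\partic}(M);\mathbb{Z})$ and the universal coefficient theorem, then invoke Parts \ref{odd dim and p is odd} and \ref{higher stabilization in odd dimensions and p is 2} of \cref{stability results for closed manifold} for $p$ odd and $p=2$ respectively, exactly as in \cref{integral secondary stability for surface} and \cref{integral secondary stab open manifold}. Your additional step --- verifying that the integral lift $\tilde{\omega}_{1}$ satisfies $\browd{\tilde{\omega}_{1}}{e}=0$ over $\mathbb{Z}$, a hypothesis that \cref{stabilization map second def} genuinely requires in the closed case and that the paper leaves implicit (it is not covered by \cref{homology classes which give a stabilization map} with integral coefficients) --- is sound as written, and the torsion input it needs, namely that $H_{*}(\text{Conf}_{\partic}(\mathbb{R}^{n});\mathbb{Z})$ contains no elements of order $4$, is part of Cohen--Lada--May's computation, so your argument closes that gap rather than leaving one.
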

We now prove \cref{closed periodic secondary stability}.
\begin{proof}[Proof of \cref{closed periodic secondary stability}]
\cref{secondary stability open case} immediately follows from \cref{stability results for closed manifold} and \cref{integral secondary stab closed manifold} in the case $\srange=1$.
\end{proof}

\begin{remark}
If $M$ is a closed connected manifold of even dimension, we do not know if there is a similar statement to \cref{integral secondary stab closed manifold}. In this case, the issue is that we do not know how to define relative homology $H_{*}(\text{Conf}_{\partic }(M),\text{Conf}_{\partic -1}(M);\mathbb{Z})$.
When $M$ is a closed connected manifold of even dimension, we can only consider $H_{*}(\text{Conf}_{\partic }(M),\text{Conf}_{\partic -p}(M);\mathbb{F}_{p})$, not $H_{*}(\text{Conf}_{\partic }(M),\text{Conf}_{\partic -1}(M);\mathbb{F}_{p})$, and there is no analogue of $H_{*}(\text{Conf}_{\partic }(M),\text{Conf}_{\partic -p}(M);\mathbb{F}_{p})$ for integral homology.

It might still be possible to obtain a secondary stability statement result by working in the setting of divided power algebras. Let $D$ denote the divided power algebra over $\mathbb{Z}$ in a single variable. Suppose that $M$ is an open manifold. The chain complex $C_{*}(\text{Conf}(M);\mathbb{Z})$ can be viewed as a $\mathbb{Z}[x]$-module, where the variable $x$ acts on $C_{*}(\text{Conf}(M);\mathbb{Z})$ by stabilization,
and we have that $$\text{Tor}_{i}^{\mathbb{Z}[x]}(C_{*}(\text{Conf}(M);\mathbb{Z}),\mathbb{Z})\cong \bigoplus_{\partic} H_{i}(\text{Conf}_{\partic}(M),\text{Conf}_{\partic-1}(M);\mathbb{Z}).$$ 
When $M$ is a closed manifold, $\mathbb{Z}[x]$ no longer acts on $C_{*}(\text{Conf}(M);\mathbb{Z})$, but $D$ acts on $C^{*}(\text{Conf}(M);\mathbb{Z})$. So instead consider
the hyper Tor groups $\text{Tor}_{i}^{D}(C^{*}(\text{Conf}(M);\mathbb{Z}),\mathbb{Z})$. These hyper Tor groups are an analogue of the relative homology $H_{i}(\text{Conf}_{\partic}(M),\text{Conf}_{\partic-1}(M);\mathbb{Z})$. We conjecture that the hyper Tor groups $\text{Tor}_{*}^{D}(C^{*}(\text{Conf}(M);\mathbb{Z}),\mathbb{Z})$ stabilize for a closed manifold $M$. We also ask if there is an analogue of the secondary stabilization map $$t_{2} \colon C_{*}(\text{Conf}_{k}(M),\text{Conf}_{k-1}(M);\mathbb{Z})\to C_{*+1}(\text{Conf}_{k+2}(M),\text{Conf}_{k+1}(M);\mathbb{Z})$$ for the hyper Tor groups $\text{Tor}_{*}^{D}(C^{*}(\text{Conf}(M);\mathbb{Z}),\mathbb{Z})$.
\end{remark}
\begin{remark}\label{remark on applying BCT}
In the case that $M$ is a manifold of finite type and homology is with coefficients over the field $\mathbb{F}_{2}$, the secondary stability results for this case can also be deduced from \textcite{MR991102}. B\"{o}digheimer--Cohen--Taylor show that the homology $H_{*}(\text{Conf}(M);\mathbb{F}_{2})$ can be explicitly calculated from a graded vector space $\mathscr{C}(H_{*}(M;\mathbb{F}_{2}))$ depending only on the homology $H_{*}(M;\mathbb{F}_{2})$ and the dimension of $M$ (see \textcite[Theorem A]{MR991102}). They give an explicit filtration of $\mathscr{C}(H_{*}(M;\mathbb{F}_{2}))$ which calculates $H_{*}(\text{Conf}_{\partic}(M);\mathbb{F}_{2})$ (see \textcite[Theorem B and Section 4]{MR991102}).
As a result, we can use the vector space $\mathscr{C}(H_{*}(M;\mathbb{F}_{2}))$ to deduce secondary stability in this case. Since the stabilization map $t_{1}\colon H_{*}(\text{Conf}_{\partic}(M);\mathbb{F}_{2})\to H_{*}(\text{Conf}_{\partic+1}(M);\mathbb{F}_{2})$ is injective when $M$ is an open manifold, the relative homology $H_{*}(\text{Conf}_{\partic}(M),\text{Conf}_{\partic-1}(M);\mathbb{F}_{2})$ is the cokernel of this stabilization map. Therefore, secondary stability can almost certainly be deduced from the vector space $\mathscr{C}(H_{*}(M;\mathbb{F}_{2}))$ in this case. The only hard part is checking that the obvious guess for the secondary stabilization map on $\mathscr{C}(H_{*}(M;\mathbb{F}_{2}))$ agrees with the secondary stabilization map $t_{2}\colon H_{*}(\text{Conf}_{\partic}(M),\text{Conf}_{\partic-1}(M);\mathbb{F}_{2})\to H_{*+1}(\text{Conf}_{\partic+2}(M),\text{Conf}_{\partic+1}(M);\mathbb{F}_{2})$.
\end{remark}
\printbibliography
\end{document}